\documentclass[10pt]{amsart}

\usepackage[T1]{fontenc} 
\usepackage[utf8]{inputenc} 
\usepackage{longtable}
\usepackage{amssymb}
\usepackage{amsthm}
\usepackage{amsmath}
\usepackage{wasysym}
\usepackage{dcpic, pictex}
\usepackage{bbm}
\usepackage{enumerate}
\usepackage{amsfonts}
\usepackage{amsthm}
\usepackage{amsmath}
\usepackage{times}
\usepackage{amscd}
\usepackage[mathscr]{eucal}
\usepackage{indentfirst}
\usepackage{pict2e}
\usepackage{epic}
\usepackage{epstopdf} 
\usepackage{verbatim}
\usepackage{cancel}
\usepackage[foot]{amsaddr}
\usepackage{lipsum}
\usepackage{mathrsfs}
\usepackage{bm}

\usepackage{graphicx}

\usepackage{
	amsfonts, 
	latexsym, 
	enumerate,
}
\usepackage[english]{babel}
\allowdisplaybreaks
\makeindex
\newtheorem{theorem}{Theorem}[section]
\newtheorem{lemma}[theorem]{Lemma}
\newtheorem{proposition}[theorem]{Proposition}
\newtheorem{definition}[theorem]{Definition}
\newtheorem{example}[theorem]{Example}

\newtheorem{remark}[theorem]{Remark}

\newtheorem{assumption}[theorem]{Assumption}
\newtheorem{setting}[theorem]{Setting}

\usepackage[a4paper,top=3.5cm,bottom=3.5cm,left=3cm,right=3cm]{geometry}
\numberwithin{equation}{section}
\allowdisplaybreaks

\usepackage{xcolor}

\newcommand{\nc}{\normalcolor}
\newcommand{\dif}{\mathrm{d}}
\newcommand{\E}{\mathbf{E}}
\newcommand{\R}{\mathbf{R}}
\newcommand{\C}{\mathbf{C}}
\newcommand{\HH}{\mathbf{H}}

\newcommand{\N}{\mathbf{N}}

\newcommand{\LT}{\mathrm{LT}}

\newcommand{\ii}{\mathrm{i}}
\newcommand{\ee}{\mathrm{e}}

\newcommand{\pair}{\nu}

\usepackage{hyperref}

\title[Eigenvector decorrelation for random matrices]{Eigenvector decorrelation for random matrices}

\date{\today}

\begin{document}
	
	\maketitle
	
	\vspace{0.25cm}
	
	\renewcommand{\thefootnote}{\fnsymbol{footnote}}

	\noindent
	\mbox{}%
	\hfill%
	\begin{minipage}{0.21\textwidth}
		\centering
		{Giorgio Cipolloni}\footnotemark[1]\\
		\scriptsize{\textit{cipolloni@axp.mat.uniroma2.it}}
	\end{minipage}
	\hfill%
	\begin{minipage}{0.21\textwidth}
		\centering
		{L\'aszl\'o Erd\H{o}s}\footnotemark[2]\\
		\scriptsize{\textit{lerdos@ist.ac.at}}
	\end{minipage}
	\hfill%
	\begin{minipage}{0.21\textwidth}
		\centering
		{Joscha Henheik}\footnotemark[3]\\
		\scriptsize{\textit{joscha.henheik@unige.ch}}
	\end{minipage}
	\hfill%
	\begin{minipage}{0.21\textwidth}
		\centering
		{Oleksii Kolupaiev}\footnotemark[2]\\
		\scriptsize{\textit{okolupaiev@ist.ac.at}}
	\end{minipage}
	\hfill%
	\mbox{}%
	\footnotetext[1]{Department of Mathematics, University of Rome Tor Vergata, Via della Ricerca Scientifica, 1, 00133 Roma RM, Italy.}
	\footnotetext[2]{Institute of Science and Technology Austria, Am Campus 1, 3400 Klosterneuburg, Austria.}
\footnotetext[3]{Section of Mathematics, University of Geneva, Rue du Conseil Général 7-9, 1205 Geneva, Switzerland.}
	\renewcommand*{\thefootnote}{\arabic{footnote}}
	\vspace{0.25cm}
	
	\begin{abstract} 
		
		We study the sensitivity of the eigenvectors of random matrices, showing that even small perturbations make the eigenvectors almost orthogonal. More precisely, we consider two deformed Wigner matrices $W+D_1$, $W+D_2$ and show that their bulk eigenvectors become asymptotically orthogonal as soon as $\mathrm{Tr}(D_1-D_2)^2\gg 1$, or their respective energies are separated on a scale much bigger than the local eigenvalue spacing. Furthermore, we show that quadratic forms of eigenvectors of $W+D_1$, $W+D_2$ with any deterministic matrix $A\in\C^{N\times N}$ in a specific subspace of codimension one are of size $N^{-1/2}$. This proves a generalization of the Eigenstate Thermalization Hypothesis to eigenvectors belonging to two different spectral families.
	\end{abstract}
	\vspace{0.15cm}
	
	\footnotesize \textit{Keywords:} Eigenvector Perturbation Theory,   Davis-Kahan Theorem, Local Law, Characteristic Flow, Eigenstate Thermalization,  Zigzag Strategy. 
	
	\footnotesize \textit{2020 Mathematics Subject Classification:} 60B20, 82C10.
	\vspace{0.25cm}
	\normalsize

\section{Introduction} \label{sec:intro}

\subsection{Main result}  The behavior of the eigenvectors of a Hermitian matrix under perturbations is known to be quite subtle: even a small change in the matrix may lead to a significant rotation of the eigenvectors  due to resonances.  This
phenomenon is ubiquitous in a broad range of numerical and statistical applications, see, e.g., \cite{Chen, Davis, DPillis, Fan, Ng, Yoon}.  For example, in 
the classical paper \cite{Davis} Davis and Kahan  give a deterministic upper bound for the deviation of the eigenvectors from the unperturbed ones in terms of the spectral gap, while in \cite[Theorem 2]{Ng} the authors show that there always exists a perturbation causing a big change in the eigenvectors. 
When the magnitude of the perturbation exceeds the local eigenvalue spacing of the initial matrix, standard perturbation 
theory does not control the eigenbasis of the perturbed matrix any more and
the behavior of the eigenvectors is highly sensitive to the properties of the original matrix. While in some  rare cases 
even such  larger  perturbations still cause only a small change, 
typically the perturbed eigenbasis is completely decoupled from the initial one.
 In this paper, we show that indeed this typical scenario occurs for random matrices with very high probability.

More precisely, we consider two \emph{deformed Wigner matrices} of the form $H_1=W+D_1$, $H_2=W+D_2$, where $W$ is a \emph{Wigner matrix}\footnote{A Wigner matrix is a Hermitian $N \times N$ matrix $W=W^*$ with independent, identically distributed centered entries (up to the Hermitian symmetry) with $\E|W_{ab}|^2=1/N$; see also Assumption~\ref{ass:momass}.} and $D_1, D_2$ are Hermitian deterministic \emph{deformations}, which we assume to be traceless without loss of generality.
 Denote the eigenvalues (\emph{energies}) of $H_l$ in increasing order\footnote{The upper index $l=1,2$ of the eigenvalues $\lambda$ and other related quantities should not be confused with a power.} by $\lambda_1^l\le \lambda_2^l\le\ldots\le \lambda_N^l$, $l=1,2$, and let ${\boldsymbol u}_1^l, {\boldsymbol u}_2^l,\ldots, {\boldsymbol u}_N^l$ be the corresponding orthonormal eigenvectors. We measure the distance between the families $\lbrace {\boldsymbol u}^1_i\rbrace_{i=1}^N$ and $\lbrace {\boldsymbol u}^2_j\rbrace_{j=1}^N$ by looking at the \emph{eigenvector overlaps} $\langle \boldsymbol{u}_i^1, A\boldsymbol{u}_j^2\rangle$ for a deterministic observable matrix $A$. 
 
 Our first main result (Theorem \ref{theo:maintheo2}) is the decomposition
 \begin{equation}
 \label{eq:decorrintro}
 \langle \boldsymbol{u}_i^1, A\boldsymbol{u}_j^2\rangle=\langle VA\rangle \langle \boldsymbol{u}_i^1,\boldsymbol{u}_j^2\rangle+\mathcal{O}\left(\frac{\lVert A\rVert}{\sqrt{N}}\right),
 \end{equation}
for bulk indices\footnote{We say that an index $i$ is in the bulk of the spectrum if the density of states around $\lambda_i^l$ is strictly positive; see \eqref{eq:kappabulk} for the precise definition.} $i,j$, where
$\langle X \rangle: = \frac{1}{N}{\mathrm Tr}\, X$ denotes the averaged trace of $X\in\C^{N\times N}$. Here
 $V$ is an appropriately chosen deterministic matrix depending on the deformations $D_1,D_2$ and the (typical locations of the) energies $\lambda_i^1, \lambda_j^2$ with $\lVert V\rVert \lesssim 1$ (see \eqref{eq:defV} for its definition). The $N^{-1/2}$ error term in \eqref{eq:decorrintro} is optimal.

As our second main result (Theorem \ref{theo:maintheo}), we give an upper bound on the overlap $\langle \boldsymbol{u}_i^1,\boldsymbol{u}_j^2\rangle$ in \eqref{eq:decorrintro}. In the special case $D_1=D_2$ we trivially have $\langle \boldsymbol{u}_i^1,\boldsymbol{u}_j^2\rangle=\delta_{ij}$, hence \eqref{eq:decorrintro} is just the Eigenstate Thermalization Hypothesis (ETH) for deformed Wigner matrices proven in \cite{equipart}. However, in general, when we consider two different deformations, the overlap $\langle \boldsymbol{u}_i^1,\boldsymbol{u}_j^2\rangle$ is non-trivial. In fact, it subtly depends on two effects; the difference in deformations, $D_1 - D_2$, and the difference in energy $\lambda_i^1 - \lambda_j^2$.   
In order to study the decorrelation properties of $\langle \boldsymbol{u}_i^1, A\boldsymbol{u}_j^2\rangle$ we thus need to give an estimate on this eigenvector overlap in terms of these two differences. More precisely, in Theorem \ref{theo:maintheo}, we prove the optimal bound
\begin{equation}
\label{eq:decestort}
\big|\langle \boldsymbol{u}_i^1,\boldsymbol{u}_j^2\rangle\big|^2\lesssim \frac{1}{N}\cdot \frac{1}{\langle (D_1-D_2)^2\rangle+\mathrm{LT}+|\lambda_i^1 - \lambda_j^{2}|^2},
\end{equation}
where the so-called \emph{linear term} $\mathrm{LT}$ is (the absolute value of) a specific linear combination of $D_1 - D_2$ and $\lambda_i^1 - \lambda_j^2$ and its precise definition 
will be given in \eqref{eq:linterm}. The estimate \eqref{eq:decestort} manifests the interplay of the two 
decay effects in three different terms, which can make the eigenvectors $\boldsymbol{u}_i^1, \boldsymbol{u}_j^2$ almost orthogonal. 
The identification of the decay in  $D_1-D_2$ is the main new result in this paper. It captures the effect that the spectral
resolutions of $W+D_1, W+D_2$ become more and more independent as $\langle (D_1-D_2)^2\rangle$ grows. 
We describe the relation between the three terms in \eqref{eq:decestort} in more details below Theorem~\ref{theo:maintheo}. Here we only comment on  the optimality of our proven 
decay in terms of  $\langle (D_1-D_2)^2\rangle$. Standard second order perturbation theory (outlined in Remark \ref{rem:perturb}) indicates
that  $\langle \boldsymbol{u}_i^1,\boldsymbol{u}_i^2\rangle \approx 1$ 
in the regime $\langle (D_1-D_2)^2\rangle\ll 1/N$. Our bound \eqref{eq:decestort} shows that $
\langle \boldsymbol{u}_i^1,\boldsymbol{u}_i^2\rangle \approx 0$ in the opposite regime $\langle (D_1-D_2)^2\rangle\gg 1/N$.

Putting together our two main results, \eqref{eq:decorrintro} and \eqref{eq:decestort}, we see that the overlap $ \langle \boldsymbol{u}_i^1, A\boldsymbol{u}_j^2\rangle$ can be small on two different grounds: Either the observable matrix $A$ is (nearly) orthogonal to $V$, i.e.~$\langle V A \rangle \approx  0$, or the overlap $ \langle \boldsymbol{u}_i^1, \boldsymbol{u}_j^2\rangle$ is small as estimated in \eqref{eq:decestort}. We coin the first the \emph{regularity effect} and the second the \emph{overlap decay effect}. All results hold with very large probability.

\subsection{Previous  related results} To put our results \eqref{eq:decorrintro}--\eqref{eq:decestort} into context we now describe several related results, which partially explored only one of the two smallness effects at a time. We stress that 
our results \eqref{eq:decorrintro}--\eqref{eq:decestort} manage to catch both these effects in a unified and optimal manner. In fact, prior to this work, the regularity effect \eqref{eq:decorrintro} was only studied in the context of the same matrix $H$, i.e. $D_1=D_2$ (and possibly both equal to zero), to prove the ETH in the setting of random matrices. The ETH, posed by Deutsch in \cite{Deutsch} as a signature of chaos in quantum systems, states that quadratic forms of eigenfunctions of chaotic Hamiltonians can be described purely by macroscopic quantitites and that the (pseudo-random) fluctuations are entropically suppressed. In the context of a \emph{single} random matrix ensemble the ETH reads as
\begin{equation}
\label{eq:ETH}
\langle \boldsymbol{u}_i, A\boldsymbol{u}_j\rangle=\langle VA\rangle \delta_{ij}+\mathcal{O}\left(\frac{\lVert A\rVert}{\sqrt{N}}\right),
\end{equation}
where $\boldsymbol{u}_i$ are the orthonormal eigenvectors of an $N\times N$ random matrix $H$. The ETH in the form \eqref{eq:ETH} was first proven for Wigner matrices (i.e.~$D_1=D_2=0$, in which case $V = I$) in \cite{ETH} (see also \cite{momentflow, RBM} for previous partial results). We point out that even the Gaussianity of the fluctuations in the $N^{-1/2}$-term is
 known for Wigner matrices for special observables \cite{momentflow, RBM}, for general observables \cite{fluctETH, Berryconj, fluctETHedge}, and for deformed Wigner matrices \cite{equipart}. The result \eqref{eq:ETH} was extended in several directions: to more general random matrix ensembles \cite{equipart, ETHgen, quantumchaos, ETHW-type}, where $V$ becomes energy dependent, to $d$--regular graphs \cite{reg_graph_sc, reg_graph} and to improvement of the error term in \eqref{eq:ETH} from $\lVert A\rVert$ to $\langle A^2\rangle^{1/2}$ \cite{fluctETHgen, rank_loc_law, edgeETH}. Related to \eqref{eq:ETH}, we also mention that in the past few years there has been great interest in studying eigenvector overlaps of different nature in several other contexts, including tensor principal component analysis (PCA) \cite{PCA}, shrinkage estimators \cite{shrink_est, nonlin_shrink}, noise detection \cite{noise_gauss, noise}, minors \cite{minors}, the equipartition principle \cite{equipart_init}, and many body physics \cite{manybody}.

On the other hand, estimates of the form \eqref{eq:decestort}, focusing on the $D_1 - D_2$ behavior, were 
previously studied only for Hermitian matrices in the very special case when $D_i=x_iD$, for a scalar $x_i$, in \cite{quench}, and in the context of decorrelation estimates for the Hermitization of non--Hermitian matrices in \cite{CLTnon-herm, CLTnon-hermreal, mesoCLT, gumbel}, where the deformation has a very special  $2\times 2$ block structure
 with zero diagonal blocks and off--diagonal blocks being constant multiples of the identity.
 As a related problem, sensitivity of the top eigenvector for a Wigner matrix to resampling of a small portion of 
 the matrix elements was studied in \cite{Bordenave} and extended to sparse matrices 
 in \cite{BordenaveLee}. \nc

\subsection{Multi--resolvent local laws} Local laws in general are concentration estimates for a single resolvent $G$ of a random matrix, or alternating chains of resolvents and deterministic matrices $A$, i.e. $GAGAGA...$. 
The main technical tool that we use to prove the decorrelation estimates for eigenvectors in \eqref{eq:decorrintro}--\eqref{eq:decestort} is a \emph{two-resolvent local law}, which is stated in Theorem \ref{theo:multigllaw} below. 

We now first describe our new multi-resolvent local law and then relate it to previous results. Let us denote the resolvent of $W+D_j$ at $z_j\in\C\setminus\R$ by $G_j:=(W+D_j-z_j)^{-1}$ and let $A$ be a deterministic $N\times N$ matrix. Then our new multi--resolvent local law asserts that, as $N$ tends to infinity, the matrix product $G_1AG_2$ concentrates around its deterministic approximation, denoted by $M_{12}^{A}$, which is explicitly given by\footnote{Note that $G_1AG_2$ is \emph{not} close to $M_1 AM_2$, indicating that multi-resolvent local laws are not simple consequences of the single resolvent local law. }
\[
M_{12}^A = M_1 A M_2 + \frac{\langle M_1 A M_2 \rangle }{1 - \langle M_1 M_2 \rangle} M_1 M_2.
\]
Here $M_i$ denotes the deterministic approximation of the single resolvent $G_i$ obtained as the unique solution $M_i = M^{D_i}(z_i)$ to the \emph{Matrix Dyson Equation} (MDE)
\begin{equation} \label{eq:MDE}
-M_i^{-1} = z_i - D_i + \langle M_i \rangle
\end{equation}
under the constraint $\Im M_i \, \Im z_i > 0$. 
 We optimally control the fluctuation of $G_1AG_2$ around $M_{12}^{A}$ in terms of $D_1-D_2$ and $z_1-z_2$, showing that typically the size of the fluctuation around $M_{12}^{A}$ is smaller than the size of $M_{12}^{A}$ itself. For this reason, in Proposition \ref{prop:stab} below, we give the following bound on $M_{12}^{A}$:
\begin{equation}
\lVert M_{12}^{A}\rVert\lesssim \frac{1}{\gamma}, \qquad\quad \gamma : = \langle (D_1-D_2)^2\rangle+\vert \Re z_1-\Re z_2\vert^2+\LT +|\Im z_1| + |\Im z_2| ,
\label{eq:stab_intro}
\end{equation}
where the $\LT\ge 0$ behaves as (the absolute value of) a linear combination of $D_1-D_2$ and $z_1-z_2$ (a precise definition will be given in \eqref{eq:linterm} later). The interesting regime is when $\gamma \ll 1$. However, when $A\in \C^{N\times N}$ lies in a specific subspace of codimension one, the bound in \eqref{eq:stab_intro} improves to $\lVert M_{12}^{A}\rVert\lesssim 1$. We call such matrices \emph{regular} and establish an improved local law for $G_1AG_2$ in this case. When one deals with Wigner matrices, i.e. $D_1=D_2=0$, then $A$ is regular if and only if ${\mathrm Tr}\, A=0$. However, when the deformations $D_1, D_2$ are non--zero, the notion of regularity depends on $D_1, D_2$, as well as on the spectral parameters $z_1, z_2$, in a nontrivial way; see Definition~\ref{def:regulardef} for the precise definition.

We now informally discuss the structure of the bounds in the multi--resolvent local laws in Theorem~\ref{theo:multigllaw} and Proposition~\ref{prop:isoLL} with a concrete example. Let $\boldsymbol{x}, \boldsymbol{y}\in\C^N$ be deterministic unit vectors. When $D_1=D_2$, it was shown in \cite[Proposition 4.4]{equipart} that for $\lVert A\rVert\lesssim1$ we have\footnote{By $\langle \cdot,\cdot\rangle$ we denote the inner product in $\C^N$.}
\begin{equation}
\vert\langle \boldsymbol{x}, (G_1AG_2-M_{12}^A)\boldsymbol{y}\rangle\vert \lesssim
\begin{cases}
\frac{1}{\sqrt{N\eta}}\cdot\frac{1}{\eta}=(N\eta^3)^{-1/2},& A \text{ is general},\\
\frac{1}{\sqrt{N\eta}}\cdot\frac{1}{\eta}\cdot\sqrt{\eta}=(N\eta^2)^{-1/2},& A \text{ is regular}
\end{cases}
\label{eq:iso_intro}
\end{equation}
for $N\eta\gg 1$, where $\eta: =\vert \Im z_1\vert\wedge\vert\Im z_2\vert$ is small in the interesting \emph{local} regime. Note that the bound in the case of regular $A$ is $\sqrt{\eta}$ times better than in the general case. This improvement is known as a $\sqrt{\eta}$\emph{--rule} and was initially observed in \cite{multiG} in the context of Wigner matrices. This rule correctly predicts the size of an arbitrarily long resolvent chain $G_1A_1G_2\cdots A_{k-1}G_k$: each regular $A_i$ accounts for an additional $\sqrt{\eta}$ improvement compared with the bound uniform in $\Re z_1, \Re z_2$ and all bounded observables.

In this paper we make a step further and show how \eqref{eq:iso_intro} improves once we start taking into account the distance between spectral parameters and between deformations. We also show how this decay effect can be combined with the effect that the matrix $A$ is regular. Namely, we prove that (see Proposition~\ref{prop:isoLL} below)
\begin{equation}
\vert\langle \boldsymbol{x}, (G_1AG_2-M_{12}^A)\boldsymbol{y}\rangle\vert \lesssim
\begin{cases}
\frac{1}{\sqrt{N\eta}}\cdot\frac{1}{\eta}\cdot\sqrt{\frac{\eta}{\gamma}}=(N\eta^2\gamma)^{-1/2},& A \text{ is general},\\
\frac{1}{\sqrt{N\eta}}\cdot\frac{1}{\eta}\cdot\sqrt{\frac{\eta}{\gamma}}\cdot\sqrt{\gamma}=(N\eta^2)^{-1/2},&  A \text{ is regular}.
\end{cases}
\label{eq:iso_intro_gamma}
\end{equation}
Note that for the control parameter $\gamma$ from \eqref{eq:stab_intro} we have $\sqrt{\eta/\gamma}\lesssim 1$, showing that in fact this additional factor in \eqref{eq:iso_intro_gamma}, compared to \eqref{eq:iso_intro}, gives additional smallness. 

From \eqref{eq:iso_intro_gamma}, we can thus draw the following two rules of thumb, refining the previous $\sqrt{\eta}$-rule. 
\vspace{2mm}

\noindent $\boldsymbol{\sqrt{\eta/\gamma}}$\textbf{--rule} (\emph{Decay effect}): For each pair of neighboring resolvents with different indices, $G_1$, $G_2$, we gain an additional (small) factor $\sqrt{\eta/\gamma}$. 

\vspace{2mm}

\noindent $\boldsymbol{\sqrt{\gamma}}$\textbf{--rule} (\emph{Regularity effect}): For each regular matrix we gain an additional (small) factor $\sqrt{\gamma}$. 
\vspace{2mm}

\noindent Note that when both effects are present, we gain back the $\sqrt{\eta/\gamma} \, \sqrt{\gamma} = \sqrt{\eta}$-rule. Thus with the proper definition of regularity no additional gain can be obtained from the decay effect; this is natural since the $\sqrt{\eta/\gamma}$-rule comes from the unique unstable direction of the two-body stability operator \eqref{eq:stabop}, while the concept of regularity exactly removes this worst direction.

 In \eqref{eq:iso_intro}--\eqref{eq:iso_intro_gamma} we presented the example of the two-resolvent isotropic law for clarity of presentation, but in Theorem~\ref{theo:multigllaw} and Proposition~\ref{prop:isoLL} we prove analogous results also in the averaged case and for isotropic chains containing three resolvents, respectively. Longer chains can also be handled by our method and our two new rules correctly predict their size, but we refrain from doing so, since they are not needed for the eigenvector overlap. In fact, on a heuristic level one could deduce the results in Theorem~\ref{theo:multigllaw}, Proposition~\ref{prop:isoLL} by using the $\sqrt{\gamma}$-- and $\sqrt{\eta/\gamma}$--rules for each unit $G_1AG_2$ and multiplying the gains from them. In particular, in the averaged case $\langle G_1AG_2A\rangle$ one can extract the gain from both units $G_1AG_2$ and $G_2AG_1$ because of the cyclicity of the trace.

 Our paper is the first instance when both the decay and the regularity effects are considered together, previously only at most one of them was identified at a time. In fact, the study of multi-resolvent local laws started in the context of Wigner matrices where none of these two effects were exploited \cite{therm}; see also \cite{mesofluc1, mesofluc2} for concrete cases when some decay in $|\Re z_1-\Re z_2|$ was identified in the context of central limit theorems for linear eigenvalue statistics. After \cite{therm}, there has been great progress in proving multi-resolvent local laws either for regular observables \cite{ETH, multiG, rank_loc_law, OptLowerBound, ETHgen, edgeETH, quantumchaos,ETHW-type,non-Herm_overlaps} or for different deformations of a specific form for Hermitian matrices \cite{quench} and for the Hermitization of non--Hermitian matrices \cite{CLTnon-herm, mesoCLT, gumbel}. 

We conclude this section by pointing out that the multi-resolvent local laws mentioned above have also been used in several other important problems in random matrix theory; we now name some of them. They played a key role in the recent solution of the bulk universality conjecture for non--Hermitian random matrices \cite{bulk_univ, bulk_real, bulk_real_Dubova}, as well as in proving universality of the distribution of diagonal overlaps of left/right non--Hermitian eigenvectors \cite{non-Herm_overlaps} and of their entries \cite{lr_eigenvectors, sing_val}. Two--resolvents local laws have also been used to prove decorrelation estimates for the resolvent of the Hermitization of non--Hermitian matrices in the context of space--time correlation of linear statistics of non--Hermitian eigenvalues \cite{non-herm_fluct}, and to compute the leading order asymptotic of the log-determinant of non--Hermitian matrices \cite{max_char}. Lastly we point out that similar decorrelation estimates, proven in \cite{quench}, have been used in \cite{Narayanan} to study random hives associated to the eigenvalues of GUE matrices.

\subsection{The method of characteristics} We prove multi-resolvent local laws in Theorem \ref{theo:multigllaw} using the so--called \emph{zigzag} strategy \cite{edgeETH, mesoCLT}, which involves three key steps. First, we prove a concentration bound on the global scale (\emph{global law}), i.e. when the spectral parameters are at a distance of order one from the spectrum. Then we propagate this bound down to the real line by evolving the matrix $W$ along the Ornstein--Uhlenbeck flow, while the spectral parameters $z_1, z_2$ and the deformations $D_1, D_2$ evolve according to a certain deterministic evolution, called \emph{characteristic equations} (see \eqref{eq:def_flow} below for the definition). Along this flow the imaginary part of the spectral parameters is reduced (\emph{zig step}). This second step establishes local laws for spectral parameters with small imaginary parts, though only for matrices with a Gaussian component, added by the Ornstein-Uhlenbeck flow. Finally, the last step of the zigzag strategy eliminates this Gaussian component, again dynamically, via a \emph{Green function comparison} argument (\emph{zag step}). We point out that zig and zag steps are used many times in tandem to decrease the distance of the spectral parameters to the spectrum step by step. 

While the zigzag strategy is a well--established method which has been worked out in many instances, there are several important novelties in our current approach.  
The first novelty is that we perform the proof for an abstract control parameter satisfying certain general conditions which we precisely describe in Definition~\ref{def:gamma}. We do this since the structure of the upper bounds in Theorem~\ref{theo:maintheo} is fairly complicated and we thus need to keep track of different effects at the same time. The second novelty is the self--improving estimates in the zag step stated in Lemmas \ref{lem:condGron2iso} and \ref{lem:condGron3iso}. In fact, we need to perform several zigzag steps to prove the optimal $1/\sqrt{\gamma}$ decay, instead of the $1/\sqrt{\eta}$ in \eqref{eq:iso_intro_gamma}. We do this gradually: We first prove \eqref{eq:iso_intro_gamma} with $1/\sqrt{\gamma}$ replaced by $1/\sqrt{\eta^{1-b}\gamma^b}$ for some $b\in (0,1)$ and then, using this bound as an input, we improve it to $1/\sqrt{\eta^{1-b'}\gamma^{b'}}$ for some $b'>b$. Iterating this procedure finitely many times we finally obtain the desired $1/\sqrt{\gamma}$ in \eqref{eq:iso_intro_gamma}. As an additional third novelty, we extend the delicate analysis of the two-body stability from \cite{echo} to include the new linear term $\mathrm{LT}$.

We conclude this section with a brief historical discussion of the use of the \emph{method of characteristics} (zig step) in random matrix theory\footnote{We point out that, even if we do not mention it, some of the following references also use a comparison step similar to the zag step to remove the additional Gaussian component added via the zig step.}. The idea to study the evolution of the resolvent along the characteristic flow was first introduced in \cite{Pastur, DBM_fluct, Warzel, DBM_beta, gaps} to prove local laws for single resolvents in the bulk and the edge of the spectrum, though only for matrices which have a Gaussian component. In the edge regime a similar version of the characteristics was used before to prove Tracy--Widom universality for the largest eigenvalue of deformed Wigner matrices \cite{D_edge}. In the context of single resolvent local laws, this method was later extended to cover also the cusp regime \cite{UBM, spec_edge, CuspUniv}. All the results mentioned above concern single resolvent local laws. Only more recently the method of characteristics was used to prove local laws for products of two or more resolvents. The first instances of multi--resolvent local laws proven with this method are for the unitary Brownian motion \cite{LQG} and for the product of resolvents of the Hermitization of non--Hermitian matrices at different spectral parameters \cite{mesoCLT}. Since then this method has been very successful in proving a multitude of multi--resolvent local laws for regular matrices or for matrices with specific different deformations \cite{edgeETH, quantumchaos, gumbel, Schatten, ETHW-type, non-Herm_overlaps, nonHermdecay}. In the current work we show that this method is also effective to optimally catch both the decay and the regularity effect at the same time. Finally, we mention that the method of characteristics was also useful to prove central limit theorems for linear eigenvalues statistics 
\cite{DBM_fluct, DBM_beta, CLT_applied, eig_fluct, optCLT}, to study their time correlations \cite{non-herm_fluct}, as well as to study certain extremal statistics \cite{max_char}.

\nc

\subsection*{Notations and conventions}
We set $[k] := \{1, ... , k\}$ for $k \in \N$ and $\langle A \rangle := N^{-1} \mathrm{Tr}(A)$, $N \in \N$,
for the normalized trace of an $N \times N$-matrix $A$.
For positive quantities $f, g$ we write $f \lesssim g$, $f \gtrsim g$, to denote that $f \le C g$ and $f \ge c g$, respectively, for some $N$-independent constants $c, C > 0$ that depend only on the basic control parameters of the model
in Assumption~\ref{ass:momass} below. We denote the complex upper--half plane by $\HH:=\{z\in \C : \Im z>0\}$

 We denote vectors by bold-faced lower case Roman letters $\boldsymbol{x}, \boldsymbol{y} \in \C^{N}$, for some $N \in \N$. Moreover, for vectors $\boldsymbol{x}, \boldsymbol{y} \in \C^{N}$ and a matrix $A\in\C^{N\times N}$ we define
 \begin{equation*}
	\langle \boldsymbol{x}, \boldsymbol{y} \rangle := \sum_i \bar{x}_i y_i\,, 
	\qquad\quad A_{\boldsymbol{x} \boldsymbol{y}} := \langle \boldsymbol{x}, A \boldsymbol{y} \rangle\,. 
\end{equation*}
Matrix entries are indexed by lower case Roman letters $a, b, c , ... ,i,j,k,... $ from the beginning or the middle of the alphabet and unrestricted sums over those are always understood to be over $\{ 1 , ... , N\}$. 

Finally, we will use the concept  \emph{with very high probability},  meaning that for any fixed $D > 0$, the probability of an $N$-dependent event is bigger than $1 - N^{-D}$ for all $N \ge N_0(D)$. We will use the convention that $\xi > 0$ denotes an arbitrarily small positive exponent, independent of $N$.
 Moreover, we introduce the common notion of \emph{stochastic domination} (see, e.g., \cite{semicirclegeneral}): For two families
\begin{equation*}
	X = \left(X^{(N)}(u) \mid N \in \N, u \in U^{(N)}\right) \quad \text{and} \quad Y = \left(Y^{(N)}(u) \mid N \in \N, u \in U^{(N)}\right)
\end{equation*}
of non-negative random variables indexed by $N$, and possibly a parameter $u$, we say that $X$ is stochastically dominated by $Y$, if for all $\epsilon, D >0$ we have 
\begin{equation*}
	\sup_{u \in U^{(N)}} \mathbf{P} \left[X^{(N)}(u) > N^\epsilon Y^{(N)}(u)\right] \le N^{-D}
\end{equation*}
for large enough $N \ge N_0(\epsilon, D)$. In this case we write $X \prec Y$. If for some complex family of random variables we have $\vert X \vert \prec Y$, we also write $X = O_\prec(Y)$. 

\subsection*{Acknowledgments} L. Erd{\H o}s, J. Henheik and O. Kolupaiev were supported by the ERC Advanced Grant ``RMTBeyond'' No.~101020331. G. Cipolloni is partially supported by the MUR Excellence Department Project MatMod@TOV awarded to the Department of Mathematics, University of Rome Tor Vergata, CUP E83C18000100006.

\section{Main results}
\label{sec:mainres}
We consider an $N\times N$ deformed Wigner matrix of the form $H = D + W$, where $D= D^*$ is a deterministic deformation and $W$ is a Wigner matrix, i.e. real symmetric or complex Hermitian matrix $W=W^*$ with independent entries (up to the symmetry constraint) having distribution
\begin{equation}
W_{aa}\stackrel{\dif}{=} \frac{1}{\sqrt{N}}\chi_\dif, \qquad\qquad\quad 
W_{ab}\stackrel{\dif}{=}\frac{1}{\sqrt{N}}\chi_{\mathrm{od}},\quad a>b.
\end{equation}
On the ($N$-independent)
random variables $\chi_\dif \in \R$, $\chi_{\mathrm{od}}\in \C$ we formulate the following assumptions:
\begin{assumption}
\label{ass:momass}
Both $\chi_\dif$, $\chi_{\mathrm{od}}$ are centered $\E\chi_\dif=\E \chi_{\mathrm{od}}=0$ and have unit variance 
$\E \chi_\dif^2=  \E|\chi_{\mathrm{od}}|^2=1$. In the complex case we also assume\footnote{We make this further assumption just to keep the presentation cleaner and shorter. In fact, inspecting the proof of Sections~\ref{sec:zig} and \ref{sec:zag} it is clear that this assumption can easily be removed. This was explained in detail in \cite[Sec. 4.4]{edgeETH}.} that $\E\chi_{\mathrm{od}}^2=0$. Furthermore, we assume the existence of high moments, i.e. for any $p\in\N$ there exists a constant $C_p>0$ such that
\begin{equation}
\label{eq:momass}
\E\big[|\chi_\dif|^p+|\chi_{\mathrm{od}}|^p\big]\le C_p.
\end{equation}
\end{assumption}

Our main goal is to study the decorrelation of the eigenvectors of $W+D_1$, $W+D_2$ for two different 
 Hermitian deformations $D_1,D_2\in\C^{N\times N}$. For simplicity, we
 will always assume that the deformations $D_1,\,D_2$ are traceless, i.e. that $\langle D_1\rangle=\langle D_2\rangle=0$. This is not restrictive, since the spectrum of $W+D_l$, for $l=1,2$, differs from the spectrum of $W+(D_l-\langle D_l\rangle)$ only by a shift of size $\langle D_l\rangle$ to the right. In particular, all the results presented below also hold without the restriction to traceless deformations, one just needs to shift the spectral parameters properly. 

Before stating our main result we introduce some useful notations and definitions. Let $D=D^*\in \C^{N\times N}$ with $\lVert D\rVert\lesssim 1$,  denote its empirical eigenvalue density by
\begin{equation}
\mu(D):=\frac{1}{N}\sum_{i=1}^N \delta_{d_i},
\end{equation}
with $d_1, \dots, d_N$ denoting the eigenvalues of $D$. Let $\mu_{\mathrm{sc}}$ be the semicircular distribution with density $\rho_{\mathrm{sc}}(x):=(2\pi)^{-1}\sqrt{(4-x^2)_+}$; we recall that $\rho_{\mathrm{sc}}$ is the limiting density of the eigenvalues of a Wigner matrix $W$. Then the limiting eigenvalue density of $W+D$ is given by the free convolution (see \cite{Biane} for a detailed discussion)
\begin{equation}
\mu_D=\mu_{\mathrm{sc}}\boxplus\mu(D),
\end{equation}
which is a probability distribution on $\R$. 
Let $m_D$ be the Stieltjes transform of $\mu_D$, i.e. for $z\in \C\setminus \R$ we have
\begin{equation}
\label{eq:Stieltjes}
m_D(z):=\int_\R \frac{\mu_D(\dif x)}{x-z},
\end{equation}
and define the corresponding density by
\begin{equation}
\rho_D(x):=\lim_{\eta\to 0^+}\rho_{D}(x+\ii \eta), \qquad\quad \rho_D(z):=\frac{1}{\pi}\big|\Im m_D(z)\big|.
\end{equation}

Next, fix a small $\kappa>0$, and define the \emph{$\kappa$-bulk} of the density $\rho_D$ by
\begin{equation} \label{eq:kappabulk}
\mathbf{B}_\kappa(D):=\{x\in\R: \rho_D(x)\ge \kappa\}.
\end{equation}
Furthermore, we define the \emph{quantiles} $\gamma_i^D$ of $\rho_D$ implicitly via
\begin{equation}
\label{eq:quant}
\int_{-\infty}^{\gamma_i^D} \rho_D(x)\,\dif x=\frac{i}{N}, \qquad\quad i\in [N].
\end{equation}
From the \emph{eigenvalue rigidity} it is known \cite{StabCor, SlowCor} that $\gamma_i^D$ very well approximates the $i$th eigenvalue $\lambda_i$ of $W+D$.

We are now ready to state our two main results.

\subsection{First main result: Regular observables and eigenstate thermalization (Theorem \ref{theo:maintheo2})}

In order to prove the decomposition in \eqref{eq:decorrintro} with such a precise estimate of the error term, we need to find
the appropriate one-codimensional set of observables, $A=A(D_1,D_2,\gamma_i^{D_1},\gamma_j^{D_2})$, depending both on $D_1,D_2$ as well as on the approximate eigenvalues so that $\langle {\boldsymbol u}_i^1,A {\boldsymbol u}_j^2\rangle$ can be bounded by $N^{-1/2}$. In Definition~\ref{def:regulardef} we characterize the family of such matrices. This result can be thought as a generalization of the ETH  for eigenvectors belonging to two different spectral families.

We start by introducing the notion of \emph{regular observables}, a concept, which in this generality was first introduced in \cite[Def. 3.1]{OptLowerBound} and later in \cite[Def. 4.2]{equipart}.

\begin{definition}[Regular observables]
\label{def:regulardef}
Let $A\in\C^{N\times N}$ be a deterministic matrix, let $z_1, z_2\in \C\setminus \R$ be spectral parameters, and let $D_1,D_2\in \C^{N\times N}$ be deterministic deformations. Fix a small constant\footnote{The precise dependence of $\delta$ on $\kappa$ and $\lVert D_1\rVert, \lVert D_2\rVert$  is discussed in the last paragraph of the proof of Theorem~\ref{theo:maintheo}.} $\delta>0$ depending on $\kappa$ from \eqref{eq:kappabulk} and $\lVert D_1\rVert, \lVert D_2\rVert$. Introduce the short--hand notation $\nu_l:=(z_l,D_l)$, $l=1,2$, we will call $\nu_l$ a \emph{spectral pair}.   Set
\begin{equation}
\phi(\nu_1,\nu_2)=\phi_\delta(\nu_1,\nu_2):=\chi_\delta(\Re z_1-\Re z_2)\chi_\delta(\langle (D_1-D_2)^2\rangle)\chi_\delta(\Im z_1)\chi_\delta(\Im z_2),
\label{eq:def_phi}
\end{equation}
where $0\le \chi_\delta(x)\le 1$ is a symmetric bump function such that it is equal to one for $|x|\le\delta/2$ and equal to zero for $|x|\ge \delta$.

We define the $(\nu_1,\nu_2)$--\emph{regular component} of $A$ by
\begin{equation}
\mathring{A}^{\nu_1,\nu_2}:=A-\phi(\nu_1,\nu_2)\langle VA\rangle I,
\label{eq:regulardef}
\end{equation}
where we used the short--hand notation
\begin{equation}
\label{eq:defV}
V=V(\nu_1,\nu_2):=\frac{M^{D_2}(\Re z_2+\ii \mathfrak{s}\Im z_2)M^{D_1}(\Re z_1+\ii \Im z_1)}{\langle M^{D_1}(\Re z_1+\ii \Im z_1)M^{D_2}(\Re z_2+\ii \mathfrak{s}\Im z_2)\rangle}.
\end{equation}
In \eqref{eq:defV} the relative sign of the imaginary parts is defined as
\[
\mathfrak{s}=\mathfrak{s}(z_1,z_2):=-\mathrm{sgn}(\Im z_1\Im z_2).
\]
We say that $A$ is a \emph{regular observable with respect to $(\nu_1,\nu_2)$} if $A=\mathring{A}^{\nu_1,\nu_2}$.
\end{definition}

Note that our definition of regularity is \emph{asymmetric} in the two spectral pairs. In particular, while $\mathring{A}^{\nu_1,\nu_2} = \mathring{A}^{\nu_1,\bar{\nu}_2}$, it does \emph{not} necessarily hold that $\mathring{A}^{\nu_1,\nu_2}$ equals $\mathring{A}^{\bar{\nu}_1,\nu_2}$. The way of regularization presented in Definition \ref{def:regulardef} is not the only possible one. Alternatively, one could exchange the indices $1$ and $2$, or put $\mathfrak{s}$ on the other argument. It is also possible to define a regularization which is symmetric in $\nu_1, \nu_2$, hence may look more canonical, however we do not proceed in this direction since the definition \eqref{eq:regulardef} which we use is technically more manageable.

\begin{remark}[On the choice of $V$]
The convenience of our choice of $V$ and thus  the definition of regular observables in \eqref{eq:regulardef} lies in the fact that $V$ is the right eigenvector $R_{12}=R(\nu_1,\nu_2)$ corresponding to the largest (in absolute value) eigenvalue of the operator $\mathcal{X}_{12}$, which is defined by
\begin{equation}
\label{eq:stabop}
\mathcal{X}_{12}[\cdot]:=\big[([\mathcal{B}_{12}]^{-1})^*[\cdot^*]\big]^*, \qquad\quad \mathcal{B}_{12}[\cdot]:=1-M^{D_1}(z_1)\langle\cdot\rangle M^{D_2}(z_2)=1-M_1\langle\cdot\rangle M_2,
\end{equation} 
with $M_l$ from \eqref{eq:MDE}.  Here $\mathcal{B}_{12}$ denotes the two--body stability operator that naturally appears when solving the analog of the Dyson equation for the deterministic approximation $M_{12}^A$ of the two-resolvent chain $G_1 A G_2$. With the above choice $\mathring{A}^{\nu_1,\nu_2}$ is defined so that $\langle \mathring{A}^{\nu_1,\nu_2} R_{12}\rangle=0$, i.e. $V=R_{12}$.

The operator $\mathcal{X}_{12}$ has a single very large eigenvalue if and only if $D_1\approx D_2$, $z_1\approx \bar{z}_2$ and $\vert\Im z_1\vert, \vert\Im z_2\vert$ are small. Regular observables are defined precisely such that the action of $\mathcal{X}_{12}$ (and also $\mathcal{X}_{1\bar{2}}$) remain bounded on them. This also explains the role of the cutoff function $\phi$ in \eqref{eq:def_phi}: regularity is a nontrivial concept only when $\phi\neq 0$; in the complementary regime $\phi=0$ every matrix $A$ is regular.
\end{remark}

We are now ready to state our first main result. 
\begin{theorem}[Generalized Eigenstate Thermalization]
\label{theo:maintheo2}
Fix any $\kappa>0$ and fix $D_1,D_2\in\C^{N\times N}$ with $\lVert D_l\rVert\lesssim 1$. Let $W$ be a Wigner matrix satisfying Assumption~\ref{ass:momass}, and, for $l=1,2$, denote by ${\boldsymbol u}_1^l,\dots, {\boldsymbol u}_N^l$ the orthonormal eigenvectors of $W+D_l$. 
Fix indices $i,j$ such that the quantiles $\gamma_i^{D_1}\in\mathbf{B}_\kappa(D_1)$ and $\gamma_j^{D_2}\in\mathbf{B}_\kappa(D_2)$ are in the $\kappa$--bulk of the corresponding densities.  Let $A\in\C^{N\times N}$ be a deterministic matrix which is regular with respect to $ (\nu_1,\nu_2):=((\gamma_i^{D_1}+\ii  0^+,D_1),(\gamma_j^{D_2}+\ii  0^+,D_2))$. Then,
\begin{equation}
\big|\langle {\boldsymbol u}_i^1, A {\boldsymbol u}_j^2\rangle\big|\prec \frac{\lVert A\rVert}{\sqrt{N}}.
\label{eq:A_overlap}
\end{equation}
More generally, for arbitrary observables $A\in\C^{N\times N}$, we have
\begin{equation}
\label{eq:optb2}
\big|\langle {\boldsymbol u}_i^1, A {\boldsymbol u}_j^2\rangle- \langle VA\rangle \phi_{ij} \langle {\boldsymbol u}_i^1, {\boldsymbol u}_j^2\rangle\big|\prec \frac{\lVert A\rVert}{\sqrt{N}},
\end{equation}
where $V=V(\nu_1,\nu_2)$ is defined in \eqref{eq:defV}. Here we defined
\begin{equation}
\phi_{ij}=\phi_{ij}(\delta):=\boldsymbol1(|\gamma_i^{D_1}-\gamma_j^{D_2}|\le \delta)\boldsymbol1(\langle (D_1-D_2)^2\rangle\le \delta)
\end{equation}
for a fixed $\delta=\delta(\kappa)>0$ which is chosen sufficiently small, so that $\|V\|\lesssim 1$ when $\phi_{ij}\neq 0$. The bounds \eqref{eq:A_overlap} and \eqref{eq:optb2} are uniform in the indices $i,j$ such that $\gamma_i^{D_1}\in\mathbf{B}_\kappa(D_1)$ and $\gamma_j^{D_2}\in\mathbf{B}_\kappa(D_2)$.
\end{theorem}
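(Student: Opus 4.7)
The plan is to follow the standard eigenstate thermalization argument: relate the eigenvector overlap to an averaged two-resolvent quantity via a positive-operator inequality, and then invoke the averaged two-resolvent local law from Theorem~\ref{theo:multigllaw}.

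For the regular case \eqref{eq:A_overlap}, fix an arbitrarily small $\xi>0$ and set $z_l:=\gamma_{i_l}^{D_l}+\ii\eta$ with $(i_1,i_2)=(i,j)$ and $\eta:=N^{-1+\xi}$. Single-resolvent rigidity for $W+D_l$ (a consequence of its local law) gives $|\lambda_i^l-\gamma_i^{D_l}|\prec N^{-1}$, so $|\lambda_i^l-\Re z_l|\ll\eta$ with very high probability. Consequently the eigenprojector $P_i^l$ onto ${\bm u}_i^l$ satisfies the operator inequality $P_i^l\preceq C\eta\,\Im G_l(z_l)$, which yields
\begin{equation*}
  \big|\langle {\bm u}_i^1, A {\bm u}_j^2\rangle\big|^2 = \mathrm{Tr}\bigl(P_i^1 A P_j^2 A^*\bigr) \le C\eta^2\,\mathrm{Tr}\bigl(\Im G_1\,A\,\Im G_2\,A^*\bigr) = C N\eta^2\,\langle \Im G_1\,A\,\Im G_2\,A^*\rangle.
\end{equation*}

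Next, expand $\Im G_l=(G_l(z_l)-G_l(\bar z_l))/(2\ii)$, splitting $\langle\Im G_1 A\,\Im G_2 A^*\rangle$ into four terms indexed by the signs $(\sigma_1,\sigma_2)\in\{+,-\}^2$ of the imaginary parts. From $\overline{\langle X\rangle}=\langle X^*\rangle$ and trace cyclicity, the $(-,+)$ term is the complex conjugate of the $(+,-)$ term, and $(-,-)$ is conjugate to $(+,+)$; thus only the $(+,+)$ and $(+,-)$ representatives need to be controlled. For each, Theorem~\ref{theo:multigllaw} gives $\langle G_1 A G_2 A^*\rangle=\langle M_{12}^A A^*\rangle+O_\prec(\mathrm{err})$, with an error that vanishes at $\eta=N^{-1+\xi}$. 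In the non-singular $(+,+)$ configuration, the stability operator $\mathcal{B}_{12}$ is uniformly invertible in the bulk and $\lVert M_{12}^A\rVert\lesssim\lVert A\rVert$ automatically. In the singular $(+,-)$ configuration, the unstable eigenvector of $\mathcal{B}_{12}$ is exactly the matrix $V$ from \eqref{eq:defV} evaluated at $\nu_l=(\gamma_{i_l}^{D_l}+\ii 0^+,D_l)$---this is the point of the $\mathfrak{s}$-factor in Definition~\ref{def:regulardef}---and the regularity condition $\langle V A\rangle=0$ neutralizes the divergent contribution, so $\lVert M_{12}^A\rVert\lesssim\lVert A\rVert$ via the stability bound \eqref{eq:stab_intro}. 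Combining, $\langle\Im G_1 A\,\Im G_2 A^*\rangle\lesssim\lVert A\rVert^2$, which plugged into the first display gives $|\langle {\bm u}_i^1, A{\bm u}_j^2\rangle|^2\lesssim N\eta^2\lVert A\rVert^2=N^{-1+2\xi}\lVert A\rVert^2$. Since $\xi>0$ is arbitrary, this establishes \eqref{eq:A_overlap}.

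For arbitrary $A$, decompose $A=\mathring{A}^{\nu_1,\nu_2}+\phi_{ij}\langle VA\rangle I$ directly from Definition~\ref{def:regulardef}. On the support of $\phi_{ij}$, both $|\gamma_i^{D_1}-\gamma_j^{D_2}|$ and $\langle(D_1-D_2)^2\rangle$ are small, which forces $|\langle M_1 M_2\rangle|\gtrsim 1$ in the denominator of \eqref{eq:defV} and hence $\lVert V\rVert\lesssim 1$; therefore $\lVert\mathring{A}\rVert\lesssim\lVert A\rVert$. Applying \eqref{eq:A_overlap} to the regular observable $\mathring{A}$ then yields
\begin{equation*}
  \big|\langle {\bm u}_i^1, A{\bm u}_j^2\rangle - \phi_{ij}\langle VA\rangle\langle {\bm u}_i^1,{\bm u}_j^2\rangle\big| = \big|\langle {\bm u}_i^1,\mathring{A}\,{\bm u}_j^2\rangle\big| \prec \lVert A\rVert/\sqrt{N},
\end{equation*}
which is \eqref{eq:optb2}. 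The reduction above is essentially automatic once the two-resolvent local law is in hand, so the main technical effort is not here but in Theorem~\ref{theo:multigllaw} itself: establishing the uniform control $\lVert M_{12}^A\rVert\lesssim\lVert A\rVert$ for regular $A$ down to the local scale $\eta\sim N^{-1}$ in the singular $(+,-)$ configuration, where the $V$-direction of $\mathcal{B}_{12}^{-1}$ blows up and must be cancelled exactly by the regularity.
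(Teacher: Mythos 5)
Your overall structure---the positivity argument reducing $|\langle \bm u_i^1, A \bm u_j^2\rangle|^2$ to $(N\eta)^2\langle\Im G_1 A\,\Im G_2 A^*\rangle$ at $\eta=N^{-1+\xi}$, the expansion of $\Im G$ into four sign configurations, the application of the averaged two-resolvent local law, and the decomposition $A=\mathring{A}+\phi\langle VA\rangle I$ for general observables---is the same as the paper's (which, in turn, refers to \cite[Theorem 2.2]{OptLowerBound} for details). However, there is a genuine gap in the step where you invoke Theorem~\ref{theo:multigllaw} for the singular $(+,-)$ term.

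The improved error bound \eqref{eq:reg2}, which is the only one that yields $\langle\Im G_1 A\Im G_2 A^*\rangle\lesssim\lVert A\rVert^2$ at $\eta=N^{-1+\xi}$ when $\widehat\gamma$ is small, requires \emph{both} $A_1=A$ to be $(\nu_1,\nu_2)$-regular \emph{and} $A_2=A^*$ to be $(\nu_2,\nu_1)$-regular. You only use the hypothesis on $A$. As the paper emphasizes after Definition~\ref{def:regulardef}, the regularization is \emph{asymmetric}: conjugating the second spectral pair leaves $\mathring{A}$ unchanged, but conjugating the first one does not. Consequently, while for the $(+,+)$ term one can check from $(\mathring{A}^{\nu_1,\nu_2})^*=\mathring{(A^*)}^{\nu_2,\nu_1}$ that the $(\nu_1,\nu_2)$-regularity of $A$ does imply the $(\nu_2,\nu_1)$-regularity of $A^*$, for the $(+,-)$ term (spectral parameters $z_1=\gamma_i^{D_1}+\ii\eta$, $z_2=\gamma_j^{D_2}-\ii\eta$) the required condition on $A^*$ amounts to $\langle M_2 M_1^* A\rangle=0$, whereas the hypothesis only gives $\langle M_2^* M_1 A\rangle=0$. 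These conditions differ in the bulk, where $M_l\ne M_l^*$. Merely observing that ``$\langle VA\rangle=0$ neutralizes the divergence'' addresses the boundedness of $M_{12}^A$ but not this fluctuation error, and the weaker bound \eqref{eq:reg1} with one regular observable carries the uncontrolled factor $1/\sqrt{\widehat\gamma}$.

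The missing ingredient is Lemma~\ref{lem:reg_reg}: it shows that the various regularizations $\mathring{A}^{\nu_2,\nu_1},\mathring{A}^{\bar\nu_2,\nu_1},\dots$ differ in operator norm from $\mathring{A}^{\nu_1,\nu_2}$ by $O(\lVert A\rVert\sqrt{\widehat\gamma})$. Applied to $A^*$, this gives that $A^*$ is \emph{approximately} $(\nu_2,\nu_1)$-regular for the singular sign configuration, and the residual of size $\lVert A\rVert\sqrt{\widehat\gamma}$ can then be handled by the general local law \eqref{eq:g1g2b} (or \eqref{eq:reg1}) because the $\sqrt{\widehat\gamma}$ factor cancels the $1/\widehat\gamma$-type blow-up. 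The same lemma, now with $y_1=y_2=\eta>0$, also bridges the discrepancy between the hypothesis (regularity at $\Im z=0^+$) and the scale $\Im z=N^{-1+\xi}$ at which the local law is applied. Without this lemma, your argument does not close, so you should make it explicit.
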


\begin{example}[Eigenstate Thermalization]\label{ex:ETH}
Some special cases of \eqref{eq:optb2} recover previously known results:
\begin{itemize}
\item[(i)] For $D_1=D_2=0$ \eqref{eq:optb2} is the ETH bound for Wigner matrices \cite[Theorem 2.2]{ETH}, as in this case $V=I$, yielding
\begin{equation}
\big|\langle {\boldsymbol u}_i,A{\boldsymbol u}_j\rangle-\langle A\rangle\delta_{ij}\big|\prec \frac{\lVert A\rVert}{\sqrt{N}}.
\label{eq:ETH_Wign}
\end{equation}
Here $\lbrace{\boldsymbol u}_i\rbrace_{i=1}^N$ denote the orthonormal eigenvectors of $W$. Though \eqref{eq:optb2} implies \eqref{eq:ETH_Wign} only for bulk indices, in \cite{ETH},  \eqref{eq:ETH_Wign} was proven for all $i,j\in[N]$.

\item[(ii)] More generally, when $D_1=D_2=D\in\C^{N\times N}$, we have
\[
V=\frac{M(\gamma_i)M^*(\gamma_j)}{\langle M(\gamma_i)M^*(\gamma_j)\rangle},
\]
with $\gamma_i:=\gamma_i^D$. In this case, \eqref{eq:optb2} is the ETH bound for deformed Wigner matrices as given in \cite[Theorem~2.7]{equipart}: 
\[
\left|\langle {\boldsymbol u}_i,A{\boldsymbol u}_j\rangle-\frac{\langle \Im M(\gamma_i)A\rangle}{\langle \Im M(\gamma_i)\rangle}\delta_{ij}\right|\prec \frac{\lVert A\rVert}{\sqrt{N}}
\]
for bulk indices, where we used that $V=\Im M(\gamma_i)/\langle \Im M(\gamma_i)\rangle$. Here $\lbrace{\boldsymbol u}_i\rbrace_{i=1}^N$ denote the orthonormal eigenvectors of $W+D$.
 \end{itemize}

\end{example}

In the next section, we will estimate the overlap $\langle {\boldsymbol u}_i^1, {\boldsymbol u}_j^2\rangle$ appearing in \eqref{eq:optb2}.

\subsection{Second main result: Optimal eigenvector decorrelation (Theorem \ref{theo:maintheo})}
In \eqref{eq:optb2} we showed that for general observables (matrices) $A$ the overlap $\langle {\boldsymbol u}_i^1, A {\boldsymbol u}_j^2\rangle$ can be decomposed as $\langle VA\rangle \langle {\boldsymbol u}_i^1, {\boldsymbol u}_j^2\rangle$ plus a very small error. However, while $\lVert V\rVert\lesssim 1$ is deterministic, the overlap $\langle {\boldsymbol u}_i^1, {\boldsymbol u}_j^2\rangle$ is in general still random. This naturally raises the question if we can give a non-trivial bound on the overlap $\langle {\boldsymbol u}_i^1, {\boldsymbol u}_j^2\rangle$. We positively answer this question in Theorem~\ref{theo:maintheo} below. \nc In particular,
 we show that the size of the overlaps $\langle {\boldsymbol u}_i^1, {\boldsymbol u}_j^2\rangle$
is typically smaller when $D_1,D_2$ are more separated. Another effect is that the overlap becomes smaller when we consider eigenvectors corresponding to well separated eigenvalues. To quantify these types of decay we introduce the \emph{linear term}, defined as
\begin{equation}
\label{eq:linterm}
\LT(z_1,z_2):=\begin{cases}
\left\vert z_1-z_2 -\frac{\langle M_1(D_1-D_2)M_2\rangle}{\langle M_1M_2\rangle}\right\vert\wedge 1,& \text{if } \Im z_1\Im z_2<0,\\[2 mm]
\left\vert z_1-\bar{z}_2 -\frac{\langle M_1(D_1-D_2)M^*_2\rangle}{\langle M_1M^*_2\rangle}\right\vert\wedge 1 ,& \text{if } \Im z_1\Im z_2>0.
\end{cases}
\end{equation}
Here, $M_l=M^{D_l}(z_l)$, for $l=1,2$, is the unique solution \cite[Theorem 2.1]{Helton} of the MDE \eqref{eq:MDE} under the constraint $\Im M_l \Im z_l>0$. We also mention that from \eqref{eq:MDE} one can recover \eqref{eq:Stieltjes} by $m_{D_l}(z_l)=\langle M_l(z_l)\rangle$. From the definition \eqref{eq:linterm} and the fact that $M_l$ and $D_l$ commute it follows that $\LT(z_1,z_2)=\LT(z_1,\bar{z}_2)$ and $\LT(z_1,z_2)=\LT(\bar{z}_1,z_2)$ for any $z_1,z_2\in\C\setminus\R$. Therefore, \eqref{eq:linterm} extends continuously to the real line, i.e. $\LT(z_1,z_2)$ is well-defined for $z_1,z_2\in\R$.

We are now ready to state our second main result. 
\begin{theorem}[Optimal eigenvector decorrelation]
\label{theo:maintheo}
Fix any $\kappa>0$ and fix $D_1,D_2\in\C^{N\times N}$ Hermitian with $\lVert D_l\rVert\lesssim 1$. Let $W$ be a Wigner matrix satisfying Assumption~\ref{ass:momass}, and, for $l=1,2$, denote by ${\boldsymbol u}_1^l,\dots, {\boldsymbol u}_N^l$ the orthonormal eigenvectors of $W+D_l$. Then,
\begin{equation}
\label{eq:optb}
\big|\langle {\boldsymbol u}_i^1, {\boldsymbol u}_j^2\rangle\big|^2\prec \frac{1}{N}\cdot \frac{1}{\langle (D_1-D_2)^2\rangle+\LT(\gamma_i^{D_1},\gamma_j^{D_2})+|\gamma_i^{D_1}-\gamma_j^{D_2}|^2}\wedge 1,
\end{equation}
uniformly over indices $i,j$ such the quantiles $\gamma_i^{D_l}\in\mathbf{B}_\kappa(D_l)$, for $l=1,2$, are in the $\kappa$--bulk of the density $\rho_{D_l}$.
\end{theorem}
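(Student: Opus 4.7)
The plan is to convert the averaged two-resolvent local law (Theorem~\ref{theo:multigllaw}) into a bound on the individual overlap $\langle \bm{u}_i^1, \bm{u}_j^2\rangle$ via a spectral decomposition at the local scale, then extract the $(i,j)$ diagonal contribution by positivity. Fix a small $\xi > 0$, set $\eta := N^{-1+\xi}$, and choose spectral parameters $z_1 := \gamma_i^{D_1} + \ii \eta$, $z_2 := \gamma_j^{D_2} + \ii \eta$. With $G_l := (W + D_l - z_l)^{-1}$, the spectral representation $\Im G_l = \eta \sum_k \bm{u}_k^l (\bm{u}_k^l)^*/|\lambda_k^l - z_l|^2$ gives the termwise nonnegative identity
\begin{equation*}
\langle \Im G_1 \Im G_2\rangle = \frac{\eta^2}{N}\sum_{k,l} \frac{|\langle \bm{u}_k^1, \bm{u}_l^2\rangle|^2}{|\lambda_k^1 - z_1|^2 |\lambda_l^2 - z_2|^2}.
\end{equation*}
Retaining only $(k,l) = (i,j)$ and using bulk eigenvalue rigidity $|\lambda_i^1 - \gamma_i^{D_1}|, |\lambda_j^2 - \gamma_j^{D_2}| \prec N^{-1} \le \eta$ to control $|\lambda_i^1 - z_1|, |\lambda_j^2 - z_2| \prec \eta$, one obtains
\begin{equation*}
|\langle \bm{u}_i^1, \bm{u}_j^2\rangle|^2 \prec N\eta^2\, \langle \Im G_1 \Im G_2\rangle.
\end{equation*}

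To bound $\langle \Im G_1 \Im G_2\rangle$ I would decompose $\Im G_l = (G_l(z_l) - G_l(\bar z_l))/(2\ii)$; this expresses $\langle \Im G_1 \Im G_2\rangle$ as a linear combination of four averaged two-resolvent chains $\langle G_1(\cdot) G_2(\cdot)\rangle$. Apply Theorem~\ref{theo:multigllaw} with observable $A = I$ to each. Since the error is of order $o(1/\gamma)$ at our choice of $\eta$, each chain is controlled by $|\langle M_{12}^I\rangle|$, and by Proposition~\ref{prop:stab}
\begin{equation*}
\|M_{12}^I\| \lesssim \frac{1}{\gamma}, \qquad \gamma = \langle (D_1-D_2)^2\rangle + |\Re z_1 - \Re z_2|^2 + \LT(z_1, z_2) + |\Im z_1| + |\Im z_2|,
\end{equation*}
uniformly in both sign configurations of $\Im z_1 \Im z_2$ occurring among the four terms. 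Hence $\langle \Im G_1 \Im G_2\rangle \prec 1/\gamma$, and combining this with the previous step yields
\begin{equation*}
|\langle \bm{u}_i^1, \bm{u}_j^2\rangle|^2 \prec \frac{N\eta^2}{\gamma} = \frac{N^{-1+2\xi}}{\gamma}.
\end{equation*}

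As $\eta \downarrow 0$ the terms $|\Im z_l|$ in $\gamma$ vanish and $\LT(z_1, z_2)$ approaches $\LT(\gamma_i^{D_1}, \gamma_j^{D_2})$ by the continuity of $M^{D_l}$ up to the bulk of the real axis; the residual $N^{2\xi}$ factor is absorbed into $\prec$ by the arbitrariness of $\xi$. This is exactly \eqref{eq:optb} in the regime where the stated right-hand side is smaller than $1$; in the complementary regime the trivial bound $|\langle \bm{u}_i^1, \bm{u}_j^2\rangle|^2 \le \|\bm{u}_i^1\|^2 \|\bm{u}_j^2\|^2 = 1$ is already captured by the $\wedge 1$. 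The proof of Theorem~\ref{theo:maintheo} thus reduces to the averaged two-resolvent local law at the local scale, and the main obstacle is shifted there: because $A = I$ is in general not $(\nu_1, \nu_2)$-regular, its deterministic approximation $M_{12}^I$ blows up like $1/\gamma$ along the unstable direction of the stability operator $\mathcal{X}_{12}$ from \eqref{eq:stabop}, so establishing the local law requires showing that the fluctuation is genuinely of smaller order than $1/\gamma$ at the local scale — this is precisely what the refined $\sqrt{\eta/\gamma}$-rule and the iterative zigzag scheme of the paper provide.
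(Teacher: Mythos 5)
Your proof reproduces the paper's argument: reduce to $\langle\Im G_1\Im G_2\rangle$ at scale $\eta=N^{-1+\xi}$ via spectral decomposition and rigidity, then apply the averaged two-resolvent local law \eqref{eq:g1g2b} together with $\|M_{12}^I\|\lesssim 1/\beta_*\lesssim 1/\widehat\gamma$ from \eqref{eq:boundM12} and Proposition~\ref{prop:stab}. The phrase ``as $\eta\downarrow 0$'' is slightly loose (one keeps $\eta=N^{-1+\xi}$ fixed and uses that $\widehat\gamma(z_1,z_2)\gtrsim\widehat\gamma(\gamma_i^{D_1},\gamma_j^{D_2})$ up to $O(\eta)$ errors absorbed by $\prec$ or by the $\wedge 1$), but the argument is correct and matches the paper's.
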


We now briefly comment on \eqref{eq:optb}. 
There are several effects that make the eigenvectors almost orthogonal; these are manifested by 
the various terms in the denominator on the rhs. of~\eqref{eq:optb}. 
The main novel effect is expressed by the  term $\langle (D_1-D_2)^2\rangle$ 
that measures the decay due to the fact that the spectra of $W+D_1$, $W+D_2$ become more and more independent as $\langle (D_1-D_2)^2\rangle$ increases. Focusing on this effect only, 
\eqref{eq:optb} simplifies to
\begin{equation}
\big|\langle {\boldsymbol u}_i^1, {\boldsymbol u}_j^2\rangle\big|^2\prec \frac{1}{N\langle (D_1-D_2)^2\rangle},
\end{equation}
uniformly for bulk indices. 
The second effect appears when the corresponding eigenvalues (\emph{energies}), which are well approximated by the quantiles $\gamma^D$, 
are far away. This effect is trivially   present even for a single deformation, $D_1=D_2=D$, in which 
case $\langle {\boldsymbol u}_i^D, {\boldsymbol u}_j^D\rangle =\delta_{ij}$.
Finally, the combination of these two effects is more delicate.  The last term in~\eqref{eq:optb}
shows that the {\it square} of the energy difference, $|\gamma_i^{D_1}-\gamma_j^{D_2}|$, is 
always present in the estimate. This is improved to {\it linear} decay, contained in the term $\LT$,  but 
for  the difference of the {\it renormalized energies} that are the 
energies $\gamma_i^{D_l}$ shifted with  
$\langle M_1 D_l M_2^{(*)}\rangle/\langle  M_1 M_2^{(*)}\rangle$.

\begin{remark}[Eigenvector correlation in perturbative regime]\label{rem:perturb}
As discussed above, we showed that the overlaps $\langle {\boldsymbol u}_i^1, {\boldsymbol u}_j^2\rangle$ are much smaller than $\lVert {\boldsymbol u}_i^1\rVert\cdot\lVert{\boldsymbol u}_j^2\rVert=1$ when $\langle (D_1-D_2)^2\rangle\gg 1/N$. Here, for simplicity, we only consider diagonal overlaps, i.e. $i=j$. We point out that the smallness of \eqref{eq:optb} may be due also to the other two terms in the denominator of the right--hand side of \eqref{eq:optb}, however we do not consider these effects in this remark to keep the presentation simpler. We now
show that this condition is  necessary, in fact  we claim that for $\langle (D_1-D_2)^2\rangle\ll 1/N$ we have
\begin{equation}
\label{eq:almostd}
\langle {\boldsymbol u}_i^1, {\boldsymbol u}_i^2\rangle=1+o(1).
\end{equation}

We now describe how to obtain \eqref{eq:almostd}. By second order perturbation theory we have
\begin{equation}
\label{eq:secordper}
\langle {\boldsymbol u}_i^1, {\boldsymbol u}_i^2\rangle=\langle {\boldsymbol u}_i^1, {\boldsymbol u}_i^1\rangle+\sum_{j\ne i}\frac{|\langle {\boldsymbol u}_i^1,(D_1-D_2) {\boldsymbol u}_j^1\rangle|^2}{(\lambda_i^1-\lambda_j^1)^2}+\dots.
\end{equation}
Since $\langle {\boldsymbol u}_i^1, {\boldsymbol u}_i^1\rangle=1$, we only need 
 to estimate the second term in the right--hand side of \eqref{eq:secordper}. 
 Higher order terms in the perturbation series \eqref{eq:secordper} can be estimated similarly but we
 omit them for simplicity.  In order to deduce \eqref{eq:almostd}, we need to give a lower bound on the denominator  and an upper bound on the numerator in the rhs. of \eqref{eq:secordper}.
 
  For the lower bound
 we have 
  \begin{equation}
\label{eq:levrep}
(\lambda_i^1-\lambda_j^1)^2\gtrsim \frac{|i-j|^2}{N^2}
\end{equation}
with high probability. To see this,  in case of $|i-j|\ge N^\xi$ with an arbitrary small $\xi>0$,
  we employ the rigidity estimate \cite{StabCor, SlowCor}.
  For nearby indices, say  $i<j\le i+ N^\xi$, we use
  \[
\mathbf{P}\big(|\lambda_i^1-\lambda_j^1|\le N^{-1-\omega}\big) \le 
\mathbf{P}\big(|\lambda_i^1-\lambda_{i+1}^1|\le N^{-1-\omega}\big) \le N^{-c\omega},
\]
for some small fixed $c,\omega>0$.
 In the last step we used  the universality of the eigenvalue gaps for deformed
  Wigner matrices\footnote{The first bulk universality result in terms of correlation functions
   for deformed Wigner matrices with diagonal deformations was given in \cite{LeeSchnelliStetlerYau}. The 
gap universality in full generality was given, e.g.,  in Corollary 2.6 of \cite{SlowCor}.}
   and the explicit level repulsion bound for GOE/GUE matrix:
\[
\mathbf{P}^{\mathrm GOE/GUE}\big(|\lambda_i^1-\lambda_{i+1}^1|\le N^{-1-\omega}\big)\le N^{-c\omega}.
\]

 For the upper bound, we employ ETH for deformed Wigner matrix $W+D_1$ in the Hilbert-Schmidt norm form:
 \begin{equation}
\label{eq:ETHhs}
\left|\langle {\boldsymbol u}_i^1,(D_1-D_2) {\boldsymbol u}_j^1\rangle-\frac{\langle (D_1-D_2)\Im M^{D_1}(\gamma_i^1)\rangle}{\langle \Im M^{D_1}(\gamma_i^1)\rangle}\delta_{ij}\right|\lesssim  \frac{\langle (D_1-D_2)^2 \rangle^{1/2}}{\sqrt{N}},
\end{equation}
with $M^{D_1}$ from \eqref{eq:MDE} and $\gamma_i^1:=\gamma_i^{D_1}$ being the quantiles from \eqref{eq:quant}.
In \cite{equipart} we proved ETH for deformed Wigner matrices in the form
\begin{equation}
\label{eq:ETHop}
\left|\langle {\boldsymbol u}_i^1,(D_1-D_2) {\boldsymbol u}_j^1\rangle-\frac{\langle (D_1-D_2)\Im M^{D_1}(\gamma_i^1)\rangle}{\langle \Im M^{D_1}(\gamma_i^1)\rangle}\delta_{ij}\right|\lesssim  \frac{\lVert D_1-D_2 \rVert}{\sqrt{N}},
\end{equation}
i.e with the operator norm $\lVert D_1-D_2 \rVert$ instead of the Hilbert-Schmidt norm of $D_1-D_2$.
Strictly speaking, the improved bound \eqref{eq:ETHhs}  is nowhere proven for the eigenvectors 
of  deformed Wigner matrix $W+D_1$, $D_1\ne 0$, 
however this can be easily obtained using a similar  (in fact much simpler) zigzag approach as the
 one presented in Sections~\ref{sec:zig}--\ref{sec:zag} of this paper. 
 We also point out that a bound similar to \eqref{eq:ETHhs} has already been obtained, using similar arguments, for Wigner matrices ($D_1=0$) in \cite{edgeETH} and for Wigner--type matrices with a diagonal deformation in \cite{ETHW-type}.

Finally, combining \eqref{eq:levrep} with \eqref{eq:ETHhs}, from \eqref{eq:secordper} we obtain
\[
\langle {\boldsymbol u}_i^1, {\boldsymbol u}_i^2\rangle=1+\mathcal{O}\left(N\langle (D_1-D_2)^2\rangle\right)
\]
which directly implies the desired claim \eqref{eq:almostd}. 
Every step of this argument can easily be made rigorous but we omit details for brevity.

\end{remark}

\begin{remark}[Independence of eigenvalue gaps] We point out that using the eigenvector overlap bound \eqref{eq:optb} we can prove that the eigenvalue gaps in the bulk of the spectrum of $W+D_1$, $W+D_2$ are independent as long as $\langle (D_1-D_2)^2\rangle\gg 1/N$. In fact, following verbatim \cite[Section 7]{CLTnon-herm} and its adaptation to the Hermitian case in \cite{quench}, we can prove the desired independence via the study of weakly correlated Dyson Brownian motions. The only input required for this proof is the overlap bound $|\langle {\boldsymbol u}_i^1, {\boldsymbol u}_j^2\rangle|\ll 1$. 
\end{remark}

We point out that
the bounds \eqref{eq:optb2} and \eqref{eq:optb} are optimal except for the $N^\epsilon$-factor (for any $\epsilon > 0$) coming from the $\prec$ bound. This can be seen by the fact that a local $N^\delta$-average of eigenvectors
\[
\frac{1}{N^{2\delta}}\sum_{|i-i_0|\le N^\delta\atop |j-j_0|\le N^\delta} N\big|\langle {\boldsymbol u}_i^1, A {\boldsymbol u}_j^2\rangle\big|^2
\]
for some small $\delta >0$ is proportional to products of resolvents, as shown in the rhs. of \eqref{eq:fromevecttores} below, for which we precisely compute the deterministic approximation in Theorem~\ref{theo:multigllaw}.  
 
We stated our main results Theorems~\ref{theo:maintheo2} and \ref{theo:maintheo} only for indices in the bulk of the spectra of $W+D_1$, $W+D_2$ and estimated the error in Theorem \ref{theo:maintheo2} in terms of the operator norm $\Vert A \Vert$. 
In Section~\ref{sec:extension} below, we comment on possible extensions and improvements.

\section{Proofs of the Main Results: Multi-resolvent local laws}

In this section we present several technical tools and preliminary results that will be often used in this paper. More precisely, in Section~\ref{sec:stabop} we study lower bounds on the stability operator, which are one of the fundamental inputs to obtain the decay in the rhs. of \eqref{eq:optb}. Then, in Section~\ref{sec:multigllaw}, we state our main technical result (Theorem~\ref{theo:multigllaw} below), which is a \emph{multi--resolvent local law} for the product of the resolvents of $W+D_1$ and $W+D_2$, with $D_1,D_2\in\C^{N\times N}$. Lastly, in Section~\ref{sec:extension} we comment on the optimality and discuss some possible extension of Theorem~\ref{theo:multigllaw}.

\subsection{Preliminaries on the stability operator}
\label{sec:stabop}

Recall the definition of the stability operator from \eqref{eq:stabop}. One can easily see that its smallest (in absolute value) eigenvalue is $1-\langle M_1M_2\rangle$ with associated eigenvector $M_1 M_2$; {the only other eigenvalue, trivially equal to one, is highly degenerate}. 
Here, $M_1=M^{D_1}(z_1)$, $M_2=M^{D_2}(z_2)$ are the solutions of the MDE \eqref{eq:MDE}.  In this section we give a lower bound on its absolute value
\begin{equation}
\label{eq:smallestev}
\beta(z_1,z_2):=\big|1-\langle M_1M_2\rangle\big|.
\end{equation}
The main control parameters in the following statements are $\langle (D_1-D_2)^2\rangle$ and the linear term $\LT(z_1,z_2)$ which is defined as in \eqref{eq:linterm}, for $z_1,z_2\in\C\setminus\R$. The proof of the following proposition and comments about its optimality are postponed to Section~\ref{sec:addstabop} of the Supplementary Material \cite{supplement}.

\begin{proposition}[Stability bound]\label{prop:stab} 
Fix a (large) constant $L>0$. Let $D_1, D_2\in\C^{N\times N}$ be Hermitian matrices with $\langle D_l\rangle=0$ and $\lVert D_l\rVert\le L$ for $l=1,2$. For $z_l=E_l+\ii\eta_l\in\HH$, $l=1,2$, recall the notation $\rho_l:=\pi^{-1}\langle \Im M^{D_l}(z_l)\rangle$ and denote 
\begin{equation}
\beta_*:=\beta_*(z_1,z_2)=\beta(z_1,z_2)\wedge\beta(z_1,\bar{z}_2),
\label{eq:def_beta_*}
\end{equation}
\begin{equation}
\widehat{\gamma}:=\widehat{\gamma}(z_1,z_2)= \langle (D_1-D_2)^2\rangle+\LT + \vert E_1-E_2\vert^2\wedge 1 + \frac{\eta_1}{\rho_1}\wedge 1+\frac{\eta_2}{\rho_2}\wedge 1. \label{eq:def_gamma_0}
\end{equation}
Then uniformly in $z_1, z_2\in\HH$ it holds that
\begin{equation}
(\rho_1+\rho_2)^2\lesssim\beta(z_1,z_2).
\label{eq:rho_stab}
\end{equation}
Moreover, fix a (large) constant $C_0>0$ and assume that for some  intervals $\mathbf{I}_1,\mathbf{I}_2\subset \R$ we have
\begin{equation}
\sup_{\Re z_l\in\mathbf{I}_l}\lVert M^{D_l}(z_l)\rVert\le C_0,\quad l=1,2.
\label{eq:M_bound_I}
\end{equation}
Then uniformly in $z_l=E_l+\ii \eta_l\in\HH$ with $E_l\in\mathbf{I}_l$\nc, $l=1,2$, it holds that
\begin{equation}
\widehat{\gamma} \lesssim \beta_*\lesssim \widehat{\gamma}^{1/4},		
\label{eq:stab_bound}
\end{equation}
where the implicit constants depend only on $L$ and $C_0$.
\end{proposition}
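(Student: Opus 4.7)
The proof extends the delicate two-body stability analysis from \cite{echo} to incorporate the new linear term $\LT$. The central algebraic ingredient for the lower bound $\widehat{\gamma} \lesssim \beta_*$ is a resolvent-style identity derived from the MDE: starting from $-M_l^{-1} = z_l - D_l + m_l$ (with $m_l := \langle M_l\rangle$) and the identity $M_1 - M_2 = M_1(M_2^{-1} - M_1^{-1})M_2$, taking the normalized trace yields
\[
(m_1 - m_2)\bigl(1 - \langle M_1 M_2\rangle\bigr) = (z_1 - z_2)\langle M_1 M_2\rangle - \langle M_1(D_1 - D_2) M_2\rangle,
\]
together with an analogous identity in which $M_2$ is replaced by $M_2^*$ (corresponding to $\beta(z_1, \bar z_2)$). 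Since $|m_1 - m_2|\lesssim 1$ on the intervals $\mathbf{I}_l$, and $|\langle M_1 M_2\rangle|$ is close to $1$ whenever $\beta(z_1,z_2)$ is small (else $\beta(z_1,z_2) \gtrsim 1$ already), this immediately delivers $\LT \lesssim \beta_*$: the identity chosen according to the sign of $\Im z_1 \Im z_2$ controls the dangerous one of $\beta(z_1, z_2), \beta(z_1, \bar z_2)$, while the other is bounded below by a constant in the non-singular configuration. The remaining terms in $\widehat \gamma$ are treated by refining \cite{echo}: the $\eta_l/\rho_l$ contribution follows from Cauchy--Schwarz, $|\langle M_1 M_2^*\rangle|^2 \le \langle|M_1|^2\rangle\langle|M_2|^2\rangle$, combined with the identity $\langle|M_l|^2\rangle = \pi\rho_l/(\eta_l + \pi\rho_l)$ obtained from the imaginary part of the MDE; the quadratic terms $|E_1-E_2|^2$ and $\langle(D_1-D_2)^2\rangle$ emerge from second-order perturbative expansion about the degenerate point, with linear contributions precisely absorbed by $\LT$.

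For the estimate \eqref{eq:rho_stab}, one splits $\langle M_1 M_2\rangle$ into real and imaginary parts, uses the positivity $\Im M_l = (\eta_l + \pi\rho_l)|M_l|^2 \ge 0$ (for $\Im z_l > 0$), and estimates the cross term $\langle\Im M_1 \Im M_2\rangle$ by Cauchy--Schwarz to obtain the quadratic lower bound $(\rho_1+\rho_2)^2 \lesssim |1 - \langle M_1 M_2\rangle|$; the mixed-sign case reduces to this one after substituting $z_2\mapsto\bar z_2$. The upper bound $\beta_* \lesssim \widehat\gamma^{1/4}$ is proved via a Taylor expansion of $\langle M_1 M_2\rangle$ and $\langle M_1 M_2^*\rangle$ around the degenerate configuration $z_1=z_2,\, D_1=D_2,\, \eta_l=0$, using the implicit-function-theorem control on $m_l(z_l)$ afforded by the MDE under \eqref{eq:M_bound_I}; the $1/4$ power reflects the square-root degeneracy near the spectral edge, where the tightness of \eqref{eq:rho_stab} (i.e., $\rho_l^2 \lesssim \beta_*$) imposes an additional quadratic loss.

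The main obstacle is the careful combination of the distinct smallness regimes inside $\widehat\gamma$: depending on which of $\LT$, $|E_1-E_2|^2$, $\langle(D_1-D_2)^2\rangle$, or $\eta_l/\rho_l$ dominates, a different branch of the perturbative analysis applies, and the novel role of $\LT$ as an \emph{exact} linear cancellation (rather than a mere linear upper bound) must be tracked throughout. Once this cancellation is extracted via the identity above, the remainder of the argument largely parallels \cite{echo}.
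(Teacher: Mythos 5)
You correctly identify the algebraic engine behind $\LT\lesssim\beta_*$: subtracting the MDEs for $M_1$ and $M_2^*$ gives
\begin{equation*}
z_1-\bar{z}_2-\frac{\langle M_1(D_1-D_2)M_2^*\rangle}{\langle M_1M_2^*\rangle}
= \frac{\bigl(1-\langle M_1M_2^*\rangle\bigr)\bigl(\langle M_1\rangle -\langle M_2^*\rangle\bigr)}{\langle M_1M_2^*\rangle}\,,
\end{equation*}
which is precisely the representation \eqref{eq:LT_2nd_form} used in the paper. However, there is a genuine gap in how you conclude $\LT\lesssim\beta_*$. For $z_1,z_2\in\HH$ this identity directly gives $\LT\lesssim\beta(z_1,\bar z_2)$ once $|\langle M_1M_2^*\rangle|\gtrsim 1$; but since $\beta_*=\beta(z_1,z_2)\wedge\beta(z_1,\bar z_2)$, you must \emph{also} show $\LT\lesssim\beta(z_1,z_2)$. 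Your claim that ``the other is bounded below by a constant in the non-singular configuration'' is not correct under the stated hypotheses: Proposition~\ref{prop:stab} only assumes $\lVert M_l\rVert\le C_0$, not a bulk restriction, so both $\beta$'s can be small at once. For instance, with $D_1=D_2$ and $z_1=z_2=E+\ii\eta$ near a spectral edge, one has $\beta(z_1,z_2)\gtrsim\rho_1^2$ by \eqref{eq:rho_stab} (and this can be nearly saturated), while $\beta(z_1,\bar z_2)=\eta/(\eta+\pi\rho_1)$; both vanish as $\rho_1\to 0$ with $\eta\ll\rho_1$. The paper therefore treats $\LT\lesssim\beta(z_1,z_2)$ as a separate, harder estimate: each factor in the numerator of the $\LT$ representation is bounded separately, namely $|1-\langle M_1M_2^*\rangle|\le |1-\langle M_1M_2\rangle|+2\lVert M_1\rVert\,|\langle\Im M_2\rangle|\lesssim |1-\langle M_1M_2\rangle|^{1/2}$ (via \eqref{eq:rho_stab}) and $|\langle M_1\rangle-\langle M_2^*\rangle|^2\le \langle M_1M_1^*\rangle+\langle M_2M_2^*\rangle-2\Re\langle M_1M_2\rangle\lesssim|1-\langle M_1M_2\rangle|$. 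Your proposal omits this step entirely.

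Your explanation of the $1/4$ exponent in $\beta_*\lesssim\widehat\gamma^{1/4}$ is also not what the paper does, and as stated it is not a proof. The paper does not Taylor expand around the degenerate configuration, nor is the power a consequence of ``square-root degeneracy near the spectral edge.'' It starts from the chain $\beta_*\le|1-\langle M_1M_1^*\rangle|+|\langle M_1^*(M_1-M_2)\rangle|$, bounds the first term by $\eta_1/\rho_1\wedge 1\le\widehat\gamma$, and rewrites $M_1-M_2$ via the MDE difference formula to get $|\langle M_1^*(M_1-M_2)\rangle|\lesssim(|E_1-E_2|+\eta_1+\eta_2+\Delta)/\beta_*\lesssim\widehat\gamma^{1/2}/\beta_*$. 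Solving the resulting self-consistent quadratic inequality $\beta_*\lesssim\widehat\gamma+\widehat\gamma^{1/2}/\beta_*$ yields the $1/4$ power; as the paper notes, any positive exponent would suffice, and the exponent here is merely an artifact of this particular algebraic chain. Finally, you are vague about the remaining terms $\langle(D_1-D_2)^2\rangle$, $|E_1-E_2|^2$, and $\eta_l/\rho_l$: the paper handles them by directly citing \cite[Proposition 4.2]{echo} for the $\LT$-free part of $\widehat\gamma$, rather than rederiving a perturbative expansion.
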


Note that \eqref{eq:M_bound_I} is automatically satisfied for $\mathbf{I}_l=\mathbf{B}_\kappa(D_l)$ with the constant $C_0$ depending only on $\kappa$. This follows from the bound
\begin{equation*}
\lVert M^D_l(z_l)\rVert\le \left(\vert \Im z_l\vert + \vert\langle\Im M^{D_l}(z_l)\rangle\vert\right)^{-1} \le C \kappa^{-1}\,. 
\end{equation*}

\nc

We point out that, even if not highlighted in the notation, the quantities $\beta,\beta_*$ and $\widehat{\gamma}$ also depend on the deformations $D_1,D_2$. We will often omit this dependence in notations when it is clear what the arguments are.

The most relevant part of Proposition~\ref{prop:stab} is the lower bound $\beta_*\gtrsim\widehat{\gamma}$. This bound in a weaker form (more precisely, without $\LT$ included in $\widehat{\gamma}$) has already appeared in \cite[Proposition 4.2]{echo}. It should be viewed as a lower bound on the two--body stability operator in terms of simpler control parameters collected in $\widehat{\gamma}$.  In the inequality $\beta_*\lesssim \widehat{\gamma}^{1/4}$ we do not pursue getting the optimal power for $\widehat{\gamma}$. 
In fact, any positive exponent would work for our purpose.

\subsection{Multi--resolvent local law: Proofs of Theorems \ref{theo:maintheo} and \ref{theo:maintheo2}}
\label{sec:multigllaw}

The main idea to give a bound on single eigenvector overlaps as in Theorems~\ref{theo:maintheo}--\ref{theo:maintheo2} is to upper bound the overlaps by traces of products of two resolvents, and then prove a bound for these quantities (see e.g. \eqref{eq:fromevecttores} below). For this reason in this section we first recall the traditional single resolvent local law, and then state our new \emph{multi--resolvent local laws}, which are our main technical result.

Let $D\in\C^{N\times N}$, with $\lVert D\rVert\lesssim 1$, and let $W$ be a Wigner matrix satisfying Assumption~\ref{ass:momass}. Then, for $z\in\C\setminus\R$ we define the resolvent of $W+D$ by $G(z)=G^D(z):=(W+D-z)^{-1}$. It is well known that in the limit $N\to\infty$ the resolvent becomes approximately deterministic $G(z)\approx M(z)$, with  $M(z)=M^D(z)$ being the solution of \eqref{eq:MDE}. This is expressed by the following single resolvent local law \cite[Theorem 2.1]{SlowCor}
\begin{equation}
\label{eq:singllaw0}
\big|\langle (G(z)-M(z))A\rangle\big|\prec \frac{1}{N|\Im z|}, \qquad\quad \big|\langle{\boldsymbol x}, (G(z)-M(z)){\boldsymbol y}\rangle\big|\prec {\frac{1}{\sqrt{N|\Im z|}}},
\end{equation}
uniformly in deterministic matrices $A\in\C^{N\times N}$ with $\lVert A\rVert\le 1$, unit vectors ${\boldsymbol x},{\boldsymbol y}\in\C^N$, and spectral parameters $z$ in the bulk regime, i.e.~$\Re z \in \mathbf{B}_\kappa(D)$ for some fixed $\kappa > 0$. 

The main topic of this section, however, is to compute the deterministic approximation of the products of two resolvents $G_1AG_2$, with $G_l:=G^{D_l}(z_l)$ for $l=1,2$ and a deterministic observable in between. While $G^{D_l}(z_l)\approx M^{D_l}(z_l)$, the deterministic approximation of $G_1AG_2$ is not given by the product of the deterministic approximations $M_1AM_2$, but, as we will see from our result, rather by
\begin{equation}
\label{eq:defM12}
M_{\nu_1,\nu_2}^A:=\mathcal{B}_{12}^{-1}\big[M_1AM_2\big],
\end{equation}
with $\nu_l=(z_l,D_l)$, $M_l=M^{D_l}(z_l)$ and with $\mathcal{B}_{12}$ being the stability operator defined in \eqref{eq:stabop}. We will stick to the following notational convention. In most cases we will simplify the notation $M_{\nu_1,\nu_2}^A$ to $M_{12}^A$ when it is clear from the context what the arguments are. Moreover, if $\nu_1, \nu_2$ depend on an additional parameter $t$, i.e. $\nu_1=\nu_1(t)$, $\nu_2=\nu_2(t)$, we will denote the dependence of \eqref{eq:defM12} on $t$ in two equivalent ways:
\begin{equation}
M^A_{\nu_1(t),\nu_2(t)}=M^A_{12,t}.
\label{eq:M2_t-dep}
\end{equation}

On the deterministic approximation defined in \eqref{eq:defM12} we have the bound (see Proposition~\ref{prop:norm_M_bounds} below)
\begin{equation}
\label{eq:boundM12}
\lVert M_{\nu_1,\nu_2}^A\rVert\lesssim \frac{\lVert A\rVert}{\beta_*},
\end{equation}
with $\beta_*$ from \eqref{eq:def_beta_*}. In the case when $A$ is $(\nu_1,\nu_2)$-regular, i.e. $A=\mathring{A}^{\nu_1,\nu_2}$, \eqref{eq:boundM12} improves to $\lVert M_{\nu_1,\nu_2}^A\rVert\lesssim \lVert A\rVert$. For precise statement see Proposition \ref{prop:norm_M_bounds}. We are now ready to state our main technical result.

\begin{theorem}[Average two-resolvent local laws in the bulk]
\label{theo:multigllaw}
Fix $L, \epsilon, \kappa > 0$. Let $W$ be a Wigner matrix satisfying Assumption~\ref{ass:momass}, and let $D_1,D_2\in\C^{N\times N}$ be Hermitian matrices
such that $\langle D_l\rangle=0$ and $\lVert D_l\rVert\le L$ for $l=1,2$. For spectral parameters $z_1, z_2 \in \C \setminus \R$, denote  $\eta_l:=|\Im z_l|$ and  $\eta_*:=\eta_1\wedge\eta_2\wedge 1$. 
Finally, let $\widehat{\gamma} = \widehat{\gamma}(z_1, z_2)$ be defined as in \eqref{eq:def_gamma_0}. Then, the following holds:

\begin{itemize}

\item[Part 1.] \textnormal{[General case]} For deterministic \nc $B_1,B_2\in\C^{N\times N}$ we have
\begin{equation}
\label{eq:g1g2b}
\big|\big\langle \big(G^{D_1}(z_1)B_1G^{D_2}(z_2)-M_{\nu_1,\nu_2}^{B_1}\big)B_2\big\rangle\big|\prec \left(\frac{1}{N\eta_1\eta_2}\wedge\frac{1}{\sqrt{N\eta_*}\widehat{\gamma}}\right)\lVert B_1\rVert \lVert B_2\rVert
\end{equation}
uniformly in $B_1, B_2$\nc, spectral parameters satisfying $\Re z_l\in\mathbf{B}_{\kappa}(D_l)$, $|z_l| \le N^{100}$, for $l=1,2$, and~$\eta_* \ge N^{-1+\epsilon}$. 

\item[Part 2.]  \textnormal{[Regular case]} Consider deterministic \nc $A_1, A_2, B\in\C^{N\times N}$. Moreover, recalling \eqref{eq:regulardef}, let $A_1$ be $(\pair_1, \pair_2)$-regular and $A_2$ be $(\pair_2, \pair_1)$-regular. Then,
\begin{align}
\label{eq:reg1}
\big|\big\langle \big(G^{D_1}(z_1)A_1G^{D_2}(z_2)-M_{\nu_1,\nu_2}^{A_1}\big)B\big\rangle\big|&\prec \left(\frac{1}{N\eta_1\eta_2}\wedge\frac{1}{\sqrt{N\eta_* \widehat{\gamma}}}\right)\lVert A_1\rVert\lVert B\rVert, \\
\label{eq:reg2}
\big|\big\langle \big(G^{D_1}(z_1)A_1G^{D_2}(z_2)-M_{\nu_1,\nu_2}^{A_1}\big)A_2\big\rangle\big|&\prec \left(\frac{1}{N\eta_1\eta_2}\wedge \frac{1}{\sqrt{N\eta_*}}\right)\lVert A_1\rVert\lVert A_2\rVert,
\end{align}
uniformly in $A_1, A_2, B$, spectral parameters satisfying $\Re z_l\in\mathbf{B}_{\kappa}(D_l)$, $|z_l| \le N^{100}$, for $l=1,2$, and $\eta_* \ge N^{-1+\epsilon}$.

\end{itemize}
\end{theorem}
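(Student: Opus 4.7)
\medskip

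\noindent\textbf{Proof plan for Theorem \ref{theo:multigllaw}.} The plan is to follow the zigzag strategy outlined in the introduction, performing induction on the distance of the spectral parameters to the real axis while keeping track of the control parameter $\widehat{\gamma}$. First, I would establish a \emph{global law}: when $\eta_* \gtrsim 1$, the bounds in \eqref{eq:g1g2b}--\eqref{eq:reg2} can be obtained by standard cumulant expansion, since the two-body stability operator is uniformly bounded away from zero on the global scale. The deterministic approximation $M^A_{12}$ appearing in the statement naturally arises from solving the fixed-point equation associated with the Dyson equation for products, and Proposition~\ref{prop:stab} provides the crucial control $\beta_*\gtrsim\widehat{\gamma}$ together with the norm bound \eqref{eq:boundM12} that will be needed to propagate the estimates.

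Second, I would perform the \emph{zig step}: introduce an Ornstein--Uhlenbeck flow on $W$, $W_t = e^{-t/2}W_0 + \sqrt{1-e^{-t}}\, \widetilde{W}$, with an independent GUE/GOE $\widetilde{W}$, while simultaneously evolving the spectral pairs $\nu_l(t) = (z_l(t), D_l(t))$ along the characteristic equations so that the deterministic approximations $M_l$ and $M^A_{12,t}$ remain invariant (in an appropriate sense) along the flow, and such that $\eta_l(t)$ decreases towards the target local scale. Applying Itô's formula to the tracial observable $\langle (G_1(t) A G_2(t) - M^A_{12,t}) B\rangle$, the drift terms conspire to produce the stability operator $\mathcal{B}_{12}$ acting on the error, while the martingale term generates a quadratic variation that, after Gronwall-type integration, yields the desired bound provided the initial (global-scale) error is small. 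Crucially, the invariance of the regularity class (i.e.~$\mathring{A}^{\nu_1(t),\nu_2(t)}$ is preserved up to a small correction from the shift of $V$) is what allows one to exploit the $\sqrt{\widehat{\gamma}}$-rule along the characteristic trajectory.

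Third, I would perform the \emph{zag step}, a Green function comparison between the matrix with added Gaussian component and the original Wigner matrix. This is implemented by considering a suitable interpolation in the third or fourth moment of the entries and bounding the derivative of the relevant observable along the interpolation using the already-established local law (as an a priori input). To upgrade the decay from $1/\sqrt{\eta_*^{1-b}\widehat{\gamma}^b}$ for some initial small $b\in(0,1)$ to the optimal $1/\sqrt{\eta_* \widehat{\gamma}}$ (or $1/\sqrt{\eta_*}$ in the doubly regular case), I would iterate the zig-zag pair finitely many times, each time feeding the improved bound back as input, as alluded to by Lemmas~\ref{lem:condGron2iso} and \ref{lem:condGron3iso} referenced in the introduction. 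The trivial bound $(N\eta_1\eta_2)^{-1}$ is obtained directly from the single resolvent local law \eqref{eq:singllaw0} combined with the spectral decomposition $G_l = \sum_k \bm{u}_k^l(\bm{u}_k^l)^*/(\lambda_k^l - z_l)$, yielding the minimum in \eqref{eq:g1g2b}--\eqref{eq:reg2}.

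The main obstacle is the bootstrap to the optimal $\widehat{\gamma}$-exponent. The fundamental difficulty is that the two-body stability operator $\mathcal{B}_{12}$ has a unique small eigenvalue of size $\beta_*$ in the direction $M_1 M_2$ (with left eigenvector $V$), and the error term in the zig-step evolution picks up a factor of $\beta_*^{-1}\sim\widehat{\gamma}^{-1}$ in this unstable direction. The definition of regularity \eqref{eq:regulardef} is engineered precisely to project out this direction, removing the singularity at the cost of losing one power of $\widehat{\gamma}$ in the gain, which matches the identity $\sqrt{\widehat{\gamma}}\cdot\sqrt{\eta/\widehat{\gamma}}=\sqrt{\eta}$. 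Consequently, tracking the regularity of $A_1$ and $A_2$ along the characteristic flow — in particular, verifying that the $t$-dependent regularization $\mathring{A}^{\nu_1(t),\nu_2(t)}$ only differs from the initial $\mathring{A}^{\nu_1(0),\nu_2(0)}$ by an identity component whose contribution can be absorbed into the error — will be the most delicate part of the argument, and requires the refined stability analysis from Proposition~\ref{prop:stab} extended to include the linear term $\mathrm{LT}$.
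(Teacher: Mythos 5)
Your proposal follows the paper's zigzag strategy (global law, characteristic flow zig step, GFT zag step, bootstrap iteration on the $\widehat{\gamma}$-exponent), so the overall architecture matches. However, two points deserve attention.

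First, the claim that the $(N\eta_1\eta_2)^{-1}$ alternative in \eqref{eq:g1g2b} ``is obtained directly from the single resolvent local law combined with the spectral decomposition'' is not correct. Spectral decomposition together with \eqref{eq:singllaw0} only yields the trivial bound of order $(\eta_1\eta_2)^{-1}$; the extra $N^{-1}$ gain is a genuine two-resolvent fluctuation-averaging effect that has no short proof. The paper obtains this bound by propagating it from the global scale --- it is included as $\alpha_{t}\le(N\eta_{1,t}\eta_{2,t})^{-1}$ throughout Sections~\ref{sec:zig}--\ref{sec:zag}, starting from the global law in Proposition~\ref{prop:global} and preserved by the Gronwall estimates --- rather than deriving it a priori.

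Second, although you correctly identify that the zag step needs an a priori local law as input, you do not mention that the quadratic variation and higher cumulant terms in the Green function comparison for $\langle (G_1BG_2-M_{12}^B)B'\rangle$ necessarily generate \emph{isotropic} chains $(G_1BG_2)_{\bm x\bm y}$ and $(G_1BG_2B'G_1)_{\bm x\bm y}$. These must be controlled in tandem (Proposition~\ref{prop:isoLL}), requiring the full hierarchy of conditional Gronwall estimates in Lemmas~\ref{lem:uncondGron2iso}--\ref{lem:condGronreg}, not just the average one. Related to this, your description of the zag step as an ``interpolation in the third or fourth moment'' suggests a Lindeberg swap; the paper instead uses a dynamical comparison along the OU flow itself (Itô formula plus truncated cumulant expansion in \eqref{eq:cumexIto}), which meshes more naturally with the self-improving bootstrap $b\mapsto b'=(b+1/3)\wedge 1$. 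Both routes can in principle work, but the bootstrap that eliminates the factor $\eta^{1-b}$ in the error is easier to track in the dynamical formulation.
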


One important technical tool needed for the proof of Part 2 Theorem \ref{theo:maintheo2} is the content of the following lemma, which compares regularizations of a deterministic matrix with respect to different pairs of spectral pairs. We point out that this is not a type of continuity statement about the dependence of $\mathring{A}^{\nu_1,\nu_2}$ on $(\nu_1,\nu_2)$ like in \cite[Lemma 3.3]{OptLowerBound}.
The proof of Lemma \ref{lem:reg_reg} is presented in Section \ref{sec:reg_reg} of the Supplementary Material \cite{supplement}.
\begin{lemma}[Comparison of different regularizations]\label{lem:reg_reg} Fix (large)
$L>0$ and (small) $\kappa>0$. Let $D_1,D_2\in\C^{N\times N}$ be Hermitian deformations.
Moreover, assume that $\langle D_1\rangle=\langle D_2\rangle=0$ and $\lVert D_1\rVert\le L$, $\lVert D_2\rVert\le L$. Take spectral parameters $z_1, z_2\in\HH$ such that $\min\lbrace\rho_1(z_1),\rho_2(z_2)\rbrace\ge\kappa$, where $\rho_l(z_l):=\langle\Im M^{D_l}(z_l)\rangle\pi^{-1}$, $l=1,2$. For $y_1,y_2\ge 0$ denote $z_l':=z_l+\ii y_l\in\HH$, $l=1,2$. Additionally we use notations $\nu_l:=(z_l,D_l)$, $\nu_l':=(z_l',D_l)$ and $\bar{\nu}_l':=(\bar{z}_l',D_l)$ for $l=1,2$. Then for any observable $A\in\C^{N\times N}$ we have
\begin{equation}
\begin{split}
& \lVert \mathring{A}^{\nu_1',\nu_2'}-\mathring{A}^{\nu_1,\nu_2}\rVert\lesssim \lVert A\rVert\sqrt{\widehat{\gamma}(z_1',z_2')},\qquad  \lVert \mathring{A}^{\bar{\nu}_1',\nu_2'}-\mathring{A}^{\nu_1,\nu_2}\rVert\lesssim \lVert A\rVert\sqrt{\widehat{\gamma}(z_1',z_2')},\\
& \lVert \mathring{A}^{\nu_2',\nu_1'}-\mathring{A}^{\nu_1,\nu_2}\rVert\lesssim \lVert A\rVert\sqrt{\widehat{\gamma}(z_1',z_2')},\qquad  \lVert \mathring{A}^{\bar{\nu}_2',\nu_1'}-\mathring{A}^{\nu_1,\nu_2}\rVert\lesssim \lVert A\rVert\sqrt{\widehat{\gamma}(z_1',z_2')}.
\end{split}
\label{eq:reg_reg}
\end{equation}
The implicit constants in \eqref{eq:reg_reg} depend only on $L$ and $\kappa$. Recall that when the complex conjugation falls on the second spectral pair, we have $\mathring{A}^{\nu_1',\bar{\nu}_2'}=\mathring{A}^{\nu_1',\nu_2'}$ by definition.
\end{lemma}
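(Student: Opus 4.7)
The plan is to reduce each of the four estimates to a scalar comparison and then carry out a careful perturbation analysis of $V$. Starting from the definition~\eqref{eq:regulardef}, for any pair $(\mu_1, \mu_2)$ we have
\begin{equation*}
\mathring{A}^{\mu_1, \mu_2} - \mathring{A}^{\nu_1, \nu_2} = \big[\phi(\nu_1, \nu_2) \langle V(\nu_1, \nu_2) A\rangle - \phi(\mu_1, \mu_2) \langle V(\mu_1, \mu_2) A\rangle\big] I,
\end{equation*}
so its operator norm equals the absolute value of the scalar coefficient. We therefore only need to estimate this single scalar for $(\mu_1,\mu_2) \in \{(\nu_1', \nu_2'), (\bar{\nu}_1', \nu_2'), (\nu_2', \nu_1'), (\bar{\nu}_2', \nu_1')\}$.

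Next we split into two regimes. If $\widehat{\gamma}(z_1',z_2') \gtrsim 1$, the claim follows from the trivial bound $|\phi \langle V A\rangle| \le \|V\| \|A\|$ on both terms, provided $\|V\| \lesssim 1$ on the support of $\phi$. The latter holds because $\|M^{D_l}(z_l)\| \lesssim 1$ in the bulk (ensured by $\rho_l \ge \kappa$ together with $\Im z_l \le \delta$ forced by $\phi>0$), and the denominator $|\langle M^{D_1}(z_1) M^{D_2}(\Re z_2 + \ii \mathfrak{s}\Im z_2)\rangle|$ is bounded below on the support of $\phi$ by a standard bulk argument. In the complementary regime $\widehat{\gamma}(z_1',z_2') \le c$, for a small constant $c$ to be fixed, we split
\begin{equation*}
\phi(\nu) \langle V(\nu) A \rangle - \phi(\mu) \langle V(\mu) A\rangle = [\phi(\nu) - \phi(\mu)] \langle V(\nu) A \rangle + \phi(\mu) \langle [V(\nu) - V(\mu)] A\rangle
\end{equation*}
and treat the two pieces separately.

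For the $\phi$-difference I would exploit that the symmetry $\chi_\delta(-x) = \chi_\delta(x)$ makes $\phi$ invariant under both swapping its arguments and conjugating either spectral parameter; the only variation between $\phi(\nu_1, \nu_2)$ and $\phi(\mu_1, \mu_2)$ thus comes from the shifts $y_1, y_2$ in the imaginary parts. Since $\chi_\delta$ has Lipschitz constant $O(1/\delta) = O(1)$, we obtain $|\phi(\nu) - \phi(\mu)| \lesssim y_1 + y_2$, and $y_l \le \eta_l' \lesssim \rho_l' \, \widehat{\gamma} \lesssim \widehat{\gamma} \le \sqrt{\widehat{\gamma}}$ (using $\widehat{\gamma}\le 1$) closes this term. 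For the $V$-difference, the trace Cauchy--Schwarz inequality $|\langle X A \rangle| \le \sqrt{\langle X X^*\rangle} \, \|A\|$ reduces matters to a normalized Hilbert--Schmidt estimate $\sqrt{\langle (V(\nu)-V(\mu))(V(\nu)-V(\mu))^*\rangle} \lesssim \sqrt{\widehat{\gamma}}$. The key perturbation identity extracted from the MDE \eqref{eq:MDE},
\begin{equation*}
M^{D_1}(z_1) - M^{D_2}(z_2) = M^{D_1}(z_1)\big[(z_2 - z_1) + (D_1 - D_2) + \langle M^{D_2}(z_2) - M^{D_1}(z_1)\rangle\big] M^{D_2}(z_2),
\end{equation*}
together with invertibility of the single-body stability operator in the bulk, yields $\sqrt{\langle (M^{D_1}(z_1) - M^{D_2}(z_2))^2\rangle} \lesssim |z_1 - z_2| + \sqrt{\langle (D_1-D_2)^2\rangle} \lesssim \sqrt{\widehat{\gamma}}$, which propagates to the difference of $V$'s via term-by-term expansion of its numerator and denominator.

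The main obstacle is the swap cases (3) and (4). Here the identity $V(\nu_2, \nu_1) = V(\nu_1, \nu_2)^*$ (derived from $(M^{D_l}(z_l))^* = M^{D_l}(\bar z_l)$ and cyclicity of the trace in the denominator) reduces the problem to controlling $V - V^*$, or equivalently a commutator of the form $[M^{D_1}(z_1'), M^{D_2}(\bar z_2')]$ arising in the numerator of the difference. This commutator vanishes in the degenerate limit $D_1 = D_2$, $z_1 = z_2$, but out of this limit it is only controlled in \emph{normalized} Hilbert--Schmidt norm---the only norm in which $\langle (D_1 - D_2)^2\rangle$ provides useful information---whereas the operator norm $\|D_1 - D_2\|$ may be order one. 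The decisive point is that the Cauchy--Schwarz reduction above recasts $|\langle [V(\nu)-V(\mu)] A\rangle|$ as precisely a normalized HS quantity, so the weaker HS bound on the commutator suffices. Assembling all pieces yields the claimed $\sqrt{\widehat{\gamma}(z_1',z_2')}$ bound uniformly in the four cases, with the restriction to bulk $\rho_l \ge \kappa$ ensuring that all implicit constants depend only on $L$ and $\kappa$.
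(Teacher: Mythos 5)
Your proof takes a genuinely different route from the paper's. The paper first replaces $A$ by its $(\nu_1,\nu_2)$-regular component $\mathcal{A}$ (a harmless step since $\widehat{\gamma}\sim 1$ off $\mathrm{supp}\,\phi$), then exploits the exact vanishing $\langle M^{D_1}(z_1)\mathcal{A}\,(M^{D_2}(z_2))^*\rangle=0$ to telescope $\langle M^{D_2}(z_2')\mathcal{A}\,(M^{D_1}(z_1'))^*\rangle$ against it and bounds the two telescope pieces via the MDE-difference identity \eqref{eq:MDE_dif} plus trace Cauchy--Schwarz; with this reduction, the cutoff $\phi(\mu)$ enters only multiplicatively (bounded by $1$) and no bound on $V(\mu)-V(\nu_1,\nu_2)$ is ever required. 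Your argument bypasses the reduction to a regular observable, estimating the scalar coefficient for general $A$ directly by separately controlling the Lipschitz variation of $\phi$ (which gives $O(y_1+y_2)\lesssim\widehat{\gamma}$) and the normalized Hilbert--Schmidt variation of $V$ (closed against $A$ by trace Cauchy--Schwarz). Both proofs ultimately rest on the same core input, namely the MDE-difference bound $\langle(M_1-M_2)(M_1-M_2)^*\rangle^{1/2}\lesssim|z_1-z_2|+\langle(D_1-D_2)^2\rangle^{1/2}\lesssim\sqrt{\widehat{\gamma}}$ in the bulk, and both hinge on the insight you correctly flagged, that $D_1-D_2$ is controlled only in normalized HS norm. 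What your approach buys is uniformity: all four target pairs are treated on the same footing via $V(\nu_2',\nu_1')=V(\nu_1',\nu_2')^*$ and the commutator relation $V(\bar\nu_2',\nu_1')-V(\nu_1',\nu_2')\propto[M^{D_1}(z_1'),(M^{D_2}(z_2'))^*]$, and you avoid the vague-monotonicity property \eqref{eq:g_monot} that the paper needs to convert the mixed parameter $\widehat{\gamma}(z_1,z_2')$ back into $\widehat{\gamma}(z_1',z_2')$. The one minor imprecision is that $V-V^*$ itself is not a single commutator; its numerator decomposes into the anti-Hermitian combination $(M_2-M_1)^*M_1-M_1^*(M_2-M_1)$ together with the scalar $\Im\langle M_1M_2^*\rangle$, but each piece vanishes at $M_1=M_2$ and obeys the same HS bound, so the argument goes through.
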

In the rhs.~of \eqref{eq:reg_reg} we do not aim to get the optimal power of $\widehat{\gamma}$, but rather formulate Lemma \ref{lem:reg_reg} minimalistically and collect only those bounds which will be used later. 

Given Theorem~\ref{theo:multigllaw} and Lemma \ref{lem:reg_reg} we immediately conclude the proof of Theorems~\ref{theo:maintheo} and \ref{theo:maintheo2}.

\begin{proof}[Proof of Theorems~\ref{theo:maintheo2} and  \ref{theo:maintheo}]

We first prove Theorem~\ref{theo:maintheo}. Consider $i,j\in [N]$ such that $\gamma_i^{D_1}\in\mathbf{B}_\kappa(D_1)$ and  $\gamma_j^{D_2}\in\mathbf{B}_\kappa(D_2)$, and let $\eta=N^{-1+\epsilon}$ for a small fixed $\epsilon>0$. Then, by spectral decomposition we readily obtain
\begin{equation}
\label{eq:fromevecttores}
N\big|\langle {\boldsymbol u}_i^1,{\boldsymbol u}_j^2\rangle\big|^2\prec (N\eta)^2\big|\big\langle \Im G^{D_1}(\gamma_i^{D_1}+\ii\eta)\Im G^{D_2}(\gamma_j^{D_2}+\ii\eta) \big\rangle\big|.
\end{equation}
We point out that to prove \eqref{eq:fromevecttores} we also use the standard rigidity bound from \cite{StabCor, SlowCor}:
\begin{equation}
\big|\lambda_i^D-\gamma_i^D\big|\prec \frac{1}{N}, \qquad\quad \gamma_i^D\in\mathbf{B}_\kappa(D).
\end{equation}
Finally, combining \eqref{eq:fromevecttores} with \eqref{eq:g1g2b}, using \eqref{eq:boundM12} and
\begin{equation}
\label{eq:boundmnorm}
\lVert M^{D_l}(z_l) \rVert=\left\lVert \frac{1}{D_l-z_l-\langle M^{D_l}(z_l) \rangle} \right\rVert\le \vert\langle\Im M^{D_l}(z_l)\rangle\vert^{-1}\le C_\kappa
\end{equation}
in the $\kappa$--bulk of the spectrum, and the definition of $\widehat{\gamma}$ from \eqref{eq:def_gamma_0}, we immediately conclude \eqref{eq:optb}.

Now we discuss how to adjust the argument above to prove Theorem \ref{theo:maintheo2}. The fact that $\lVert V\rVert\lesssim 1$ in the regime when $\phi_{ij}\ne 0$ follows by simple perturbation theory if $\delta$ is chosen sufficiently small in terms of $\kappa$ from \eqref{eq:kappabulk} and in terms of $\lVert D_1\rVert, \lVert D_2\rVert$. In the complementary regime this bound is trivial. One more input which is needed for the proof of Theorem~\ref{theo:maintheo2} is Lemma~\ref{lem:reg_reg} specialized to the case $y_1=y_2=0$. In fact, Lemma~\ref{lem:reg_reg} implies that if $A$ is $(\nu_1,\nu_2)$--regular, then it is close in operator norm to $\mathring{A}^{\nu_2,\nu_1}$. The rest of the details are omitted for the sake of brevity (see \cite[Theorem 2.2]{OptLowerBound} for the details of a very similar proof).
\end{proof}

We conclude this section commenting on the optimality of Theorem~\ref{theo:multigllaw}. 

\subsection{Optimality and possible extensions of Theorem~\ref{theo:multigllaw}}
\label{sec:extension}

In this section  explain in what sense Theorem~\ref{theo:multigllaw}  is optimal and that it can be  extended 
to energies where the limiting eigenvalue density is small. We also comment on the possibility of replacing the operator norm in the rhs. of the estimates in Theorem~\ref{theo:multigllaw} with the typically smaller Hilbert--Schmidt norm. All these improvements and extensions can be achieved following our {\it zigzag} strategy of first proving the desired result for matrices with a fairly large Gaussian component, as in Section~\ref{sec:zig} (zig step), and then prove it for general matrices using a dynamical comparison argument, as in Section~\ref{sec:zag} (zag step). We omit the details of these proofs to keep the presentation simple and short, in fact, the main focus of this paper is to develop techniques to handle different deformations $D_1, D_2$ and we do it in the simpler cases
  of the bulk of the spectrum proving estimates in terms of the operator norm. In the following we give more precise references to papers where  similar analyses were already performed in detail. 

We first consider the bound \eqref{eq:g1g2b}. In the bulk of the spectrum, this bound is optimal except for the fact that $1/(\sqrt{N\eta_*}\widehat{\gamma})$ should be replaced by $1/(N\eta_*\widehat{\gamma})$. Notice, that once the bound $1/(N\eta_*\widehat{\gamma})$ is achieved, the term $1/(N\eta_1\eta_2)$ in \eqref{eq:g1g2b} is obsolete, as it is always bigger. This improvement can be achieved by proving (weaker) local laws also for products of longer resolvents; see, e.g., \cite{multiG, edgeETH, Schatten} for similar arguments. In fact, this overestimate is due to the fact that the four resolvents chains, appearing e.g. in the quadratic variation of the stochastic term in \eqref{eq:GG_evol} below are currently estimated in terms of products of traces of two resolvents using certain crude reduction inequalities (see e.g. \eqref{eq:mart_reduct}). Following the evolution of these longer chains more carefully would give the improvement $1/(N\eta_*\widehat{\gamma})$.

We also believe  that assuming that $M^{D_l}(z_l)$ are bounded throughout the spectrum (see e.g. condition \eqref{eq:M_bound_I} with $\mathbf{I}=\R$ and Remark~\ref{rmk:Mbdd} below) one can extend the local laws \eqref{eq:g1g2b}--\eqref{eq:reg2} to hold uniformly in the spectrum with the similar zigzag strategy\nc. In fact, in this case we expect an additional gain $\sqrt{\rho_1+\rho_2}$ in their rhs.; see \cite[Theorem 2.4]{edgeETH} for a similar argument. We postpone the details to future work.  

Finally, the operator norm in \eqref{eq:g1g2b}--\eqref{eq:reg2} can be replaced by the typically smaller Hilbert--Schmidt norm. Again, this can be achieved following our proof in Sections~\ref{sec:zig}--\ref{sec:zag}, but we omit a detailed proof of brevity. A similar proof was carried out in full detail in \cite{edgeETH} in the simpler setting of Wigner matrices using the Lindeberg swapping technique, but it can be readily adapted to the current case. Additionally, we expect that the Lindeberg technique can be replaced by a dynamical argument similar to Section~\ref{sec:zag}.

\section{Zigzag strategy: Proof of Theorem \ref{theo:multigllaw}} \label{sec:zigzag}

To prove the multi--resolvent local law in Theorem~\ref{theo:multigllaw} we follow the \emph{zigzag strategy}, similarly to \cite{mesoCLT, edgeETH}. That is, we prove Theorem~\ref{theo:multigllaw} by running in tandem the \emph{characteristic flow} associated to a matrix valued Ornstein--Uhlenbeck process, and a \emph{Green's function comparison theorem (GFT)}.

More precisely, the zigzag strategy consists of the following three steps: 
\begin{itemize}
\item[\textbf{1.}] \textbf{Global law:} Prove a \emph{global law} for spectral parameters $z_j$ that are "far away" from the self-consistent spectrum, $\min_j \mathrm{dist}(z_j, \mathrm{supp}(\rho^{D_j})) \ge \delta$ (see Section \ref{subsec:global}). 
\item[\textbf{2.}] \textbf{Characteristic flow:} Propagate the bound from large distances to a smaller one by considering the evolution of the Wigner matrix $W$ along an Ornstein-Uhlenbeck flow, thereby introducing a Gaussian component (see Section \ref{subsec:charflowzig}). The spectral parameters evolve according to the \emph{characteristic flow} defined in \eqref{eq:def_flow}. 
The simultaneous effect of these two evolutions is a key cancellation of two large terms.
\item[\textbf{3.}] \textbf{Green function comparison:}  Remove the Gaussian component by a Green function comparison (GFT) argument (see Section \ref{subsec:GFTzag}).
\end{itemize}
In order to reduce the distance of the spectral parameters down to the optimal scale for the local law, Steps~2 and 3 will be applied many times in tandem. This inductive argument is carried out in Proposition \ref{lem:induction} in Section~\ref{subsec:zigzagconcl}. 

While Theorem~\ref{theo:multigllaw} states local laws only for average quantities, within the GFT, isotropic resolvent chains of the form 
\begin{equation}
(GBG)_{\boldsymbol x \boldsymbol y} \quad \text{or} \quad (GBGBG)_{\boldsymbol x \boldsymbol y}
\label{eq:need_3G}
\end{equation}
naturally arise, which requires to analyze them as well. That is, we necessarily need to perform the zigzag strategy for such quantities in an analogous way.

Throughout the entire argument, all process will run for times $t$ in a fixed interval $[0,T]$ for some terminal time $T > 0$ of order one, which we will choose below in \eqref{eq:def_T}. 
\subsection{Input global laws} \label{subsec:global}

Here we state the necessary global laws that will be used as an input to prove Theorem~\ref{theo:multigllaw}.  Note that in the global regime no restriction to the bulk is necessary.\nc
\begin{proposition}[Global law] \label{prop:global}
Fix $L, \epsilon, \delta > 0$. Let $W$ be a Wigner matrix satisfying Assumption~\ref{ass:momass}, and let $D_1,D_2\in\C^{N\times N}$ be bounded Hermitian matrices, 
i.e.~$\lVert D_l\rVert\le L$ for $l=1,2$. For spectral parameters $z_1, z_2 \in \C \setminus \R$ with $\min_j \mathrm{dist}(z_j,\mathrm{supp}(\rho^{D_j}))\ge \delta$, deterministic unit vectors ${\boldsymbol x}, {\boldsymbol y}\in\C^N$ and matrices $B_1,B_2\in\C^{N\times N}$, we have (recall $G_j:=(W+D_j-z_j)^{-1}$)
\begin{align}
\left\vert\left\langle \left(G_1B_1G_2 - M_{\nu_1,\nu_2}^{B_1}\right) B_2\right\rangle\right\vert&\prec \frac{\lVert B_1\rVert\lVert B_2\rVert }{N}\,, 
\label{eq:avincond} \\
\left\vert \left\langle {\boldsymbol x},\left(G_1B_1G_2 -M_{\nu_1,\nu_2}^{B_1}\right){\boldsymbol y}\right\rangle\right\vert &\prec \frac{\lVert B_1\rVert}{\sqrt{N}} \,,
\label{eq:iso2incond} \\
\left\vert \left\langle {\boldsymbol x},G_1B_1G_2B_2G^{(*)}_{1,s} {\boldsymbol y}\right\rangle\right\vert &\prec \lVert B_1\rVert\lVert B_2\rVert \,.
\label{eq:iso3incond}
\end{align}
\end{proposition}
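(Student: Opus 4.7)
In the global regime three preliminary facts simplify the analysis considerably. First, rigidity of the eigenvalues (a consequence of \eqref{eq:singllaw0}) combined with $\mathrm{dist}(z_j,\mathrm{supp}(\rho^{D_j}))\ge \delta$ yields $\lVert G_j\rVert \lesssim 1$ with very high probability; directly from the MDE \eqref{eq:MDE} also $\lVert M_j\rVert \lesssim 1$. Second, since the spectral parameters sit at distance $\delta$ from the spectrum, a direct computation shows $|1-\langle M_1 M_2\rangle|\gtrsim 1$ uniformly, so the two-body stability operator $\mathcal B_{12}[X] = X - M_1 \langle X\rangle M_2$ of \eqref{eq:stabop} has $\lVert \mathcal B_{12}^{-1}\rVert \lesssim 1$. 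Third, these operator-norm bounds immediately give
\[
|\langle \bm x, G_1 B_1 G_2 B_2 G_{1,s}^{(*)}\bm y\rangle| \le \lVert G_1\rVert \lVert B_1\rVert \lVert G_2\rVert \lVert B_2\rVert \lVert G_{1,s}^{(*)}\rVert \lesssim \lVert B_1\rVert \lVert B_2\rVert,
\]
which proves \eqref{eq:iso3incond}; only \eqref{eq:avincond} and \eqref{eq:iso2incond} require genuine work.

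\textbf{Two-resolvent self-consistent equation.} For \eqref{eq:avincond} I would use the standard resolvent identity
\[
G_j - M_j = -M_j W G_j - M_j\, \langle G_j - M_j\rangle\, G_j,
\]
obtained from $G_j^{-1}-M_j^{-1} = W + \langle M_j\rangle$ and $-M_j^{-1}=z_j-D_j+\langle M_j\rangle$. Substituting this into $\langle G_1 B_1 G_2 B_2\rangle - \langle M_1 B_1 M_2 B_2\rangle$ and performing a cumulant expansion of the $W$-factor, the second cumulant of $W$ produces the key contribution $\langle M_1 M_2 B_2\rangle\, \langle G_1 B_1 G_2\rangle$, while all higher cumulants and the $\langle G_j-M_j\rangle$-type terms are controlled by the single-resolvent local law \eqref{eq:singllaw0} and the a priori norm bounds above. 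This yields the scalar equation
\[
\langle G_1 B_1 G_2 B_2\rangle - \langle M_1 B_1 M_2 B_2\rangle \;=\; \langle M_1 M_2 B_2\rangle\,\langle G_1 B_1 G_2\rangle + O_\prec\!\big(N^{-1}\lVert B_1\rVert\lVert B_2\rVert\big).
\]
Specializing to $B_2 = I$ determines $\langle G_1 B_1 G_2\rangle = \tfrac{\langle M_1 B_1 M_2\rangle}{1-\langle M_1 M_2\rangle} + O_\prec(N^{-1}\lVert B_1\rVert)$; substituting back and using the explicit formula $\langle M^{B_1}_{12} B_2\rangle = \langle M_1 B_1 M_2 B_2\rangle + \tfrac{\langle M_1 B_1 M_2\rangle\langle M_1 M_2 B_2\rangle}{1-\langle M_1 M_2\rangle}$ (which follows from $\mathcal B_{12}[M^{B_1}_{12}] = M_1 B_1 M_2$) proves \eqref{eq:avincond}. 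The isotropic bound \eqref{eq:iso2incond} is obtained by the same scheme, using now the isotropic version of \eqref{eq:singllaw0} with test vectors $\bm x$ and $B_1 M_2 \bm y$; the $N^{-1/2}$ rate simply inherits from the weaker isotropic single-resolvent input.

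\textbf{Main obstacle.} The only structural subtlety is the correction $\tfrac{\langle M_1 B_1 M_2\rangle}{1-\langle M_1 M_2\rangle}\, M_1 M_2$ distinguishing $M^{B_1}_{12}$ from the naive $M_1 B_1 M_2$: it is the fingerprint of inverting $\mathcal B_{12}$ on $M_1 B_1 M_2$, and it emerges precisely from the second-cumulant contraction above. In the local regime of Theorem~\ref{theo:multigllaw} this inversion is genuinely delicate and requires the quantitative stability bounds of Proposition~\ref{prop:stab}, but in the present global regime the uniform lower bound $|1-\langle M_1 M_2\rangle|\gtrsim 1$ trivializes it to a scalar division. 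The global law is thus a routine consequence of cumulant expansion combined with the single-resolvent local law, and serves as the unconditional starting input for the zigzag induction in Section~\ref{sec:zigzag}.
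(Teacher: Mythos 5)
Your strategy --- norm bounds in the global regime, boundedness of the scalar stability factor $1-\langle M_1 M_2\rangle$, a trivial operator-norm bound for the three-resolvent chain, and a cumulant-expansion argument combined with the single-resolvent local law for the two-resolvent chain --- is essentially the paper's own proof. The paper organizes the computation at the matrix level via the exact identity
\[
(1-M_1\langle\cdot\rangle M_2)\big(G_1B_1G_2-M_{12}^{B_1}\big)=M_1B_1(G_2-M_2)-M_1\underline{WG_1B_1G_2} +M_1\langle G_1B_1G_2\rangle(G_2-M_2)+M_1\langle G_1-M_1\rangle G_1B_1G_2,
\]
applies the bounded inverse of the stability operator, and controls the underlined self-renormalized term by a minimalistic cumulant expansion; you arrive at the same place via a scalar self-consistent equation for $\langle G_1B_1G_2\rangle$, specializing to $B_2=I$ and back-substituting, which is an equivalent reorganization.

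One algebraic slip you should fix: the stated identity $G_j - M_j = -M_j W G_j - M_j\langle G_j - M_j\rangle G_j$ does not follow from $G_j^{-1}-M_j^{-1} = W + \langle M_j\rangle$; the correct form is $G_j - M_j = -M_j(W + \langle M_j\rangle)G_j$, i.e.~in self-renormalized notation $G_j - M_j = -M_j\underline{WG_j} + \langle G_j - M_j\rangle M_j G_j$ with $\underline{WG_j}:=WG_j+\langle G_j\rangle G_j$. Your version carries the wrong sign on the fluctuation term and omits the $\langle G_j\rangle G_j$ counterterm, so the second-cumulant cancellation you invoke would not actually occur as written. This is a repairable bookkeeping error rather than a gap in strategy, but the cumulant step does not close without it.
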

\begin{proof}
The proof of these global laws is very similar to the one presented in \cite[Appendix B]{multiG}, we thus omit several details and just present the main steps. To keep the presentation short and simple we only present the proof in the averaged case.

In the following we will often use the fact that
\begin{equation}
\label{eq:normbound}
\lVert G_j\rVert\lesssim \frac{1}{\min_j  \mathrm{dist}(z_j,\mathrm{supp}(\rho^{D_j}))}\le \frac{1}{\delta}\lesssim 1.
\end{equation}
By explicit computations it is easy to see that
\begin{equation}
\begin{split}
\label{eq:2geq}
(1-M_1\langle\cdot\rangle M_2)\big(G_1B_1G_2-M_{12}^{B_1}\big)&=M_1B_1(G_2-M_2)-M_1\underline{WG_1B_1G_2} \\
&\quad+M_1\langle G_1B_1G_2\rangle(G_2-M_2)+M_1\langle G_1-M_1\rangle G_1B_1G_2,
\end{split}
\end{equation}
where
\[
\underline{WG_1B_1G_2}:=WG_1G_2+\langle G_1\rangle G_1B_1G_2+\langle G_1B_1G_2\rangle G_2.
\]
Taking the trace in \eqref{eq:2geq} against $B_2$, by the single resolvent local law \eqref{eq:singllaw0}, the norm bound \eqref{eq:normbound}, and the fact that $(1-M_1\langle\cdot\rangle M_2)^{-1}$ is bounded in the regime $\min_j \mathrm{dist}(z_j,\mathrm{supp}(\rho^{D_j}))\ge \delta$, we have
\begin{equation}
\langle (G_1B_1G_2-M_{12}^{B_1})B_2\rangle=-\langle M_1\underline{WG_1B_1G_2B_2}\rangle+\mathcal{O}_\prec\left(\frac{\lVert B_1\rVert\lVert B_2\rVert}{N}\right).
\end{equation}
Finally, using a minimalistic cumulant expansion as in \cite[(B.4)--(B.8)]{multiG}, we conclude $|\langle M_1\underline{WG_1B_1G_2B_2}\rangle|\prec N^{-1}\lVert B_1\rVert\lVert B_2\rVert$ and so \eqref{eq:avincond}.
\end{proof}

\subsection{Zig step: Propagating bounds via the characteristic flow} \label{subsec:charflowzig}
For Hermitian $D_j \in \C^{N \times N}$ with $\langle D_j\rangle=0$, spectral parameters $z_j\in \C\setminus\R$, $j=1,2$, and fixed $T>0$ the characteristic flow is defined by the following ODEs (see also \cite[(5.3)]{mesoCLT}): 
\begin{equation}
	\partial_t D_{j,t}:=-\frac{1}{2}D_{j,t},\qquad \partial_t z_{j,t} = -\langle M^{D_{j,t}}(z_{j,t})\rangle -\frac{z_{j,t}}{2},\qquad j=1,2\,,
	\label{eq:def_flow}
\end{equation}
with terminal conditions $D_{j,T} = D_j$ and $z_{j,T} = z_j$. 

We will often use the following short--hand notations:
\begin{equation*}
	M_{j,t}:=M^{D_{j,t}}(z_{j,t}), \quad \rho_{j,t}=\rho_{j,t}(z_{j,t}):=\frac{1}{\pi}\big|\langle \Im M^{D_{j,t}}(z_{j,t})\rangle\big|, \quad \eta_{j,t} := |\Im z_{j,t}|, \qquad j=1,2.
\end{equation*}

Even if our main results in Sections~\ref{sec:mainres} and \ref{sec:multigllaw} are presented only in the bulk of the spectrum, in the case of general observables we study the zig step uniformly in the spectrum, since this does not present significant additional difficulties (see Part 1 of Proposition~\ref{prop:Zig} below). For this reason, several definition in the remainder of this section will be presented uniformly in the spectrum.
In the zig step for regular observables and in the zag step for both types of observables we still restrict ourselves to the bulk.
\nc
\subsubsection{Preliminaries on the characteristic flow and admissible control parameters} \label{subsubsec:prelimchar}
Before formulating the fundamental building blocks for Step 2 of the zigzag strategy in Section \ref{subsubsec:propagating} below, we collect a few preliminaries concerning the characteristic flow and introduce \emph{admissible control parameters} $\gamma$, generalizing the concretely chosen $\widehat{\gamma}$ in \eqref{eq:def_gamma_0}. 

First, we  define a time--dependent version of the spectral domains on which we prove the local law from Theorem~\ref{theo:multigllaw} along the flow.

\begin{definition}[Spectral domains]\label{def:specdom} We define the time dependent spectral domains as follows: 
	\begin{itemize}
\item[(i)] \textnormal{[Unrestricted domains]} Fix a (small) $\epsilon>0$. For $j \in [2]$ define 
\begin{equation} \label{eq:specdom}
	\Omega_T^j :=\left\lbrace z\in\C\setminus\R: \vert\Im z\cdot \rho_{j,T}(z)\vert\ge N^{-1+\epsilon},\, \vert\Im z\vert\le N^{100}, \vert\Re z\vert\le N^{200}\right\rbrace.
\end{equation}
For $s,t\in [0,T]$ denote by $\mathfrak{F}_{t,s}^j$ the evolution operator along the flow \eqref{eq:def_flow}, i.e. $\mathfrak{F}_{t,s}^j(z_{j,s})=z_{j,t}$. Then we construct the family of \emph{unrestricted spectral domains} $\{\Omega_t^j\}_{t\in[0,T]}$, $j \in [2]$, by $\Omega_t^j:=\mathfrak{F}_{t,T}^j(\Omega_T^j)$.
\item[(ii)] \textnormal{[Bulk-restricted domains]} Fix additionally a (small) $\kappa > 0$ and recall \eqref{eq:kappabulk} for the definition of the $\kappa$-bulk $\mathbf{B}_\kappa(D)=\cup_{r=1}^{m}I_r$. Here $I_r=[a_r,b_r]\subset \R$ are closed non-intersecting intervals and $b_r<a_{r+1}$ for $r\in[1,m-1]$. We also denote $b_0:=-\infty$ and $a_{m+1}:=+\infty$. Then we define the family of \emph{bulk-restricted spectral domains} as
\begin{equation}
	\Omega_{\kappa,T}^j:= \Omega_T^j\setminus \left( \bigcup\limits_{r=0}^{m} \lbrace z\in\C\setminus \R: \Re z\in [b_r,a_{r+1}], \vert \Im z\vert \le \vert \Re z-a_{r+1}\vert\wedge\vert \Re z-b_r\vert \rbrace\right)
	\label{eq:specdom_bulk}
\end{equation}
and $\Omega_{\kappa,t}^j:=\mathfrak{F}_{t,T}^j(\Omega_{\kappa,T}^j)$ for $t \in [0,T]$. 
	\end{itemize}
\end{definition}

The bulk-restricted spectral domains $\Omega_{\kappa,t}^j$ are depicted in Figure \ref{fig:domains}.

\begin{figure}[h]
\begin{center}
\includegraphics[height=3cm]{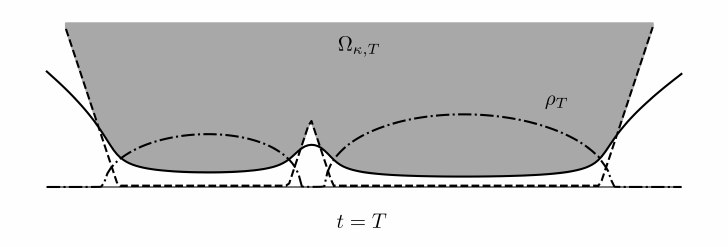}

\includegraphics[height=3cm]{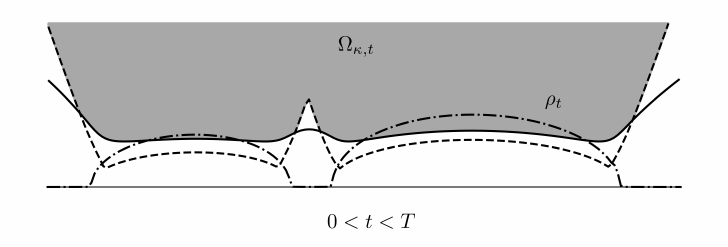}

\includegraphics[height=3cm]{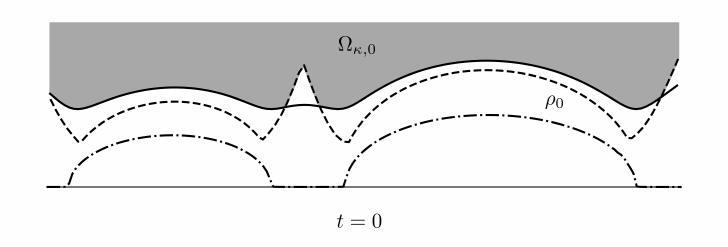}
\end{center}
\caption{In gray, we illustrated the $\Im z > 0$ part of the bulk-restricted spectral domains $\Omega_{\kappa,t}$ for three times, $t= 0$, $ t \in (0,T)$, and $t = T$ (the $\Im z < 0$ part is obtained by reflection). 
 On each of the panels, the graph of the density $\rho_t$ is superimposed in a dash-dotted style. The solid curve in the $t=T$ panel represents the implicitly defined curve $\vert\Im z\vert \rho(z)=N^{-1+\epsilon}$, above which one has the unrestricted domain $\Omega_T \supset \Omega_{\kappa, T}$. On the same $t = T$ panel, the region below the dashed curve is removed in the rhs.~of \eqref{eq:specdom_bulk}. For $t<T$ the solid and the dashed curves are the images of the corresponding curves at $t=T$ under the flow $\mathfrak{F}_{t,T}$.}
\label{fig:domains}
\end{figure}

Next, we state some trivially checkable properties of the characteristics flow \eqref{eq:def_flow}. Since Lemma \ref{lem:flow_properties}~(i) holds for $j=1$ and $j=2$, we drop the index $j$ in $z_j$, $D_j$, $\Omega_t^j$ and related quantities. In particular, we use the notation $z_t$ for $z_{j,t}$.
\begin{lemma}[Elementary properties of the characteristic flow]\label{lem:flow_properties} 
We have the following. 
\begin{itemize}
\item[(i)] Let $z_0\in\Omega_0$ be given. Then we have
\begin{itemize}
	\item[(1)] $M_t(z_t)=\ee^{t/2}M_0(z_0)$.
	\item[(2)] The map $t\mapsto\eta_t$ is monotone decreasing.
	\item[(3)] The solution to the second equation in \eqref{eq:def_flow} is explicitly given by
	\begin{equation}
		z_t = \ee^{-t/2}z_0-2\langle M_0(z_0)\rangle\sinh \frac{t}{2}.
		\label{eq:z_t}
	\end{equation}
	\item[(4)] For any $m>1$ and $t\in [0,T]$ we have
	\begin{equation}
		\int_0^t \frac{\rho_s}{\eta_s^m}\dif s \le \frac{1}{(m-1)\eta_t^{m-1}},\qquad\quad \int_0^t \frac{\rho_s}{\eta_s}\dif s\le \log \left(\frac{\eta_0}{\eta_t}\right).
		\label{eq:int_bounds}
	\end{equation} 
	\item[(5)] We have
	\begin{equation}
	\label{eq:linetarho}
	\frac{\eta_t}{\rho_t}=e^{s-t}\frac{\eta_s}{\rho_s}-\pi(1-e^{s-t}).
	\end{equation}
\end{itemize}
\item[(ii)] Let $z_j\in\Omega_0^j$ and $D_j=D_j^*\in\C^{N\times N}$ be given for $j \in [2]$. Denote $\nu_{j,t}:=(z_{j,t},D_{j,t})$ for $t\in [0,T]$ and assume that $\phi(\nu_{1,T},\nu_{2,T})=1$ (recall \eqref{eq:def_phi} for its definition). Let $A\in \C^{N\times N}$ be a regular observable with respect to $(\nu_{1,T},\nu_{2,T})$. Then $A$ is regular with respect to $(\nu_{1,t},\nu_{2,t})$ for any $t\in[0,T]$.
\end{itemize}
\end{lemma}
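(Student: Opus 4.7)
The plan is to organize all five items of (i) around the single ansatz $M_t = e^{t/2} M_0$ combined with the scaling $D_t = e^{(T-t)/2} D_T$ (which is immediate from $\partial_t D_t = -D_t/2$); everything else follows essentially as a corollary. Concretely, I would verify by direct substitution that $e^{t/2} M_0$ solves the MDE \eqref{eq:MDE} with deformation $D_t$ and some spectral parameter $z_t$, and \emph{read off} $z_t$ from this substitution by using that $M_0$ solves the MDE at $(z_0, D_0)$. This yields the closed form
\begin{equation*}
z_t = e^{-t/2} z_0 - 2 \langle M_0 \rangle \sinh(t/2),
\end{equation*}
which is (i.3). Differentiating this closed form in $t$ and simplifying with $\langle M_t \rangle = e^{t/2} \langle M_0 \rangle$ reproduces the second ODE in \eqref{eq:def_flow}; combined with the preserved sign of $\Im z_t \cdot \Im M_t$ and uniqueness of the MDE solution under this constraint, this closes (i.1).

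\textbf{Parts (i.2), (i.4), (i.5).} All three will follow from the imaginary part of the $z_t$-ODE. For $\eta_t > 0$ (the opposite case is symmetric by conjugation) one has $\partial_t \eta_t = -\pi \rho_t - \eta_t / 2$, strictly negative, giving (i.2) and in particular preservation of $\mathrm{sgn}(\Im z_t)$ along the flow. For (i.4) I would use the crude consequence $\pi \rho_t \le -\partial_t \eta_t$ and the change of variables $s \mapsto \eta_s$, reducing both integrals in \eqref{eq:int_bounds} to elementary integrals of $\eta^{-m}$ and $\eta^{-1}$ over $[\eta_t, \eta_0]$, after which $\pi \ge 1$ absorbs any residual prefactor. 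For (i.5) I would combine $\partial_t \rho_t = \rho_t / 2$ (which follows from (i.1), since $\Im M_t = e^{t/2} \Im M_0$) with the above imaginary ODE to arrive at the first-order linear ODE
\begin{equation*}
\partial_t (\eta_t / \rho_t) = -\pi - \eta_t / \rho_t,
\end{equation*}
which integrates (against the factor $e^t$) to exactly \eqref{eq:linetarho}.

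\textbf{Part (ii) and main obstacle.} The key observation here is that $V(\nu_{1,t}, \nu_{2,t})$ is in fact \emph{constant} along the characteristics. By (i.1) applied to each index, $M_{j,t} = e^{(t-T)/2} M_{j,T}$ for $j=1,2$, and by (i.2) the signs of $\Im z_{j,t}$ cannot change in $t$, so $\mathfrak{s}$ is constant in $t$. The $\mathfrak{s}$-conjugated factor $M^{D_{2,t}}(\Re z_{2,t} + \ii \mathfrak{s} \Im z_{2,t})$ in \eqref{eq:defV} scales by the same $e^{(t-T)/2}$, because the characteristic flow commutes with $z \mapsto \bar z$ and $M^{D}(\bar z) = (M^{D}(z))^*$. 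The two $e^{(t-T)/2}$ factors in the numerator of \eqref{eq:defV} then cancel the corresponding factor from the normalizing trace in the denominator, yielding $V(\nu_{1,t}, \nu_{2,t}) = V(\nu_{1,T}, \nu_{2,T})$. Since $\phi(\nu_{1,T}, \nu_{2,T}) = 1$ and $A$ is regular at $t = T$, we have $\langle V_T A \rangle = 0$, and hence $\phi(\nu_{1,t}, \nu_{2,t}) \langle V_t A \rangle = 0$ for every $t \in [0,T]$, which is exactly the definition of regularity at time $t$. The only mildly delicate step of the whole lemma is this last bookkeeping with $\mathfrak{s}$ and complex conjugation — one must check that the scaling $e^{(t-T)/2}$ of $M^D$ is insensitive to whether the spectral parameter is $z$ or $\bar z$. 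Everything else is direct substitution and elementary ODE manipulations, matching the paper's own remark that these properties are ``trivially checkable''.
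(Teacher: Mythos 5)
Your proof is correct and uses the natural, essentially forced approach of direct substitution into the MDE (for (i.1) and (i.3)) followed by elementary manipulations of the imaginary-part ODE $\partial_t\eta_t = -\pi\rho_t - \eta_t/2$ (for (i.2), (i.4), (i.5)); the paper itself presents these as ``trivially checkable'' and gives no proof, so there is nothing to compare against, but your route is the standard one.

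One small point worth making explicit in (ii): you rely on the identity $M^{D}(\bar z) = (M^D(z))^*$ and on the fact that the flow commutes with $z\mapsto\bar z$, i.e.\ the characteristic solution launched from $\bar z_{2,T}$ is exactly $\bar z_{2,t}$. Both are true (the first from taking the Hermitian conjugate of the MDE and checking the sign constraint; the second by conjugating the characteristic ODE and using $\overline{\langle M^D(z)\rangle} = \langle M^D(\bar z)\rangle$ for Hermitian $D$), but since the conclusion that $V(\nu_{1,t},\nu_{2,t})$ is constant is the crux of the argument, spelling these two conjugation facts out would make the proof airtight. The rest — the cancellation of the scalar factors $e^{(t-T)/2}$ between numerator and denominator of $V$, the constancy of $\mathfrak{s}$ via sign preservation of $\Im z_{j,t}$, and the reduction of regularity to $\langle V A\rangle=0$ under $\phi=1$ — is exactly right.
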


Notice that the error terms in Theorem~\ref{theo:multigllaw} are expressed in terms of the control parameter $\widehat{\gamma}$.  In Theorem~\ref{theo:multigllaw}, $\widehat{\gamma}$ is explicitly given, however, in order to make the argument more transparent, we collect in Definition \ref{def:gamma} all properties of $\widehat{\gamma}$ which are needed for the proof of Theorem \ref{theo:multigllaw}, arriving to the definition of an \emph{admissible control parameter}. In Proposition \ref{prop:gamma} we show that $\widehat{\gamma}$ on its own is an admissible control parameter. Further in Section~\ref{sec:zig} we work in this more general framework using a general admissible parameter $\gamma$ instead of $\widehat{\gamma}$.\nc

\begin{definition}[Admissible control parameter]
	\label{def:gamma}
Let $\gamma:\big( \C\setminus\R\big)^2\times \big(\C^{N \times N}\big)^2 \to (0,+\infty)$ be uniformly bounded in $N$ and assume that $\gamma(z_1,z_2,D_1,D_2)=\gamma(\bar{z}_1,z_2,D_1,D_2)$ and the same for $z_2 \to \bar{z}_2$. Moreover, for $t \in [0,T]$, let $\gamma_t : \Omega_0^1 \times \Omega_0^2 \times \big(\C^{N \times N}\big)^2 \to (0, \infty)$  with 
\begin{equation} \label{eq:gammat}
 \gamma_t(z_{1},z_{2},D_{1},D_{2}):=\gamma(z_{1,t},z_{2,t},D_{1,t},D_{2,t})
\end{equation}
 be the time-dependent version of $\gamma$, and $\beta_{*,t} : \Omega_0^1 \times \Omega_0^2 \times \big(\C^{N \times N}\big)^2 \to (0, \infty)$  with 
\begin{equation} \label{eq:beta*t}
\beta_{*,t}(z_{1},z_{2},D_{1},D_{2}):=\beta_*(z_{1,t},z_{2,t},D_{1,t},D_{2,t}).
\end{equation}	
the time-dependent version of $\beta_*$ (recall \eqref{eq:def_beta_*} and \eqref{eq:smallestev}). In \eqref{eq:gammat} and \eqref{eq:beta*t}, $z_{j,t}$ and $D_{j,t}$ are the solutions to \eqref{eq:def_flow} with $z_{j,0} = z_j$ and $D_{j,0} = D_j$ for $j \in [2]$. 

	Let $\mathfrak{D}_1, \mathfrak{D}_2 \subset \C^{N \times N}$ be $N$-dependent families of $N\times N$ Hermitian matrices. 	   We say that $\gamma$ is a \emph{$(\mathfrak{D}_1,\mathfrak{D}_2)$-admissible control parameter} if the following conditions hold uniformly in $D_{j}\in\mathfrak{D}_j$, $z_{j}\in\Omega^j_0$, $j\in[2]$, $t\in[0,T]$ and $N$:
	\begin{itemize}
		\item[(1)] \textnormal{[$\gamma$ is a lower bound on the stability operator]} It holds that
		\begin{equation}
			\left(\vert \Im z_{1,t}\vert/\rho_{1,t}(z_{1,t})+\vert \Im z_{2,t}\vert/\rho_{2,t}(z_{2,t})\right)\wedge 1\lesssim \gamma_t\lesssim \beta_{*,t}\,, 
			\label{eq:g_def}
		\end{equation}
	where both $\gamma_t$ and $\beta_{*,t}$ are evaluated at $(z_{1},z_{2},D_{1},D_{2})$.
		\item[(2)] \textnormal{[Monotonicity in time]} Uniformly in $0 \le s\le t \le T$, we have
		\begin{equation}
			\gamma_s(z_{1},z_{2},D_{1},D_{2})\sim \gamma_t(z_{1},z_{2},D_{1},D_{2})+t-s.
			\label{eq:time_monot}
		\end{equation}
		\item[(3)] \textnormal{[Vague monotonicity in imaginary part]} Uniformly in $z_1,z_2\in\HH$ and $x\in[0,\infty)$ it holds that
		\begin{equation}
		     \gamma(z_1,z_2,D_{1,t},D_{2,t})\lesssim \gamma(z_1,z_2+\ii x,D_{1,t},D_{2,t})\wedge \gamma(z_1+\ii x,z_2,D_{1,t},D_{2,t}).
			\label{eq:g_monot}
		\end{equation}
	\end{itemize}
\end{definition}
We now verify that $\widehat{\gamma}$ is an admissible control parameter in the sense of Definition \ref{def:gamma}. 
\begin{proposition}[Admissibility of $\widehat{\gamma}$]
	\label{prop:gamma}
	Fix $L, C_0>0$. Let $\mathfrak{D}$ be a set of all  traceless \nc $N\times N$ Hermitian matrices such that any $D\in\mathfrak{D}$ satisfies  \eqref{eq:M_bound_I} for $\mathbf{I}=\R$ \nc with constant $C_0$ and $\lVert D\rVert\le L$. Then $\widehat{\gamma}$ defined in \eqref{eq:def_gamma_0} is a $(\mathfrak{D},\mathfrak{D})$-admissible control parameter. 
\end{proposition}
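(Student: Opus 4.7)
The plan is to verify the three defining conditions of Definition~\ref{def:gamma} for the concrete choice $\gamma=\widehat{\gamma}$ of \eqref{eq:def_gamma_0}. The key ingredients are the stability estimate from Proposition~\ref{prop:stab} and the explicit characteristic flow formulas from Lemma~\ref{lem:flow_properties}; each condition reduces to a direct check on the individual summands of $\widehat{\gamma}$.

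For condition~(1), the upper bound $\widehat{\gamma}_t\lesssim\beta_{*,t}$ is exactly the inequality $\widehat{\gamma}\lesssim\beta_*$ of \eqref{eq:stab_bound} in Proposition~\ref{prop:stab}, applied to the flowed data $(z_{1,t},z_{2,t},D_{1,t},D_{2,t})$. To legitimately invoke Proposition~\ref{prop:stab} along the entire flow, I observe that the hypothesis \eqref{eq:M_bound_I} with $\mathbf{I}=\R$ is preserved in time: by Lemma~\ref{lem:flow_properties}(i)(1), $M^{D_{j,t}}(z_{j,t})=e^{t/2}M^{D_j}(z_{j,0})$, so $\lVert M^{D_{j,t}}\rVert$ remains bounded by $e^{T/2}C_0$ uniformly in $\Re z_{j,t}\in\R$ and $t\in[0,T]$. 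The matching lower bound $(\eta_{1,t}/\rho_{1,t}+\eta_{2,t}/\rho_{2,t})\wedge 1\lesssim\widehat{\gamma}_t$ is immediate from the explicit form of $\widehat{\gamma}_t$.

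For condition~(2), I track each summand of $\widehat{\gamma}_t$ separately. The deformation piece $\langle(D_{1,t}-D_{2,t})^2\rangle=e^{-t}\langle(D_1-D_2)^2\rangle$ is smooth and Lipschitz in $t$. By \eqref{eq:z_t}, $z_{j,t}$ is smooth with derivative bounded in terms of $L$ via $\lVert M_{j,0}\rVert\lesssim 1$, so $|E_{1,t}-E_{2,t}|^2\wedge 1$ is Lipschitz in $t$. The linear term $\mathrm{LT}(z_{1,t},z_{2,t})$ is a smooth, uniformly bounded function of $t$ (by smoothness of $M_{j,t}$ and $D_{j,t}$) and hence Lipschitz. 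These three observations give $\widehat{\gamma}_s-\widehat{\gamma}_t\lesssim(t-s)$. The matching lower bound is driven by the $\eta/\rho$ summand: Lemma~\ref{lem:flow_properties}(i)(5) yields $\eta_{j,t}/\rho_{j,t}+\pi=e^{s-t}(\eta_{j,s}/\rho_{j,s}+\pi)$, and hence
\begin{equation*}
\eta_{j,s}/\rho_{j,s}-\eta_{j,t}/\rho_{j,t}=(1-e^{s-t})\bigl(\eta_{j,s}/\rho_{j,s}+\pi\bigr)\gtrsim 1-e^{s-t}\gtrsim (t-s)\wedge 1,
\end{equation*}
where on the bounded interval $[0,T]$ one has $(t-s)\wedge 1\sim(t-s)$. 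Combined with the trivial monotonicity $\widehat{\gamma}_s\ge\widehat{\gamma}_t$, this delivers $\widehat{\gamma}_s\sim\widehat{\gamma}_t+(t-s)$; in the regime where the cap $\eta/\rho\wedge 1$ is active at both times $s,t$ one has $\widehat{\gamma}_s\sim\widehat{\gamma}_t\sim 1\sim\widehat{\gamma}_t+(t-s)$ trivially.

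Condition~(3) is again verified summand-wise: $\langle(D_{1,t}-D_{2,t})^2\rangle$ and $|E_1-E_2|^2\wedge 1$ are invariant under shifts of $\Im z_j$, while the Herglotz representation gives
\begin{equation*}
\frac{\eta_j}{\rho_j(E_j+\ii\eta_j)}=\pi\bigg(\int\frac{\dif\mu_{D_{j,t}}(x)}{(x-E_j)^2+\eta_j^2}\bigg)^{-1},
\end{equation*}
which is manifestly non-decreasing in $\eta_j$, so that capping by $1$ preserves monotonicity. It remains to compare $\mathrm{LT}$ at the two spectral arguments; since $\mathrm{LT}\le 1$ by definition, any decrease of $\mathrm{LT}$ under the shift is $O(1)$. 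A short case split closes the argument: when the shift $x\gtrsim 1$, the enlarged $\eta_j$ forces $\eta_j/\rho_j\wedge 1\sim 1$ on the right-hand side, which absorbs any $O(1)$ loss in $\mathrm{LT}$; when $x$ is small, $\mathrm{LT}$ changes only by $O(x)$ via its Lipschitz dependence on $z_j$, and this change is dominated by the (also small) monotone increase in $\eta_j/\rho_j\wedge 1$ up to multiplicative constants. The main technical point of the proof is precisely this treatment of $\mathrm{LT}$ in conditions~(2) and~(3): although $\mathrm{LT}$ is smooth and bounded, it is not manifestly monotone in the spectral parameters, so one must combine its Lipschitz property with the uniform boundedness and the genuine monotonicity of the $\eta/\rho$ terms. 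The remaining steps are direct bookkeeping with the explicit formulas of Lemma~\ref{lem:flow_properties}.
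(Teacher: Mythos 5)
Your verification of condition~(1) is essentially the paper's one-line appeal to Proposition~\ref{prop:stab}. Your additional remark about propagating \eqref{eq:M_bound_I} along the flow, however, is not quite justified as written: the identity $M^{D_{j,t}}(z_{j,t})=e^{(t-T)/2}M^{D_{j,T}}(z_{j,T})$ controls $M^{D_{j,t}}$ only along characteristics, not at arbitrary $z$ with $\Re z\in\R$, since the forward flow is not surjective onto $\HH$. Your treatment of condition~(2) follows the same strategy as the paper (tracking each summand of $\widehat{\gamma}$ separately), except that the paper replaces your generic ``smooth and bounded, hence Lipschitz'' reasoning for the $\LT_t$ term by an exact flow identity, which is what actually produces constants independent of $N$ and of the data.

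The genuine gap is in condition~(3). You assert that $\LT$ ``changes only by $O(x)$ via its Lipschitz dependence on $z_j$'', but $\LT$ is \emph{not} Lipschitz in $z_2$ with a uniform constant. Differentiating the MDE gives $\partial_{z_2}M_2=M_2^2/(1-\langle M_2^2\rangle)$, and $|1-\langle M_2^2\rangle|$ can be as small as $\rho_2^2$; this is not bounded below on the full upper half plane, where \eqref{eq:g_monot} must hold (it is not a bulk statement). The paper therefore only has the H\"older-$1/3$ estimate \eqref{eq:M_holder} at its disposal, yielding $|\LT(z_1,z_2)-\LT(z_1,w_2)|\lesssim|z_2-w_2|+\Delta|z_2-w_2|^{1/3}$ (see \eqref{eq:LT_regularity}), and for small $x$ the H\"older contribution $\Delta x^{1/3}$ dominates any increase of $\Im w_2/\rho_2(w_2)\wedge 1$ which is merely linear in $x$; so the claimed ``domination'' fails. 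This is why the paper's proof first splits on $|\langle M_1 M_2^*\rangle|\ge 1/2$ versus its complement (in the latter $\beta_*\sim1$ makes \eqref{eq:g_monot} trivial), and in the former performs a three-way case split on the relative sizes of $|z_2-w_2|$, $\Delta^{3/2}$, and $\rho_2(w_2)$ versus $(\Im w_2)^{1/3}$, with a distinct absorption mechanism in each sub-case. Your coarse ``$x\gtrsim 1$ versus $x$ small'' dichotomy does not reach the necessary cases, and the argument would not close.
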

The proof of Proposition \ref{prop:gamma} and a sufficient condition for $D$ to satisfy \eqref{eq:M_bound_I} for $\mathbf{I}=\R$ \nc are given in Section \ref{sec:cone} of the Supplementary Material \cite{supplement}.

As discussed around \eqref{eq:need_3G}, during the proof of Theorem \ref{theo:multigllaw} we need to handle resolvent products of the form $GBGBG$. More precisely, let $D_l\in\C^{N\times N}$ be Hermitian deformations and $z_l\in\C\setminus\R$ for $l\in [3]$. Denote $G_l:=(W+D_l-z_l)^{-1}$, $\nu_l:=(z_l,D_l)$ and $M_l:=M^{D_l}(z_l)$. We define the deterministic approximation of $G_1B_1G_2B_2G_3$ by (see \cite[Definition 4.1]{equipart})
\begin{equation}
M_{\nu_1,\nu_2,\nu_3}^{B_1,B_2}:=\mathcal{B}_{13}^{-1}\left[M_1B_1M_{\nu_2,\nu_3}^{B_2}+\langle M_{\nu_1,\nu_2}^{B_1}\rangle M_1M_{\nu_2,\nu_3}^{B_2}\right],
\end{equation}
where $\mathcal{B}_{13}$ is defined in \eqref{eq:stabop}, i.e.~$\mathcal{B}_{13}[\cdot]=1-M_1\langle\cdot\rangle M_3$. 
In the case when $\nu_l$ depend on additional parameter $t$, i.e. $\nu_l=\nu_l(t)$, $l\in[3]$, we adhere the analogue of the convention \eqref{eq:M2_t-dep} for $M_{\nu_1,\nu_2,\nu_3}^{B_1,B_2}$.  Namely, we use the shorthand notation
\begin{equation}
M_{123,t}^{B_1,B_2}:=M_{\nu_1(t),\nu_2(t),\nu_3(t)}^{B_1,B_2}.
\label{eq:M3t-dep}
\end{equation}

We are now ready to state bounds on the deterministic approximation of products of two and three resolvents:

\begin{proposition}[Bounds on $M$]\label{prop:norm_M_bounds} 
Fix $L>0$. Let $D_1,D_2\in \C^{N\times N}$ be Hermitian deformations with $\langle D_j\rangle=0$ and $\lVert D_j\rVert\le L$ for $j=1,2$. For spectral parameters $z_1,z_2\in\C\setminus\R$ denote the corresponding spectral pairs by $\nu_j=(z_j,D_j)$. Additionally we denote $\bar{\nu}_j:=(\bar{z}_j,D_j)$ and
\begin{equation*}
\ell(z_1,z_2):=\eta_1\rho_1(z_1)\wedge\eta_2\rho_2(z_2),\, \text{where}\,\, \eta_j=\vert\Im z_j\vert,\,\,\, \rho_j(z_j)=\pi^{-1}\vert\langle \Im M^{D_j}(z_j)\rangle\vert,\,\, j=1,2.\nc
\end{equation*}
\begin{itemize}
\item[Part 1:] \textnormal{[General case]}  Fix additionally $C_0>0$ and assume that $D_1, D_2$ satisfy \eqref{eq:M_bound_I} for $\mathbf{I}=\R$ with constant $C_0$. \nc Then uniformly in $z_1,z_2\in\C\setminus\R$ and deterministic matrices $B_1,B_2\in\C^{N\times N}$ it holds that
\begin{subequations}
\begin{align}
\left\lVert M_{\nu_1,\nu_2}^{B_1}\right\rVert\lesssim \frac{\lVert B_1\rVert}{\beta_*(z_1,z_2)},\label{eq:M2_gen}\\
\lVert M_{\nu_1,\nu_2,\nu_1}^{B_1,B_2}\rVert+\lVert M_{\nu_1,\nu_2,\bar{\nu}_1}^{B_1,B_2}\rVert \lesssim \frac{\lVert B_1\rVert\lVert B_2\rVert}{\ell(z_1,z_2)\beta_*(z_1,z_2)}.\label{eq:M3_gen}
\end{align}
\end{subequations}
Here the implicit constants depend only on $C_0$ and $L$.
\item[Part 2:] \textnormal{[Regular case]} Fix $\kappa>0$. Uniformly in $z_1,z_2\in \C\setminus\R$ with $\rho_j(z_j)\ge\kappa$ for $j=1,2$, in $(\nu_1,\nu_2)$-regular $A_1\in\C^{N\times N}$ and general $B_2\in\C^{N\times N}$ we have
\begin{subequations}
\begin{align}
\left\lVert M_{\nu_1,\nu_2}^{A_1}\right\rVert\lesssim \lVert A_1\rVert, \label{eq:M2_reg}\\
\lVert M_{\nu_1,\nu_2,\nu_1}^{A_1,B_2}\rVert+\lVert M_{\nu_1,\nu_2,\bar{\nu}_1}^{A_1,B_2}\rVert\lesssim \frac{\lVert A_1\rVert\lVert B_2\rVert}{\ell(z_1,z_2)\sqrt{\beta_*(z_1,z_2)}},\label{eq:M3_reg_gen}\\
\lVert M_{\nu_1,\nu_2,\nu_1}^{A_1,A_2}\rVert+\lVert M_{\nu_1,\nu_2,\bar{\nu}_1}^{A_1,A_2}\rVert\lesssim \frac{\lVert A_1\rVert\lVert A_2\rVert}{\ell(z_1,z_2)}.\label{eq:M3_reg_reg}
\end{align}
\end{subequations}
 We point out that $\ell\sim\eta_*$, since $z_1,z_2$ satisfy $\rho_i(z_i) \ge \kappa$. \nc The implicit constants in \eqref{eq:M2_reg}-\eqref{eq:M3_reg_reg} depend only on $L$ and $\kappa$, \eqref{eq:M3_reg_gen} also holds when the second observable is $(\nu_2,\nu_1)$-regular and the first one is general.
\end{itemize}
\end{proposition}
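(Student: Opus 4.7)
The central tool is the rank-one structure of the stability operator. Since the map $X\mapsto M_1\langle X\rangle M_2$ factors through the scalar $\langle X\rangle$, the Sherman--Morrison identity gives the explicit inverse
\[
\mathcal{B}_{12}^{-1}[Y] \;=\; Y + \frac{\langle Y\rangle}{1-\langle M_1 M_2\rangle}\,M_1 M_2 ,
\qquad
M_{\nu_1,\nu_2}^{B_1} \;=\; M_1 B_1 M_2 + \frac{\langle M_1 B_1 M_2\rangle}{1-\langle M_1 M_2\rangle}\,M_1 M_2 .
\]
All bounds in the proposition reduce to careful algebraic manipulation of this identity and its analog with $\mathcal{B}_{13}$ replacing $\mathcal{B}_{12}$, combined with the uniform estimate $\|M_l\|\lesssim 1$ (from \eqref{eq:M_bound_I} in Part 1, and from $\rho_j\ge\kappa$ together with \eqref{eq:boundmnorm} in Part 2).

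For \eqref{eq:M2_gen}, the two terms on the right are bounded respectively by $\|B_1\|$ and $\|B_1\|/|1-\langle M_1 M_2\rangle|\le\|B_1\|/\beta_*$. For \eqref{eq:M2_reg}, the argument splits on the sign $\mathfrak{s}=-\mathrm{sgn}(\Im z_1\Im z_2)$. When $\mathfrak{s}=+1$ one has $V\propto M_2 M_1$, so the regularity of $A_1$ reads $\langle M_1 A_1 M_2\rangle=0$ and the rank-one correction vanishes identically. When $\mathfrak{s}=-1$, regularity reads $\langle M_1 A_1 M_2^*\rangle=0$, which does not directly kill $\langle M_1 A_1 M_2\rangle$; however, in this regime $\beta(z_1,z_2)\gtrsim(\rho_1+\rho_2)^2\gtrsim\kappa^2$ by \eqref{eq:rho_stab} and the bulk assumption, so the scalar coefficient is bounded and the correction contributes only $O(\|A_1\|)$.

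For the three-resolvent bounds I would apply the analogous Sherman--Morrison inversion to $\mathcal{B}_{13}^{-1}$, writing
\[
M_{\nu_1,\nu_2,\nu_3}^{B_1,B_2} \;=\; Y + \frac{\langle Y\rangle}{1-\langle M_1 M_3\rangle}\,M_1 M_3 ,
\qquad
Y \;:=\; M_1 B_1 M_{\nu_2,\nu_3}^{B_2}+\langle M_{\nu_1,\nu_2}^{B_1}\rangle\,M_1 M_{\nu_2,\nu_3}^{B_2} ,
\]
for $z_3\in\{z_1,\bar z_1\}$. The source $Y$ is controlled using the already-established two-point bound \eqref{eq:M2_gen} (or its regular counterpart \eqref{eq:M2_reg}), together with the explicit scalar identity $\langle M_{\nu_1,\nu_2}^{B_1}\rangle=\langle M_1 B_1 M_2\rangle/(1-\langle M_1 M_2\rangle)$, whose denominator is $\beta(z_1,z_2)$ rather than the smaller $\beta_*(z_1,z_2)$; this improvement is crucial in the $\Im z_1\Im z_2>0$ regime where $\beta(z_1,z_2)\gtrsim 1$ while $\beta_*$ may be tiny. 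The outer denominator $|1-\langle M_1 M_3\rangle|$ is controlled via the Ward identity $\langle M_1 M_1^*\rangle=\pi\rho_1/(\eta_1+\pi\rho_1)$, giving $|1-\langle M_1 M_1^*\rangle|\sim\eta_1/\rho_1$ in the $z_3=\bar z_1$ case; the $z_3=z_1$ case is handled by the MDE-derived identity $\partial_z\langle M_1\rangle=\langle M_1^2\rangle/(1-\langle M_1^2\rangle)$ and the boundedness of $\partial_z\langle M_1\rangle$ in the bulk. Together these supply the common $1/\ell$ factor in \eqref{eq:M3_gen}, \eqref{eq:M3_reg_gen}, and \eqref{eq:M3_reg_reg}.

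The main technical obstacle is the fractional exponent $\sqrt{\beta_*}$ in \eqref{eq:M3_reg_gen}: a termwise estimate of $\mathcal{B}_{13}^{-1}[Y]$ only yields $\ell\beta_*$ in the denominator when one observable is regular. To capture the extra $\sqrt{\beta_*}$ gain, one splits the scalar $\langle Y\rangle$ multiplying the singular direction $M_1 M_3$ via a Hilbert--Schmidt Cauchy--Schwarz, pairing the regular factor $A_1$ (which, via \eqref{eq:M2_reg}, contributes a factor $\sqrt{\beta_*}$ better than in the general case) against the nonregular one. When both observables are regular, both halves of the Cauchy--Schwarz gain $\sqrt{\beta_*}$ and the two factors combine to kill the remaining $\beta_*$ altogether, yielding \eqref{eq:M3_reg_reg}. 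Executing this split while tracking which $V$-direction is killed by which regularity hypothesis is the technical heart of the proof.
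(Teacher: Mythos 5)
Your proposal correctly identifies the Sherman--Morrison explicit inversion of $\mathcal{B}_{12}$ and $\mathcal{B}_{13}$ as the structural starting point, and the case split according to the sign $\mathfrak{s}=-\mathrm{sgn}(\Im z_1\Im z_2)$ for \eqref{eq:M2_reg} matches the paper's proof: when $\Im z_1\Im z_2<0$ the regularity of $A_1$ annihilates the scalar $\langle M_1A_1M_2\rangle$, while for $\Im z_1\Im z_2>0$ one uses $\beta(z_1,z_2)\gtrsim\kappa^2$ from \eqref{eq:rho_stab}. The bound \eqref{eq:M2_gen} is immediate, and the paper's treatment of \eqref{eq:M3_gen} is along the lines you describe, using the lower bounds $|1-\langle M_1M_2\rangle|\gtrsim\eta_1/\rho_1$ and $|1-\langle M_1^2\rangle|\gtrsim\rho_1^2$ from Proposition~\ref{prop:stab}.

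There are two gaps, one minor and one substantive. The minor one: in your reduction of the regularity of $A_1$ to $\langle M_1A_1M_2\rangle=0$ when $\mathfrak{s}=+1$, you silently discard the possibility that $A_1=\mathring{A}_1^{\nu_1,\nu_2}$ because the cutoff $\phi(\nu_1,\nu_2)$ vanishes rather than because $\langle VA_1\rangle=0$. The paper handles this by noting that $\phi=0$ forces $\widehat{\gamma}\sim 1$ and hence $\beta(z_1,z_2)\sim 1$. Relatedly, your control of the outer denominator $|1-\langle M_1^2\rangle|$ via ``boundedness of $\partial_z\langle M_1\rangle$ in the bulk'' is insufficient for Part~1, which is stated uniformly in $z_1,z_2\in\C\setminus\R$; one needs $|1-\langle M_1^2\rangle|\gtrsim\rho_1^2$ from \eqref{eq:rho_stab}, which does not presuppose the bulk.

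The substantive gap is precisely where you flag ``the technical heart of the proof,'' namely the $\sqrt{\beta_*}$ gain in \eqref{eq:M3_reg_gen} for $\lVert M_{\nu_1,\nu_2,\bar\nu_1}^{A_1,B_2}\rVert$ with $\Im z_1\Im z_2>0$ and $\langle M_1A_1M_2^*\rangle=0$. A Hilbert--Schmidt Cauchy--Schwarz split of $\langle Y\rangle$ does not see the relevant structure: the problematic scalar is $\langle M_1M_1^*A_1M_2\rangle/(1-\langle M_1^*M_2\rangle)$, and any Cauchy--Schwarz pairing of a ``regular half'' against a ``nonregular half'' loses the phase information that makes this product small. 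What is actually needed is the algebraic cancellation
\[
\left\vert\langle M_1A_1M_2\rangle\bigl(1-\langle M_1^*M_2\rangle\bigr)-\langle M_1^*A_1M_2\rangle\bigl(1-\langle M_1M_2\rangle\bigr)\right\vert\lesssim\sqrt{\vert 1-\langle M_1^*M_2\rangle\vert}\,\lVert A_1\rVert,
\]
which in the paper follows from Lemma~\ref{lem:reg_reg}: the two regularizations $\mathring{A}^{\nu_1,\nu_2}$ and $\mathring{A}^{\nu_2,\nu_1}$ differ by $O(\lVert A\rVert\sqrt{\widehat{\gamma}})$ in operator norm, and this is the quantitative input that produces the $\sqrt{\beta_*}$. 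Lemma~\ref{lem:reg_reg} is in turn proved by subtracting the two MDEs to control $M_1-M_2^*$; neither this lemma nor anything playing its role appears in your proposal, so the argument does not close.
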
  
The proof of Proposition \ref{prop:norm_M_bounds} is given in Section \ref{sec:norm_M_bounds} of the Supplementary Material \cite{supplement}.

\subsubsection{Propagating local law bounds} \label{subsubsec:propagating}

The general setting for propagating local law bounds is the following:
\begin{setting}[Zig step] \label{setting}
	Fix large constant $L>0$ and let $\mathfrak{D}_1,\mathfrak{D}_2$ be sets of $N\times N$ traceless Hermitian matrices such that $\lVert D\rVert\le L$ for any $D\in\mathfrak{D}_j$, $j\in[2]$.
	 Let $\gamma$ be a $(\mathfrak{D}_1,\mathfrak{D}_2)$-admissible control parameter as in Definition~\ref{def:gamma}.
	
	Fix a terminal time $T > 0$, let $z_{j,0} \in \Omega_0^j$, $D_{j,0} \in \mathfrak{D}_{j}$ for $j \in [2]$, and denote their time evolutions \eqref{eq:def_flow} by $z_{j,t} \in \Omega_t^j$ and $D_{j,t} \in \mathfrak{D}_j$, respectively. Moreover, let $s \in [0,T]$ be an initial time for the Ornstein-Uhlenbeck process. That is, for $t \in [s,T]$, let $W_t$ be the solution to \eqref{eq:OU},
		\begin{equation}
		\dif W_t = -\frac{1}{2}W_t\dif t +\frac{\dif B_{t-s}}{\sqrt{N}} \quad \text{with initial condition} \quad W_s = W\,.  
		\label{eq:OU}
	\end{equation}
	Here, $B_{t-s}$ is a real symmetric ($\beta =1 $) or complex Hermitian ($\beta=2$) matrix-valued Brownian motion with entries having variance equal to $(t-s)$ times those of a GOE/GUE matrix. Finally, we denote the resolvent of $W_t + D_{j,t}$ at $z_{j,t}$ by
	\begin{equation}\label{Gjt}
		G_{j,t}:=(W_t+D_{j,t}-z_{j,t})^{-1}
	\end{equation}
and introduce the abbreviations $\gamma_t$ from Definition~\ref{def:gamma}, and
	\begin{equation} \label{eq:etaelldef}
\eta_{*,t} := \eta_{1,t} \wedge \eta_{2,t}\wedge 1 \quad \text{and} \quad \ell_{t} := (\eta_{1,t} \rho_{1,t}) \wedge (\eta_{2,t} \rho_{2,t}) \,. 
	\end{equation}
\end{setting}

The following proposition formalizes the propagation of local laws along the evolution from Setting \ref{setting}. 

\begin{proposition}[Average and isotropic zig step for two and three resolvents]
\label{prop:Zig} 
In the Setting \ref{setting}, we have the following. 
\begin{itemize}
\item[Part 1:] \textnormal{[General case]} Consider Setting \ref{setting} with $\mathfrak{D}_1, \mathfrak{D}_2$ such that \eqref{eq:M_bound_I} is satisfied for $\mathbf{I}=\R$ with some constant $C_0$ for any matrix $D\in\mathfrak{D}_1\cup\mathfrak{D}_2$. \nc Assume that, for fixed initial time  $s \in [0,T]$, we have\footnote{The notation $G^{(*)}$ indicates both choices of adjoint $G^*$ and no adjoint $G$. }
\begin{subequations}
		\begin{align}
				\left\vert\left\langle \left(G_{1,s}B_1G_{2,s} - M_{12,s}^{B_1}\right) B_2\right\rangle\right\vert&\prec \left(\frac{1}{N\eta_{1,s}\eta_{2,s}}\wedge \frac{1}{\sqrt{N\ell_s}\gamma_s}\right)\lVert B_1\rVert\lVert B_2\rVert \,, 
			\label{eq:Zig_init} \\
	\left\vert \left\langle {\boldsymbol x},\left(G_{1,s}B_1G_{2,s} -M_{12,s}^{B_1}\right){\boldsymbol y}\right\rangle\right\vert &\prec \frac{1}{\sqrt{N\ell_s}}\cdot\frac{1}{\sqrt{\eta_{*,s} \gamma_s}}\lVert B_1\rVert \,,
	\label{eq:init_iso2} \\
		\left\vert \left\langle {\boldsymbol x},G_{1,s}B_1G_{2,s}B_2G^{(*)}_{1,s} {\boldsymbol y}\right\rangle\right\vert &\prec \frac{1}{\ell_s} \cdot \frac{1}{ \gamma_s}\lVert B_1\rVert\lVert B_2\rVert \,,
	\label{eq:init_iso3}
\end{align}
\end{subequations}
uniformly in $z_{j,s}\in\Omega_s^{j}$, $j \in [2]$, deterministic matrices $B_1,B_2$ and unit vectors $\boldsymbol{x},\boldsymbol{y}\in \C^N$. Then it holds that 
\begin{subequations}
\begin{align}
		\left\vert\left\langle \left(G_{1,t}B_1G_{2,t} - M_{12,t}^{B_1}\right) B_2\right\rangle\right\vert&\prec \left(\frac{1}{N\eta_{1,t}\eta_{2,t}}\wedge \frac{1}{\sqrt{N\ell_t}\gamma_t}\right)\lVert B_1\rVert\lVert B_2\rVert \,, 
	\label{eq:Zig}\\
	\left\vert \left\langle {\boldsymbol x},\left(G_{1,t}B_1G_{2,t} -M_{12,t}^{B_1}\right){\boldsymbol y}\right\rangle\right\vert &\prec \frac{1}{\sqrt{N\ell_t}} \cdot\frac{1}{\sqrt{\eta_{*,t} \gamma_t}}\lVert B_1\rVert \,, 
	\label{eq:2G_iso} \\
		\left\vert \left\langle {\boldsymbol x},G_{1,t}B_1G_{2,t}B_2G^{(*)}_{1,t} {\boldsymbol y}\right\rangle\right\vert &\prec \frac{1}{\ell_t} \cdot\frac{1}{ \gamma_t}\lVert B_1\rVert\lVert B_2\rVert \,, 
	\label{eq:3G_iso}
\end{align}
\end{subequations}
uniformly in $t\in [s,T]$, $z_{j,t}\in\Omega_t^{j}$, $j \in [2]$, matrices $B_1,B_2$ and unit vectors $\boldsymbol x, \boldsymbol y \in \C^N$.

\item[Part 2:] \textnormal{[Regular case]}  Assume that the result of Part 1 holds in $[0,T]$ and consider the slightly modified Setting \ref{setting} with $z_{j,0} \in \Omega_{\kappa, 0}^j$ for some  bulk parameter $\kappa > 0$. 
Assume that for fixed initial time $s \in [0,T]$, we have 
\begin{subequations}
	\begin{align}
	\left\vert\left\langle \left(G_{1,s}A_1 G_{2,s} - M_{12,s}^A\right) B\right\rangle\right\vert& \prec \left(\frac{1}{N\eta_{1,s}\eta_{2,s}}\wedge \frac{1}{\sqrt{N\ell_s\gamma_s}}\right)\lVert A_1\rVert\lVert B\rVert \,, 
			\label{eq:Zig_inithr} \\
					\left\vert\left\langle \left(G_{1,s}A_1G_{2,s} - M_{12,s}^{A_1}\right) A_2\right\rangle\right\vert &\prec \left(\frac{1}{N \eta_{1,s} \eta_{2,s}}\wedge  \frac{1}{\sqrt{N\ell_s}}\right)\lVert A_1\rVert\lVert A_2\rVert \,, 
		\label{eq:Zig_init_reg} \\
		\left\vert \left\langle {\boldsymbol x},\left(G_{1,s}A_1G_{2,s} -M_{12,s}^{A_1}\right){\boldsymbol y}\right\rangle\right\vert &\prec \frac{1}{\sqrt{N\ell_s}} \cdot\frac{1}{\sqrt{\eta_{*,s}}}\lVert A_1\rVert \,, 
		\label{eq:init_iso2_reg} \\
	\left\vert \left\langle {\boldsymbol x},G_{1,s}A_1G_{2,s}BG^{(*)}_{1,s} {\boldsymbol y}\right\rangle\right\vert & \prec \frac{1}{\ell_s} \cdot \frac{1}{\sqrt{\gamma_s}}\lVert A_1\rVert\lVert B\rVert \,,
	\label{eq:init_iso3hr} \\
				\left\vert \left\langle {\boldsymbol x},G_{1,s}A_1G_{2,s}A_2G^{(*)}_{1,s} {\boldsymbol y}\right\rangle\right\vert &\prec \frac{1}{\ell_s}\lVert A_1\rVert\lVert A_2\rVert  \,, 
		\label{eq:init_iso3_reg}
	\end{align}
\end{subequations}
	uniformly in $z_{j,s}\in\Omega_{\kappa,s}^{j}$, $j \in [2]$, deterministic $B\in \C^{N\times N}$, $(\pair_1, \pair_2)$-regular  $A_1$ and $(\pair_2, \pair_1)$-regular $A_2$ and unit vectors $\boldsymbol{x},\boldsymbol{y}\in \C^N$. Then it holds that 
	\begin{subequations}
	\begin{align}
	\left\vert\left\langle \left(G_{1,t}A_1G_{2,t} - M_{12,t}^A\right) B\right\rangle\right\vert&\prec \left(\frac{1}{N\eta_{1,t}\eta_{2,t}}\wedge \frac{1}{\sqrt{N\ell_t\gamma_t}}\right)\lVert A_1\rVert\lVert B\rVert \,, 
	\label{eq:Zighr}\\
					\left\vert\left\langle \left(G_{1,t}A_1G_{2,t} - M_{12,t}^{A_1}\right) A_2\right\rangle\right\vert&\prec \left(\frac{1}{N\eta_{1,t}\eta_{2,t}}\wedge \frac{1}{\sqrt{N\ell_t}}\right)\lVert A_1\rVert\lVert A_2\rVert \,, 
		\label{eq:Zig_reg} \\
		\left\vert \left\langle {\boldsymbol x},\left(G_{1,t}A_1G_{2,t} -M_{12,t}^{A_1}\right){\boldsymbol y}\right\rangle\right\vert &\prec \frac{1}{\sqrt{N\ell_t}} \cdot\frac{1}{\sqrt{\eta_{*,t}}}\lVert A_1\rVert \,, 
		\label{eq:2G_iso_reg} \\
		\left\vert \left\langle {\boldsymbol x},G_{1,t}A_1G_{2,t}BG^{(*)}_{1,t} {\boldsymbol y}\right\rangle\right\vert & \prec \frac{1}{\ell_t} \cdot\frac{1}{\sqrt{\gamma_t}}\lVert A_1\rVert\lVert B\rVert \,, 
	\label{eq:3G_isohr} \\
				\left\vert \left\langle {\boldsymbol x},G_{1,t}A_1G_{2,t}A_2G^{(*)}_{1,t} {\boldsymbol y}\right\rangle\right\vert &\prec \frac{1}{\ell_t}\lVert A_1\rVert\lVert A_2\rVert  \,, 
		\label{eq:3G_iso_reg}
	\end{align}
\end{subequations}
uniformly in $t\in [s,T]$, $z_{j,t}\in\Omega_{\kappa,t}^{j}$, $j \in [2]$, deterministic $B\in \C^{N\times N}$, $(\pair_1, \pair_2)$-regular  $A_1$ and $(\pair_2, \pair_1)$-regular $A_2$ and unit vectors $\boldsymbol x, \boldsymbol y \in \C^N$.
\end{itemize}
\end{proposition}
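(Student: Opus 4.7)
My plan is to derive a system of stochastic differential equations for the quantities on the left-hand sides of \eqref{eq:Zig}--\eqref{eq:3G_iso} (and their regular variants), and then close a Gronwall-type argument in $t$ using the stability bound from Proposition~\ref{prop:stab} together with the admissibility of $\gamma$ from Definition~\ref{def:gamma}. Concretely, I would first apply It\^o's formula to $G_{j,t}=(W_t+D_{j,t}-z_{j,t})^{-1}$, using $dW_t=-\frac{1}{2}W_t\,dt+dB_{t-s}/\sqrt{N}$ together with the characteristic ODEs \eqref{eq:def_flow}. A direct computation gives
\begin{equation*}
	dG_{j,t}=\tfrac{1}{2}G_{j,t}\,dt+\big(\langle G_{j,t}\rangle-\langle M_{j,t}\rangle\big)G_{j,t}^2\,dt-G_{j,t}\,\frac{dB_{t-s}}{\sqrt{N}}\,G_{j,t}\,,
\end{equation*}
where the $\frac{1}{2}G_{j,t}$ term is produced precisely by the cancellation of the OU drift with $\partial_tz_{j,t}$, $\partial_tD_{j,t}$ (this is the defining feature of the characteristic flow). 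The correction $\langle G_{j,t}\rangle-\langle M_{j,t}\rangle$ is controlled by the single-resolvent local law \eqref{eq:singllaw0}, so it contributes only to lower-order error terms.

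Taking trace against $B_2$ (or bilinear forms against $\bm{x},\bm{y}$) of the product rule for $d(G_{1,t}B_1G_{2,t})$ and subtracting the analogous evolution of $M_{12,t}^{B_1}$ computed from the Dyson equation yields a schematic SDE of the form
\begin{equation*}
	d\Phi_t=\mathcal{L}_t\Phi_t\,dt+d\mathcal{M}_t+\mathcal{E}_t\,dt\,,
\end{equation*}
where $\mathcal{L}_t$ is (a piece of) the stability operator $\mathcal{B}_{12,t}$, the martingale $\mathcal{M}_t$ has quadratic variation involving four-resolvent chains, and $\mathcal{E}_t$ contains forcing terms built from $\langle G_{j,t}-M_{j,t}\rangle$ times lower-order objects. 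The four-resolvent chain in $d\langle\mathcal{M}\rangle_t$ I would reduce, via a Schwarz/$GAG\cdot GAG$-splitting, to a product of two three-resolvent isotropic chains of the type appearing in \eqref{eq:init_iso3}, \eqref{eq:3G_iso}. This is precisely the reason the 2G averaged, 2G isotropic, and 3G isotropic bounds must be propagated \emph{in tandem}: none of them closes by itself. The same computation runs in the regular case, with $B_1,B_2$ replaced by $A_1,A_2$; crucially, Lemma~\ref{lem:flow_properties}(ii) guarantees that if $A_1$ is $(\nu_{1,T},\nu_{2,T})$-regular then it is $(\nu_{1,t},\nu_{2,t})$-regular for every $t\in[0,T]$, so the improved $\|M_{12,t}^{A_1}\|\lesssim\|A_1\|$ from Proposition~\ref{prop:norm_M_bounds} is available throughout.

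To propagate the bounds, I would integrate the SDE from $s$ to $t$ against the evolution kernel of $\mathcal{L}_t$. The size of this kernel is governed by $\beta_{*,t}$, which, by Proposition~\ref{prop:stab} and admissibility \eqref{eq:g_def}, is $\gtrsim\gamma_t$; moreover, the time monotonicity \eqref{eq:time_monot} of $\gamma$ guarantees that $\gamma_t\gtrsim\gamma_s+(t-s)$, giving enough room for the Gronwall step. The martingale term is estimated by the Burkholder-Davis-Gundy inequality together with the bootstrap inputs on the 3G isotropic chains; the forcing term $\mathcal{E}_t$ is estimated using the input bounds at time $s$ propagated by the single-resolvent local law and the norm bounds on $M_{12,t}^{A}$, $M_{123,t}^{A,B}$ from Proposition~\ref{prop:norm_M_bounds}. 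The identities \eqref{eq:int_bounds} and \eqref{eq:linetarho} from Lemma~\ref{lem:flow_properties}(i) are used to convert time integrals of $\rho_u/\eta_u^m$ into $1/\eta_t^{m-1}$-type bounds, producing the factors $1/(N\eta_{1,t}\eta_{2,t})$, $1/\sqrt{N\ell_t}$, $1/\ell_t$ on the right-hand sides of \eqref{eq:Zig}--\eqref{eq:3G_iso_reg}.

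The main obstacle will be the extraction of the $\sqrt{\eta_t/\gamma_t}$-gain (the decay effect), which corresponds to upgrading $1/\sqrt{\eta_t}$ to $1/\sqrt{\gamma_t}$ in the right-hand sides. A single pass of the Gronwall argument only produces $1/\sqrt{\eta_t^{1-b}\gamma_t^{b}}$ for some $b<1$, because the quadratic-variation of the martingale is estimated using the same bound we are trying to prove. I would resolve this by a self-improving iteration, as announced in the introduction and in the discussion around the $\sqrt{\eta/\gamma}$-rule: starting from $b=0$ (i.e.\ from the existing $1/\sqrt{\eta_t}$-bound in the style of \cite{equipart,edgeETH}), each application of the zig step with the previous bound as input raises $b\to b'>b$, and finitely many iterations reach $b=1$, i.e.\ the target $1/\sqrt{\gamma_t}$. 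The admissibility conditions \eqref{eq:time_monot} and \eqref{eq:g_monot} are used at each iteration to pull the integrated errors back to the desired form in $\gamma_t$, and the vague monotonicity \eqref{eq:g_monot} is precisely what allows comparison of $\gamma_t(z_{1,t},z_{2,t})$ with $\gamma_t$ evaluated at shifted imaginary parts appearing in the intermediate estimates.
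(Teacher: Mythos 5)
Your high‑level architecture — It\^o evolution of $G_{j,t}$ along the characteristics, comparison with the evolution of $M^{B_1}_{12,t}$, Gronwall — matches the paper, and your computation of $dG_{j,t}$ (the $\tfrac{1}{2}G$ drift emerging from cancellation of OU and characteristic terms) is correct. However, there is a genuine misidentification of the mechanism that produces the $1/\gamma_t$-decay, and as written your proposal cannot close the zig step.

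You propose obtaining $1/\sqrt{\gamma_t}$ by a self‑improving iteration in an exponent $b$, bootstrapping $1/\sqrt{\eta^{1-b}\gamma^b}$ to $1/\sqrt{\eta^{1-b'}\gamma^{b'}}$. That iterative scheme is indeed in the paper, but it belongs to the \emph{zag} step (the Green function comparison of Section~\ref{sec:zag}, Lemmas~\ref{lem:condGron2iso}--\ref{lem:condGron3iso}), not the zig step that Proposition~\ref{prop:Zig} concerns. In the zig step the paper obtains the full $1/\gamma_t$-decay in a single pass, and the engine is different: a stopping‑time argument combined with a \emph{split of the time integral at $s_*$}, where $s_*$ is the first time $\min_j\eta_{j,s}/\rho_{j,s}$ falls below $\gamma_{t\wedge\tau}$ (eq.~\eqref{eq:s*}). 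On $[0,s_*]$ one uses $\alpha_s\le 1/(N\eta_{1,s}\eta_{2,s})$, while on $[s_*,t\wedge\tau]$ one exploits $\alpha_s\le 1/(\sqrt{N\ell_s}\gamma_s)$ together with $t\wedge\tau-s_*\lesssim\gamma_{t\wedge\tau}$ (eq.~\eqref{eq:s*_ineq}) and $\beta_s\sim\beta_{t\wedge\tau}$ (eq.~\eqref{eq:b_sim}); it is precisely the shortness of the late‑time window that yields $1/(N\ell_{t\wedge\tau}\gamma_{t\wedge\tau}^2)$ in \eqref{eq:propest2}. Without this $s_*$-split your Gronwall integral cannot produce the $\gamma$-gain, and inserting the zag-step iteration in its place does not help: that iteration removes a Gaussian component at fixed spectral parameters, whereas here the spectral parameters are moving and the improvement comes from the structure of the characteristic flow itself.

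A second inaccuracy: you state that the $2$G averaged, $2$G isotropic and $3$G isotropic bounds must be propagated in tandem because ``none of them closes by itself.'' In the zig step the averaged $2$G estimate \eqref{eq:Zig} \emph{is} self-contained. The quadratic variation of the averaged martingale contains a four-resolvent average, and the paper reduces it via spectral decomposition and positivity (eq.~\eqref{eq:mart_reduct}) to a \emph{product of two averaged two‑resolvent traces}, then handles the $|G_2|$ factors by the integral representation \eqref{eq:|G|_int} along vertical rays (justified by the ray property, Lemma~\ref{lem:ray}) and the vague monotonicity \eqref{eq:g_monot}. No $3$G isotropic input is needed there. The $2$G and $3$G isotropic estimates \eqref{eq:2G_iso}--\eqref{eq:3G_iso} are indeed coupled to each other (and use the already-proved averaged bound), but not in the way you describe. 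Finally, you omit the stopping time \eqref{eq:stoptime} and the propagator bounds of Lemma~\ref{lem:prop}, which are essential to convert the SDE into a usable stochastic Gronwall inequality and control $\exp\bigl(\int f_r\,dr\bigr)$ via $\beta_s$.
\par
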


Note that while the case of general observables is self--contained (i.e. it does not require any information about regular observables), the cases of one or two regular observables have to be done in tandem. In fact, when computing the quadratic variation for the stochastic term in \eqref{eq:dif_g} for traces with only one regular observable one gets a trace with two regular observables. On the other hand, the case of two regular observables is not self--contained either, because of the $\mathrm{Lin}_t$ term in \eqref{eq:dif_g}.

\subsection{Zag step: Removing Gaussian components via a GFT} \label{subsec:GFTzag}
As already explained below \eqref{eq:def_flow}, from now on we constrain the argument to the \emph{bulk}, i.e. we assume the spectral parameters to be in bulk-restricted domains $\Omega_{\kappa,t}^j$, 
where it holds that $\ell \sim \eta_*$. For ease of notation, we shall also write $\eta \equiv \eta_*$. 

The general setting for removing a Gaussian component in Lemmas \ref{lem:uncondGron2iso}--\ref{lem:condGronreg} is the following. 
\begin{setting}[Zag step] \label{settingzag}
		Fix a large constant $L>0$ and let $\mathfrak{D}_1,\mathfrak{D}_2$ be sets of $N\times N$ traceless Hermitian matrices such that any $D \in\mathfrak{D}_j$, $j\in[2]$, satisfies $\lVert D\rVert\le L$. Let $\gamma$ be a $(\mathfrak{D}_1,\mathfrak{D}_2)$-admissible control parameter as in Definition~\ref{def:gamma}.
	
	Fix some $\kappa > 0$ (bulk parameter) and a terminal time $T > 0$, let $z_{j,0} \in \Omega_{\kappa, 0}^j$, $D_{j,0} \in \mathfrak{D}_{j}$ for $j \in [2]$, and denote their time evolutions \eqref{eq:def_flow} by $z_{j,t} \in \Omega_{\kappa, t}^j$ and $D_{j,t} \in \mathfrak{D}_j$, respectively. 
	Now, take two fixed times $s, t \in [0,T]$ with $s \le t$ and consider the Ornstein-Uhlenbeck process
	\begin{equation*}
		\dif W_r = -\frac{1}{2}W_r\dif r +\frac{\dif B_{r-s}}{\sqrt{N}} \quad \text{with initial condition} \quad W_s = W
	\end{equation*}
	for times  $r \in [s,t]$. 
 Finally, we denote the resolvent of $W_r + D_{j,t}$ at $z_{j,t}$ (note that the $t$ index is fixed!) by
	\begin{equation}
		G_{j,r}:=(W_r+D_{j,t}-z_{j,t})^{-1}\,. 
	\end{equation}
	The times $s,t$ and hence, in particular, the spectral parameters $z_{j,t} \in \Omega_{\kappa, t}^j$ remain fixed through the Lemmas~\ref{lem:uncondGron2iso}--\ref{lem:condGronreg} below. Thus, dropping the time arguments, we denote $\eta_j = |\Im z_{j,t}|$, $\eta := \min_j \eta_j$, and $\gamma = \gamma_t(z_1, z_2, D_1, D_2)$. 
\end{setting}

Contrary to the the \emph{zig} step in Section \ref{subsec:charflowzig}, where all three local law bounds (two resolvent average, two and three resolvent isotropic) are propagated together (cf.~Proposition \ref{prop:Zig}), the Gronwall estimates needed to remove the Gaussian component will be done separately in a carefully chosen order. More precisely, we begin with an \emph{unconditional} Gronwall estimate for isotropic two resolvents, see Lemma \ref{lem:uncondGron2iso}. We call it unconditional, because the differential inequality obtained in \eqref{eq:uncondGron2iso} does not require any input. The differential inequalities \eqref{eq:condGron2iso}, \eqref{eq:condGron3iso}, and \eqref{eq:condGron2av} in Lemmas \ref{lem:condGron2iso}--\ref{lem:condGron2av}, however, require certain inputs, which are obtained from integrating the differential inequalities in time. \nc Since Lemmas \ref{lem:condGron2iso}--\ref{lem:condGron2av} require inputs, we call them \emph{conditional} Gronwall estimates. Moreover, we point out that the proof of the two resolvent and three resolvent isotropic bounds contain an internal recursion. In fact, Lemmas \ref{lem:condGron2iso}--\ref{lem:condGron3iso} are used several times to gradually improve the bound (the exponent $b$ is improved to $b'$). Finally, in Lemma~\ref{lem:condGronreg} we explain how the conditional Gronwall estimates change in case of \emph{regular observables}.

All estimates in Lemmas \ref{lem:uncondGron2iso}--\ref{lem:condGronreg} hold \emph{uniformly} in all spectral parameters $z_{j,t} \in \Omega_{\kappa, t}^j$ for the fixed time $t$. 
\subsubsection{Unconditional Gronwall estimate for isotropic two-resolvent chains}
We begin with an unconditional Gronwall estimate for isotropic two resolvents. 

\begin{lemma}[Unconditional Gronwall estimate for isotropic two resolvents] \label{lem:uncondGron2iso} Let $\boldsymbol x, \boldsymbol y \in \C^N$ be bounded and set  
	\begin{equation} \label{eq:unGFT1isoinput}
		R_r := \left| \big( G_{1,r} B_1 G_{2,r} - M_{12}^{B_1}\big)_{\boldsymbol x \boldsymbol y} \right|  \quad \text{and} \quad \mathcal{E}_0 := \frac{1}{\sqrt{N \eta}} \frac{1}{\eta} \,,
	\end{equation}
	Then, for $p \in \N$ and any $\xi > 0$, we have that 
	\begin{equation} \label{eq:uncondGron2iso}
		\frac{\dif }{\dif r} \E |R_r|^{2p} \lesssim \left(1 + \frac{1}{\sqrt{N \eta} \, \eta}\right) \, \left( \E |R_r|^{2p} + N^\xi \mathcal{E}_0^{2p}\right)\,. 
	\end{equation}
\end{lemma}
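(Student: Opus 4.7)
The plan is to apply It\^o's formula to $|R_r|^{2p}$ along the OU flow of Setting \ref{settingzag} and organize the resulting drift and quadratic variation into a Gronwall-type inequality. Because $z_{j,t}$ and $D_{j,t}$ (and hence the deterministic approximation $M_{12}^{B_1}$) are \emph{frozen} at the value $t$ during the zag step, only the product $G_{1,r}B_1G_{2,r}$ evolves in $r$, and It\^o yields
\begin{equation*}
\dif R_r = \mathrm{drift}_r\, \dif r + \dif\mathcal{M}_r\,,
\end{equation*}
where $\mathrm{drift}_r$ combines the OU drift $-W_r/2$ acting on the two resolvents and the It\^o self-contraction from $\dif B_{r-s}/\sqrt N$, while $\dif\mathcal{M}_r$ is the martingale increment.

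First I would analyze $\mathrm{drift}_r$ via the standard renormalization trick: the combination of OU drift and It\^o correction is precisely the underlined object $\underline{W G_1 B_1 G_2}$ (Gaussian self-contractions subtracted, exactly as in \eqref{eq:2geq}). Using the resolvent identity $G_{j,r}(W_r + D_{j,t} - z_{j,t}) = I$ together with the MDE \eqref{eq:MDE}, $\mathrm{drift}_r$ splits into (a) a linear feedback piece in which the two-body stability operator $\mathcal{B}_{12}$ acts on $G_{1,r}B_1G_{2,r} - M_{12}^{B_1}$ itself -- contributing at most $|R_r|$ up to the $O(1)$ bound on $\mathcal{B}_{12}^{-1}$ valid in the bulk by Proposition \ref{prop:stab} -- and (b) remainders whose pointwise size is dominated by single-resolvent isotropic and averaged local laws \eqref{eq:singllaw0}, hence by $\mathcal{E}_0$. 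Second, for the quadratic variation the Wigner covariance structure gives
\begin{equation*}
\frac{\dif [\mathcal{M},\bar{\mathcal{M}}]_r}{\dif r} = \frac{1}{N} \sum_{a,b} \bigl|\bigl(G_{1,r} E^{ab} G_{1,r} B_1 G_{2,r} + G_{1,r} B_1 G_{2,r} E^{ab} G_{2,r}\bigr)_{\bm{x} \bm{y}}\bigr|^2 + (\text{transposed piece in the real case})\,,
\end{equation*}
which, after summation $\sum_{ab} X E^{ab} Y E^{ab} Z = N\langle Y\rangle X Z$ and the Ward identity $G_{j,r} G_{j,r}^* = \Im G_{j,r}/\eta_j$, collapses to a bounded combination of $(\Im G_{j,r})_{\bm{x}\bm{x}}/(N\eta^3)$, yielding $\dif[\mathcal{M},\bar{\mathcal{M}}]_r/\dif r \prec \mathcal{E}_0^2$ after the single-resolvent isotropic law.

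With these two inputs, It\^o applied to $|R_r|^{2p} = R_r^p \bar R_r^p$ and taking expectations (killing the martingale) yields
\begin{equation*}
\frac{\dif}{\dif r}\E|R_r|^{2p} \lesssim p\,\E\!\left[|R_r|^{2p-1}\bigl(|R_r| + \mathcal{E}_0\bigr)\right] + p^2\,\E\!\left[|R_r|^{2p-2}\right]\mathcal{E}_0^2\,,
\end{equation*}
and Young's inequality with dual exponents $(2p/k,\, 2p/(2p-k))$ converts each cross term into $\E|R_r|^{2p} + N^\xi \mathcal{E}_0^{2p}$ at the cost of an arbitrarily small $N^\xi$. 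The main obstacle is isolating, inside the drift analysis, the single cross-term in which a single-resolvent isotropic error of size $\mathcal{E}_0 = (\sqrt{N\eta}\,\eta)^{-1}$ multiplies the \emph{dynamical} quantity $R_r$ itself rather than a deterministic object: because this is the \emph{unconditional} step and no a priori control on $R_r$ is available, this contribution must be estimated pointwise as $\mathcal{E}_0 \cdot |R_r|$, producing a coefficient $(\sqrt{N\eta}\,\eta)^{-1}$ multiplying $\E|R_r|^{2p}$ and thus the additive $(\sqrt{N\eta}\,\eta)^{-1}$ in the Gronwall prefactor of \eqref{eq:uncondGron2iso}; the remaining estimates are routine reduction inequalities and Young bookkeeping.
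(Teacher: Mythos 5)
Your proposal applies a pathwise It\^o + Gronwall argument (the \emph{zig} technique), but this lemma is a \emph{zag}-step estimate where the spectral parameters $z_{j,t}, D_{j,t}$ are frozen and only $W_r$ runs along the OU flow. The central claim of your drift analysis---that ``the combination of OU drift and It\^o correction is precisely the underlined object $\underline{W G_1 B_1 G_2}$''---is not correct. Direct computation gives
\[
\mathrm{drift}_r = \tfrac{1}{2}\big(G_1 W_r G_1 B_1 G_2 + G_1 B_1 G_2 W_r G_2\big) + \langle G_1\rangle G_1^2 B_1 G_2 + \langle G_1 B_1 G_2\rangle G_1 G_2 + \langle G_2\rangle G_1 B_1 G_2^2\,,
\]
which has a different algebraic structure (the $W$ sits \emph{between} two copies of the same resolvent, not on the outside) and, more importantly, is of order $\eta^{-2}$ or worse in the isotropic sense, not $\mathcal{E}_0=(N\eta^3)^{-1/2}$. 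The underlined object of \eqref{eq:2geq} is a purely \emph{algebraic} identity derived from the MDE; it has nothing to do with the It\^o drift, and you are conflating the two. In the zig step the characteristic equations for $z_{j,t},D_{j,t}$ exactly cancel the large $GWG$-type terms; that cancellation is unavailable here because the characteristics are frozen. The drift is tamed only \emph{after} taking expectation and applying a cumulant expansion to $\E[W_{ab}\,\partial_{ab}\cdots]$, whereby the second-order (Gaussian) cumulant cancels identically against the It\^o correction and the martingale quadratic variation. This is exactly the paper's route: $\frac{\dif}{\dif r}\E|R_r|^{2p}$ is reduced to a sum over cumulant orders $k\ge 3$ as in \eqref{eq:cumexIto}, each order estimated via single-resolvent local laws, Schwarz inequalities, Ward identities and isotropic resummation; and for this unconditional version the isotropic two-resolvent chain is bounded by the trivial $\eta^{-1}$ (or summed up via \eqref{eq:longSchwarz}) rather than by a splitting into $M_{12}^{B_1}$ plus a small fluctuation. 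A pathwise Gronwall scheme as you propose would leave a coefficient of order $\eta^{-2}$ in front of $|R_r|$ and cannot close.
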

The proof of Lemma \ref{lem:uncondGron2iso} is given in Section \ref{sec:zag}. 
\subsubsection{Conditional Gronwall estimates: general case} In this section, we collect our conditional Gronwall estimates for general observables. The initial input, i.e.~\eqref{eq:GFT1isoinput} for $b=0$, will be obtained from integrating \eqref{eq:uncondGron2iso} in time. A similar approach was introduced in parallel in \cite{nonHermdecay}. 

\begin{lemma}[Conditional Gronwall estimate for isotropic two resolvents] \label{lem:condGron2iso}Assume that for some fixed  $b \in [0,1]$ it holds that 
	\begin{equation} \label{eq:GFT1isoinput}
		R_r := \left| \big( G_{1,r} B_1 G_{2,r} - M_{12}^{B_1}\big)_{\boldsymbol x \boldsymbol y} \right| \prec \mathcal{E}_0 \quad \text{with} \quad \mathcal{E}_0 := \frac{1}{\sqrt{N \eta}} \frac{1}{\eta^{1-b/2} \gamma^{b/2}} \,,
	\end{equation}
	uniformly in bounded $\boldsymbol x, \boldsymbol y \in \C^N$ and $r \in [s,t]$. 
	Then, for $p \in \N$ and any $\xi > 0$, we have that 
	\begin{equation} \label{eq:condGron2iso}
		\frac{\dif }{\dif r} \E |R_r|^{2p} \lesssim \left(1 + \frac{1}{\sqrt{N \eta} \, \eta}\right) \, \left( \E |R_t|^{2p} + N^\xi \mathcal{E}_1^{2p}\right)\,, 
	\end{equation}
	where we denoted
	\begin{equation}
		\mathcal{E}_1 := \frac{1}{\sqrt{N \eta}} \frac{1}{\eta^{1-b'/2} \gamma^{b'/2}} \quad \text{with} \quad b':= (b+1/3) \wedge 1 \,. 
	\end{equation}
\end{lemma}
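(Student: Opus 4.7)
\medskip

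\noindent\textbf{Proof proposal for Lemma \ref{lem:condGron2iso}.} The plan is to compute the time derivative of $\E|R_r|^{2p}$ directly by applying It\^{o}'s formula to the quantity $R_r = (G_{1,r} B_1 G_{2,r} - M_{12}^{B_1})_{\bm x \bm y}$ along the Ornstein--Uhlenbeck flow of Setting \ref{settingzag}. Since the spectral parameters $z_{j,t}$ and deformations $D_{j,t}$ are frozen, the deterministic correction $M_{12}^{B_1}$ is constant in $r$, so the entire $r$-dependence is carried by the resolvents $G_{j,r}$ via the resolvent identity $\del_{W_{ab}} G_{j,r} = -G_{j,r} E^{ab} G_{j,r}$. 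Writing $|R_r|^{2p} = R_r^p \bar{R}_r^p$ and applying It\^o, the drift part from $-\frac{1}{2} W_r \dif r$ can be rewritten via Gaussian integration by parts (equivalently, a cumulant expansion truncated at second order plus a negligible remainder using Assumption \ref{ass:momass}). The net effect is the standard cancellation between the linear drift term and the second-order cumulant term at the level of the \emph{deterministic} approximation, exactly as in the characteristic-flow derivation, but now with time-independent $M_{12}^{B_1}$: this leaves only ``error'' terms in which at least one resolvent is replaced by its fluctuation, plus the genuine It\^{o} contribution of order $N^{-1}$.

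\medskip

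\noindent The next step is to estimate each of the remaining terms. They are schematically of the form $\langle \cdot , G B G B G \cdot \rangle$, $\langle G_{1,r} \cdot \rangle$-type traces, or products $R_r^{p-1}\bar{R}_r^p \times (\text{error factor})$. The diagonal terms, in which all the extra $G$'s come from differentiating the same factor of $R_r$ or $\bar R_r$, produce a contribution bounded by
\[
\Bigl(1 + \frac{1}{\sqrt{N\eta}\,\eta}\Bigr) \, \E |R_r|^{2p},
\]
which is the universal Gronwall coefficient appearing on the right-hand side of \eqref{eq:condGron2iso}; the prefactor $1/(\sqrt{N\eta}\,\eta)$ is simply the fluctuation size of a single resolvent entry times $1/\eta$, and the ``$1$'' comes from the drift.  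The off-diagonal terms involve at least one factor of the form
\[
\bigl( G_{1,r} B_1 G_{2,r} B G_{1,r}^{(*)}\bigr)_{\bm x \bm y}, \qquad \bigl\langle G_{1,r} B_1 G_{2,r} B_2\bigr\rangle,
\]
for which we insert the two-resolvent input \eqref{eq:GFT1isoinput} at exponent $b$ together with the already-established general-case isotropic and average bounds from Part 1 of Proposition \ref{prop:Zig}, and use $\|M_{12}^{B_1}\| \lesssim \|B_1\|/\beta_* \lesssim \|B_1\|/\gamma$ from Proposition \ref{prop:norm_M_bounds}. A careful bookkeeping of powers of $\eta$ and $\gamma$, exploiting that each isolated resolvent with no neighboring matrix contributes $1/\eta$ while each adjacent pair $G_{1,r} \cdot G_{2,r}$ contributes $1/(\eta\,\gamma^{1/2}\eta^{-1/2})$ via the $\sqrt{\eta/\gamma}$-rule, yields precisely the improved error factor
\[
\mathcal{E}_1 \;=\; \frac{1}{\sqrt{N\eta}}\cdot\frac{1}{\eta^{1-b'/2}\gamma^{b'/2}}, \qquad b'= (b+1/3)\wedge 1.
\]

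\medskip

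\noindent The main obstacle, and the point where the exponent $1/3$ enters, is the accounting in the off-diagonal terms: after applying It\^o once, one obtains a cubic resolvent chain whose natural size combines two ``gain factors'' from the decay effect and one ``loss factor'' from the extra resolvent. Schematically, writing the old control as $\eta^{-1+b/2}\gamma^{-b/2}$, the cubic chain produces a term of order $\eta^{-3/2+b}\gamma^{-b}$ multiplied by $1/\sqrt{N\eta}$, and balancing this gain against the old bound forces a shift in the exponent by exactly $1/3$ per iteration until $b$ hits the cap $1$. Running Gronwall's lemma then propagates the improved bound from $r=s$ to $r=t$; repeated application of the lemma (a finite number of times, governed by $1/3$) upgrades the initial $1/\sqrt{\eta}$ to the optimal $1/\sqrt{\gamma}$. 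Finally, the uniformity in $\bm x, \bm y$ and the spectral parameters $z_{j,t}\in\Omega_{\kappa,t}^{j}$ follows by a standard grid argument together with the Lipschitz dependence of $G_{j,r}$ on $z_{j,t}$, using the trivial bound $\|G_{j,r}\|\le 1/\eta$ for a net of polynomially many points.
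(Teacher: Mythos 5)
Your high-level strategy — It\^o plus cumulant expansion along the OU flow, cancellation of the linear drift against the second-order cumulant term (leaving only cumulants of order $k\ge 3$), and then bounding the surviving terms using the single-resolvent local law, the input \eqref{eq:GFT1isoinput}, and the bound $\lVert M_{12}^{B_1}\rVert\lesssim\gamma^{-1}$ — is indeed the approach the paper takes. But there is a genuine gap: the entire content of the lemma is the quantitative improvement $b\to b'=(b+1/3)\wedge 1$, and your proposal never actually demonstrates it. The paragraph labelled ``the main obstacle'' asserts that a cubic resolvent chain yields a factor of order $\eta^{-3/2+b}\gamma^{-b}$ and that ``balancing this gain against the old bound forces a shift by exactly $1/3$''; that intermediate claim is not correct as stated (plug in $b=0$: your chain factor is $\eta^{-3/2}\gamma^0$, which is \emph{larger} than $\mathcal{E}_0/\mathcal{E}_1=(\gamma/\eta)^{1/6}$ times the old control, not an improvement), and no balancing argument is actually carried out. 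The paper instead performs an explicit term-by-term analysis of the order-$k=3$ contribution, organizing by how many $R$-factors are differentiated (types $(\partial^3 R)|R|^{2p-1}$, $(\partial^2 R)(\partial R)|R|^{2p-2}$, $(\partial R)^3|R|^{2p-3}$), splitting each resolvent factor into $M$ plus fluctuation, performing isotropic resummation over the diagonal index (e.g. $\sum_a M_{\bm x a}(\cdot)_{a}\to (\cdot)_{\bm m}$ with $\lVert\bm m\rVert\lesssim\sqrt N$), and then checking all eight resulting combinations; the factor $(\eta/\gamma)^{1/6}$ drops out of this explicit comparison. Without that bookkeeping, the claimed $1/3$ is just restated, not derived.

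Two smaller points: the characterisation of the OU/second-cumulant cancellation as ``exactly as in the characteristic-flow derivation, but now with time-independent $M_{12}^{B_1}$'' is misleading — in the zag step there is no characteristic cancellation at all, only the standard OU-vs-second-cumulant cancellation that happens because the spectral parameters are frozen, and this is worth stating precisely. And the final remarks about iterating the lemma to ``upgrade the initial $1/\sqrt\eta$ to the optimal $1/\sqrt\gamma$'' describe how the lemma is \emph{used} (in Proposition~\ref{lem:induction}), not part of its proof, so they do not belong here. You should also handle the higher cumulants $k\ge 4$ explicitly, distinguishing the cases where $m\le k-2$ and $m\in\{k-1,k\}$ factors of $R$ are hit, as the paper does — your ``diagonal vs.\ off-diagonal'' dichotomy does not map onto the correct classification.
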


The proof of Lemma \ref{lem:condGron2iso} is given in Section \ref{sec:zag}. The conditional Gronwall estimate concerning isotropic three resolvents is given in the following lemma. 

\begin{lemma}[Conditional Gronwall estimate for isotropic three resolvents] \label{lem:condGron3iso}
	Assume that for some fixed $b \in [0,1]$ it holds that 
	\begin{equation} \label{eq:GFT2isoinput}
		R_r := \left| \big( G_{1,r} B_1 G_{2,r} B_2 G_{1,r}\big)_{\boldsymbol x \boldsymbol y} \right| \prec \mathcal{E}_0 \quad \text{with} \quad \mathcal{E}_0 := \frac{1}{\eta} \frac{1}{\eta^{1-b} \gamma^{b}} \,,
	\end{equation}
	uniformly bounded $\boldsymbol x, \boldsymbol y \in \C^N$ and $r \in [s,t]$. Moreover, suppose that 
	\begin{equation} \label{eq:3Gisoass}
		\left| \big( G_{1,r} B_1 G_{2,r} - M_{12}^{B_1}\big)_{\boldsymbol x \boldsymbol y} \right| \prec \frac{1}{\sqrt{N \eta} \, \eta^{1-b'/2} \gamma^{b'/2}} \quad \text{with} \quad b':= (b+1/3) \wedge 1  \,, 
	\end{equation}
	uniformly in $r \in [s,t]$, bounded $\boldsymbol x, \boldsymbol y \in \C^N$, and $B_1 \in \C^{N \times N}$ (and the same for indices $1$ and $2$ interchanged).  
	Then, for $p \in \N$ and any $\xi > 0$, we have that 
	\begin{equation} \label{eq:condGron3iso}
		\frac{\dif }{\dif r} \E |R_t|^{2p} \lesssim \left(1 + \frac{1}{\sqrt{N \eta} \, \eta}\right) \, \left( \E |R_r|^{2p} + N^\xi \mathcal{E}_1^{2p}\right)\,, 
	\end{equation}
	where we denoted
	\begin{equation} \label{eq:E13iso}
		\mathcal{E}_1 := \frac{1}{\eta} \frac{1}{\eta^{1-b'} \gamma^{b'}} \,. 
	\end{equation}
\end{lemma}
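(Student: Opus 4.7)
The plan is to adapt the argument of Lemma~\ref{lem:condGron2iso} to the three-resolvent chain. Set $X_r := (G_{1,r} B_1 G_{2,r} B_2 G_{1,r})_{\bm x \bm y}$, so $R_r = |X_r|$, and note that only $W_r$ evolves while $z_{j,t}, D_{j,t}$ stay frozen. Using $\dif G_{j,r} = \tfrac{1}{2} G_{j,r} W_r G_{j,r}\,\dif r - G_{j,r}\,\tfrac{\dif B_{r-s}}{\sqrt{N}}\,G_{j,r} + \tfrac{1}{N}\langle G_{j,r}\rangle G_{j,r}^2\,\dif r$ and the Leibniz rule for $\dif X_r$, I would rewrite each occurrence of $W_r G_{j,r}$ via the standard renormalization $\underline{W_r G_{j,r}} := W_r G_{j,r} - \langle G_{j,r}\rangle G_{j,r}$. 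The $-\tfrac{1}{2}\langle G\rangle G$ pieces produced this way exactly cancel (up to admissible remainders) against the It\^o self-energy corrections, which is the usual OU/It\^o cancellation in the zigzag scheme (cf.~\cite{edgeETH, mesoCLT}). What remains is (a) a martingale differential, (b) deterministic renormalization remainders that are traces and entries of chains of at most three resolvents, and (c) It\^o-cross terms where $W$ hits two different $G_{j,r}$ factors, producing chains of length four or five.

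Taking expectation of $\dif |X_r|^{2p}$, the martingale part drops and we are left with the drift and quadratic variation contributions. For the drift, the remainders in (b) and (c) are estimated by combining Proposition~\ref{prop:norm_M_bounds} for their deterministic approximations with the hypotheses \eqref{eq:GFT2isoinput} and \eqref{eq:3Gisoass} for their fluctuations, and with the trivial norm bound $\|G_{j,r}\|\le 1/\eta$ where unavoidable. All these terms are of the form $R_r \cdot (\text{admissible scalar})$, so after H\"older's and Young's inequalities their contribution to $\tfrac{\dif}{\dif r}\E|R_r|^{2p}$ is bounded by $\lesssim \E|R_r|^{2p} + N^\xi \mathcal{E}_1^{2p}$, as required.

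The decisive term is the quadratic variation $\Psi_{\mathrm{qv}} := \frac{1}{N}\sum_{a,b} |\partial_{W_{ab}} X_r|^2$. Expanding $\partial_{W_{ab}} X_r$ into three pieces (one for each of the three resolvents $W$ can hit), each piece produces a summand of the schematic form
\begin{equation*}
\frac{1}{N}\sum_{a,b}|(C_1)_{\bm x a}|^2 \,|(C_2)_{b\bm y}|^2
\end{equation*}
with $C_1, C_2$ sub-chains of length one or two whose total length is three. For each summand I would apply the Ward identity $\sum_a |G_{\bm x a}|^2 = (\Im G)_{\bm x\bm x}/\eta$ in the slots where $C_1$ or $C_2$ is a single resolvent, reducing the remaining factor to a diagonal entry of a length-three chain which is controlled by \eqref{eq:GFT2isoinput}. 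In the summand where $W$ hits the middle $G_2$, I would instead use $\sum_a |(C)_{\bm x a}|^2 = (CC^*)_{\bm x\bm x}$ to isolate the length-two chain $G_1 B_1 G_2$ and apply the improved two-resolvent isotropic bound \eqref{eq:3Gisoass}, which provides the sharper $\mathcal{E}_1$-type control in exactly one factor of the summand and is what converts the exponent $b$ into $b'= (b+\tfrac{1}{3})\wedge 1$. Summing the three contributions gives $\Psi_{\mathrm{qv}} \prec \mathcal{E}_1^2$, and then $p\,\E[|X_r|^{2p-2}\Psi_{\mathrm{qv}}] \lesssim \E|R_r|^{2p} + N^\xi \mathcal{E}_1^{2p}$ by Young's inequality, producing the $\mathcal{E}_1^{2p}$ term in \eqref{eq:condGron3iso}; the prefactor $(1+\tfrac{1}{\sqrt{N\eta}\,\eta})$ arises from the Burkholder--Davis--Gundy-type step when passing from the It\^o SDE to the bound on $\E|R_r|^{2p}$.

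The hard part is the careful bookkeeping in the quadratic variation: one must identify in every summand a single slot where the improved bound \eqref{eq:3Gisoass} can be deployed, and verify that this deployment is available for \emph{both} orderings of the spectral-pair indices and for chains containing $G_{j,r}^*$ as well as $G_{j,r}$ (which is why \eqref{eq:3Gisoass} is assumed symmetrically in indices~$1,2$). A secondary delicate point is that the chains produced by the It\^o-cross terms in (c) have length up to five; for these, one has to avoid overusing the trivial $\|G\|\le 1/\eta$ bound by first reducing via Ward identities to length-three diagonal entries that match either \eqref{eq:GFT2isoinput} or, after one Ward reduction, \eqref{eq:3Gisoass}. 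Once the quadratic variation estimate is done, the remainder of the proof is routine combination with the drift estimate and integration.
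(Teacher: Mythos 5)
The paper's proof of Lemma~\ref{lem:condGron3iso} is ``completely analogous'' to that of Lemma~\ref{lem:condGron2iso}, i.e.~it proceeds via It\^o's formula combined with a truncated cumulant expansion as in \eqref{eq:cumexIto}: the second-cumulant terms, which include all quadratic-variation/Laplacian contributions, exactly cancel against the Gaussian part of the drift (this is the defining cancellation of the Green function comparison step, reflecting that the OU flow preserves the first two moments of $W$), and the remaining $k \ge 3$ cumulant terms are estimated through isotropic resummation and the input bounds \eqref{eq:GFT2isoinput}--\eqref{eq:3Gisoass}. The $\mathcal{E}_1^{2p}$ error in \eqref{eq:condGron3iso} is produced entirely by these third and higher order cumulants.

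Your proposal takes a genuinely different route and contains a conceptual gap. You set up a ZIG-style It\^o SDE for $X_r$ with the $\underline{WG}$ renormalization, claim an SDE-level cancellation of the self-energy terms, and then treat the quadratic variation $\Psi_{\mathrm{qv}}=\frac{1}{N}\sum_{ab}|\partial_{ab}X_r|^2$ as a surviving contribution that must be bounded by $\mathcal{E}_1^2$. This conflates the zig- and zag-step cancellation mechanisms. In the zag step the spectral parameters are \emph{frozen}, so there is no characteristic flow to cancel the OU drift at the SDE level; in fact the renormalization $\underline{WG}=WG+\langle G\rangle G$ only cancels $\frac12\langle G\rangle G^2$ (also, your displayed SDE has a spurious $1/N$ in front of the self-energy term: the correct It\^o correction is $\langle G\rangle G^2\,\dif r$, not $\frac{1}{N}\langle G\rangle G^2\,\dif r$). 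The term $\frac12 G\underline{W_r G}G'\cdots$ you are left with is not small pathwise; it only becomes small after taking expectation and cumulant-expanding, and it is precisely the second cumulant of that expectation that cancels $\Psi_{\mathrm{qv}}$. Consequently $\Psi_{\mathrm{qv}}$ is not a quantity one should estimate by $\mathcal{E}_1^2$ --- it has already cancelled --- and the bound $\Psi_{\mathrm{qv}}\prec\mathcal{E}_1^2$ is in any case not achievable by the reduction you sketch. Likewise, the prefactor $(1+\frac{1}{\sqrt{N\eta}\,\eta})$ does not come from a BDG step (there is no $\sup_r$ in \eqref{eq:condGron3iso}) but from the third-cumulant estimates combined with Young's inequality, as in the displayed bounds following \eqref{eq:GFT1isotype1.1}. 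You have correctly identified that the improved two-resolvent bound \eqref{eq:3Gisoass}, deployed in exactly one slot, is what converts $b$ into $b'$; but to make this rigorous you must set up the cumulant expansion \eqref{eq:cumexIto}, split $G = M + (G-M)$ and $G_1B_1G_2 = M_{12}^{B_1}+(G_1B_1G_2-M_{12}^{B_1})$ in the $k=3$ terms, resum isotropically, and use \eqref{eq:3Gisoass} on the two-resolvent factors --- not estimate a QV that is no longer there.
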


	The proof of Lemma \ref{lem:condGron3iso} is completely analogous to that of Lemma \ref{lem:condGron2iso} and so omitted.

We point out that the input bound \eqref{eq:GFT2isoinput} with $b = 0$ is trivially satisfied since (neglecting the time dependence)
\begin{equation} \label{eq:3Gisotrivial}
	\left| \big(G_1 B_1 G_2 B_2 G_1\big)_{\boldsymbol x \boldsymbol y} \right| \le \frac{\Vert B_2 \Vert}{\eta} \sqrt{\big(G_1 B_1 \Im G_2 B_1^* G_1^*\big)_{\boldsymbol x \boldsymbol x} \big(\Im G_1\big)_{\boldsymbol y \boldsymbol y}} \prec \frac{1}{\eta^2}
\end{equation}
by a simple Schwarz inequality together with Ward identities, the trivial bound $\Vert G \Vert \le \eta^{-1}$, and a single resolvent local law giving $|G_{\boldsymbol u \boldsymbol v}| \prec 1$ for $\boldsymbol u, \boldsymbol v$ of bounded norm. The other input \eqref{eq:GFT2isoinput} will be obtained by integrating the differential inequality \eqref{eq:condGron2iso} from Lemma \ref{lem:condGron2iso}. 

The time integrated versions of the differential inequalities from Lemmas \ref{lem:condGron2iso}--\ref{lem:condGron3iso} both serve as inputs for the following lemma concerning average two resolvents. 
\nc
\begin{lemma}[Conditional Gronwall estimate for average two resolvents]
	\label{lem:condGron2av}
	Assume that
	\begin{equation} \label{eq:GFT2avinput}
		\left| \big( G_{1,r} B_1 G_{2,r} - M_{12}^{B_1}\big)_{\boldsymbol x \boldsymbol y} \right| \prec \frac{1}{\sqrt{N \eta} \, \eta^{1/2} \gamma^{1/2}} \quad \text{and} \quad 	\left| \big( G_{1,r} B_1 G_{2,r}B_2 G_{1,r} \big)_{\boldsymbol x \boldsymbol y} \right| \prec \frac{1}{\eta\, \gamma}
	\end{equation}
	uniformly in $r \in [s,t]$, bounded $\boldsymbol x, \boldsymbol y \in \C^N$, and $B_1, B_2 \in \C^{N \times N}$.  
	Then, defining
	\begin{equation*} 
		R_t := \left| \big\langle \big(G_{1,t} B_1 G_{2,t} -M_{12}^{B_1}\big)B_2\big\rangle  \right| \,,
	\end{equation*}
	for $p \in \N$ and any $\xi > 0$, we have that 
	\begin{equation} \label{eq:condGron2av}
		\frac{\dif }{\dif r} \E |R_r|^{2p} \lesssim \left(1 + \frac{1}{\sqrt{N} \, \eta}\right) \, \left( \E |R_r|^{2p} + N^\xi \mathcal{E}_1^{2p}\right)\,, \quad \text{where} \quad \mathcal{E}_1 := \frac{1}{N \eta_1 \eta_2} \wedge \frac{1}{\sqrt{N\eta} \, \gamma} \,. 
	\end{equation}
\end{lemma}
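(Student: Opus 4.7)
The plan is to run the standard Green function comparison along the Ornstein--Uhlenbeck flow, tailored to the average two-resolvent observable $R_r$. First I would compute $\dif R_r$ using Itô's formula applied to $W_r$, exploiting that $R_r$ depends on $r$ only through $W_r$ (since $z_{j,t}$ and $D_{j,t}$ are frozen). Using $\partial_{W_{ab}} G_{j,r} = -(G_{j,r})_{\cdot a}(G_{j,r})_{b\cdot}$ (with the usual symmetrization), the differential decomposes into three parts: a genuine martingale from $\dif B_{r-s}/\sqrt{N}$, a drift $-\tfrac12 \sum_{ab} W_{ab,r}\partial_{W_{ab}}(\cdot)\, \dif r$, and an Itô correction of order $N^{-1}\sum_{ab}\partial_{W_{ab}}^2(\cdot)\,\dif r$.

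Next I would expand $\dif |R_r|^{2p}$ and take expectations. For the drift term I apply a cumulant expansion to $\E[W_{ab}\,|R_r|^{2p-2}\,\overline{R_r}\,\partial_{W_{ab}} R_r]$. As is standard for the OU flow, the second-cumulant contribution cancels the Itô correction exactly; only the third- and higher-order cumulants survive, each carrying an extra $N^{-1/2}$ factor. The surviving terms are all polynomials in three-resolvent chains of the form $(G_1 B_1 G_2 B_2 G_1)$ (or with indices swapped), possibly contracted with further $G_j$'s. These are controlled by the inputs \eqref{eq:GFT2avinput}: the isotropic three-resolvent input $\prec 1/(\eta\gamma)$ directly bounds the chains appearing in the drift, while Ward identities $\sum_a |G_{ab}|^2 = \eta^{-1}(\Im G)_{bb}$ convert the summations over matrix indices into the additional $\eta^{-1}$ factors encoded in the target $\mathcal{E}_1$.

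The quadratic variation contribution to $\dif \E|R_r|^{2p}$ from the martingale involves $\frac{1}{N}\sum_{ab}(\partial_{W_{ab}} R_r)^2$. Since $\partial_{W_{ab}}\langle (G_1 B_1 G_2 - M_{12}^{B_1})B_2\rangle = -\frac1N[(G_2 B_2 G_1 B_1 G_1)_{ba} + (G_1 B_1 G_2 B_2 G_2)_{ba}]$, this sum reduces by another Ward identity to a trace of a four-resolvent chain, which splits via Schwarz into products of two three-resolvent chains bounded by the second input in \eqref{eq:GFT2avinput}, producing the $1/(\sqrt{N\eta}\gamma)$ piece of $\mathcal{E}_1$; the competing bound $1/(N\eta_1\eta_2)$ in $\mathcal{E}_1$ comes instead from the trivial norm bounds $\Vert G_j\Vert\le \eta_j^{-1}$. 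After collecting all terms we obtain estimates of the form $|R_r|^{2p-k}\mathcal{E}_1^{k}$ for $k = 1,2$, which Young's inequality converts into $\E|R_r|^{2p} + N^\xi \mathcal{E}_1^{2p}$ up to the claimed coefficient $1 + 1/(\sqrt{N}\eta)$.

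The main obstacle I anticipate is bookkeeping rather than conceptual: ensuring that the prefactor is the \emph{sharper} $1+1/(\sqrt{N}\eta)$ appearing in \eqref{eq:condGron2av}, as opposed to the isotropic-type coefficient $1+1/(\sqrt{N\eta}\eta)$ of \eqref{eq:condGron2iso}. This extra $\sqrt{\eta}$ gain is only realized by exploiting the averaging $\langle \cdot \rangle$ via one additional Ward identity in each of the drift, higher-cumulant, and quadratic variation terms, so one has to verify that every resolvent chain arising in the expansion admits such a reduction before applying the input bounds \eqref{eq:GFT2avinput}.
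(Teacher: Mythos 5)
Your overall strategy is the same as the paper's: Itô plus cumulant expansion along the OU flow, with the second-cumulant term cancelling the drift and Itô correction, so only third- and higher-order cumulants survive, and exploiting the $1/N$ from the normalized trace to gain the extra $\sqrt{\eta}$ in the prefactor $1+1/(\sqrt{N}\eta)$ relative to the isotropic case. That core mechanism is correct, and your observation that one additional Ward-type reduction per term is what produces the improvement is the right intuition for the bulk of the terms. The paper organizes this by first establishing the uniform bound $|\partial_{ab}^l\partial_{ba}^{k-l}R|\prec (N\eta\gamma)^{-1}$, which dispatches all terms with $k\ge 4$ and those with $k=3$, $m\ge 2$ (in the notation where $m$ is the number of $R$-factors hit) by Young's inequality alone.

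There are, however, two genuine gaps in your plan. First, for $k=3$ and $m=1$ a direct Ward/Schwarz estimate is not sufficient when the only off-diagonal factor left after differentiation is a five-resolvent chain $(G_1B_1G_2B_2G_1)_{ab}$ multiplying diagonal $G_{aa}G_{bb}$. Applying Ward or Schwarz to $\sum_{ab}(G_1B_1G_2B_2G_1)_{ab}G_{aa}G_{bb}$ naively loses too much (essentially a full factor of $\eta^{-1/2}/\gamma^{-1}$ at $\gamma\sim 1$). The paper instead splits $G_{aa}=M_{aa}+(G-M)_{aa}$ and performs an \emph{isotropic resummation}: the pure $M_{aa}M_{bb}$ piece is contracted with the deterministic vector $\bm m=(M_{aa})_a$ of norm $\lesssim\sqrt{N}$, so $\sum_{ab}(G_1B_1G_2B_2G_1)_{ab}M_{aa}M_{bb}=(G_1B_1G_2B_2G_1)_{\bm m\bm m}$, which the isotropic input \eqref{eq:GFT2avinput} controls with the right power of $N$. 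Your Ward-identity argument does not cover this constellation. Second, $\mathcal{E}_1$ is a minimum of two expressions; you only sketch how $1/(\sqrt{N\eta}\gamma)$ arises. Proving the competing bound $1/(N\eta_1\eta_2)$ requires a separate and more careful estimate for the $k=3$ terms: since that target has $N$ rather than $\sqrt{N}$ in the denominator, one must extract a full extra $N^{-1/2}$ by fully exploiting every off-diagonal factor (and, when only long chains appear, by a Ward-plus-operator-norm reduction as in the paper's display \eqref{eq:2ndcasespecial}), using $\gamma\gtrsim\eta_1\vee\eta_2$ to close for $k\ge 4$. These two points are not bookkeeping; they are where the argument actually lives.
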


The proof of Lemma \ref{lem:condGron2av} is given in Section \ref{sec:zag}.

\subsubsection{Conditional Gronwall estimates: regular case}

For \emph{regular observables}, the desired local law enjoys a further improvement in accordance with the  $\sqrt{\gamma}$-rule (see the discussion in Section \ref{sec:intro}) 
for such observables. In order to remove the Gaussian component introduced in the characteristic flow step, we again employ conditional Gronwall estimates. 

\begin{lemma}[Conditional Gronwall estimates for regular observables] \label{lem:condGronreg}
	Let $A_1, A_2 \in \C^{N \times N}$ be bounded matrices and assume that $A_1$ is $(\pair_1, \pair_2)$-regular and $A_2$ is $(\pair_2, \pair_1)$-regular. Then we have the following: 
	\begin{itemize}
		\item[(i)] Upon replacing $B_1 \to A_1$ and $\gamma \to 1$, Lemma \ref{lem:condGron2iso} holds verbatim.
		\item[(ii)] Upon replacing $B_i \to A_i$, for $i \in [2]$, and $\gamma \to 1$, Lemma \ref{lem:condGron3iso} holds verbatim. 
		
		Moreover, in case that only \emph{one} of the general observables $B_i$ is replaced by a regular one $A_i$, and the assumption \eqref{eq:3Gisoass} is suitably adjusted (namely replacing $\gamma \to 1$ only for the case with a regular observable), Lemma \ref{lem:condGron3iso} holds with $\gamma \to \sqrt{\gamma}$ in the definition of $\mathcal{E}_0$ and  $\mathcal{E}_1$ in \eqref{eq:GFT2isoinput} and \eqref{eq:E13iso}, respectively. 
		\item[(iii)] Upon replacing $B_i \to A_i$, for $i \in [2]$, and $\gamma \to 1$, Lemma \ref{lem:condGron2av} holds verbatim. 
		
		Moreover, in case that only \emph{one} of the general observables $B_i$ is replaced by a regular one $A_i$, and the assumption \eqref{eq:GFT2avinput} is suitably adjusted (as described in item (iii) above), the conclusion \eqref{eq:condGron2av} holds with $\gamma \to \sqrt{\gamma}$. 
	\end{itemize} 
\end{lemma}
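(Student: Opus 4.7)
The plan is to rerun the proofs of Lemmas \ref{lem:condGron2iso}, \ref{lem:condGron3iso}, and \ref{lem:condGron2av} essentially verbatim, tracking how the regularity of the observables $A_1, A_2$ improves the deterministic-approximation bounds that enter at every step. The underlying mechanism is the $\sqrt{\gamma}$-rule discussed in the introduction: each regular slot strips one $\beta_*^{-1/2}$ factor from the relevant $M$-chain bounds, and since $\gamma \lesssim \beta_*$ by admissibility (condition (1) in Definition \ref{def:gamma}), this converts directly into the advertised $\sqrt{\gamma}$-gain per regular observable. A convenient structural simplification is that during the zag step of Setting \ref{settingzag} the spectral pair $(\nu_1,\nu_2)$ is frozen at the values $z_{j,t}$ for all $r \in [s,t]$, so the regularity of $A_1, A_2$ with respect to $(\nu_1, \nu_2)$ is preserved identically throughout the OU evolution and Lemma \ref{lem:reg_reg} plays no role in this argument.

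For item (i), the Itô calculus producing the differential inequality \eqref{eq:condGron2iso} proceeds unchanged. The key substitution is that whenever the deterministic approximation $M^{A_1}_{12}$ or its two-resolvent chain variants appear in the drift or inside the quadratic-variation integrand, we invoke the regular-case bound \eqref{eq:M2_reg} in place of \eqref{eq:M2_gen}, saving a factor of $\beta_*$; and analogously we invoke \eqref{eq:M3_reg_gen} for the emerging three-resolvent chains of the form $G_{1,r} A_1 G_{2,r} B G^{(*)}_{1,r}$, saving a factor $\sqrt{\beta_*}$. Translating through $\gamma \lesssim \beta_*$, each improvement yields exactly the prescribed $\gamma \to 1$ replacement in the definitions of $\mathcal{E}_0$ and $\mathcal{E}_1$.

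Item (ii) follows the same template as the proof of Lemma \ref{lem:condGron3iso}. When both observables are regular, the bound \eqref{eq:M3_reg_reg} gives $\|M_{123}^{A_1,A_2}\|\lesssim \|A_1\|\|A_2\|/\ell$ with no $\beta_*$ factor, producing $\gamma \to 1$ throughout the emerging chains. When only one observable is regular, \eqref{eq:M3_reg_gen} retains exactly one $\sqrt{\beta_*}^{-1}$, which under admissibility is at worst $\gamma^{-1/2}$, producing the $\gamma \to \sqrt{\gamma}$ replacement. The suitably adjusted input \eqref{eq:3Gisoass} (with $\gamma \to 1$, respectively $\sqrt{\gamma}$, in the regular slot) supplies via Schwarz the two-resolvent subchain estimates needed to close the quadratic variation. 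Item (iii) is analogous: the assumption \eqref{eq:GFT2avinput} is replaced by its regular-case counterparts supplied by (i) and (ii), $M^{A_1}_{12}$ is again bounded via \eqref{eq:M2_reg}, and then the Gronwall identity \eqref{eq:condGron2av} closes with the stated improvement in $\mathcal{E}_1$.

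The main obstacle I anticipate is bookkeeping rather than any genuinely new analytic difficulty, especially in the mixed single-regular cases of (ii) and (iii). Each cumulant contribution generated by differentiating $G_{j,r}$ along the OU flow produces new resolvent chains, and one must systematically verify that whenever the unstable direction of $\mathcal{B}_{12}$ would ordinarily contribute a factor $\beta_*^{-1}$, either the adjacent observable is still regular (so that the orthogonality $\langle \mathring{A}^{\nu_1,\nu_2} R_{12}\rangle = 0$ removes the worst direction at the deterministic-approximation level) or there is a compensating $\gamma$-factor coming from elsewhere in the expansion. Because the regular/general labelling of each slot is explicit and $(\nu_1,\nu_2)$ stays frozen, this check is mechanical: once it has been carried out on each emerging term, the Gronwall inequality closes identically to the general case, and no new stability input beyond Proposition \ref{prop:stab} and no new norm bound beyond Proposition \ref{prop:norm_M_bounds} is required.
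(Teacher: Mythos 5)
Your proposal is correct and follows the same approach as the paper's proof: rerun the Gronwall arguments from Lemmas~\ref{lem:condGron2iso}--\ref{lem:condGron2av} verbatim, replacing the general $M$-bound \eqref{eq:Mboundcond} by the improved regular-case bounds from Proposition~\ref{prop:norm_M_bounds} (specifically \eqref{eq:M2_reg}), with regularity preserved trivially since the spectral parameters and deformations are frozen during the zag step. Your additional observations (that Lemma~\ref{lem:reg_reg} is not needed here, and the role of the three-resolvent $M$-bounds) are accurate, though the paper's own proof mentions only the two-resolvent improvement because the three-resolvent chains in Lemmas~\ref{lem:condGron3iso}--\ref{lem:condGron2av} are controlled via the adjusted input assumptions rather than via their deterministic approximations.
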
 
\begin{proof}
	The proof of Lemma \ref{lem:condGronreg} works in the exact same way as the proofs of Lemmas \ref{lem:condGron2iso}--\ref{lem:condGron2av}, with the only difference that the bound \eqref{eq:Mboundcond} gets complemented by the improved estimates
	\begin{equation} \label{eq:Mboundcondimproved}
		\big\Vert M_{12}^{A_1} \Vert \lesssim \Vert A_1 \Vert  \quad \text{and} \quad \big\Vert M_{21}^{A_2} \Vert \lesssim \Vert A_2 \Vert
	\end{equation}
	from Proposition \ref{prop:norm_M_bounds} (note that there is no $\gamma^{-1}$ on the rhs.~of \eqref{eq:Mboundcondimproved}). The rest of the argument is identical. 
\end{proof}

\subsection{Conclusion of the zigzag strategy: Proof of Theorem \ref{theo:multigllaw}} \label{subsec:zigzagconcl}

We start with the following trivially checkable lemma, which follows by standard ODE theory and \eqref{eq:linetarho}.

\begin{lemma}[Initial conditions] \label{lem:ODEbasic}
Fix $0\le T<1$, and pick a spectral parameter $|z|\lesssim 1$ and a matrix $\lVert D\rVert\lesssim 1$. Then there exist initial conditions $z_0,D_0$ such that the solutions $z_t,D_t$ of \eqref{eq:def_flow}, with initial conditions $z_0,D_0$, satisfies $z_T=z$ and $D_T=D$. Additionally, we have $\mathrm{dist}(z_0,\mathrm{supp}(\rho_{D_0}))\ge c T$, for some universal constant $c>0$.
\end{lemma}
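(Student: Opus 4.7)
The first ODE in \eqref{eq:def_flow} is linear and trivially solvable: $D_t = e^{-t/2} D_0$, so setting $D_0 := e^{T/2} D$ yields $D_T = D$ together with $\lVert D_0\rVert \lesssim 1$.

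For the spectral parameter, the plan is to construct $z_0$ by integrating the second ODE in \eqref{eq:def_flow} backward in time from $t=T$ to $t=0$ with terminal data $z_T = z$. The vector field $(t,z) \mapsto -\langle M^{D_t}(z)\rangle - z/2$ is jointly continuous in $t$ and holomorphic (hence locally Lipschitz) in $z$ on each of $\{\Im z > 0\}$ and $\{\Im z < 0\}$, since $M^{D_t}$ is a holomorphic function there, bounded on compact subsets away from the real axis. Standard ODE theory then yields a unique backward trajectory, and the monotonicity of $\eta_t = |\Im z_t|$ stated in Lemma~\ref{lem:flow_properties}(i)(2) guarantees this trajectory stays on the same side of $\R$ as $z$, so $z_0 \in \C \setminus \R$ is well-defined. (The case of real $z$ outside the spectrum of $W+D$ is handled analogously by running the flow along the real axis, exploiting reality of $M^{D_t}(z_t)$ there.)

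For the distance bound, I would directly invoke identity \eqref{eq:linetarho} with $s = 0$ and $t = T$:
\begin{equation*}
\frac{\eta_0}{\rho_0} \,=\, e^T\,\frac{\eta_T}{\rho_T} + \pi(e^T - 1) \,\ge\, \pi T,
\end{equation*}
where the inequality uses $\eta_T/\rho_T \ge 0$ and the elementary estimate $e^T - 1 \ge T$. This yields $\eta_0 \ge \pi T \rho_0$. Since the hypotheses $|z|,\,\lVert D\rVert \lesssim 1$ together with the explicit $t$-dependence of the flow (scaling by $e^{\pm T/2}$ with $T < 1$) keep $(z_0, D_0)$ in a fixed compact set, the density $\rho_0$ is bounded below by a positive universal constant in the bulk regime relevant for the zigzag initialization. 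Combining with the trivial estimate $\mathrm{dist}(z_0, \mathrm{supp}(\rho_{D_0})) \ge |\Im z_0| = \eta_0$ gives the claimed bound $\mathrm{dist}(z_0, \mathrm{supp}(\rho_{D_0})) \ge cT$.

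The main subtlety I anticipate concerns uniformity of the constant $c$: the clean argument above relies on $\rho_0$ being bounded away from zero, which is automatic when $z$ is in the bulk but may degenerate at a spectral edge. In that edge regime, one instead tracks the real-axis dynamics $\partial_t \delta_t = -(m_{D_t}(z_t) - m_{D_t}(E^{\pm}(D_t))) - \delta_t/2$ for $\delta_t$ the distance to the moving edge, exploiting the square-root behavior $m_{D_t}(E^{\pm} + \delta) - m_{D_t}(E^{\pm}) \sim \sqrt{\delta}$ to derive a quantitative (though weaker in $T$) positive distance lower bound. Since the lemma is only used to initialize the zigzag from the global regime, where any fixed positive distance suffices, this is a minor adjustment rather than a genuine obstacle.
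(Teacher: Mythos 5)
Your proof matches the paper's intended route: the paper itself asserts that this lemma follows from standard ODE theory and the identity \eqref{eq:linetarho}, and you carry out exactly that, with $D_0 = e^{T/2}D$ and backward integration of the $z$-flow. Two small remarks that streamline and sharpen the argument. First, the backward solution is explicit without any appeal to ODE existence theory: since $\langle M_0\rangle = e^{-T/2}\langle M_T\rangle = e^{-T/2}m_D(z)$ by Lemma~\ref{lem:flow_properties}(i)(1), inverting \eqref{eq:z_t} gives $z_0 = e^{T/2}z + 2m_D(z)\sinh(T/2)$ directly, and $\mathrm{sgn}\,\Im z_0 = \mathrm{sgn}\,\Im z$ is immediate from $\Im m_D(z) = \pi\,\mathrm{sgn}(\Im z)\rho_D(z)$. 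Second, the lower bound $\rho_0 \gtrsim 1$ that your argument needs does not follow merely from $(z_0, D_0)$ lying in a compact set (the density can vanish on compact sets); the clean justification is again the flow identity $\rho_0 = e^{-T/2}\rho_T = e^{-T/2}\rho_D(z) \gtrsim \kappa$ whenever $z$ is in the $\kappa$-bulk, which is the regime where the lemma is applied. Your observation that the bound $\mathrm{dist}(z_0,\mathrm{supp}\,\rho_{D_0}) \gtrsim T$ degenerates to $\sim T^2$ near a spectral edge is correct — e.g.\ $D=0$, $z\to 2$ gives $z_0 = 2\cosh(T/2)$ at distance $2\cosh(T/2)-2 \sim T^2/4$ from $[-2,2]$ — so the lemma as stated is slightly optimistic outside the bulk, but since $T$ is a fixed order-one constant in the application \eqref{eq:def_T}, any fixed positive distance suffices and this does not affect the zigzag initialization.
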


Along the proof of Theorem \ref{theo:multigllaw}, we will also prove the following proposition. 

\begin{proposition}[Isotropic two- and three-resolvent local laws] \label{prop:isoLL}
Fix $L, C_0,\epsilon > 0$. Let $W$ be a Wigner matrix satisfying Assumption~\ref{ass:momass}, and let $D_1,D_2\in\C^{N\times N}$ be bounded Hermitian matrices. 
For spectral parameters $z_1, z_2 \in \C \setminus \R$, denote  $\eta_l:=|\Im z_l|$,  $\rho_l:=\pi^{-1}|\langle \Im M_l\rangle|$, and $\ell := \min_{l \in [2]} \eta_l \rho_l$. Finally, let $\widehat{\gamma} = \widehat{\gamma}(z_1, z_2)$ be defined as in \eqref{eq:def_gamma_0}. Then, the following holds:
\begin{itemize}
\item[\textnormal{Part 1:}] \textnormal{[General case]} For bounded $B_1,B_2\in \C^{N\times N}$ and unit ${\boldsymbol x}, {\boldsymbol y}\in\C^N$, we have
\begin{subequations}
\begin{align}
	\left\vert \left\langle {\boldsymbol x},\left(G_{1}B_1G_{2} -M_{z_{1},z_{2}}^{B_1}\right){\boldsymbol y}\right\rangle\right\vert &\prec \frac{1}{\sqrt{N\ell}} \cdot\frac{1}{\sqrt{\eta_* \gamma}} \,, 
\label{eq:2G_iso_final} \\
\left\vert \left\langle {\boldsymbol x},G_{1}B_1G_{2}B_2G^{(*)}_{1} {\boldsymbol y}\right\rangle\right\vert &\prec \frac{1}{\ell \gamma} \,, 
\label{eq:3G_iso_final}
\end{align}
\end{subequations}
uniformly in spectral parameters satisfying $|z_1|,|z_2|\le N^{100}$ and $N\ell\ge N^\epsilon$.
\item[\textnormal{Part 2:}] \textnormal{[Regular case]} Recall \eqref{eq:regulardef}, let $A_1\in \C^{N\times N}$ be $(\nu_1,\nu_2)$--regular and let $A_2\in \C^{N\times N}$ be $(\nu_2,\nu_1)$--regular. Then, for bounded $A_1, A_2, B\in \C^{N\times N}$ and unit ${\boldsymbol x}, {\boldsymbol y}\in\C^N$, we have
\begin{subequations}
	\begin{align}
		\left\vert \left\langle {\boldsymbol x},\left(G_{1}A_1G_{2} -M_{z_{1},z_{2}}^{A_1}\right){\boldsymbol y}\right\rangle\right\vert &\prec \frac{1}{\sqrt{N\ell}} \cdot\frac{1}{\sqrt{\eta_*}} \,, 
		\label{eq:2G_iso_finalreg} \\
		\left\vert \left\langle {\boldsymbol x},G_{1}A_1G_{2}BG^{(*)}_{1} {\boldsymbol y}\right\rangle\right\vert &\prec \frac{1}{\ell\sqrt{\gamma}}  \,, 
		\label{eq:3G_iso_finalregnew} \\
		\left\vert \left\langle {\boldsymbol x},G_{1}A_1G_{2}A_2G^{(*)}_{1} {\boldsymbol y}\right\rangle\right\vert &\prec \frac{1}{\ell}  \,, 
		\label{eq:3G_iso_finalreg}
	\end{align}
\end{subequations}
uniformly in spectral parameters satisfying $|z_1|,|z_2|\le N^{100}$ and $N\ell\ge N^\epsilon$.
\end{itemize}

\end{proposition}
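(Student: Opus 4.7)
The plan is to prove Proposition \ref{prop:isoLL} alongside Theorem \ref{theo:multigllaw} by executing the zigzag strategy iteratively, combining the global law (Proposition \ref{prop:global}) as initial input with alternating zig (Proposition \ref{prop:Zig}) and zag (Lemmas \ref{lem:uncondGron2iso}--\ref{lem:condGronreg}) steps to drive the spectral parameters from the global scale down to the optimal local scale $N\ell \ge N^\epsilon$. All estimates will be tracked with the admissible control parameter $\widehat{\gamma}$, whose admissibility is guaranteed by Proposition \ref{prop:gamma}.

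First, for any target spectral parameters $(z_j, D_j)$ with $\Re z_j \in \mathbf{B}_\kappa(D_j)$, I invoke Lemma \ref{lem:ODEbasic} to realize them as endpoints of the characteristic flow starting from initial data $(z_{j,0}, D_{j,0})$ at distance $\gtrsim T$ from the self-consistent spectrum, with $T < 1$ chosen appropriately. At time $0$, Proposition \ref{prop:global} supplies the input bounds \eqref{eq:Zig_init}--\eqref{eq:init_iso3}, which hold trivially since $\gamma_0 \sim 1$ and $\eta_{*,0} \sim 1$ in that regime. The zig step of Proposition \ref{prop:Zig} then propagates these bounds along the flow to the target time $T$, at the cost of adding a Gaussian component through the Ornstein--Uhlenbeck evolution.

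Second, the Gaussian component is removed via a Green's function comparison executed as a self-improving induction on an exponent $b \in [0,1]$. Concretely, the unconditional Lemma \ref{lem:uncondGron2iso} initializes the isotropic two-resolvent bound at $b = 0$; Lemma \ref{lem:condGron2iso} then iteratively upgrades $b \to b' = (b + 1/3) \wedge 1$, terminating at $b = 1$ after three rounds, with Gronwall's inequality converting each differential inequality into a high-moment bound (the prefactor $1 + 1/(\sqrt{N\eta}\,\eta)$ is of order one on the bulk-restricted domain). In parallel, the trivial bound \eqref{eq:3Gisotrivial} initializes the isotropic three-resolvent bound, which Lemma \ref{lem:condGron3iso} upgrades using the currently improved isotropic two-resolvent estimate as the input \eqref{eq:3Gisoass}. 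Once the isotropic bounds reach their optimal form, Lemma \ref{lem:condGron2av} provides the averaged two-resolvent bound. For regular observables, Lemma \ref{lem:condGronreg} runs the same mechanism with $\gamma$ replaced by $1$ (or by $\sqrt{\gamma}$ in mixed chains), yielding the improvements of Part 2.

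Third, the output of one zigzag cycle is a local law for the original (Gaussian-free) Wigner matrix at a smaller scale; these bounds serve as input \eqref{eq:Zig_init}--\eqref{eq:init_iso3} for a subsequent cycle applied on an even shorter time window, and a finite sequence of such cycles covers the full range down to $N\ell \ge N^\epsilon$. Translating the abstract $\gamma_t$ back to $\widehat{\gamma}$ via Proposition \ref{prop:gamma} yields the claimed bounds \eqref{eq:2G_iso_final}--\eqref{eq:3G_iso_finalreg}.

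The main obstacle lies in the mutually coupled nature of the self-improving recursion: the upgrade of the isotropic two-resolvent bound in Lemma \ref{lem:condGron2iso} rests on controlling a quadratic variation term that contains three-resolvent chains, so Lemma \ref{lem:condGron3iso} must be applied in tandem, but the three-resolvent improvement in turn requires the two-resolvent bound at the just-improved exponent (cf.~\eqref{eq:3Gisoass}). Carefully interleaving these updates while tracking all the $\eta$- and $\gamma$-exponents, and simultaneously handling mixed regular/general chains with the correct $\sqrt{\gamma}$-bookkeeping dictated by Proposition \ref{prop:norm_M_bounds}, is the most delicate part of the argument. The compatibility of $\widehat{\gamma}$ with the flow (Proposition \ref{prop:gamma}) together with the two-body stability estimate (Proposition \ref{prop:stab}) are the algebraic inputs that make the iteration close.
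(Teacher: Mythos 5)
Your proposal matches the paper's zigzag strategy (Proposition~\ref{lem:induction} and its surroundings in Section~\ref{subsec:zigzagconcl}) in all essential respects: global law as initialization, alternating zig (Proposition~\ref{prop:Zig}) and zag (Lemmas~\ref{lem:uncondGron2iso}--\ref{lem:condGronreg}) steps, the self-improving $b\to b'=(b+1/3)\wedge 1$ recursion interleaving the two- and three-resolvent isotropic Gronwall estimates, and the final pass through Lemma~\ref{lem:condGron2av} for the averaged bound, with Lemma~\ref{lem:condGronreg} handling the regular case.

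One inaccuracy is worth flagging: your parenthetical ``the prefactor $1+1/(\sqrt{N\eta}\,\eta)$ is of order one on the bulk-restricted domain'' is false --- for $\eta$ near $N^{-1+\epsilon}$ this prefactor is as large as $N^{3(1-\epsilon)/2}$. What actually closes the Gronwall argument is that the time increment between consecutive zigzag stages is chosen so that $(t_{k+1}-t_k)\cdot(\sqrt{N}\,\eta^{3/2})^{-1}\lesssim 1$; this dictates the specific geometric scale recursion $a_{k+1}=\tfrac23 a_k+\tfrac13$, $\eta_k=N^{-a_k}$, which both makes the integrated propagator $O(1)$ per stage and guarantees that $K=O(|\log\epsilon|)$ stages suffice independently of $N$. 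Your description of ``a finite sequence of cycles on shorter windows'' captures the spirit, but without this precise scale bookkeeping it is not clear a priori that only finitely many iterations are needed. The remainder of your plan --- in particular the correct identification of the coupled two-/three-resolvent recursion as the delicate point and of Proposition~\ref{prop:gamma} and Proposition~\ref{prop:stab} as the algebraic backbone --- is sound.
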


\subsubsection{General case: Proof of Part 1 of Theorem \ref{theo:multigllaw} and Proposition \ref{prop:isoLL}}
Fix a bulk parameter $\kappa > 0$ and $\epsilon > 0$. For $j \in [2]$, we now define sequences of domains in the following way: 
Consider the monotonically increasing sequence $(a_k)_{k \in \N_0} \subset [0,1]$ defined recursively as
\begin{equation}
	a_{k+1} := \frac{2}{3} a_k + \frac{1}{3} \quad \text{with} \quad a_0 = 0 \,. 
\end{equation}
Moreover, set 
\begin{equation*}
\eta_k := N^{-a_k}
\end{equation*}
and let $K \in \N$ be the smallest integer satisfying $\eta_K < N^{-1+\epsilon}$ (note that $K = O(|\log \epsilon|)$ is independent of $N$). By Lemma \ref{lem:ODEbasic}, choose the terminal time $T> 0$ in such a way that 
\begin{equation}
\Omega_{\kappa, 0}^j \subset \{z \in \C : \vert\Im z\vert \ge c\} \quad \text{for} \quad j \in [2] \,. 
\label{eq:def_T}
\end{equation}
Here, $c > 0$ depends only on $L$ and $\kappa$ via Lemma \ref{lem:ray}~(ii). 
Next, let $(t_k)_{k=0}^K \subset [0,T]$ be monotonically increasing sequence of times with $t_0 = 0$, $t_K = T$ and, for  $k \in [K-1]$, we define $t_k$ as the largest time in $[0,T]$ satisfying 
\begin{equation}
\Omega_k^j := \Omega_{\kappa, t_k} \subset \{z \in \C : \vert\Im z\vert \ge \eta_k\} \quad \text{for} \quad j \in [2] \,. 
\end{equation}

After having set up these sequences of domains, the key for proving the target local laws is the following \emph{induction argument}, which we prove below. 
\begin{proposition}[Induction on scales] \label{lem:induction}
Assume that the local laws \eqref{eq:g1g2b} and \eqref{eq:2G_iso_final}--\eqref{eq:3G_iso_final} hold uniformly on $\Omega_k^j$ for the deformed Wigner matrices $W+ D_{j, t_k}$. Then they also hold uniformly on $\Omega_{k+1}^j$ for the deformed Wigner matrices $W+ D_{j, t_{k+1}}$. 
\end{proposition}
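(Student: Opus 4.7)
The plan is to perform one full iteration of the zigzag strategy on the time interval $[t_k, t_{k+1}]$. For the zig step I would apply Part 1 of Proposition \ref{prop:Zig} with initial time $s = t_k$ and terminal time $t = t_{k+1}$, using $\gamma = \widehat{\gamma}$ as the admissible control parameter (cf.~Proposition \ref{prop:gamma}). Its hypotheses \eqref{eq:Zig_init}--\eqref{eq:init_iso3} are exactly the inductive assumption on $\Omega_k^j$, and the conclusion \eqref{eq:Zig}--\eqref{eq:3G_iso} gives the same local laws on the reduced domain $\Omega_{k+1}^j$, but now for the OU-evolved matrix $W_{t_{k+1}} + D_{j,t_{k+1}}$, which carries an added Gaussian component of variance proportional to $t_{k+1} - t_k$.

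\textbf{Zag step.} To remove the added Gaussian at the now-fixed spectral parameters $z_{j, t_{k+1}} \in \Omega_{k+1}^j$, I would work in Setting \ref{settingzag} with $s = t_k$, $t = t_{k+1}$ and compare the three resolvent quantities at $r = t$ (controlled by the zig step output) with their values at $r = s$ (for the fresh Wigner matrix $W + D_{j,t_{k+1}}$, which is what Proposition \ref{lem:induction} demands). This comparison will come from integrating the Gronwall-type differential inequalities of Lemmas \ref{lem:uncondGron2iso}--\ref{lem:condGron2av} over the order-one interval $r \in [t_k, t_{k+1}]$, and the three bounds have to be produced in a careful order. First, I would use the unconditional Lemma \ref{lem:uncondGron2iso} to obtain the isotropic 2-resolvent a priori input \eqref{eq:GFT1isoinput} with $b = 0$, and then iterate Lemma \ref{lem:condGron2iso} whose improvement rule $b \mapsto (b + 1/3) \wedge 1$ reaches $b = 1$ in three steps, producing the target \eqref{eq:2G_iso_final}. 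In parallel I would start the isotropic 3-resolvent chain from the trivial estimate \eqref{eq:3Gisotrivial} and iterate Lemma \ref{lem:condGron3iso}, whose 2-resolvent input \eqref{eq:3Gisoass} is supplied at each level by the current 2-resolvent improvement; iterating to $b = 1$ gives \eqref{eq:3G_iso_final}. Finally, with both isotropic bounds at their optimal strength, Lemma \ref{lem:condGron2av} provides the differential inequality \eqref{eq:condGron2av} for the averaged 2-resolvent, which upon integration yields \eqref{eq:g1g2b}.

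\textbf{Main obstacle.} The hard part, already visible in the outline, is the internal self-improvement inside the zag step: the quadratic variation of the 2-resolvent martingale that drives Lemma \ref{lem:condGron2iso} naturally produces 3-resolvent chains, while the 3-resolvent Gronwall in turn requires an improved 2-resolvent input \eqref{eq:3Gisoass}. The two improvement ladders therefore have to be climbed simultaneously, keeping careful track of which value of $b$ is currently available for each chain, until both reach $b = 1$ and the optimal $\widehat{\gamma}$-dependence claimed in Theorem \ref{theo:multigllaw} is obtained; only then can the averaged law be closed off via Lemma \ref{lem:condGron2av}. One also has to check that the scheme terminates after $O(|\log\epsilon|)$ iterations, which is harmless since the number of outer scales $k$ is likewise $O(|\log\epsilon|)$.
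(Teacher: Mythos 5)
Your proposal reproduces the paper's proof: a zig step via Part~1 of Proposition~\ref{prop:Zig} with $s=t_k$, $t=t_{k+1}$, followed by a zag step that integrates the Gronwall inequalities of Lemmas~\ref{lem:uncondGron2iso}, \ref{lem:condGron2iso}, \ref{lem:condGron3iso}, and \ref{lem:condGron2av} in exactly that order, climbing the two isotropic $b$-ladders in tandem and closing off the averaged law last. The only minor imprecision is your closing remark: the inner $b$-ladder terminates after exactly three iterations ($b=0\to 1/3\to 2/3\to 1$), not $O(|\log\epsilon|)$; the $O(|\log\epsilon|)$ count is the number of outer scales $k$, which is external to the single-scale step proved here.
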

The input for $k=0$ is ensured by the global law in Proposition \ref{prop:global}. Then, applying Proposition~\ref{lem:induction} in total $K$ times, we arrive at Part 1 of Theorem \ref{theo:multigllaw} and Proposition \ref{prop:isoLL}. 

\begin{proof}[Proof of Proposition \ref{lem:induction}]
Given the assumption in Proposition \ref{lem:induction}, we find from Proposition \ref{prop:Zig} with $s = t_k$ and $t = t_{k+1}$, the local laws to hold on $\Omega_{k+1}^j$ at the cost of having introduced a Gaussian component of order $t_{k+1}-t_k$. We now remove this Gaussian component in several steps. Here, we will frequently employ Gronwall's Lemma to integrate the differential inequalities \eqref{eq:uncondGron2iso}, \eqref{eq:condGron2iso}, \eqref{eq:condGron3iso}, and \eqref{eq:condGron2av}, and thereby use that (by construction)
\begin{equation}
t_{k+1} - t_k \lesssim 1 \quad \text{and} \quad \frac{t_{k+1} - t_k}{\sqrt{N} |\Im z|^{3/2}} \lesssim 1 \quad \text{uniformly for} \quad z \in \Omega_{k+1}^j\,, \ j \in [2]\,. 
\end{equation}
The steps are as follows: 
\begin{itemize}
\item[1.] With the aid of Lemma \ref{lem:uncondGron2iso}, integrating \eqref{eq:uncondGron2iso} ending at $t = t_{k+1}$, we infer \eqref{eq:GFT1isoinput} with $b = 0$ and $s = t_k$. 
\item[2.] By Lemma \ref{lem:condGron2iso}, integrating \eqref{eq:condGron2iso} ending at $t = t_{k+1}$, we infer \eqref{eq:3Gisoass} for $b=0$ (i.e.~$b' = 1/3$) and $s = t_k$. 
\item[3.] By Lemma \ref{lem:condGron3iso} (and using \eqref{eq:3Gisotrivial}), integrating \eqref{eq:condGron3iso} ending at $t = t_{k+1}$, we obtain \eqref{eq:GFT2isoinput} with $b= 1/3$. 
\item[4.] In order to improve the exponent $b$, repeat steps 2 and 3 for two more times, giving us \eqref{eq:GFT2isoinput}--\eqref{eq:3Gisoass} for $b=1$ and $s=t_k$, $t= t_{k+1}$. That, is we proved \eqref{eq:2G_iso_final}--\eqref{eq:3G_iso_final} to hold on $\Omega_{k+1}^j$. 
\item[5.] Finally, by application of Lemma \ref{lem:condGron2av} (note that \eqref{eq:GFT2avinput} is obtained in Step 4), we integrate \eqref{eq:condGron2av} ending at $t = t_{k+1}$ to infer \eqref{eq:g1g2b} to hold on $\Omega_{k+1}^j$. 
\end{itemize}
This concludes the proof of Proposition \ref{lem:induction}.
\end{proof}

\subsubsection{Regular case: Proof of Part 2 of Theorem \ref{theo:multigllaw} and Proposition \ref{prop:isoLL}}

The proof of Theorem \ref{theo:multigllaw} for regular observables (Part 2) follows very similar steps to those in the proof of Part 1, with the only exception that the local laws for chains with one and two regular observables have to be propagated together. We thus omit this proof for the sake of brevity.
\qed

\newcommand{\com}{\color{magenta}}

\section{Zig step: Proof of Proposition \ref{prop:Zig}} \label{sec:zig}

In the current section we present the proof of Proposition \ref{prop:Zig}. Firstly we do the zig step for average two-resolvent chains in Section \ref{subsec:Zig_2Gav}. This is done self-consistently, i.e. without involving isotropic quantities
or longer chains. However the single resolvent local law is used, which states that for any fixed $\zeta>0$ and for any $z\in\C\setminus\R$ such that $N\vert\Im z\vert\rho(z)\ge N^\zeta$, it holds that
\begin{equation}
\label{eq:singllaw}
\big|\langle (G(z)-M(z))A\rangle\big|\prec \frac{1}{N|\Im z|}, \qquad\quad \big|\langle{\boldsymbol x}, (G(z)-M(z)){\boldsymbol y}\rangle\big|\prec \sqrt{{\frac{\rho}{N|\Im z|}}}.
\end{equation}
Note that \eqref{eq:singllaw} coincides with \eqref{eq:singllaw0} when $\Re z$ is in the bulk, however \eqref{eq:singllaw} is more general since it is uniform in the spectrum.  The local law \eqref{eq:singllaw} was proven near the edge in \cite{SlowCor} and was later extended to the cusp regime in \cite{CuspUniv}. In fact, for the proof of Proposition \ref{prop:Zig} we do not need \eqref{eq:singllaw} itself, but just a weaker statement that \eqref{eq:singllaw} propagates along the zig flow, which can be directly proven by the methods described below in Section~\ref{subsec:Zig_2Gav}. Thus our proof can be easily made independent of \cite{SlowCor, CuspUniv}, but for simplicity in the current presentation we will rely on them as they are already available.

Later in Section \ref{subsec:Zig_iso} we work with isotropic two- and three-resolvent chains and prove \eqref{eq:2G_iso}, \eqref{eq:3G_iso} relying on the result of Section \ref{subsec:Zig_2Gav}. Finally, in Section \ref{subsec:Zig_regular} we explain how the proofs of \eqref{eq:Zig}--\eqref{eq:3G_iso} should be modified in the setting when one or several of observables are regular in the sense of Definition~\ref{def:regulardef}.

Throughout the entire section we will assume without loss of generality that all matrices $A_j,B_j$, $j=1,2$ are bounded in operator norm by 1, i.e. $\lVert A_j\rVert\le 1$, $\lVert B_j\rVert\le 1$. Also by ${\boldsymbol x}, {\boldsymbol y}\in\C^N$ we will mean unit vectors. Moreover, for simplicity we present the proof for $s=0$ and $t=T$. 
To keep the presentation short we often omit the time dependence in $G_{j,s}$ and simply write $G_j$. That is, we use the shorthand notation
\begin{equation*}
G_j = (W_s+D_{j,s}-z_{j,s})^{-1},\quad j\in [1,2],
\end{equation*}
whenever the time $s$ is clear from the context. For all other time dependent variables, such as $z_{j,s}$, $D_{j,s}$, and $\ell_s$, we keep the time dependence explicitly.

\subsection{Average two-resolvent chains: Proof of \eqref{eq:Zig} in Proposition \ref{prop:Zig}}\label{subsec:Zig_2Gav}

By Itô calculus, 
	for any deterministic observables $R_1,R_2\in\C^{N\times N}$, recalling \eqref{Gjt}, \eqref{eq:def_flow}
	and \eqref{eq:OU},  we have   
	\begin{equation}
		\begin{split}
			\dif \langle G_{1,t}R_1G_{2,t}R_2\rangle &= \dif\mathcal{E}_t+\langle G_{1,t}R_1G_{2,t}R_2\rangle \dif t + \langle G_{1,t}R_1G_{2,t}\rangle \langle G_{2,t}R_2G_{1,t}\rangle\dif t\\
			&\quad +\langle G_{1,t}-M_{1,t}\rangle \langle G_{1,t}^2R_1G_{2,t}R_2\rangle \dif t + \langle G_{2,t}-M_{2,t}\rangle \langle G_{1,t}R_1G_{2,t}^2R_2\rangle\dif t,\\
			&\quad+\frac{\boldsymbol1(\beta=1)}{N}\bigg[ \langle G_{1,t}^\mathfrak{t}G_{1,t}R_1G_{2,t}R_2G_{1,t}\rangle\dif t+\langle G_{2,t}^\mathfrak{t}G_{2,t}R_2G_{1,t}R_1G_{2,t}\rangle\dif t \\
			&\qquad \qquad \qquad \quad +\langle (G_{1,t}R_1G_{2,t})^\mathfrak{t}G_{2,t}R_2G_{1,t}\rangle \dif t \bigg] \,,
			\end{split}
		\label{eq:GG_evol}
	\end{equation}
where the \emph{martingale term} in the first line of \eqref{eq:GG_evol} is given by
	\begin{equation}\label{eq:def_qv}
\dif\mathcal{E}_t=\frac{1}{\sqrt{N}}\sum\limits_{a,b=1}^N \partial_{ab}\langle G_{1,t}R_1G_{2,t}R_2\rangle \dif B_{ab}.
	\end{equation}
	Here $\partial_{ab}=\partial_{w_{ab}(t)}$ stands for the directional derivative in the direction of $w_{ab}(t)$ (here $w_{ab}(t)$ denote the entries of $W_t$), $\beta=1$, $\beta=2$ denote the real and complex case, respectively, and $\mathfrak{t}$ denotes the transposition. 
	 From now on to keep the presentation short and simple we only consider the complex case $\beta=2$, since the real case $\beta=1$ is very similar, it only requires to estimate a few more terms in \eqref{eq:GG_evol}, whose estimate does not require any new idea. We refer to \cite{edgeETH} for a similar case when the additional terms present in the real case were estimated carefully.
    
The differential in \eqref{eq:GG_evol} is complemented by the time derivative of the corresponding deterministic approximation (recall the shorthand notation $M_{12,t}^{R}$ from \eqref{eq:M2_t-dep}) given in the next lemma. Its proof is completely analogous to \cite[Lemma 5.5]{mesoCLT} and hence omitted.
	
    \begin{lemma}[Time derivative of $M_{12}$]\label{lem:M_evol} For any $t\in [0,T]$ it holds that
    \begin{equation}
    \partial_t \langle M_{12,t}^{R_1}R_2\rangle = \langle M_{12,t}^{R_1}R_2\rangle + \langle M_{12,t}^{R_1}\rangle\langle M_{21,t}^{R_2}\rangle.
    \label{eq:M_evol}
    \end{equation}
    \end{lemma}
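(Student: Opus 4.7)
The plan is to differentiate the defining relation for $M_{12,t}^{R_1}$ rather than the explicit resolvent-style formula, since this produces the target identity in a more transparent way. The starting point is the fact that along the characteristic flow \eqref{eq:def_flow} we have $M_{j,t} = e^{t/2} M_{j,0}$ (Lemma \ref{lem:flow_properties}~(i)(1)), hence $\partial_t M_{j,t} = \tfrac{1}{2} M_{j,t}$ for $j=1,2$. Using this, I would compute the time derivative of the stability operator $\mathcal{B}_{12,t}[X] = X - M_{1,t}\langle X\rangle M_{2,t}$ acting on a fixed argument $X$: by the product rule, $\partial_t \mathcal{B}_{12,t}[X] = -M_{1,t}\langle X\rangle M_{2,t}$. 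Similarly, $\partial_t(M_{1,t} R_1 M_{2,t}) = M_{1,t} R_1 M_{2,t}$.

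Next, I would apply $\partial_t$ to both sides of $\mathcal{B}_{12,t}[M_{12,t}^{R_1}] = M_{1,t} R_1 M_{2,t}$. The left-hand side expands as $(\partial_t \mathcal{B}_{12,t})[M_{12,t}^{R_1}] + \mathcal{B}_{12,t}[\partial_t M_{12,t}^{R_1}]$, and the previous computation gives
\begin{equation*}
\mathcal{B}_{12,t}\!\left[\partial_t M_{12,t}^{R_1}\right] = M_{1,t} R_1 M_{2,t} + \langle M_{12,t}^{R_1}\rangle\, M_{1,t} M_{2,t}.
\end{equation*}
Inverting $\mathcal{B}_{12,t}$ and using its simple spectral structure, namely $\mathcal{B}_{12,t}[M_{1,t}M_{2,t}] = (1-\langle M_{1,t}M_{2,t}\rangle) M_{1,t}M_{2,t}$, yields
\begin{equation*}
\partial_t M_{12,t}^{R_1} = M_{12,t}^{R_1} + \langle M_{12,t}^{R_1}\rangle \cdot \frac{M_{1,t}M_{2,t}}{1 - \langle M_{1,t}M_{2,t}\rangle}.
\end{equation*}

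Finally, I would pair this identity with $R_2$ in the normalized trace. The first term clearly produces $\langle M_{12,t}^{R_1} R_2\rangle$. For the second term, cyclicity gives $\langle M_{1,t}M_{2,t} R_2\rangle = \langle M_{2,t} R_2 M_{1,t}\rangle$, which is exactly the numerator in the explicit formula $\langle M_{21,t}^{R_2}\rangle = \langle M_{2,t} R_2 M_{1,t}\rangle/(1-\langle M_{2,t}M_{1,t}\rangle)$ coming from $M_{21,t}^{R_2} = \mathcal{B}_{21,t}^{-1}[M_{2,t} R_2 M_{1,t}]$. Combining these gives precisely the target formula \eqref{eq:M_evol}.

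I do not anticipate any genuine obstacle: the entire argument is a short algebraic manipulation built on two inputs already available in the excerpt (the exponential growth $M_{j,t} = e^{t/2} M_{j,0}$ and the fact that $M_{1,t}M_{2,t}$ is an eigenvector of $\mathcal{B}_{12,t}$). The only mild bookkeeping subtlety is to keep track of which of the two stability operators $\mathcal{B}_{12,t}$ versus $\mathcal{B}_{21,t}$ is being inverted, and to apply cyclicity of the trace to match the numerators; this is what lets the $\mathcal{B}_{12,t}^{-1}[M_{1,t}M_{2,t}]$ factor be recognized as $\langle M_{21,t}^{R_2}\rangle/\langle M_{12,t}^{R_1}\rangle$ after pairing with $R_2$.
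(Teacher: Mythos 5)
Your calculation is correct: differentiating the defining relation $\mathcal{B}_{12,t}[M_{12,t}^{R_1}] = M_{1,t}R_1M_{2,t}$, using $\partial_t M_{j,t} = \tfrac12 M_{j,t}$ and the eigenrelation $\mathcal{B}_{12,t}[M_{1,t}M_{2,t}]=(1-\langle M_{1,t}M_{2,t}\rangle)M_{1,t}M_{2,t}$, then taking traces and applying cyclicity together with $\langle M_{21,t}^{R_2}\rangle = \langle M_{2,t}R_2M_{1,t}\rangle/(1-\langle M_{2,t}M_{1,t}\rangle)$, reproduces \eqref{eq:M_evol} exactly. The paper itself omits the proof, pointing to \cite[Lemma 5.5]{mesoCLT}, so this is the natural direct verification; the only blemish is the closing remark, where the pairing of $\mathcal{B}_{12,t}^{-1}[M_{1,t}M_{2,t}]$ with $R_2$ gives simply $\langle M_{21,t}^{R_2}\rangle$, not $\langle M_{21,t}^{R_2}\rangle/\langle M_{12,t}^{R_1}\rangle$ — the displayed formulas above it are correct, so this is just a misstatement in the commentary rather than an error in the argument.
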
		
    	Then, using the shorthand notation 
    		\begin{equation}
    		g^{R_1,R_2}_t:=\left\langle \left(G_{1,t}R_1G_{2,t}-M_{12,t}^{R_1}\right)R_2\right\rangle,
        \label{eq:g_t}   	
    	\end{equation}
    	we find, subtracting \eqref{eq:M_evol} from \eqref{eq:GG_evol}, that
	\begin{equation}
			\dif g_t^{R_1,R_2} = \left(1+(2-k(R_1,R_2))\langle M_{12,t}^I\rangle \right) g_t^{R_1,R_2}\dif t +\dif\mathcal{E}_t + \mathcal{F}_t\dif t \,. 
		\label{eq:dif_g}
	\end{equation}
	Here, we introduced the notation $\mathcal{F}_t=\mathrm{Lin}_t + \mathrm{Err}_t$ for the {\it forcing term}, where
	the \emph{linear term} and \emph{error term} are given by 
	\begin{equation}\label{eq:Err}
		\begin{split}
						&\mathrm{Lin}_t = k(R_1)\langle M_{12,t}^{R_1}\rangle g_t^{I,R_2} + k(R_2)\langle M_{21,t}^{R_2}\rangle g_t^{R_1,I},\\
			&\mathrm{Err}_t = g_t^{I,R_2}g_t^{R_1,I} + \langle G_{1,t}-M_{1,t}\rangle \langle G_{1,t}^2R_1G_{2,t}R_2\rangle + \langle G_{2,t}-M_{2,t}\rangle \langle G_{1,t}R_1G_{2,t}^2R_2\rangle\,, 
		\end{split}
	\end{equation}
	respectively.
Moreover, we denoted
	\begin{equation}
		k(R_1,\ldots,R_m):=\#\lbrace j\in [1,m]: R_j\neq I\rbrace
		\label{eq:k}
	\end{equation}
	for deterministic $R_1,\ldots,R_m\in\mathbb{C}^{N\times N}$. 
	
Recall the exponent $\epsilon>0$ which is fixed in Theorem \ref{theo:multigllaw}. The current Setting \ref{setting} depends on $\epsilon$ through the definition of spectral domains \eqref{eq:specdom}. Take any $\xi_0,\xi_1,\xi_2\in (0,\epsilon/10)$
	such that $\xi_0<\xi_1/2<\xi_2/4$ and define the stopping time
	\begin{equation}
		\begin{split}
			\label{eq:stoptime}
			\tau^{R_1,R_2}&:=\inf\lbrace t\in [0,T]: \, \max_{s\in [0,t]}\max_{z_{j,0}\in\Omega^j_0}\alpha_s^{-1}\left\vert g_s^{R_1,R_2}\right\vert \ge N^{2\xi_{k(R_1,R_2)}}\rbrace, \quad\nc	\\
			\tau&:=\min\lbrace \tau^{R_1,R_2}:\, R_1,R_2\in\mathfrak{S}\rbrace\,, \quad \text{with} \quad \mathfrak{S}:=\lbrace I,B_1,B_1^*,B_2,B_2^*\rbrace \,, 
		\end{split}
	\end{equation}
	where we introduced the shorthand notation 
	\begin{equation*}
\alpha_t:=\frac{1}{N\eta_{1,t}\eta_{2,t}}\wedge \frac{1}{\sqrt{N\ell_t}\gamma_t} \,. 
	\end{equation*}
	We point out that both $g_s$ and $\alpha_s$ in \eqref{eq:stoptime} depend on the $z_{j,s}$'s and thus on the $z_{j,0}$'s via the flow as its initial condition. 
	
	In the analysis of \eqref{eq:dif_g} the following two quantities play significant role 
	
	\begin{minipage}{0.5\textwidth}
		\begin{align}
			f_r:= 2\Re\langle M_{12,r}^I\rangle\vee 0,
			\label{eq:def_f}
		\end{align}
	\end{minipage}
	\begin{minipage}{0.5\textwidth}
		\begin{align}
			\beta_r:=\vert 1-\langle M_{1,r}M_{2,r}\rangle\vert.
			\label{eq:def_beta}
		\end{align} 
	\end{minipage}
	
	These functions depend on time $r\in [0,T]$ and initial conditions $z_{j,0}\in\Omega^j_0$, $D_{j,0}\in\mathbb{C}^{N\times N}$, $j\in[1,2]$, but we will omit the dependence on initial conditions in notations when this does not cause an ambiguity. Also note that \eqref{eq:def_beta} is the time-dependent version of \eqref{eq:smallestev} where $\beta(z_1,z_2)$ is defined. Clearly $f_t$ is essentially the coefficient of $g_t^{R_1, R_2}$  in the linear ODE \eqref{eq:dif_g} 
	with forcing terms, so its exponential plays the role of the propagator. 
	 We stress that the notation $k(R_1,\ldots,R_m)$ introduced in \eqref{eq:k} serves only the purpose of covering all possible cases $R_1,R_2\in\mathfrak{S}$ in one formula \eqref{eq:dif_g}. We do not exploit the fact that for $k(R_1,R_2)>0$ the propagator with $1+(1-k(R_1,R_2)/2)f_t$ in the rhs.~of \eqref{eq:dif_g}
	 becomes smaller than $1+f_t$, but rather estimate the propagator from above by
	 the exponential of  $1+f_t$ in all cases.
	
	We now state two important technical lemmas whose proofs are postponed to Section~\ref{sec:techzig} of the Supplementary Material \cite{supplement} and after concluding the proof of \eqref{eq:Zig}, respectively. Lemma \ref{lem:prop} controls the propagator of \eqref{eq:dif_g}.

	\begin{lemma}[Bound on the propagator]
		\label{lem:prop}
		We have the following: 
		\begin{enumerate}
			\item For any spectral pairs $\nu_1, \nu_2$ it holds that
			\begin{equation}
				2\vert \langle M_{\nu_1,\nu_2}^I\rangle\vert\le\pi\rho_1/\eta_1+\pi\rho_2/\eta_2, \quad \text{with} \quad \rho_j(z)=\pi^{-1}\vert\langle \Im M_j(z)\rangle\vert,\, j\in[1,2].
				\label{eq:prop_bound_basic}
			\end{equation}
			\item For any $z_{j,0}\in\Omega_0^j$, $j\in[1,2]$, there exists $s_0=s_0(z_{1,0},z_{2,0})\in [0,T]$ such that $f_r>0$ for all $r<s_0$ and $f_r=0$ for all $r>s_0$. Note that $s_0$ may be an endpoint of $[0,T]$.
			\item For any $s,t\in[0,T]$, $s<t$, we have 
			\begin{subequations}
			    \begin{align}
					&\int_s^t f_r\dif r\le \log \frac{\eta_{1,s}\eta_{2,s}}{\eta_{1,t}\eta_{2,t}},
					\label{eq:prop_int_bound1}\\
					&\int_s^t f_r\dif r=2\log\frac{\beta_{s\wedge s_0}}{\beta_{t\wedge s_0}}.
					\label{eq:prop_int_bound2}
				\end{align}
			\end{subequations}
			\item For any $s,t\in [0,T]$, $s\le t$, it holds $\beta_s\sim \beta_t + (t-s)$.
		\end{enumerate}
	\end{lemma}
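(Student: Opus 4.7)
My plan derives all four parts from the explicit resolution of the stability operator,
\begin{equation*}
\langle M_{12,r}^I\rangle = \frac{\langle M_{1,r}M_{2,r}\rangle}{1-\langle M_{1,r}M_{2,r}\rangle},
\end{equation*}
which follows by taking the trace in $\mathcal{B}_{12}[M_{12}^I] = M_1 M_2$. Combined with the flow identity $M_{l,t}(z_{l,t}) = e^{t/2}M_{l,0}(z_{l,0})$ from Lemma~\ref{lem:flow_properties}~(i.1), this has the crucial consequence that, setting $c := \langle M_{1,0}M_{2,0}\rangle$, we have $\langle M_{1,t}M_{2,t}\rangle = e^t c$ along the flow, and therefore $\beta_t = |1 - e^t c|$ is an \emph{explicit one-variable function} of $t$.

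For part (1), I will use the Ward-type identity $M_l M_l^* = \Im M_l/(\eta_l + \pi\rho_l)$, an immediate consequence of $M_l^{-1} = D_l - z_l - \langle M_l\rangle$. Normalized tracing gives $\langle M_l M_l^*\rangle = \pi\rho_l/(\eta_l + \pi\rho_l)$, and Cauchy--Schwarz then implies, writing $a_l := \eta_l/(\pi\rho_l)$, that $|\langle M_1 M_2\rangle| \le 1/\sqrt{(1+a_1)(1+a_2)}$. Two successive AM--GM estimates, namely $\sqrt{(1+a_1)(1+a_2)} - 1 \ge \sqrt{a_1 a_2}$ and $2/\sqrt{a_1 a_2} \le 1/a_1 + 1/a_2$, together with the explicit formula above yield $2|\langle M_{12}^I\rangle| \le 1/a_1 + 1/a_2 = \pi\rho_1/\eta_1 + \pi\rho_2/\eta_2$, which is \eqref{eq:prop_bound_basic}.

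The cornerstone of parts (2) and (3) is the differential identity
\begin{equation*}
2\Re\langle M_{12,r}^I\rangle = -\frac{d}{dr}\log\beta_r^2,
\end{equation*}
obtained by a direct algebraic manipulation using $\beta_r^2 = 1 - 2e^r\Re c + e^{2r}|c|^2$. Since this is a convex quadratic in $x = e^r$, it has at most one minimum, so $(d/dr)\beta_r^2$ changes sign at most once; defining $s_0$ as this sign-change time gives part (2). Integrating the identity above on $[s\wedge s_0, t\wedge s_0]$ immediately yields \eqref{eq:prop_int_bound2}. For \eqref{eq:prop_int_bound1}, taking the imaginary part of the characteristic ODE $\partial_r z_l = -\langle M_l\rangle - z_l/2$ produces $(d/dr)\log\eta_{l,r} = -\pi\rho_{l,r}/\eta_{l,r} - 1/2$; combining this with part (1) gives
\begin{equation*}
\frac{d}{dr}\log\frac{\beta_r^2}{\eta_{1,r}\eta_{2,r}} = -2\Re\langle M_{12,r}^I\rangle + \frac{\pi\rho_{1,r}}{\eta_{1,r}} + \frac{\pi\rho_{2,r}}{\eta_{2,r}} + 1 \ge 1 > 0,
\end{equation*}
so $\beta_r^2/(\eta_{1,r}\eta_{2,r})$ is monotone nondecreasing, from which \eqref{eq:prop_int_bound1} follows by taking logarithms.

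Finally, part (4) uses that $\beta_r^2 = 1 - 2e^r\Re c + e^{2r}|c|^2$ is Lipschitz in $r \in [0,T]$ with an $O(1)$ constant, since $|c| \le 1$ strictly (by Cauchy--Schwarz from part~(1)) and $T = O(1)$; this gives $|\beta_s^2 - \beta_t^2| \lesssim t-s$, from which the two-sided comparability $\beta_s \sim \beta_t + (t-s)$ is extracted by a case analysis distinguishing whether $s_0$ lies between $s$ and $t$, exploiting that $\beta_r$ is strictly monotone on $[0,s_0]$ and $[s_0,T]$. I expect the main technical subtlety to lie in part (1), where the AM--GM chain must be executed carefully to obtain the \emph{sharp} constant (in particular, without a spurious factor of $\pi$), and in the lower bound of part (4) near the possibly degenerate minimum of $\beta_r$, which requires the explicit one-variable structure rather than mere Lipschitz estimates.
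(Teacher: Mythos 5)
Your proposal is built on the same foundation as the paper's proof, namely the explicit trace identity $\langle M_{12,r}^I\rangle = \langle M_{1,r}M_{2,r}\rangle/(1-\langle M_{1,r}M_{2,r}\rangle)$ and the flow relation $\langle M_{1,t}M_{2,t}\rangle = e^t\langle M_{1,0}M_{2,0}\rangle$. Parts (1)--(3) are correct and essentially agree with the paper's argument: for (1) you execute the same two AM--GM steps, merely in the substitution $a_l = \eta_l/(\pi\rho_l)$ rather than $x = 1/a_1$, $y = 1/a_2$; for (2) and \eqref{eq:prop_int_bound2} your log-derivative identity $2\Re\langle M_{12,r}^I\rangle = -\frac{d}{dr}\log\beta_r^2$ is exactly what the paper integrates to get \eqref{eq:prop_int_bound2}; for \eqref{eq:prop_int_bound1} you choose a slightly different route (monotonicity of $\beta_r^2/(\eta_{1,r}\eta_{2,r})$ plus the decrease of $\eta$), whereas the paper simply bounds $\int f_r\,dr$ by $\int(\pi\rho_1/\eta_1+\pi\rho_2/\eta_2)\,dr$ and integrates directly using $\pi\rho_{j,r}\le -\partial_r\eta_{j,r}$ — your version works but needs the extra step of carrying the comparison across $s_0$ when $s<s_0<t$.

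The real divergence is in part (4), and there your plan has a genuine gap as written. The Lipschitz bound $|\beta_s^2-\beta_t^2|\lesssim t-s$ only gives $\beta_s\lesssim(\beta_t^2+(t-s))^{1/2}\le\beta_t+\sqrt{t-s}$, which is strictly weaker than the claimed $\beta_s\lesssim\beta_t+(t-s)$ when $\beta_t\ll t-s\ll 1$; you acknowledge this, but the promised "case analysis distinguishing whether $s_0$ lies between $s$ and $t$" is only sketched, and making it watertight (in particular uniformly in the size of $|c|$, including the regime where the minimizer $x_0=\Re c/|c|^2$ of the quadratic lies far outside $[1,e^T]$) is genuinely fiddly. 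The paper's one-line decomposition
\begin{equation*}
\beta_s = \big|e^{s-t}(1-\langle M_{1,t}M_{2,t}\rangle)+(1-e^{s-t})\big|,
\end{equation*}
combined only with the two observations $1-e^{s-t}\ge 0$ and $\Re(1-\langle M_{1,t}M_{2,t}\rangle)\ge 0$, gives both the upper bound (triangle inequality) and the lower bound ($|a+b|\gtrsim\max\{\Re a,\,b,\,|\Im a|\}$ when $\Re a,b\ge 0$ and $b$ real) with no case analysis at all, and is therefore clearly the better route for this item. I would replace your argument for (4) with that decomposition.
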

	
The following lemma controls the forcing terms of \eqref{eq:dif_g}, i.e.~the martingale term, the linear term and error term.  
	\begin{lemma}[Bound on the forcing terms] \label{lem:2G_av_error} Consider $R_1,R_2\in\mathfrak{S}$. Denote the quadratic variation of the martingale term $\dif \mathcal{E}_t$ \eqref{eq:def_qv} 
	 by 
		\begin{equation}
			\mathrm{QV}\big[g_t^{R_1,R_2} \big]:= \frac{1}{N}\sum\limits_{a,b=1}^N \left\vert\partial_{ab}\langle G_{1,t}R_1G_{2,t}R_2\rangle\right\vert^2.
			\label{eq:2G_QV}
		\end{equation} 
		Then for any $\zeta>0$ it holds, with very high probability, that
		\begin{equation}
			\begin{split}
		\left(\int_0^{t\wedge\tau}\mathrm{QV}\big[g_s^{R_1,R_2} \big] \dif s\right)^{1/2} &+ \int_0^{t\wedge\tau}|\mathcal{F}_s|\dif s \\
		&\lesssim \alpha_{t\wedge\tau} \left(k(R_1)N^{2\xi_{k(R_2)}}+k(R_2)N^{2\xi_{k(R_1)}} + N^\zeta\right)\log N
			\end{split}
			\label{eq:2G_av_error}
		\end{equation}
		uniformly in $t\in[0,T]$, $z_{j,0}\in\Omega^j_0$ and $\lVert B_j\rVert\le 1$, $j\in [1,2]$.
	\end{lemma}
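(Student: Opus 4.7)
The goal is to bound, with very high probability, both $\bigl(\int_0^{t\wedge\tau} \mathrm{QV}[g_s^{R_1,R_2}]\dif s\bigr)^{1/2}$ and $\int_0^{t\wedge\tau}|\mathcal{F}_s|\dif s$ by $\alpha_{t\wedge\tau}\bigl(k(R_1)N^{2\xi_{k(R_2)}}+k(R_2)N^{2\xi_{k(R_1)}} + N^\zeta\bigr)\log N$. Three ingredients will carry the argument: the bootstrap bound $|g_s^{R,R'}|\le \alpha_s N^{2\xi_{k(R,R')}}$ for $R,R'\in\mathfrak{S}$ and $s\le \tau$, built into the stopping time \eqref{eq:stoptime}; the single-resolvent local law \eqref{eq:singllaw} together with the $M$-object bounds of Proposition~\ref{prop:norm_M_bounds}; and the time-integration estimates of Lemma~\ref{lem:prop} and \eqref{eq:int_bounds}.

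For the quadratic variation, I would first compute
\begin{equation*}
\partial_{w_{ab}}\langle G_1 R_1 G_2 R_2\rangle = -\frac{1}{N}\bigl[(G_1 R_1 G_2 R_2 G_1)_{ba} + (G_2 R_2 G_1 R_1 G_2)_{ba}\bigr],
\end{equation*}
then sum the absolute square over $(a,b)$ and use the Ward identities $G_j G_j^* = \Im G_j/\eta_{j,s}$ to reduce $\mathrm{QV}[g_s^{R_1,R_2}]$ to a combination of four-resolvent averages of the form $\tfrac{1}{N^2\eta_{1,s}^2}\langle \Im G_1 R_1 G_2 R_2 \Im G_1 R_2^* G_2^* R_1^*\rangle$ (plus the analogue with indices $1$ and $2$ swapped). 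Splitting $\Im G_j = \Im M_j + \Im(G_j - M_j)$ and invoking \eqref{eq:singllaw} on the fluctuation part, each of these objects reduces by Cauchy-Schwarz and further Ward applications to products of the monitored quantities $g_s^{R,R'}$, each controlled by $\alpha_s N^{2\xi}$ under the stopping time. Integrating via \eqref{eq:int_bounds}, in the form $\int_0^{t\wedge\tau}\rho_{j,s}/\eta_{j,s}^m\dif s \lesssim 1/\eta_{j,t\wedge\tau}^{m-1}$, and taking the square root yields the required bound.

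For the forcing term $\mathcal{F}_s=\mathrm{Lin}_s+\mathrm{Err}_s$, the coefficient $\langle M_{12,s}^{R_j}\rangle = \langle M_{1,s}R_j M_{2,s}\rangle/(1-\langle M_{1,s}M_{2,s}\rangle)$ in $\mathrm{Lin}_s$ is estimated by $1/\gamma_s$ via admissibility $\gamma_s \lesssim \beta_{*,s}$, multiplied by $g_s^{I,R_j}$ controlled by the stopping time. The error term $\mathrm{Err}_s$ splits into the bootstrap-controlled product $g_s^{I,R_2}g_s^{R_1,I}$, which is absorbed into the rhs thanks to the chosen hierarchy $\xi_0 < \xi_1/2 < \xi_2/4$, and the two single-resolvent correction terms $\langle G_j - M_j\rangle\cdot\langle G_1^2 R_1 G_2 R_2\rangle$, handled by combining \eqref{eq:singllaw} with a Schwarz-Ward reduction of $\langle G_1 R_1 G_2 R_2 G_1\rangle$ to a two-resolvent object controlled by the bootstrap. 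Time integration via \eqref{eq:prop_int_bound1} and \eqref{eq:time_monot} then yields the desired bound.

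The main obstacle is the reduction of the four-resolvent trace in $\mathrm{QV}$ (and of the three-resolvent trace in the error term) to the monitored $g_s^{R,R'}$ with the correct $\gamma_s$-dependence. In particular, the $1/\gamma_s$ factor present in $\alpha_s$, which goes beyond the standard $\sqrt{\eta}$-rule, must emerge from the interplay between Ward identities, the $1/\beta_{*,s}$-type bounds on $M_{12}^R$ from Proposition~\ref{prop:norm_M_bounds}, and the admissibility inequality $\gamma_s \lesssim \beta_{*,s}$; naive operator-norm estimates $\|G_j\|\le \eta_{j,s}^{-1}$ would miss this extra decay. Once this reduction is carried out, the time integration is routine via \eqref{eq:int_bounds} and \eqref{eq:prop_int_bound1}, contributing only logarithmic factors in $N$; a final union bound over a polynomial grid of $z_{j,0}\in\Omega_0^j$, together with the trivial Lipschitz continuity of all quantities in $z_{j,0}$, produces the uniformity in the statement.
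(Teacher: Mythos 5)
Your high-level skeleton---compute $\partial_{ab}\langle G_1R_1G_2R_2\rangle$, reduce via Schwarz and Ward to four-resolvent traces weighted by $(N\eta_{j,s})^{-2}$, invoke the bootstrap bound from the stopping time, integrate in time via \eqref{eq:int_bounds}---is correct in outline, and your treatment of $\mathrm{Lin}_s$ and of the product $g_s^{I,R_2}g_s^{R_1,I}$ is essentially what the paper does. But the central step, namely extracting the $1/\gamma_s$ factor from the four-resolvent trace, is wrong as you have described it, and this is precisely what you flag as ``the main obstacle.''

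The splitting $\Im G_1 = \Im M_1 + \Im(G_1 - M_1)$ does not deliver the $\gamma$-decay. After replacing both $\Im G_1$ factors by $\Im M_1$, the leading term is $\langle (R_1^*\Im M_1 R_1)\,G_2\,(R_2\Im M_1 R_2^*)\,G_2^*\rangle$, a \emph{same}-resolvent trace whose deterministic size is governed by $1/|1-\langle M_2M_2^*\rangle|\sim \rho_2/\eta_2$, with no $\gamma_s$-improvement whatsoever; the bounds $\|M_{12}^R\|\lesssim 1/\beta_{*}$ from Proposition~\ref{prop:norm_M_bounds} concern the \emph{mixed} stability operator and do not apply here. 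Moreover, the cross terms involve $(G_1-M_1)$ sandwiched inside a random chain, which is not controlled by the single-resolvent law \eqref{eq:singllaw} (that law requires a deterministic test observable). The paper instead keeps both $\Im G_1$'s in place and factorizes the four-resolvent trace $\langle G_2 S G_2^* T\rangle$ (with $S=R_2\Im G_1R_2^*$, $T=R_1^*\Im G_1R_1$, both $\ge 0$) via the spectral decomposition of $G_2$ into the product $N\langle|G_2|S\rangle\langle|G_2|T\rangle$, see \eqref{eq:mart_reduct}. Each factor is then expressed through the integral representation $|G_2|=\tfrac{2}{\pi}\int_0^\infty \Im G_2(E_2+\mathrm{i}\zeta_x)\zeta_x^{-1}\dif x$ of \eqref{eq:|G|_int}, which converts a same-resolvent $|G_2|$ into mixed traces $\langle \Im G_1 R\,\Im G_2(\cdot)\,R'\rangle$ at spectral parameters along a vertical ray. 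One then needs the ray property of the spectral domains (Lemma~\ref{lem:ray}) to ensure the bootstrap bound applies along the entire ray, and the vague monotonicity of $\gamma$ in the imaginary part (property (3) of Definition~\ref{def:gamma}, \eqref{eq:g_monot}) to replace $\gamma(z_{1,s},E_2+\mathrm{i}\zeta_x)$ by $\gamma_s$. None of these three ingredients---the positivity-based factorization, the $|G|$ integral representation along rays, the vague monotonicity of $\gamma$---appears in your argument, and without them the $1/\gamma_s$ factor does not emerge. The same mechanism is also needed for the $\langle G_1^2 R_1 G_2 R_2\rangle$ part of $\mathrm{Err}_s$, see \eqref{eq:2G_av_err_red1}--\eqref{eq:2G_av_err_red}, where your ``Schwarz--Ward reduction to a two-resolvent object controlled by the bootstrap'' likewise does not reach the $1/(\eta_{1,s}\gamma_s)$ bound directly.
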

	
	In the following, we will consider \eqref{eq:dif_g} as a system of equations for $g^{R_1,R_2}_t$, $R_1,R_2\in\mathfrak{S}$. For each choice of $R_1,R_2\in\mathfrak{S}$, we use the \emph{stochastic Gronwall argument} from \cite[Lemma 5.6]{gumbel} with \eqref{eq:2G_av_error} as an input to show that $\tau^{R_1,R_2}>\tau$ unless $\tau^{R_1,R_2}=T$. 
	This would readily imply that $\tau=T$ with very high probability, i.e.~\eqref{eq:Zig} holds.  

	Take any $R_1,R_2\in\mathfrak{S}$ and denote $g_s:=g_s^{R_1,R_2}$, $\xi:=\xi_{k(R_1,R_2)}$.  Consider \eqref{eq:2G_av_error} for some $\zeta<\xi$. 
	Due to the choice of $\xi_j$, $j\in[0,2]$ 
	  the rhs.~of \eqref{eq:2G_av_error} is upper bounded by 
	  $\alpha_{t\wedge\tau}N^{\xi}$, where we ignored the irrelevant $\log N$ factor. Then \cite[Lemma 5.6]{gumbel} with $d=1$ applied for the scalar equation \eqref{eq:dif_g} asserts that for any arbitrary small $\zeta>0$ and for any $t\ge 0$ we have
	\begin{equation}
	\begin{split}
		\sup_{0\le s\le t\wedge \tau}\left\vert g_s\right\vert^2 \lesssim &\left\vert g_0\right\vert^2 + N^{2\xi+3\zeta}\alpha_{t\wedge\tau}^2\\
		& +\int_0^{t\wedge\tau} \left(\vert g_0\vert^2 + N^{2\xi+3\zeta}\alpha_s^2\right) f_s \exp \left(2\left(1+N^{-\zeta}\right)\int_s^{t\wedge\tau}(1+f_r)\dif r\right)\dif s.
    \end{split}
		\label{eq:stoch_gronwall}
	\end{equation}
	It follows from \eqref{eq:Zig_init} that $\vert g_0\vert^2\lesssim N^{3\zeta}\alpha_0^2\le N^{3\zeta}\alpha_s^2$ with very high probability. Also \eqref{eq:prop_int_bound1} implies that
	\begin{equation*}
		\exp\left(2N^{-\zeta}\int_s^{t\wedge\tau}(1+f_r)\dif r\right)\le \exp\left(CN^{-\zeta}\log N\right)\lesssim 1.
	\end{equation*}
	Therefore, \eqref{eq:stoch_gronwall} simplifies to
	\begin{equation}
		\sup_{0\le s\le t\wedge \tau}\left\vert g_s\right\vert^2 \lesssim N^{2\xi+3\zeta}\alpha_{t\wedge\tau}^2 +N^{2\xi+3\zeta}\int_0^{t\wedge\tau}\alpha_s^2 f_s \exp \left(2\int_s^{t\wedge\tau}f_r\dif r\right)\dif s,
		\label{eq:stoch_gronwall_2}
	\end{equation}
	where we incorporated the contribution from the integral of $1$ in the exponent into the implicit multiplicative constant.
	
	Take $\zeta<\xi/3$. Then for the purpose of showing that $\tau=T$ with very high probability it suffices to verify the inequality
	\begin{equation}
		\int_0^{t\wedge\tau}\alpha_s^2 f_s \exp \left(2\int_s^{t\wedge\tau}f_r\dif r\right)\dif s\lesssim \alpha_{t\wedge\tau}^2\log N.
		\label{eq:2G_av_propagation}
	\end{equation}
	We first check that the lhs.~of \eqref{eq:2G_av_propagation} has an upper bound of order $\log N/(N\eta_{1,t\wedge\tau}\eta_{2,t\wedge\tau})^2$. In order to see this, we employ \eqref{eq:prop_bound_basic} and \eqref{eq:prop_int_bound1} along with $\alpha_s\le 1/(N\eta_{1,s}\eta_{2,s})$ and find that
	\begin{equation}
	\begin{split}
		&\int_0^{t\wedge\tau}\alpha_s^2 f_s \exp \left(2\int_s^{t\wedge\tau}f_r\dif r\right)\dif s \\
		&\quad\le\left(\frac{1}{N\eta_{1,t\wedge\tau}\eta_{2,t\wedge\tau}}\right)^2 \int_0^{t\wedge\tau}\left(\frac{\rho_{1,s}}{\eta_{1,s}}+\frac{\rho_{2,s}}{\eta_{2,s}}\right)\dif s\lesssim  \frac{\log N}{(N\eta_{1,t\wedge\tau}\eta_{2,t\wedge\tau})^2}.
		\label{eq:prop_1st_regime}
	\end{split}
	\end{equation}
	To establish the upper bound of order $\log N/(N\ell_{t\wedge\tau}\gamma^2_{t\wedge\tau})$
	 for the lhs.~of \eqref{eq:2G_av_propagation} we split the region of integration into two parts $[0,s_*]$ and $[s_*,t\wedge\tau]$, where $s_*$ is defined as
	\begin{equation}
		s_*:=\inf \left\lbrace s\in [0,t\wedge\tau]: \, \min\lbrace \eta_{1,s}/\rho_{1,s},\eta_{2,s}/\rho_{2,s}\rbrace\le \gamma_{t\wedge\tau}\right\rbrace.
		\label{eq:s*}
	\end{equation}
	Since $\eta_{j,s}/\rho_{j,s}$, $j\in [2]$, are monotonically decreasing functions in $s$, it holds that $\eta_{j,s}/\rho_{j,s}\le \gamma_{t\wedge\tau}$, $j\in [2]$, for $s\in [s_*,t\wedge\tau]$. Another property of $s_*$ which will be used is that 
	\begin{equation}
		t\wedge\tau-s_*\lesssim \gamma_{t\wedge\tau}.
		\label{eq:s*_ineq}
	\end{equation}
	We postpone the proof of \eqref{eq:s*_ineq} until the end of the proof of Part 1 of Proposition \ref{prop:Zig}. In combination with \eqref{eq:g_def} and the fourth statement of Lemma \ref{lem:prop}, \eqref{eq:s*_ineq} gives that
	\begin{equation}
		\beta_s\sim\beta_{t\wedge\tau},\quad \forall s\in [s_*,t\wedge\tau].
		\label{eq:b_sim}
	\end{equation}
	Armed with \eqref{eq:b_sim}, we are now ready to complete the proof of \eqref{eq:2G_av_propagation}. We may assume w.l.o.g.~that $t\le s_0$, since $f_s=0$ for $s>s_0$ 
	(recall Lemma \ref{lem:prop}~(2)). First, in the regime $s\in [0,s_*]$ we use that $\exp\big(\int_{s_*}^{t\wedge\tau}f_r\dif r\big)\sim 1$ by means of \eqref{eq:prop_int_bound2} and \eqref{eq:b_sim}, and thus an estimate similar to \eqref{eq:prop_1st_regime}  yields
	\begin{equation}
	\label{eq:propest1}
		\int_0^{s_*}\alpha_s^2 f_s \exp \left(2\int_s^{t\wedge\tau}f_r\dif r\right)\dif s\lesssim \frac{\log N}{(N\eta_{1,s_*}\eta_{2,s_*})^2}\lesssim \frac{\log N}{N\ell_{s_*}\gamma_{s_*}^2}\lesssim\frac{\log N}{N\ell_{t\wedge\tau}\gamma_{t\wedge\tau}^2}.
	\end{equation}
	Second, in the regime $s\in[s_*,t\wedge\tau]$ use \eqref{eq:prop_int_bound2}, $\alpha_s\le 1/(\sqrt{Nl_s}\gamma_s)$ and the bound $f_s\lesssim \beta_s^{-1}$ to get
	\begin{equation}
	\label{eq:propest2}
		\int_{s_*}^{t\wedge\tau} \alpha_s^2 f_s e^{2\int_s^{t\wedge\tau} f_r\dif r}\dif s \lesssim \frac{1}{N\ell_{t\wedge\tau}\gamma_{t\wedge\tau}^2} \cdot \frac{1}{\beta_{t\wedge \tau}^4}\int_{s_*}^{t\wedge\tau} \beta_s^3\dif s\sim \frac{\beta_{t\wedge\tau}^3(t\wedge\tau-s_*)}{N\ell_{t\wedge\tau}\gamma_{t\wedge\tau}^2\beta_{t\wedge\tau}^4}\lesssim \frac{1}{N\ell_{t\wedge\tau}\gamma_{t\wedge\tau}^2}.
	\end{equation}
	Here we used \eqref{eq:b_sim} in the last but one inequality and \eqref{eq:s*_ineq} in the last one. This finishes the proof of \eqref{eq:2G_av_propagation}. 
	
	Now we verify \eqref{eq:s*_ineq}. For any $r,s\in [0,T]$ from the definition of the characteristic flow we have that
	\begin{equation}
		e^r\eta_{j,r}/\rho_{j,r} - \mathrm{e}^s\eta_{j,s}/\rho_{j,s} = -(\mathrm{e}^r-\mathrm{e}^s)\pi/2.
		\label{eq:eta/rho}
	\end{equation}
	For $r=t\wedge\tau$, $s=s_*$ and $j$ such that $\gamma_{t\wedge\tau}\ge \eta_{j,s_*}/\rho_{j,s_*}$ we find that 
	\begin{equation*}
		\vert t\wedge\tau-s_*\wedge\tau\vert \sim \vert e^{t\wedge\tau}-e^{s_*}\vert  \lesssim \left\vert \eta_{j,t\wedge\tau}/\rho_{j,t\wedge\tau}\right\vert + \left\vert\eta_{j,s_*}/\rho_{j,s_*}\right\vert \lesssim \gamma_{t\wedge\tau}. 
	\end{equation*}
This concludes the proof of the average part \eqref{eq:Zig} of Part 1 of Proposition \ref{prop:Zig}. \qed

\vspace{2mm}

To prepare for the proof of Lemma \ref{lem:2G_av_error} in the next proposition 
 we show that  the spectral domains $\Omega_t^j$ and $\Omega_{\kappa, t}^j$ (see \eqref{eq:specdom} and \eqref{eq:specdom_bulk}) 
for $t\in [0,T]$ and $j \in [2]$ satisfy the \emph{ray property}. 
Informally this means that  for every $z$ in these domains with $\Im z > 0$ (resp.~$\Im z <0$) the vertical ray going off toward $\Re z+\ii \infty$ (resp.~$\Re z- \ii \infty$) is essentially 
contained in the domain. Since the result holds both for $j=1,2$, we will neglect $j$ in notations.  The proof of Lemma~\ref{lem:ray} is given in Section \ref{sec:techzig} of the Supplementary Material \cite{supplement}.

\begin{lemma}[Ray property for time dependent spectral domains] \label{lem:ray}  Fix a (large) $L>0$ and let $D\in \C^{N\times N}$ be a self-adjoint deformation with $\lVert D\rVert\le L$. Then we have the following. 
	\begin{itemize}
\item[(i)] \textnormal{[Unrestricted spectral domains]} For any $t\in [0,T]$, $z\in\Omega_t$ and $x\ge 0$ such that $\vert\Im z\vert+x\le N^{100}$ it holds that $z+{\mathrm sgn}(\Im z)\ii x\in\Omega_t$. That is, for $\Im z>0$ ($\Im z <0$) the vertical ray which starts at $z$,  goes up (down) and
leaves $\Omega_t$ only after reaching points with imaginary part larger than $N^{100}$ (smaller than $- N^{100}$). 
\vspace{1mm}
\item[(ii)] \textnormal{[Bulk-restricted spectral domains]} Fix a (small) $\kappa>0$. Then there exists $t_*\in [0,T]$ such that the previous part of the statement holds for $\Omega_{\kappa,t}$ for any $t\in [t_*,T]$. Namely, For any $t\in [t_*,T]$, $z\in\Omega_{\kappa,t}$ and $x\ge 0$ such that $\vert\Im z\vert+x\le N^{100}$ it holds that $z+{\mathrm sgn}(\Im z)\ii x\in\Omega_{\kappa,t}$. Moreover, $T-t_*\sim 1$ with implicit constants which depend only on $\kappa$ and $L$.
	\end{itemize}
\end{lemma}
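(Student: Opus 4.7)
The plan is to reduce the vertical-ray property at arbitrary time $t$ to a monotonicity statement via an explicit identity coming from the characteristic flow. By conjugation symmetry we may assume $\Im z > 0$ throughout.

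\emph{Part (i).} Write $\phi_t := \mathfrak{F}_{T,t}$ for the forward flow, so that by construction $z \in \Omega_t$ if and only if $\phi_t(z) \in \Omega_T$. Using Lemma~\ref{lem:flow_properties}(i), along any trajectory one has $\langle M_T(z_T)\rangle = e^{(T-t)/2}\langle M_t(z_t)\rangle$ (hence $\rho_T(\phi_t(z)) = e^{(T-t)/2}\rho_t(z)$), and the solution formula translates into
\begin{equation*}
\phi_t(z) = e^{(t-T)/2}z - 2\sinh\bigl((T-t)/2\bigr)\langle M_t(z)\rangle.
\end{equation*}
Setting $C_T := \pi(e^{T-t}-1)\ge 0$ and $E := \Re z$, a short computation then yields
\begin{equation*}
\Im \phi_t(z)\cdot\rho_T(\phi_t(z)) = \rho_t(z)\bigl(\Im z - C_T\rho_t(z)\bigr) =: F_E(\Im z),
\end{equation*}
and the constraint $\Im\phi_t(z) > 0$ is automatic whenever $F_E(\Im z) > 0$. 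Thus the main defining inequality of $\Omega_T$ becomes $F_E(\Im z)\ge N^{-1+\epsilon}$, and it suffices to show that $\eta\mapsto F_E(\eta)$ is non-decreasing on $\{F_E > 0\}$.

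To prove the monotonicity, write $\rho_t(E+\ii\eta) = \pi^{-1}\eta J_\eta$ with $J_\eta := \int \dif\mu_{D_t}(x)/((x-E)^2+\eta^2)$. Then
\begin{equation*}
F_E(\eta) = \pi^{-1}\eta^2 J_\eta\bigl(1-(e^{T-t}-1)J_\eta\bigr).
\end{equation*}
The factor $\eta^2 J_\eta = \int \eta^2/((x-E)^2+\eta^2)\,\dif\mu_{D_t}(x)$ is non-decreasing in $\eta$ (its integrand is), while $J_\eta$ is non-increasing in $\eta$, so the second factor is also non-decreasing. On $\{F_E > 0\}$ both factors are strictly positive, hence their product is non-decreasing; therefore $F_E(\Im z + x)\ge F_E(\Im z)\ge N^{-1+\epsilon}$ for every $x\ge 0$. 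The auxiliary cutoffs are then routine: $\vert\Im \phi_t(z+\ii x)\vert \le \Im(z+\ii x) \le N^{100}$ by Lemma~\ref{lem:flow_properties}(i)(2), and $\vert\Re\phi_t(z+\ii x)\vert\le N^{200}$ follows from the explicit flow formula together with $\Vert M_t\Vert \lesssim 1$, the $N^{200}$ cutoff being loose enough to absorb the $O(1)$ real drift.

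\emph{Part (ii).} The additional restriction in $\Omega_{\kappa,T}$ removes wedges near the real axis in the spectral gaps of $\rho_T$. At $t = T$ the ray property is trivial, since vertical displacement keeps $\Re z$ fixed while increasing $\vert\Im z\vert$, so one stays in the wedge complement. For $t \in [t_*, T]$, the drift $2\sinh((T-t)/2)\langle M_t(z)\rangle$ in the formula for $\phi_t$ is uniformly of order $T-t$ by the bound $\Vert M_t\Vert \lesssim \kappa^{-1}$ coming from the $\kappa$-bulk hypothesis. Choosing $T - t_*$ small enough, depending only on $\kappa$ and $L$, so that this drift is strictly smaller than the minimal gap in $\mathbf{B}_\kappa(D)$, we verify wedge-avoidance case-by-case: if $\Re\phi_t(z)$ lies in a bulk interval of $\rho_T$, the drift cannot push $\Re\phi_t(z+\ii y)$ into a gap for any admissible $y$; if instead $\Re\phi_t(z)$ lies in a gap, then $\phi_t(z)\notin$ any wedge forces $\vert\Im\phi_t(z)\vert$ to exceed the distance to the gap endpoints, a condition preserved under the vertical-plus-small-drift motion of $\phi_t(z+\ii y)$ since $\vert\Im\phi_t(z+\ii y)\vert$ only grows. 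The main obstacle throughout is making these geometric statements about the wedge deformation under the flow fully quantitative so that the implicit constant in $T-t_* \sim 1$ depends only on $\kappa$ and $L$.
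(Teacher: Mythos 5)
Your Part (i) argument is correct and takes a genuinely different route from the paper. The paper proves the ray property at $t<T$ by contradiction: it picks two boundary points of $\Omega_t$ on a vertical line, pushes them to time $T$ via the flow, writes the resulting identity \eqref{eq:deriv}, and shows the left side is $\ge -1$ (using $\partial_E\Re\langle M\rangle\ge-1$ and $\eta'(E)$ from \eqref{eq:eta'}) while the right side is $<-1$. You instead compute the explicit identity
\[
\Im\phi_t(z)\,\rho_T(\phi_t(z))=\rho_t(z)\bigl(\Im z-\pi(e^{T-t}-1)\rho_t(z)\bigr)=\pi^{-1}\eta^2J_\eta\bigl(1-(e^{T-t}-1)J_\eta\bigr),
\]
and observe that both factors $\eta^2J_\eta$ and $1-(e^{T-t}-1)J_\eta$ are non-decreasing in $\eta$ (via the Stieltjes representation of $\rho_t$), so that once the defining quantity exceeds $N^{-1+\epsilon}$ it stays there when $\eta$ increases. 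This direct monotonicity argument is simpler and avoids the derivative computation; the auxiliary cutoffs are then handled as you say (the $|\Re z|\le N^{200}$ constraint is in fact automatic, as the paper shows, so $\|M_t\|\lesssim1$ is not really needed, only $|\langle M_t\rangle|<1$).

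Part (ii), however, has a genuine gap. Your argument rests on the claim that $|\Im\phi_t(z+\ii y)|$ only grows in $y$, but this is not automatic: one has
\[
\tfrac{d}{dy}\Im\phi_t(z+\ii y)=e^{-(T-t)/2}-2\pi\sinh\bigl((T-t)/2\bigr)\,\partial_\eta\rho_t\bigl(E+\ii(\Im z+y)\bigr),
\]
and $\partial_\eta\rho_t$ can be as large as $\rho_t/\eta$, so the right side can become negative when $\Im z+y\lesssim T-t$ — exactly the regime near the wedge apexes that must be controlled. Moreover, the statement that the small real drift "cannot push $\Re\phi_t(z+\ii y)$ into a gap" is not quantitative; a boundary point whose real part sits just inside a bulk interval can be pushed into the adjacent gap even by a small drift. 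You yourself flag this obstacle at the end. The paper closes the loop differently: it reruns the contradiction argument from Part (i) on the wedge boundary, chooses $t$ close enough to $T$ so that $E_1,E_2$ lie in the same half of a single gap, and then combines the identity \eqref{eq:deriv} with the stability lower bound $\vert1-\langle M_T^2(z)\rangle\vert\gtrsim\Im z+\rho_T^2(z)$ from Proposition \ref{prop:stab} to extract the quantitative lower bound $T-t\gtrsim1$. You would need an analogous quantitative mechanism — your explicit-identity viewpoint could plausibly supply one, but as written the wedge-avoidance step is not proved.
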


\begin{proof}[Proof of Lemma~\ref{lem:2G_av_error}] Recall that the target bound in \eqref{eq:2G_av_error} consists of three parts: The quadratic variation $\mathrm{QV}$ of the martingale term and the two contributions $\mathrm{Lin}$ and $\mathrm{Err}$ to $\mathcal{F}$. We will discuss each part separately. 
	
Before going into the proof, we point out that all bounds below hold with very high probability and for times $s\in [0,t\wedge\tau]$. We often omit in notations the dependence of resolvent chains and their deterministic approximations on time when this does not lead to an ambiguity.
	\\[2mm]
	\noindent\underline{Bound on $\mathrm{QV}$:} By computing the derivatives $\partial_{ab}$ in \eqref{eq:2G_QV}, using Schwarz inequality and Ward identity, we get 
	\begin{equation}
		\mathrm{QV}\big[g_s^{R_1,R_2}\big]\lesssim \frac{1}{N^2}\left( \eta_{1,s}^{-2}\langle \Im G_1R_1G_2R_2\Im G_1R_2^*G_2^*R_1^*\rangle + \eta_{2,s}^{-2}\langle \Im G_2R_2G_1R_1\Im G_2R_1^*G_1^*R_2^*\rangle\right).
		\label{eq:2G_QV_bound}
	\end{equation}
	In the following, we will focus on the first of the two terms in \eqref{eq:2G_QV_bound}, since the estimates for the second one are identical. 
	Firstly we give an upper bound which does not depend on $\gamma$:
	\begin{equation*}
		\langle \Im G_1R_1G_2R_2\Im G_1R_2^*G_2^*R_1^*\rangle \le \langle\Im G_1\rangle\lVert R_1G_2R_2\Im G_1R_2^*G_2^*R_1^*\rVert \lesssim \rho_{1,s}/(\eta_{1,s}\eta_{2,s}^2),
	\end{equation*} 
	where we used the averaged version of the single--resolvent local law from \eqref{eq:singllaw}. We thus conclude the bound
	\begin{equation*}
		\frac{1}{N^2}\int_0^{t\wedge\tau}\!\!\eta_{1,s}^{-2}\langle \Im G_1R_1G_2R_2\Im G_1R_2^*G_2^*R_1^*\rangle\dif s\lesssim \frac{1}{N^2}\int_0^{t\wedge\tau} \frac{\rho_{1,s}}{\eta_{1,s}^3\eta_{2,s}^2}\dif s\lesssim \left(\frac{1}{N\eta_{1,t\wedge\tau}\eta_{2,t\wedge\tau}}\right)^2.
	\end{equation*}
	
	Next, we aim to reduce the average four resolvent chain from \eqref{eq:2G_QV_bound} to a product of  average two resolvent chains. In order to do so, we introduce the shorthand notations $S:=R_2\Im G_1R_2^*$, $T:=R_1^*\Im G_1R_1$ and note that $S,T\ge 0$. Let $\lbrace \lambda_i^{(2)}\rbrace_{i\in[N]}$ be the eigenvalues of $W+D_2$ and ${\boldsymbol u}_i^2$ the corresponding normalized eigenvectors. By spectral decomposition of $G_2$ we can write
	\begin{equation}
		\begin{split}
			\vert \langle G_2SG_2^*T\rangle\vert &= \frac{1}{N}\left\vert \sum\limits_{i,j\in [N]} \frac{\langle {\boldsymbol u}_i^2,S{\boldsymbol u}_j^2\rangle\langle {\boldsymbol u}_j^2,T{\boldsymbol u}_i^2\rangle}{\big(\lambda_i^{(2)}-z_2\big)\big(\lambda_j^{(2)}-\bar{z}_2\big)}\right\vert\lesssim \frac{1}{N}\sum\limits_{i,j\in [N]} \frac{\langle {\boldsymbol u}_i^2, S{\boldsymbol u}_i^2\rangle\langle {\boldsymbol u}_j^2, T{\boldsymbol u}_j^2\rangle}{\big\vert\lambda_i^{(2)}-z_2\big\vert\cdot\big\vert\lambda_j^{(2)}-z_2\big\vert}\\[2mm]
			&= N\langle \vert G_2\vert S\rangle\langle \vert G_2\vert T\rangle = N\langle \Im G_1 R_2^* \vert G_2\vert R_2\rangle \langle \Im G_1 R_1 \vert G_2\vert R_1^*\rangle.
		\end{split}
		\label{eq:mart_reduct}
	\end{equation}
	In the end of the first line we used the positive definiteness of $S,T$ and the elementary estimate
	\begin{equation*}
		\begin{split}
			\langle {\boldsymbol u}_i^2,S{\boldsymbol u}_j^2\rangle\langle {\boldsymbol u}_j^2,T{\boldsymbol u}_i^2\rangle &\le \left(\langle {\boldsymbol u}_i^2,S{\boldsymbol u}_i^2\rangle\langle {\boldsymbol u}_j^2,S{\boldsymbol u}_j^2\rangle\langle {\boldsymbol u}_i^2,T{\boldsymbol u}_i^2\rangle\langle {\boldsymbol u}_j^2,T{\boldsymbol u}_j^2\rangle\right)^{1/2}\\
			&\lesssim \langle {\boldsymbol u}_i^2,S{\boldsymbol u}_i^2\rangle\langle {\boldsymbol u}_j^2,T{\boldsymbol u}_j^2\rangle + \langle {\boldsymbol u}_j^2,S{\boldsymbol u}_j^2\rangle\langle {\boldsymbol u}_i^2,T{\boldsymbol u}_i^2\rangle.
		\end{split}	
	\end{equation*}
	
	In order to deal with absolute values of resolvents we employ the integral representation  \cite[Eq.~(5.4)]{multiG}:	
	\begin{equation}
		\vert G(E+\ii \eta)\vert = \frac{2}{\pi}\int_0^\infty \frac{\Im G(E+\ii \sqrt{\eta^2+x^2})}{\sqrt{\eta^2+x^2}}\dif x.
		\label{eq:|G|_int}
	\end{equation}
	of $|G|$ in terms of $\Im G$ along the ray $z + \ii \, \mathrm{sgn}(z) x$ for $x \ge 0$ (cf.~Lemma \ref{lem:ray}). 
	Hence, using \eqref{eq:|G|_int} for the first factor on the rhs.~of \eqref{eq:mart_reduct} we get
	\begin{equation}
		\langle \Im G_1 R_2^* \vert G_2\vert R_2\rangle = \frac{2}{\pi}\int_0^\infty \langle \Im G_1 R_2^* \Im G_2(E_{2,s}+\ii\zeta_{2,s,x})R_2\rangle \zeta_{2,s,x}^{-1}\dif x,
		\label{eq:|G|_int_applied}	
	\end{equation}
	where we abbreviated 
	 $\zeta_{2,s,x}:=(\eta_{2,s}^2+x^2)^{1/2}$. We now split the region of integration $[0,\infty)$ into two parts: $S_1$ corresponds to the regime $\zeta_{2,s,x}\le N^{100}$ and $S_2$ is the complementary regime, i.e.~$S_2:=[0,\infty)\setminus S_1$. Now, for any $x\in S_1$, by Lemma \ref{lem:ray} it holds that $E_{2,s}+\ii\zeta_{2,s,x}\in \Omega_s^2$. Thus we conclude
	\begin{equation}\label{eq:gammab}
		\langle \Im G_1 R_2^* \Im G_2(E_{2,s}+\ii\zeta_{2,s,x})R_2\rangle \lesssim \frac{1}{\gamma(z_{1,s},E_{2,s}+\ii\zeta_{2,s,x})}\left(1+ \frac{N^{2\xi_2}}{\sqrt{N\ell(z_{1,s},E_{2,s}+\ii\zeta_{2,s,x})}} \right)
	\end{equation}	 
	where we abbreviated $\ell(z,z'):=\vert\Im z\vert\rho_{1,s}(z)\wedge \vert\Im z'\vert\rho_{2,s}(z')$ for $z,z'\in\C\setminus\R$. Along with the vague monotonicity of $\gamma$ in imaginary part \eqref{eq:g_monot} this inequality implies that 
\begin{equation}
\int_{S_1} \langle \Im G_1 R_2^* \Im G_2(E_{2,s}+\ii\zeta_{2,s,x})R_2\rangle \zeta_{2,s,x}^{-1}\dif x\lesssim \frac{\log N}{\gamma(z_{1,s},z_{2,s})}.
\label{eq:|G|_int_S1}	
\end{equation}
In the complementary regime we simply bound the integrand of \eqref{eq:|G|_int_applied} by the product of operator norms of resolvents. This gives an upper bound of order $\eta^{-1}_{1,s}N^{-100}$ for the integral over $S_2$. In particular, this is smaller than $\gamma^{-1}_s$ since $\gamma$ is a bounded function of $(z_1,z_2,D,D_2)\in \big(\C\setminus\R\big)^2\times \big(\C^{N \times N}\big)^2$ (see Setting~\ref{setting} and Definition \ref{def:gamma}).\nc

	Arguing similarly for the second factor in \eqref{eq:mart_reduct} we get
	\begin{equation*}
		\frac{1}{N^2}\int_0^{t\wedge\tau}\frac{1}{\eta_{1,s}^2}\langle \Im G_1R_1G_2R_2\Im G_1R_2^*G_2^*R_1^*\rangle\dif s\lesssim\frac{1}{N}\int_0^{t\wedge\tau}\frac{1}{\eta_{1,s}^2\gamma_s^2}\dif s\lesssim \frac{1}{N\ell_{t\wedge\tau}\gamma_{t\wedge\tau}}.
	\end{equation*}    	
	This concludes the desired bound on the quadratic variation.
	\\[2mm]
	\noindent\underline{Bound on $\mathrm{Lin}_t$.} Recalling the definition from \eqref{eq:Err}, in order to verify
	\begin{equation*}
		\int_0^{t\wedge\tau}\mathrm{Lin}_s\dif s \lesssim \alpha_{t\wedge\tau} \left(k(R_1)N^{2\xi_k(R_2)}+k(R_2)N^{2\xi_{k(R_1)}}\right)\log N
	\end{equation*}
	it is sufficient to notice that $\alpha_s$ increases along the flow and that by \eqref{eq:stab_bound} and \eqref{eq:int_bounds} we have 
	\begin{equation*}
		\int_0^{t\wedge\tau}\big| \langle M_{12,s}^{R_j}\rangle\big|\dif s \lesssim \int_0^{t\wedge\tau}\left(\frac{\rho_{1,s}}{\eta_{1,s}}\wedge 1\right)\dif s\lesssim \log N,\quad j\in[1,2].
	\end{equation*}
	\\[2mm]
	\noindent\underline{Bound on $\mathrm{Err}_t$.} For the first term in $\mathrm{Err}_t$ 
	in \eqref{eq:Err} by means of \eqref{eq:int_bounds} we easily find
	\begin{equation*}
		\int_0^{t\wedge\tau} \left\vert g_s^{I,R_2}g_s^{R_1,I}\right\vert \dif s\lesssim \frac{N^{2\xi_{k(R_1)}+2\xi_{k(R_2)}}}{N}\int_0^{t\wedge\tau} \frac{\alpha_s}{\eta_{1,s}\eta_{2,s}}\dif s\lesssim \frac{N^{2\xi_{k(R_1)}+2\xi_{k(R_2)}}}{N\ell_{t\wedge\tau}}\alpha_{t\wedge\tau}\lesssim \alpha_{t\wedge\tau}.
	\end{equation*}
	The remaining two terms in $\mathrm{Err}_t$ can be treated completely analogously, hence we focus on the first of the two for concreteness. 
	
	As the first step, we separate the first $G_1$ from the rest of the factors in $\langle G_1^2R_1G_2R_2\rangle$ via a Cauchy-Schwarz inequality followed by a Ward identity: 
	\begin{equation}
	\begin{split}
		\vert \langle G_1^2R_1G_2R_2\rangle\vert&\le \frac{\langle \Im G_1\rangle^{1/2}\langle \Im G_1R_1G_2R_2R_2^*G_2^*R_1^*\rangle^{1/2}}{\eta_{1,s}}\\
		&\le  \frac{\langle \Im G_1\rangle \lVert R_1G_2R_2R_2^*G_2^*R_1^*\rVert^{1/2}}{\eta_{1,s}}\le \frac{\rho_{1,s}}{\eta_{1,s}\eta_{2,s}}.
		\label{eq:3G_av_Err1}
	\end{split}
	\end{equation}
	The last estimate follows from the usual averaged single-resolvent local law for $\Im G_1$  \eqref{eq:singllaw}
	 and holds with very high probability. In order to get an upper bound for $\langle G_1^2R_1G_2R_2\rangle$ in terms of $\gamma$ we use the {\it reduction bound}
	\begin{equation}
		\vert \langle G_1^2R_1G_2R_2\rangle\vert 		\le \langle \vert G_1\vert R_1\vert G_2\vert R_1^*\rangle^{1/2} \langle \vert G_1\vert G_1^*R_2^*\vert G_2\vert R_2G_1\rangle^{1/2} \le \langle \vert G_1\vert R_2^*\vert G_2\vert R_2\rangle \eta_{1,s}^{-1},
		\label{eq:2G_av_err_red1}
	\end{equation}
	obtained analogously to \eqref{eq:mart_reduct}, where in the final estimate we additionally used  the
	commutativity of $\vert G_1\vert^{1/2}$, $G_1$ and $G_1^*$ together with $\lVert G_1G_1^*\rVert\le\eta_{1,s}^{-2}$. 
By means of \eqref{eq:|G|_int}, using similar arguments as around \eqref{eq:|G|_int_S1}
we hence find
	\begin{equation}
		\langle \vert G_1\vert R_1\vert G_2\vert R_1^*\rangle\lesssim \gamma_s^{-1}\log N, \quad \langle \vert G_1\vert R_2^*\vert G_2\vert R_2\rangle\lesssim \gamma_s^{-1}\log N\,, 
		\label{eq:int_with_2G}
	\end{equation}
and thus
	\begin{equation}
		\vert \langle G_1^2R_1G_2R_2\rangle\vert \lesssim (\eta_{1,s}\gamma_s)^{-1}\log N.
		\label{eq:2G_av_err_red}	
	\end{equation}
	Finally, combining \eqref{eq:3G_av_Err1} and \eqref{eq:2G_av_err_red} with the single-resolvent local law $\vert \langle G_{1,s}-M_{1,s}\rangle\vert\lesssim N^\zeta/(N\eta_{1,s})$ we find, with very high probability that
	\begin{equation}
	\label{eq:finbavnonreg}
		\int_0^{t\wedge\tau}\vert \langle G_{1,s}-M_{1,s}\rangle \langle G_{1,s}^2R_1G_{2,s}R_2\rangle\vert \dif s\lesssim \int_0^{t\wedge\tau} \frac{N^\zeta}{N\eta_{1,s}}\left(\frac{\rho_{1,s}}{\eta_{1,s}\eta_{2,s}}\wedge \frac{1}{\eta_{1,s}\gamma_s}\right)\dif s\lesssim N^\zeta \alpha_{t\wedge\tau}.
	\end{equation}
	This finishes the proof of Lemma \ref{lem:2G_av_error}. 
\end{proof}

\subsection{Isotropic two- and three-resolvent chains: Proof of \eqref{eq:2G_iso}--\eqref{eq:3G_iso} in Proposition~\ref{prop:Zig}}\label{subsec:Zig_iso}
Consider deterministic matrices $B_1,B_2\in\C^{N\times N}$ and unit vectors $\boldsymbol{x}, \boldsymbol{y}\in\C^N$. The argument below proves \eqref{eq:2G_iso}, \eqref{eq:3G_iso}  uniformly in $B_1,B_2$, $\boldsymbol{x}, \boldsymbol{y}$.  
For notational simplicity we omit the dependence of $z_j$ and $G_j$ on $t$. Furthermore, to keep the presentation simple, we give the proof only in the complex case ($\beta = 2$) just as in Section \ref{subsec:Zig_2Gav} and again refer to \cite{edgeETH}  for a detailed treatment of the case $\beta = 1$. 

To start with, the analog of \eqref{eq:dif_g}  for isotropic two resolvents is (recall \eqref{eq:k} for the definition of $k(R_1)$)
\begin{equation}
\begin{split}
&\dif \big(G_{1,t}R_1G_{2,t}-M_{12,t}^{R_1}\big)_{\boldsymbol{vw}}\\
&\quad = \left(1+(1-k(R_1))\langle M_{12,t}^I\rangle\right)\big(G_{1,t}R_1G_{2,t}-M_{12,t}^{R_1}\big)_{\boldsymbol{vw}}\dif t + \dif \mathcal{E}_t^{(2)} + \mathcal{F}_t^{(2)}\dif t,\\
\end{split}
\label{eq:2G_iso_dif}
\end{equation}
for any deterministic vectors $\boldsymbol{v}, \boldsymbol{w}$, 
where $\dif \mathcal{E}_t^{(2)}$ is the martingale term 
\begin{equation*}
\dif \mathcal{E}_t^{(2)}= \frac{1}{\sqrt{N}}\sum\limits_{a,b=1}^N \partial_{ab} \left(G_{1,t}R_1G_{2,t}\right)_{\boldsymbol{vw}}\dif B_{ab},
\end{equation*}
and the forcing term $\mathcal{F}_t^{(2)} = \mathrm{Lin}^{(2)}_t + \mathrm{Err}^{(2)}_t$ is the sum of the linear term $\mathrm{Lin}^{(2)}_t$ and the error term $\mathrm{Err}^{(2)}_t$,
\begin{equation}
\begin{split}
&\mathrm{Lin}_t^{(2)}:= k(R_1)\langle M_{12,t}^{R_1}\rangle \big(G_{1,t}G_{2,t}-M_{12,t}^I\big)_{\boldsymbol{vw}},\\
&\mathrm{Err}_t^{(2)}:=\big\langle G_{1,t}R_1G_{2,t} -  M_{12,t}^{R_1}\big\rangle \left(G_{1,t}G_{2,t}\right)_{\boldsymbol{vw}} + \langle G_{1,t}-M_{1,t}\rangle \left(G_{1,t}^2R_1G_{2,t}\right)_{\boldsymbol{vw}}\\
&\qquad \qquad+\langle G_{2,t}-M_{2,t}\rangle \left(G_{1,t}R_1G_{2,t}^2\right)_{\boldsymbol{vw}}\,,
\end{split}
\end{equation}
respectively. Recalling the short notation $M_{121,t}^{R_1,R_2}$ from \eqref{eq:M3t-dep} we similarly get that 
\begin{equation}
	\begin{split}
		&\dif \big(G_{1,t}R_1G_{2,t}R_2G_{1,t}-M_{121,t}^{R_1,R_2}\big)_{\boldsymbol{vw}}\\
		& = \big(\tfrac{3}{2}+(2-k(R_1,R_2))\langle M_{12,t}^I\rangle\big)\big(G_{1,t}R_1G_{2,t}R_2G_{1,t}-M_{121,t}^{R_1,R_2}\big)_{\boldsymbol{vw}}\dif t+ \dif\mathcal{E}_t^{(3)}+\mathcal{F}^{(3)}_t\dif t \,, 
	\end{split}
	\label{eq:3G_iso_dif}
\end{equation}
where now the martingale term is given by 
\begin{equation*}
\dif\mathcal{E}^{(3)}_t = \frac{1}{\sqrt{N}}\sum\limits_{a,b=1}^N \partial_{ab}\left(G_{1,t}R_1G_{2,t}R_2G_{1,t}\right)_{\boldsymbol{vw}}\dif B_{ab}
\end{equation*}
and the summands of $\mathcal{F}^{(3)}_t = \mathrm{Lin}^{(3)}_t+\sum\limits_{i=1}^3 \mathrm{Err}^{(3)}_{i,t}$ read
\begin{equation*}
	\begin{split}
		\mathrm{Lin}^{(3)}_t &= k(R_1)\langle M_{12,t}^{R_1}\rangle (G_{1,t}G_{2,t}R_2G_{1,t} - M_{121,t}^{I,R_2})_{\boldsymbol{vw}}\\
		& + k(R_2)\langle M_{21,t}^{R_2}\rangle \big(G_{1,t}R_1G_{2,t}G_{1,t}-M_{121,t}^{R_1,I}\big)_{\boldsymbol{vw}},\\[1mm]
		\mathrm{Err}^{(3)}_{1,t}&=\big\langle G_{1,t}R_1G_{2,t}R_2G_{1,t} -M_{121,t}^{R_1,R_2}\big\rangle \left(G_{1,t}^2\right)_{\boldsymbol{vw}} + \langle M_{121,t}^{R_1,R_2} \rangle \left(G_{1,t}^2 -M_{11,t}^I\right)_{\boldsymbol{vw}},\\[1mm]
		\mathrm{Err}^{(3)}_{2,t}&=\big\langle G_{1,t}R_1G_{2,t} - M_{12,t}^{R_1}\big\rangle(G_{1,t}G_{2,t}R_2G_{1,t})_{\boldsymbol{vw}} \\
		&+ \big\langle G_{2,t}R_2G_{1,t} - M_{21,t}^{R_2}\big\rangle (G_{1,t}R_1G_{2,t}G_{1,t})_{\boldsymbol{vw}},\\[1mm]
		\mathrm{Err}^{(3)}_{3,t} &= \langle G_{1,t}-M_{1,t}\rangle (G_{1,t}^2R_1G_{2,t}R_2G_{1,t})_{\boldsymbol{vw}} + \langle G_{2,t}-M_{2,t}\rangle (G_{1,t}R_1G_{2,t}^2R_2G_{1,t})_{\boldsymbol{vw}}\\[0.5mm]
		&+ \langle G_{1,t}-M_{1,t}\rangle (G_{1,t}R_1G_{2,t}R_2G_{1,t}^2)_{\boldsymbol{vw}} \,. 
	\end{split}
\end{equation*}

In the following analysis, we will need several tolerance exponents $\theta_0,\theta_1, \xi_0,\xi_1,\xi_2\in (0,\epsilon/10)$, which we required to satisfy the relations 
\begin{equation} \label{eq:exponentrelation}
\xi_0<\xi_1<\xi_2<2\xi_1<\theta_0<\theta_1\,.
\end{equation}  
 
We then define the stopping times
\begin{equation*}
	\begin{split}
		\tau^{R_1} := &\inf \Bigl\{ t\in[0,T]:\\
		& \max_{s\in[0,t]}\max_{\boldsymbol{v},\boldsymbol{w}\in\lbrace \boldsymbol{x},\boldsymbol{y}\rbrace }\max_{z_{j,0}\in\Omega_0^j} \sqrt{N\ell_s\eta_{*,s}\gamma_s}\left\vert \left(G_{1,s}R_1G_{2,s}-M_{12,s}^{R_1}\right)_{\boldsymbol{vw}}\right\vert \ge N^{2\theta_{k(R_1)}}\Bigr\},\nc\\
		\tau^{R_1,R_2} := &\inf \Bigl\{ t\in[0,T]:\\
		& \max_{s\in[0,t]}\max_{\boldsymbol{v},\boldsymbol{w}\in\lbrace \boldsymbol{x},\boldsymbol{y}\rbrace}\max_{z_{j,0}\in\Omega_0^j} \ell_s\gamma_s\left\vert \left(G_{1,s}R_1G_{2,s}R_2G_{1,s}^{(*)}\right)_{\boldsymbol{vw}}\right\vert \ge N^{2\xi_{k(R_1,R_2)}}\Bigr\},\nc\\
		\tau :=&\min\left\lbrace \tau^{R_1}, \tau^{R_1,R_2}:\, R_1,R_2\in \mathfrak{S}\right\rbrace \,, \quad \text{recalling}\quad \mathfrak{S}=\lbrace I,B_1,B_1^*,B_2,B_2^*\rbrace
	\end{split}
\end{equation*}
 from \eqref{eq:stoptime}. 
As in Section \ref{subsec:Zig_2Gav}, the goal is to show that $\tau = T$. First note that $\tau > 0$ by initial conditions \eqref{eq:init_iso2}, \eqref{eq:init_iso3}. 

To prove our goal, we control the terms on the rhs.~of \eqref{eq:2G_iso_dif} and \eqref{eq:3G_iso_dif}. In particular we claim that uniformly in $t\in[0,T]$ we have
\begin{subequations}
\begin{equation}
\left(\int_0^{t\wedge\tau} \hspace{-2mm}\mathrm{QV}_s^{(2)}\dif s\right)^{1/2} \hspace{-4mm}+\hspace{-1mm}\int_0^{t\wedge\tau} \hspace{-2mm}\vert\mathcal{F}_s^{(2)}\vert\dif s \lesssim \frac{N^{\xi_{2k(R_1)}}+k(R_1)N^{2\theta_0}}{\sqrt{N\ell_{t\wedge\tau}\eta_{*,t\wedge\tau}\gamma_{t\wedge\tau}}} \log N
,\label{eq:iso_err_bounds_a}
\end{equation}
\begin{equation}
\left(\int_0^{t\wedge\tau}\hspace{-2mm}\mathrm{QV}_s^{(3)}\dif s\right)^{1/2} \hspace{-4mm}+\hspace{-1mm}\int_0^{t\wedge\tau} \hspace{-2mm}\vert\mathcal{F}_s^{(3)}\vert\dif s \lesssim \frac{\sum_{j=1,2}k(R_j)N^{2\xi_{k(R_{3-j})}}\!+\!N^{\xi_0}\!+\!k(R_1,R_2)N^{\xi_2} }{\ell_{t\wedge\tau}\gamma_{t\wedge\tau}}\log N  
\label{eq:iso_err_bounds_b}
\end{equation}
\end{subequations}
with very high probability,
where $\mathrm{QV}_s^{(2)}$ and $\mathrm{QV}_s^{(3)}$ are quadratic variations of $\dif\mathcal{E}^{(2)}_s$ and $\dif\mathcal{E}^{(3)}_s$ respectively.

For brevity, we omit the proof of \eqref{eq:iso_err_bounds_a}. From the proof of \eqref{eq:iso_err_bounds_b}, we discuss only the quadratic variation term (first term in the lhs.~of \eqref{eq:iso_err_bounds_b}) and $\mathrm{Err}_3^{(3)}$ (part of the second term in the lhs.~of \eqref{eq:iso_err_bounds_b}).
Along the way, the relations in \eqref{eq:exponentrelation} are used several times in order to accommodate error terms originating from the quadratic variation and the error terms $\mathrm{Lin}^{(2)}_t$ and $\mathrm{Lin}^{(3)}_t$.
We leave the rest of the technicalities to the reader and refer to \cite{nonHermdecay} where they are carefully carried out. 
However we point out that there are no new methods needed for analysis of the terms which we do not discuss here.

Firstly, for $\mathrm{QV}_s^{(3)}$ we have  
\begin{equation}
	\begin{split}
		N\cdot \mathrm{QV}_s^{(3)} &= \sum\limits_{a,b=1}^N \left\vert\partial_{ab}\left( G_1R_1G_2R_2G_1\right)_{\boldsymbol{vw}}\right\vert^2  \\
		&\lesssim {\eta_{1,s}^{-2}}\left(\Im G_1\right)_{\boldsymbol{vv}}\left(G_1^*R_2^*G_2^*R_1^*\Im G_1R_1G_2R_2G_1\right)_{\boldsymbol{ww}} \\
		& \quad +{\eta_{2,s}^{-2}}\left(G_1R_1\Im G_2R_1^*G_1^*\right)_{\boldsymbol{vv}}\left(G_1^*R_2^*\Im G_2R_2G_1\right)_{\boldsymbol{ww}} \\
		&\quad +{\eta_{1,s}^{-2}}\left(G_1R_1G_2R_2\Im G_1R_2^*G_2^*R_1^*G_1^*\right)_{\boldsymbol{vv}}\left(\Im G_1\right)_{\boldsymbol{ww}}.
	\end{split}
	\label{eq:3G_iso_qv}
\end{equation}

The long resolvent chains in the first and third term on the rhs.~of \eqref{eq:3G_iso_qv} have to be reduced to shorter ones via a suitable reduction inequality. 
In the first term on the rhs.~of \eqref{eq:3G_iso_qv}, we employ the polar decomposition of $G_2$, i.e. represent $G_2$ as $\vert G_2\vert U$, where $U$ is a unitary matrix. Denoting 
\begin{equation*}
{\boldsymbol x}:=U\vert G_2\vert^{1/2}R_2G_1{\boldsymbol w}\quad\text{and}\quad S:=\vert G_1\vert^{1/2}R_1^*\Im G_1R_1\vert G_1\vert^{1/2}
\end{equation*}
and using that $S\ge 0$ we get
\begin{equation}
	\begin{split}
&\left(G_1^*R_2^*G_2^* R_1^*\Im G_1R_1G_2R_2G_1\right)_{\boldsymbol{ww}} =  \left(S\right)_{\boldsymbol{xx}} \\
&\quad\le N\langle S\rangle \lVert x\rVert^2 = N \langle \Im G_1 R_1\vert G_2\vert R_1^*\rangle (G_1^*R_2^*\vert G_2\vert R_2G_1)_{\boldsymbol{ww}} \,. 
	\end{split}
\label{eq:3G_iso_qv_1st}
\end{equation}
By using the integral representation \eqref{eq:|G|_int} for $\vert G_2\vert$ in both factors in the rhs.~of \eqref{eq:3G_iso_qv_1st} we then find
\begin{equation}
	\langle \Im G_1 R_1\vert G_2\vert R_1^*\rangle \lesssim \frac{\log N}{\gamma_s} \quad \text{and} \quad (G_1^*R_2^*\vert G_2\vert R_2G_1)_{\boldsymbol{ww}} \lesssim \frac{N^{2\xi_2}\log N}{\ell_s\gamma_s}
\label{eq:2,3|G|}
\end{equation}
with very high probability. Here, to obtain the upper bound for $\langle \Im G_1 R_1\vert G_2\vert R_1^*\rangle$ we employed \eqref{eq:Zig}, which was already proven in Section \ref{subsec:Zig_2Gav}. Note that in the proof of the second part of \eqref{eq:2,3|G|} we encounter the resolvent chains of the form 
\begin{equation*}
(G_1^*R_2^*\vert \widetilde{G}_2\vert R_2G_1)_{\boldsymbol{ww}},\quad \text{where}\quad\widetilde{G}_2=\widetilde{G}_{2,s} := (W_s-D_{2,s}-z_{2,s}-ix)^{-1},\,x\cdot\mathrm{sgn}(\Im z)\ge 0.
\end{equation*}
These chains are bounded by $(\ell\gamma)^{-1}$ with very high probability, where $\ell$ and $\gamma$ are evaluated at $(z_{1,s},z_{2,s}+\ii x)$. So in order to argue similarly to \eqref{eq:|G|_int_applied}-\eqref{eq:gammab} we need to know that $\ell(z_{1,s},z_{2,s})\lesssim \ell(z_{1,s},z_{2,s}+ix)$. This indeed holds since $\eta \mapsto \eta\rho(E+\ii \eta)$ is an increasing function, which can be easily seen from the Stieltjes representation of $\rho(E+ \ii \eta)$.

For the third term in the rhs. of \eqref{eq:3G_iso_qv} the argument is the same, while for the second term we use Proposition \ref{prop:norm_M_bounds} in combination with the bound on the fluctuation of 3G isotropic chain which is available for $s\le\tau$.  Thus using \eqref{eq:int_bounds} we have
\begin{equation*} 
	\begin{split}
		\int_0^{t\wedge\tau} \mathrm{QV}^{(3)}_s\dif s&\lesssim \int_0^{t\wedge\tau}\left(\frac{\max\lbrace N^{2\xi_{2k(R_1)}},N^{2\xi_{2k(R_2)}}\rbrace}{\eta_{1,s}^2\ell_s\gamma_s^2}(\log N)^2+\frac{N^{-1+2\xi_{2k(R_1)}+2\xi_{2k(R_2)}}}{\eta_{2,s}^2\ell_s^2\gamma_s^2} \right)\dif s\\
		&\lesssim \frac{N^{2\xi_0}+k(R_1,R_2)N^{2\xi_2}}{\left(\ell_{t\wedge\tau}\gamma_{t\wedge\tau}\right)^2}\left((\log N)^2+\frac{N^{2\xi_2}}{N\eta_{2,t\wedge\tau}\rho_{2,t\wedge\tau}}\right)\\
		&\lesssim \left(\frac{\left(N^{\xi_0}+k(R_1,R_2)N^{\xi_2}\right)\log N}{\ell_{t\wedge\tau}\gamma_{t\wedge\tau}}\right)^2.
	\end{split}
\end{equation*}
Next we give the upper bound for $\mathrm{Err}_{3,t}^{(3)}$ providing the argument for the last term, for the other terms in $\mathrm{Err}^{(3)}_{3,t}$ the proof is similar. 
Use the usual averaged single resolvent local law from \eqref{eq:singllaw} for the first factor $\langle G_1-M_1\rangle$. In the second factor we employ the reduction estimate
\begin{equation*}
\left\vert \left(G_1R_1G_2R_2G_1^2\right)_{\boldsymbol{vw}}\right\vert \le \frac{N}{\eta_{1,s}}\left(\langle \vert G_1\vert R_1\vert G_2\vert R_1^*\rangle\langle \Im G_1 R_2^*\vert G_2\vert R_2\rangle \vert G_1\vert_{\boldsymbol{vv}}\left(\Im G_1\right)_{\boldsymbol{ww}}\right)^{1/2}.
\end{equation*}
Applying \eqref{eq:|G|_int} to the absolute values of resolvents and arguing similarly to \eqref{eq:|G|_int_applied} we get that
\begin{equation*}
	\int_0^{t\wedge\tau} \left\vert \langle G_{1,s}-M_{1,s}\rangle (G_{1,s}^2R_1G_{2,s}R_2G_{1,s})_{\boldsymbol{vw}}\right\vert\dif s \lesssim \int_0^{t\wedge\tau} \frac{N^{\zeta}}{N\eta_{1,s}}\cdot\frac{N(\log N)^2}{\eta_{1,s}\gamma_s}\dif s\lesssim \frac{N^{2\zeta}}{\ell_{t\wedge\tau}\gamma_{t\wedge\tau}}
\end{equation*}
for any $\zeta>0$.

Once we have \eqref{eq:iso_err_bounds_a},\eqref{eq:iso_err_bounds_b} in hand, we argue similarly to \eqref{eq:stoch_gronwall}\,--\,\eqref{eq:stoch_gronwall_2}.Thus in order to complete the proof of \eqref{eq:2G_iso}-\eqref{eq:3G_iso} it suffices to verify the inequalities (recall \eqref{eq:def_f} for the definition of $f_r$)
\begin{subequations}
	\begin{equation}
		\int_0^{t\wedge\tau}\frac{1}{N\ell_s\eta_{*,s}\gamma_s} f_s \exp{\left(\int_s^{t\wedge\tau}f_r\dif r\right)}\dif s\lesssim  \frac{\log N}{N\ell_{t\wedge\tau}\eta_{*,t\wedge\tau}\gamma_{t\wedge\tau}},
		\label{eq:prop_2G_iso}
	\end{equation}
	\begin{equation}
		\int_0^{t\wedge\tau}\frac{1}{\left(\ell_{s}\gamma_s\right)^2} f_s \exp{\left(2\int_s^{t\wedge\tau}f_r\dif r\right)}\dif s\lesssim  \frac{\log N}{\left(\ell_{t\wedge\tau}\gamma_{t\wedge\tau}\right)^2},
		\label{eq:prop_3G_iso}
	\end{equation}
\end{subequations}
where \eqref{eq:prop_2G_iso} corresponds to the propagation of the upper bound on the lhs. of \eqref{eq:2G_iso} and \eqref{eq:prop_3G_iso} is the analog of \eqref{eq:3G_iso}. The proof of \eqref{eq:prop_2G_iso}, \eqref{eq:prop_3G_iso} is analogous to the proof of \eqref{eq:2G_av_propagation} and is based on splitting of the interval of integration into $[0,s_*]$ and $[s_*,t\wedge\tau]$, where $s_*$ is defined in \eqref{eq:s*}. The only difference is that in the regime $s\in [0,s_*]$ one needs to use the bound $\gamma_s\ge \eta_{j,s}/\rho_{j,s}$, $j\in[2]$.

This concludes the proof of the isotropic parts \eqref{eq:2G_iso}--\eqref{eq:3G_iso} of Part 1 of Proposition \ref{prop:Zig}. \qed

\subsection{Modifications for the regular case: Proof of Part 2 of Proposition~\ref{prop:Zig}}\label{subsec:Zig_regular} 
Several steps in this proof are very similar to the ones presented in Sections~\ref{subsec:Zig_2Gav}--\ref{subsec:Zig_iso} we thus omit several details and present the proof only in the averaged case to illustrate the main differences in the simplest possible setting. Moreover, we work in the bulk-restricted spectral domains \eqref{eq:specdom_bulk} unlike in Sections~\ref{subsec:Zig_2Gav}--\ref{subsec:Zig_iso} where the proof is presented uniformly in the spectrum. In particular, it holds that $\ell_t\sim\eta_{1,t}\wedge\eta_{2,t}\wedge 1$.

Fix matrices $A_1, A_2\in\C^{N\times N}$ and take any $R_1,R_2\in \lbrace I,A_1,A_1^*,A_2,A_2^*\rbrace$. For initial conditions $z_{j,0}\in\Omega_{\kappa,0}^j$, $D_{j,0}\in\mathfrak{D}_j$, $j=1,2$, we will use the shorthand notation $\mathring{R}_j^{12}=\mathring{R}_j^{\nu_{1,t},\nu_{2,t}}$ and $\mathring{R}_j^{21}=\mathring{R}_j^{\nu_{2,t},\nu_{1,t}}$ whenever the time $t$ can be unambiguously determined from the context. Here we denoted $\nu_{j,t}:=(z_{j,t},D_{j,t})$ where $z_{j,t}, D_{j,t}$ is the solution to the characteristic flow equation \eqref{eq:def_flow} with initial conditions $z_{j,0}, D_{j,0}$.

We first consider the case when one observable is regularized and compute the differential $\dif g_t^{\mathring{R}_1^{12},R_2}$. Similarly to \eqref{eq:dif_g} we have
\begin{equation}
\begin{split}
\label{eq:flowreg}
\dif g_t^{\mathring{R}_1^{12},R_2} &=g_t^{\mathring{R}_1^{12},R_2}\dif t+\langle M_{12,t}^{\mathring{R}_1^{12}}\rangle g_t^{I,R_2} \dif t+ \langle M_{21,t}^{R_2}\rangle g_t^{\mathring{R}_1^{12},I}\dif t+\dif\mathcal{E}_t + \mathrm{Err}_t\dif t+\mathrm{Reg}_t^{(1)}\dif t, \\
\dif\mathcal{E}_t &=\frac{1}{\sqrt{N}}\sum\limits_{a,b=1}^N \partial_{ab}\langle G_{1,t}\mathring{R}_1^{12}G_{2,t}R_2\rangle \dif B_{ab},\\
\mathrm{Err}_t &= g_t^{I,R_2}g_t^{\mathring{R}_1^{12},I} + \langle G_{1,t}-M_{1,t}\rangle \langle G_{1,t}^2\mathring{R}_1^{12}G_{2,t}R_2\rangle + \langle G_{2,t}-M_{2,t}\rangle \langle G_{1,t}\mathring{R}_1^{12}G_{2,t}^2R_2\rangle,\\
\mathrm{Reg}_t^{(1)} &= -\partial_t \left[ \phi(\nu_{1,t},\nu_{2,t})\right] \frac{\langle M_{1,t}R_1M_{2,t}^{(*)}\rangle}{\langle M_{1,t}M_{2,t}^{(*)}\rangle}\langle G_{1,t}G_{2,t}R_2\rangle,
\end{split}
\end{equation}
for the definition of $\phi$ see \eqref{eq:def_phi}. In the third line in \eqref{eq:flowreg} the star above $M_{2,t}$ is present if and only if $\Im z_{1,t}\Im z_{2,t}>0$. The only difference of \eqref{eq:flowreg} from \eqref{eq:dif_g} is the additional error term $\mathrm{Reg}_t^{(1)}$ which comes from the differentiation of $\mathring{R}_1^{\nu_{1,t},\nu_{2,t}}$ in $t$. We point out that only the artificial cutoff gives a contribution to $\mathrm{Reg}_t^{(1)}$. If the regular component \eqref{eq:regulardef} was defined without $\phi$, then $\mathring{R}_1^{\nu_{1,t},\nu_{2,t}}$ would be independent of $t$ (see also Lemma \ref{lem:flow_properties}(ii)). Note that $\mathrm{Reg}_t^{(1)}=0$ when $\phi(\nu_{1,t},\nu_{2,t})\in \{0,1\}$ and in the complementary regime $\widehat{\gamma}(z_{1,t},z_{2,t})\sim 1$. Employing \eqref{eq:Zig_init} which is already proven in Sec.~\ref{subsec:Zig_2Gav} we get
\begin{equation}
\int_0^t \big| \mathrm{Reg}_s^{(1)}\big|\dif s\lesssim \frac{1}{\sqrt{N\ell_t}}, \quad \forall t\in [0,T].
\label{eq:reg_err}
\end{equation}

Beside \eqref{eq:flowreg} we also consider the case when both observables are regularized according to Definition~\ref{def:regulardef}. These two cases have to be considered together since their equations are coupled. The differential $\dif g_t^{\mathring{R}_1^{12},\mathring{R}_2^{21}}$ is completely analogous to \eqref{eq:flowreg} except for the term $\mathrm{Reg}_t^{(1)}$ which should be replaced by
\begin{equation*}
\mathrm{Reg}_t^{(2)}\!\!:=-\partial_t \!\!\left[ \phi(\nu_{1,t},\nu_{2,t})\right]\!\! \left(\frac{\langle M_{1,t}R_1M^{(*)}_{2,t}\rangle}{\langle M_{1,t}M_{2,t}^{(*)}\rangle}\langle G_{1,t}G_{2,t}\mathring{R}_2^{21}\rangle\! + \!\frac{\langle M_{2,t}R_2M^{(*)}_{1,t}\rangle}{\langle M_{2,t}M_{1,t}^{(*)}\rangle}\left\langle G_{1,t}\mathring{R}_1^{12}G_{2,t}\right\rangle\right)
\end{equation*}
with the same notational convention about $(*)$ as in \eqref{eq:flowreg}. It is easy to see that $\mathrm{Reg}^{(2)}$ satisfies the bound~\eqref{eq:reg_err}.

From now on, to further simplify the presentation, in the case when only one among $R_1,R_2$ is regularized we assume that the matrix which is not regularized equals to identity. If this is not the case, then one can proceed as in Section~\ref{subsec:Zig_2Gav} where $k(R_1,R_2)$ was introduced in \eqref{eq:k} in order to distinguish between identity and non-identity observables.

Introduce the stopping time
\begin{equation}
\begin{split}
\tau^{R_1}&:=\inf\left\{t\in [0,T]:   \max_{s\in[0,t]}\max_{z_{j,0}\in\Omega^j_{\kappa,0}} \alpha_{1,s}^{-1}\left(\big| g_s^{\mathring{R}_1^{12},I}\big|+\big| g_s^{I,\mathring{R}_1^{21}}\big|\right)\ge N^{2\xi_1}\right\rbrace,\\
\tau^{R_1,R_2}&:=\inf\left\{t\in [0,T]:   \max_{s\in[0,t]}\max_{z_{j,0}\in\Omega^j_{\kappa,0}} \alpha_{2,s}^{-1}\big| g_s^{\mathring{R}_1^{12},\mathring{R}_2^{21}}\big|\ge N^{2\xi_2}\right\rbrace,\\
\tau&:=\min\left\lbrace \tau^{R_1},\tau^{R_1,R_2}: R_1,R_2\in\lbrace A_1,A_1^*,A_2,A_2^*\rbrace\right\rbrace
\end{split}
\label{eq:defstoptime}
\end{equation}
for some small $0<\xi_1<\xi_2<\epsilon/10$, where 
\begin{equation}
\label{eq:esterrtermnew}
\alpha_{1,s}:=\frac{1}{N\eta_{1,s}\eta_{2,s}}\wedge\frac{1}{\sqrt{N\ell_s\gamma_s}}, \qquad\quad \alpha_{2,s}:=\frac{1}{N\eta_{1,s}\eta_{2,s}}\wedge\frac{1}{\sqrt{N\ell_s}}.
\end{equation}
The estimates for $g_t^{I,\mathring{R}_1^{21}}$ and $g_t^{\mathring{R}_1^{12},I}$ are completely analogous, in the following we thus consider only $g_t^{\mathring{R}_1^{12},I}$. Note that in the definition of the stopping time $\tau$ in \eqref{eq:defstoptime} we only consider quantities with at least one regular observable, since the case of no regular observables already follows by the results of Part 1 of this proof.

The argument around \eqref{eq:reg_err} shows that whenever $\phi \neq 1$, it contributes with controllable and irrelevant error terms ${\mathrm Reg}^{(1)}_t$ and ${\mathrm Reg}^{(2)}_t$ to \eqref{eq:flowreg} and to the analogue of \eqref{eq:flowreg} in the case of two regularized observables, respectively. Hence, for simplicity we may assume that for initial conditions $\nu_{j,0}=(z_{j,0},D_{j,0})$, $j=1,2$, it holds that 
\begin{equation}
\phi(\nu_{1,t},\nu_{2,t})=1,\quad \forall t\in [0,T].
\label{eq:reg_ass}
\end{equation}
The main advantage of this simplification is that in this way the concept of regularization becomes time independent. More precisely, recalling the definition of the regular component \eqref{eq:regulardef} and using Lemma \ref{lem:flow_properties}(i) along with \eqref{eq:reg_ass} we see that the regularization with respect to $(\nu_{1,t},\nu_{2,t})$ does not depend on $t$, i.e.
\begin{equation*}
\mathring{R}_1^{\nu_{1,t},\nu_{2,t}}=\mathring{R}_1^{\nu_{1,T},\nu_{2,T}} \quad\text{and}\quad \mathring{R}_2^{\nu_{2,t},\nu_{1,t}}=\mathring{R}_2^{\nu_{2,T},\nu_{1,T}}
\end{equation*}
for all $t\in [0,T]$. We will further assume that in \eqref{eq:defstoptime} $R_1$ is $(\nu_{1,T},\nu_{2,T})$-regular and $R_2$ is $(\nu_{2,T},\nu_{1,T})$-regular. 

The fact that we can achieve the bound $1/(N\eta_{1,t}\eta_{2,t})$ for $g_t^{A_1,I}$ follows directly by the arguments in Section~\ref{subsec:Zig_2Gav} for any general observable $A_1$. In the remainder of the proof we thus focus on proving the bounds $1/\sqrt{N\ell_t\gamma_t}$ and $1/\sqrt{N\ell_t}$ in the case of one or two regular observables, respectively. Throughout this section we use the properties of the characteristic flow from Lemma~\ref{lem:flow_properties} even if we do not mention it explicitly.

First, we notice that the first term in the rhs. of the differential equation in \eqref{eq:flowreg} can be neglected as it only amounts to a negligible rescaling $e^{-t}g_t^{\mathring{R}_1^{12},R_2}$. Then, we consider the stochastic term in \eqref{eq:flowreg}. To estimate this term we first bound its quadratic variation, denoted by $\mathrm{QV}[\cdot]$ as follows (we only write one representative term):
\begin{equation}
\begin{split}
\label{eq:estqvnew}
\mathrm{QV}\big[g_t^{A_1,I}\big]&\lesssim \frac{1}{N^2\eta_{1,t}^2}\langle \Im G_1 A_1G_2 \Im G_1 A_1 G_2^*\rangle \lesssim \frac{1}{N\eta_{1,t}^2}\langle \Im G_1 A_1 |G_2| A_1\rangle\langle \Im G_1 |G_2|\rangle, \\
\mathrm{QV}\big[g_t^{A_1,A_2}\big]&\lesssim  \frac{1}{N^2\eta_{1,t}^2}\langle \Im G_1 A_1 G_2 A_2\Im G_1 A_2 G_2^* A_1\rangle\\
&\lesssim \frac{1}{N\eta_{1,t}^2}\langle \Im G_1 A_1 |G_2| A_1\rangle\langle \Im G_1 A_2 |G_2| A_2\rangle,
\end{split}
\end{equation}
where we used the reduction inequalities from \eqref{eq:mart_reduct}. Here we restricted the argument to the case $R_1=A_1$ when one observable is regularized and $R_1=A_1$, $R_2=A_2$ when both are regularized. In general, one needs to consider $R_1,R_2\in\lbrace A_1,A_1^*,A_2,A_2^*\rbrace$, but all these cases are analogous to the one considered in \eqref{eq:estqvnew} and thus omitted. Notice that in the rhs. of \eqref{eq:estqvnew} also $|G|$ appeared. Products of traces with some $G$'s replaced by $|G|$ were already handled in \eqref{eq:|G|_int_applied} using the integral representation \eqref{eq:|G|_int}, however, the situation here is more delicate as we need to ensure that it is still possible to gain the additional smallness coming from $A_1,A_2$ being regular along the whole vertical line \eqref{eq:|G|_int_applied}. This analysis was already performed in full detail in \cite[Eqs.~(6.3)--(6.10)]{OptLowerBound}, we thus not repeat it here. We point out that in \cite{OptLowerBound} this was done for fixed spectral parameters, however, given Lemma~\ref{lem:reg_reg}, the fact that $z_t$ now changes in time does not cause any complication as assuming \eqref{eq:reg_ass} the notion of regularity does not depend on time. Proceeding as in \cite[Eqs.~(6.3)--(6.10)]{OptLowerBound}, using \eqref{eq:boundM12}, \eqref{eq:M2_reg}, and \eqref{eq:Zig}, we thus conclude
\begin{equation}
\mathrm{QV}\big[g_t^{A_1,I}\big]\lesssim \frac{1}{N\eta_{1,t}^2\gamma_t}, \qquad\qquad\quad \mathrm{QV}\big[g_t^{A_1,A_2}\big]\lesssim \frac{1}{N\eta_{1,t}^2}.
\end{equation}
By the path-wise Burkholder-Davis-Gundy inequality (see \cite[Appendix B.6, Eq. (18)]{BDG} with $c = 0$ for continuous martingale) we thus obtain
\begin{equation}
\label{eq:eststochnew}
\sup_{0\le t\le T}\left|\int_0^{t\wedge\tau} \dif \mathcal{E}_s\right|\lesssim N^{\xi_j}\alpha_{j,t\wedge\tau},
\end{equation}
with $j=1$ in the case of one regularized observable and $j=2$ when both $R_1,R_2$ are regularized. This convention will be used throughout this proof even if not mentioned explicitly.

Next, proceeding as in \eqref{eq:3G_av_Err1}--\eqref{eq:finbavnonreg}, using the bound \eqref{eq:M2_reg} for the deterministic terms, it is easy to see that
\begin{equation}
\label{eq:esterrterm}
\int_0^{t\wedge \tau} \mathrm{Err}_s\,\dif s \prec \frac{N^{\xi_j}}{N\ell_{t\wedge\tau}}\alpha_{j,t\wedge\tau}+\frac{N^{4{\xi_j}}}{N\gamma_{t\wedge\tau}}.
\end{equation}
We point out that also in the proof of \eqref{eq:esterrterm} we need to use the integral representation \eqref{eq:|G|_int_applied} as discussed above (see, e.g., \eqref{eq:|G|_int_S1}); we omit the details for brevity.

Combining \eqref{eq:esterrtermnew} and \eqref{eq:eststochnew}, for any $0\le s \le t$, by integrating 
the differential equation \eqref{eq:flowreg}  from $s$ to $t\wedge\tau$, we thus obtain
\begin{equation}
\begin{split}
\label{eq:onllintermnew}
 g_{t\wedge\tau}^{A_1,I}&=g_s^{A_1,I}+\int_s^{t\wedge\tau}\langle M_{12,r}^{A_1}\rangle g_r^{I,I}\dif r+\int_s^{t\wedge\tau}\langle M_{12,r}^I\rangle g_r^{A_1,I}\dif r+\mathcal{O}\left(N^{\xi_1} \alpha_{1,t\wedge\tau}\right), \\
 g_{t\wedge\tau}^{A_1,A_2}&=g_s^{A_1,A_2}+\int_s^{t\wedge\tau}\langle M_{12,r}^{A_1}\rangle g_r^{I,A_2}\dif r+\int_s^{t\wedge\tau}\langle M_{12,r}^{A_2}\rangle g_r^{A_1,I}\dif r+\mathcal{O}\left(N^{\xi_2} \alpha_{2,t\wedge\tau}\right).
 \end{split}
\end{equation}
The terms in \eqref{eq:onllintermnew} evaluated at time $s$ are estimated using \eqref{eq:Zig_inithr} and \eqref{eq:Zig_init_reg}, respectively. Using \eqref{eq:M2_reg} for the first integral in the first line of \eqref{eq:onllintermnew} and for both integrals in the second line of \eqref{eq:onllintermnew}, we thus obtain
\begin{equation}
 g_{t\wedge\tau}^{A_1,I}=\int_s^{t\wedge\tau}\langle M_{12,r}^I\rangle g_r^{A_1,I}\dif r+\mathcal{O}\left(N^{\xi_1} \alpha_{1,t\wedge\tau}\right), \qquad\qquad\quad  g_{t\wedge\tau}^{A_1,A_2}=\mathcal{O}\left(N^{\xi_2} \alpha_{2,t\wedge\tau}\right).
\end{equation}
To conclude the estimate of  $g_{t\wedge\tau}^{A_1,I}$ we apply Gronwall inequality and obtain
\begin{equation}
g_{t\wedge\tau}^{A_1,I}\lesssim N^{\xi_1} \alpha_{1,t\wedge\tau}+N^{\xi_1}\int_s^{t\wedge\tau} \alpha_{1,r} \frac{1}{\gamma_r}  \frac{\beta_r}{\beta_{t\wedge\tau}}\,\dif r\lesssim N^{\xi_1} \alpha_{1,t\wedge\tau},
\end{equation}
where in the first inequality we used \eqref{eq:def_f}--\eqref{eq:prop_int_bound2} and the second inequality follows by computations similar to \eqref{eq:propest1}--\eqref{eq:propest2}. This shows that $\tau=T$ and thus it concludes the proof.
\qed

\section{Zag step: Proof of (un)conditional Gronwall estimates in Lemmas \ref{lem:uncondGron2iso}--\ref{lem:condGron2av}} \label{sec:zag}
In this section, we prove the Gronwall estimates from Section \ref{subsec:GFTzag}. 
\subsection{Conditional Gronwall estimates: Proof of Lemmas \ref{lem:condGron2iso} and \ref{lem:condGron2av}} \label{subsec:condGron}
We begin by proving the conditional Gronwall estimates in Lemmas \ref{lem:condGron2iso} and \ref{lem:condGron2av}. 
\begin{proof}[Proof of Lemma \ref{lem:condGron2iso}]
	By Ito's formula and a cumulant expansion, we find that 
	\begin{equation} \label{eq:cumexIto}
		\frac{\dif}{\dif t} \E |R_t|^{2p} \lesssim \sum_{k=3}^{K} \frac{1}{N^{k/2}}  \sum_{a,b} \sum_{l = 0}^k \left| \E (\partial_{ab}^l \partial_{ba}^{k-l} |R_t|^{2p}) \right| + \mathcal{O}(N^{-100p})
	\end{equation}
	where we truncated the expansion at $K = \mathcal{O}(p)$ and used the trivial bound $\Vert G \Vert \le \eta^{-1}$ to estimate the error term in \eqref{eq:cumexIto}. 
	
	Throughtout the proof, we will frequently use that 
	\begin{equation*}
		\frac{1}{\sqrt{N \eta} \eta^{1/2} \gamma^{1/2}} \lesssim \mathcal{E}_1 \lesssim \mathcal{E}_0 \lesssim \frac{1}{\sqrt{N \eta} \eta} \,, \quad  \mathcal{E}_0/\mathcal{E}_1 \lesssim \eta^{-1/6}\,, \quad \text{and} \quad \eta \lesssim \gamma
	\end{equation*}
	as well as $\eta \lesssim 1$ and $N \eta \ge 1$, without further mentioning. 
	\\[2mm]
	\underline{{Third order terms:}} We begin by estimating the term of order $k=3$ in \eqref{eq:cumexIto}, as these are the most delicate ones. Distributing the derivatives according to the Leibniz rule, we see that there are three types of terms, namely (i) $(\partial^3R) |R|^{2p-1}$, (ii) $(\partial^2R) (\partial R) |R|^{2p-2}$, and (iii) $(\partial R)^3 |R|^{2p-3}$. For ease of notation, we shall henceforth drop the subscript $t$ of $R_t$ as well as the index of $G_i$, whenever it does not lead to confusion, or is irrelevant. Moreover, we will not distinguish between $R$ and $\overline{R}$ (and hence $G$ and $G^*$) as their treatment is exactly the same. 
	
	For terms of type (i), we focus on two exemplary constellations of indices; other terms are estimated analogously and are hence omitted. First, we consider 
	\begin{equation} \label{eq:GFT1isotype1.1}
		N^{-3/2} \Big|\sum_{a,b} G_{\boldsymbol x a} G_{bb} G_{aa} (G_1 B_1 G_2)_{b \boldsymbol y}\Big| |R|^{2p-1}\,. 
	\end{equation}
	For each of the four factors within the sum in \eqref{eq:GFT1isotype1.1}, we now employ either the the isotropic single resolvent law $G_{\boldsymbol u \boldsymbol v} = M_{\boldsymbol u \boldsymbol v} + \mathcal{O}_\prec \big((N \eta)^{-1/2}\big)$ or \eqref{eq:GFT1isoinput}. The resulting eight terms are then estimated by application of Schwarz inequalities (for the off-diagonal terms $M_{\boldsymbol x a}$ and $(M_{12}^{B_1})_{b \boldsymbol y}$) and \emph{isotropic resummation},
	e.g.~as
	\begin{equation} \label{eq:example1}
		N^{-3/2} \Big|\sum_{a,b} M_{\boldsymbol x a} M_{bb} M_{aa} (M_{12}^{B_1})_{b \boldsymbol y}\Big|  \lesssim N^{-1/2}\sqrt{\sum_a |M_{\boldsymbol x a}|^2 }\sqrt{\sum_b \big|(M_{12}^{B_1})_{b \boldsymbol y} \big|^2} \lesssim \frac{1}{\sqrt{N} \, \gamma}
	\end{equation}
	or, now using isotropic resummation for $(G-M)_{\boldsymbol x a}$, 
	\begin{equation*}
		N^{-3/2}\! \Big|\sum_{a,b} (G-M)_{\boldsymbol x a} M_{bb} M_{aa} (M_{12}^{B_1})_{b \boldsymbol y}\Big|\!\! \prec N^{-1} \big| (G-M)_{\boldsymbol x \boldsymbol m} \big|\sqrt{\sum_b \big|(M_{12}^{B_1})_{b \boldsymbol y} \big|^2}  \lesssim \frac{1}{N \sqrt{\eta} \, \gamma} \,, 
	\end{equation*}
	where we denoted $\boldsymbol m = (M_{aa})_{a\in [N]}$ and used that $\Vert \boldsymbol m \Vert \lesssim \sqrt{N}$,
	or
	\begin{equation*}
		N^{-3/2} \Big|\sum_{a,b} M_{\boldsymbol x a} (G-M)_{bb} M_{aa} (G_1 B_1 G_2 - M_{12}^{B_1})_{b \boldsymbol y}\Big| \prec \sqrt{\sum_a |M_{\boldsymbol x a}|^2}  \frac{\mathcal{E}_0}{\sqrt{N \eta}} \lesssim \frac{\mathcal{E}_0}{\sqrt{N \eta}}\,. 
	\end{equation*}
	In the above estimates we frequently used the bound 
	\begin{equation} \label{eq:Mboundcond}
		\big\Vert M_{12}^{B_1} \Vert \lesssim \Vert B_1 \Vert \, \gamma^{-1}
	\end{equation}
	from Proposition \ref{prop:norm_M_bounds}. 
	
	Collecting all the terms, we thus find by application of Young's inequality and using $\eta \lesssim 1$, that
	\begin{equation*}
\E 	\, [	\eqref{eq:GFT1isotype1.1}] \lesssim N^{\xi/2p} \left(\frac{1}{\sqrt{N \eta} \, \gamma} + \frac{\mathcal{E}_0}{\sqrt{N \eta}}\right) \E |R|^{2p-1} \lesssim \left(1 + \frac{1}{\sqrt{N}\eta^{3/2}}\right) \left( \E |R|^{2p} + N^\xi \mathcal{E}_1^{2p}\right) 
	\end{equation*}
for any $\xi > 0$. 
	Secondly, we consider 
	\begin{equation} \label{eq:GFT1isotype1.2}
		N^{-3/2} \Big|\sum_{a,b} G_{\boldsymbol x a} G_{ab} (G_1B_1G_2)_{bb} G_{a \boldsymbol y}\Big| |R|^{2p-1}\,. 
	\end{equation}
	Following the strategy explained below \eqref{eq:GFT1isotype1.1}, we estimate, e.g., 
	\begin{equation*} 
		N^{-3/2} \Big|\sum_{a,b} M_{\boldsymbol x a} (G-M)_{ab} (M_{12}^{B_1})_{bb} (G-M)_{a \boldsymbol y}\Big| \prec \frac{1}{N \eta \gamma} \sqrt{\sum_a |M_{\boldsymbol x a}|^2} \lesssim \mathcal{E}_1
	\end{equation*}
	or 
	\begin{equation*} 
		N^{-3/2} \Big|\sum_{a,b} (G-M)_{\boldsymbol x a} (G-M)_{ab} (G_1 B_1 G_2 - M_{12}^{B_1})_{bb} (G-M)_{a \boldsymbol y}\Big|\prec  \frac{N^{1/2}\mathcal{E}_0}{(N \eta)^{3/2}}   \lesssim \frac{\mathcal{E}_1}{\sqrt{N }\eta^{7/6}} \,, 
	\end{equation*}
	such that we conclude for any $\xi > 0$, just as above, 
	\begin{equation*}
	\E \, [	\eqref{eq:GFT1isotype1.2} ]\lesssim  \left(1 + \frac{1}{\sqrt{N}\eta^{3/2}}\right) \left(\E  |R|^{2p} + N^\xi \mathcal{E}_1^{2p}\right) \,.
	\end{equation*}
	
	For terms of type (ii), we again focus on two exemplary constellations of indices and omit the other ones, as they can be treated analogously. First, we consider
	\begin{equation} \label{eq:GFT1isotype2.1}
		N^{-3/2} \Big|\sum_{a,b} G_{\boldsymbol x a}  (G_1 B_1 G_2)_{b \boldsymbol y} G_{\boldsymbol x a} G_{bb} (G_1 B_1 G_2)_{a \boldsymbol y}\Big| |R|^{2p-2} \,,
	\end{equation}
	which we estimate as described below \eqref{eq:GFT1isotype1.1}. An exemplary term (ignoring $|R|^{2p-2}$) is bounded as
	\begin{equation} \label{eq:example2}
		\begin{split}
			N^{-3/2} &\Big|\sum_{a,b} M_{\boldsymbol x a}  (M_{12}^{B_1})_{b \boldsymbol y} (G-M)_{\boldsymbol x a} M_{bb} (G_1 B_1 G_2- M_{12}^{B_1})_{a \boldsymbol y}\Big| \\
			&\prec N^{-1} \sqrt{\sum_a |M_{\boldsymbol x a}|^2} \left|  (M_{12}^{B_1})_{\boldsymbol m \boldsymbol y} \right| \frac{\mathcal{E}_0}{\sqrt{N \eta}}
			\lesssim N^{-1/2}  \frac{\mathcal{E}_0}{\sqrt{N \eta} \gamma} \lesssim \mathcal{E}_1^2 \,,
		\end{split}
	\end{equation}
	where we used $\Vert \boldsymbol m \Vert \lesssim \sqrt{N}$ and \eqref{eq:Mboundcond}. 
	Secondly, we consider
	\begin{equation} \label{eq:GFT1isotype2.2}
		N^{-3/2} \Big|\sum_{a,b} G_{\boldsymbol x a}  (G_1 B_1 G_2)_{b \boldsymbol y} G_{\boldsymbol x b} (G_1 B_1 G_2)_{aa} G_{b \boldsymbol y}\Big| |R|^{2p-2} \,.
	\end{equation}
	Again, an exemplary term (following the strategy below \eqref{eq:GFT1isotype1.1}) can be estimated as
	\begin{equation*}
		\begin{split}
			N^{-3/2} &\Big|\sum_{a,b} (G-M)_{\boldsymbol x a}  (G_1 B_1 G_2 - M_{12}^{B_1})_{b \boldsymbol y} M_{\boldsymbol x b} (G_1 B_1 G_2- M_{12}^{B_1})_{aa} (G-M)_{a \boldsymbol y}\Big| \\
			&\prec  \sqrt{\sum_b |M_{\boldsymbol x b}|^2} \,  \frac{\mathcal{E}_0^2}{N \eta}
			\lesssim \frac{\mathcal{E}_1^2}{N \eta^{4/3}} \,. 
		\end{split}
	\end{equation*}
	In total, for terms of type (ii) we find, by means of Young's inequality, that, for any $\xi > 0$, 
	\begin{equation*}
	\E \, [	\eqref{eq:GFT1isotype2.1} + \eqref{eq:GFT1isotype2.2} ]\lesssim  \left(1 + \frac{1}{\sqrt{N}\eta^{3/2}}\right) \left( \E |R|^{2p} + N^\xi \mathcal{E}_1^{2p}\right) \,.
	\end{equation*}
		
	Lastly, for third order terms in \eqref{eq:cumexIto}, we turn to terms of type (iii). One exemplary and representative index constellation is given by 
	\begin{equation} \label{eq:GFT1isotype3.1}
		N^{-3/2} \Big|\sum_{a,b} G_{\boldsymbol x a}  (G_1 B_1 G_2)_{b \boldsymbol y} G_{\boldsymbol x a}  (G_1 B_1 G_2)_{b \boldsymbol y}  G_{\boldsymbol x b}  (G_1 B_1 G_2)_{a \boldsymbol y}\Big| |R|^{2p-3} \,,
	\end{equation}
	which we again estimate as described below \eqref{eq:GFT1isotype1.1}, e.g., as (neglecting $|R|^{2p-3}$)
	\begin{equation*} 
			N^{-3/2} \Big|\!\sum_{a,b} (G-M)_{\boldsymbol x a}  (G_1 B_1 G_2- M_{12}^{B_1})_{b \boldsymbol y} M_{\boldsymbol x a}  (M_{12}^{B_1})_{b \boldsymbol y}  (G-M)_{\boldsymbol x b}  (M_{12}^{B_1})_{a \boldsymbol y}\Big|\!\!\prec \frac{N^{-1}\mathcal{E}_0}{N \eta \gamma^2} \!\lesssim \mathcal{E}_1^3\,.
	\end{equation*}
	
	In total, for terms of type (iii) we find, by means of Young's inequality, that, for any $\xi > 0$,
	\begin{equation*}
	\E 	[\eqref{eq:GFT1isotype3.1}]  \lesssim \left(1 + \frac{1}{\sqrt{N}\eta^{3/2}}\right) \left( \E |R|^{2p} + N^\xi \mathcal{E}_1^{2p}\right) \,.
	\end{equation*}
	Therefore, collecting all the estimates for terms of type (i), (ii), and (iii), we have
	\begin{equation*}
		N^{-3/2}  \sum_{a,b} \sum_{l = 0}^3 \left| \E (\partial_{ab}^l \partial_{ba}^{k-l} |R|^{2p}) \right| \lesssim  \left(1 + \frac{1}{\sqrt{N}\eta^{3/2}}\right) \left( |R|^{2p} + N^\xi \mathcal{E}_1^{2p}\right) \,. 
	\end{equation*}
	\\[2mm]
	\underline{{Higher order terms:}} We now discuss the higher order terms in \eqref{eq:cumexIto} with $k \ge 4$ and distinguish two cases: First, we consider the case where the $k$ derivatives hit $m \le k-2$ different factors of $R$'s. Afterwards, we discuss the remaining case $m\in \{k-1, k\}$ (note that necessarily $m \le k$). 
	
	Indeed, for $m \le k-2$ different $R$ factors that are hit by a derivative, we employ the estimates ($\boldsymbol u, \boldsymbol v$ are arbitrary vectors of bounded norm)
	\begin{equation} \label{eq:input}
		|G_{\boldsymbol u \boldsymbol v}| \prec 1 \quad \text{and} \quad \left| (G_1 B_1 G_2)_{\boldsymbol u \boldsymbol v}\right| \prec \gamma^{-1} + \mathcal{E}_0
	\end{equation}
	for all but two off-diagonal terms. In this way, modulo changing one or more of the $a,b$ or $\boldsymbol x, \boldsymbol y$ indices to $b, a$ or $\boldsymbol y, \boldsymbol x$, respectively (which are all treated completely analogously), and ignoring the ``untouched" $|R|^{2p-m}$ factor, we arrive at 
	\begin{equation} \label{eq:higher1}
		N^{-k/2} \sum_{ab} |G_{\boldsymbol x a}| \big| G_{b\boldsymbol y}\big| \big(\gamma^{-1} + \mathcal{E}_0\big)^{m} 
	\end{equation}
	for $m \ge 2$, or, for $m=1$,
	\begin{equation} \label{eq:higher2}
		N^{-k/2} \sum_{ab} |G_{\boldsymbol x a}| \big| (G_1 B_1 G_2)_{b \boldsymbol y}\big| \,. 
	\end{equation}
	Following the strategy explained below \eqref{eq:GFT1isotype1.1}, we then find  
	\begin{equation*} 
		\begin{split}
			\eqref{eq:higher1} + \eqref{eq:higher2} &\prec \left(\frac{1}{N^{(k-2)/2} \gamma} +  \frac{\mathcal{E}_0}{N^{(k-3)/2} \eta^{1/2}} \right) \mathbf{1}(m=1)+ \frac{\big( \gamma^{-1} + \mathcal{E}_0\big)^{m}}{N^{(k-2)/2} \eta}  \mathbf{1}(m \ge 2)\\
			& \lesssim \frac{1}{N^{(k-2)/2} \eta \gamma^m} + \frac{\mathcal{E}_0 \, \mathbf{1}(m=1)}{N^{(k-3)/2} \eta^{1/2} } +  \frac{\mathcal{E}_0^m}{N^{(k-3)/2} \eta^{1/2}} \lesssim \left(1 + \frac{1}{\sqrt{N} \eta}\right) \mathcal{E}_1^m\,. 
		\end{split}
	\end{equation*}
	
	Next, for $m \in \{k-1, k\}$, we note that (by simple combinatorics) there are at least two $R$'s, which are hit by a derivative exactly once. Therefore, using \eqref{eq:input} for all the terms originating from the other $m-2$ differentiated $R$'s, and ignoring the ``untouched" $|R|^{2p-m}$ factor, we arrive at
	\begin{equation} \label{eq:higher3}
		N^{-k/2} \sum_{ab} |G_{\boldsymbol x a}| \big| (G_1 B_1 G_2)_{b \boldsymbol y}\big| |G_{\boldsymbol x a}| \big| (G_1 B_1 G_2)_{b \boldsymbol y}\big| \big(\gamma^{-1} + \mathcal{E}_0\big)^{m-2}
	\end{equation}
	or with $a,b$ in the last two terms interchanged. Similarly to above, we now estimate 
	\begin{equation*}
		\eqref{eq:higher3} \prec \left(\frac{1}{N^{k/2} \eta \gamma^2} + \frac{\mathcal{E}_0}{N^{(k-1)/2} \eta \gamma} +  \frac{\mathcal{E}_0^2}{N^{(k-2)/2} \eta } \right) \, \big(\gamma^{-1} + \mathcal{E}_0\big)^{m-2}\!\! \lesssim \left(1 + \frac{1}{\sqrt{N} \eta^{7/6}}\right) \mathcal{E}_1^m \,. 
	\end{equation*}
	Therefore, collecting all the terms of order $k \ge 4$, we have, by means of Young's inequality and using $\eta \lesssim 1$, 
	\begin{equation*}
		N^{-k/2}  \sum_{a,b} \sum_{l = 0}^k \left| \E (\partial_{ab}^l \partial_{ba}^{k-l} |R|^{2p}) \right| \lesssim \left(1 + \frac{1}{\sqrt{N}\eta^{3/2}}\right) \left(\E  |R|^{2p} + N^\xi\mathcal{E}_1^{2p}\right) \,,
	\end{equation*}
for any $\xi > 0$. 
	This concludes the proof of \eqref{eq:condGron2iso}. 
\end{proof}

\begin{proof}[Proof of Lemma \ref{lem:condGron2av}]
	Just as in \eqref{eq:cumexIto}, we compute by Ito's formula and a cumulant expansion (truncated at order $K = \mathcal{O}(p)$)
	\begin{equation} \label{eq:cumexIto2}
		\frac{\dif}{\dif t} \E |R_t|^{2p} \lesssim \sum_{k=3}^{K} \frac{1}{N^{k/2}}  \sum_{a,b} \sum_{l = 0}^k \left| \E (\partial_{ab}^l \partial_{ba}^{k-l} |R_t|^{2p}) \right| + \mathcal{O}(N^{-100p}) \,. 
	\end{equation}
	Just as in the proof of Lemma \ref{lem:condGron2iso}, for ease of notation, we shall henceforth drop the subscript $t$ of $R_t$ as well as the index of $G_i$, whenever it does not lead to confusion, or is irrelevant. Moreover, we will not distinguish between $R$ and $\overline{R}$ (and hence $G$ and $G^*$) as their treatment is exactly the same. We will first focus on the case where $\mathcal{E}_1 = 1/(\sqrt{N \eta} \gamma)$ (recall \eqref{eq:condGron2av}). 
	
	By direct computation, using \eqref{eq:GFT2avinput} and $\eta \lesssim \gamma$, we find that (the $N^{-1}$ comes from the normalized trace in the definition of $R$)
	\begin{equation} \label{eq:2avfund}
		\big| \partial_{ab}^l \partial_{ba}^{k-l} R \big| \prec \frac{1}{N \eta \gamma}
	\end{equation}
	for all $k \in \N$, $l \in [k] \cup \{0\}$. 
	
	Let $m \le k$ be the number of $R$-factors in \eqref{eq:cumexIto2}, that are hit by a derivative. For $k=3$ and $m \ge 2$, as well as $k \ge 4$ and $m \in [k]$ in \eqref{eq:cumexIto2}, the estimate \eqref{eq:2avfund} allows to bound these terms as (recall $\mathcal{E}_1$ from \eqref{eq:condGron2av})
	\begin{equation} \label{eq:avkge4}
		N^{-(k-4)/2} \left(\frac{1}{N \eta \gamma}\right)^m |R|^{2p-m} \lesssim \frac{1}{\sqrt{N} \eta} \left(|R|^{2p} + \mathcal{E}_1^{2p}\right)
	\end{equation}
	where we bounded the $a,b$ summations in \eqref{eq:cumexIto2} trivially, employed Young's inequality and used $N \eta \ge 1$. 
	
	The remaining case with $k=3$ and $m=1$ is now discussed separately. Note that, by explicit computation, in this case there is at least one off-diagonal term, i.e.~of the form $G_{ab}$, $(GBG)_{ab}$, or $(GBGBG)_{ab}$, resulting from three derivatives hitting a single $R$ factor. In the first case, using \eqref{eq:GFT2avinput} together with a Schwarz inequality, a Ward identity, and Young's inequality, we can bound these terms as
	\begin{equation*}
		N^{-5/2} \frac{1}{\eta \gamma}\sum_{ab} |G_{ab}| |R|^{2p-1}\prec \frac{1}{N \eta^{3/2} \gamma} |R|^{2p-1} \lesssim \frac{1}{\sqrt{N} \eta} \left( |R|^{2p} + \mathcal{E}_1^{2p}\right) \,. 
	\end{equation*}
	In the second case, the bound works completely analogously, using 
	\begin{equation*}
		N^{-5/2} \gamma^{-1}\sum_{ab} |(G_1 B G_2)_{ab}| \lesssim \frac{1}{N \eta^{1/2} \gamma} \sqrt{\big(G_1 B \Im G_2 B^* G_1^*\big)_{aa}} \prec \frac{1}{N \eta \gamma^{3/2}} \lesssim \frac{1}{\sqrt{N} \eta } \, \mathcal{E}_1
	\end{equation*}
	instead. In third case, however, we need to use \emph{isotropic resummation}: Since $(GBGBG)_{ab}$ is the only off-diagonal term (otherwise one could apply one of the first two cases), we necessarily deal with a term having the following index structure (ignoring the untouched $|R|^{2p-1}$)
	\begin{equation} \label{eq:avoddd}
		N^{-5/2} \sum_{ab} (G_1B_1G_2B_2G_1)_{ab} G_{aa} G_{bb} \,. 
	\end{equation} 
	We now write $G_{aa} = M_{aa} + \mathcal{O}_\prec((N \eta)^{-1/2})$, and similarly for $G_{bb}$, and estimate the resulting four terms separately. For the $M_{aa} M_{bb}$-term, we can isotropically sum up both indices $a,b$ as 
	\begin{equation*}
		N^{-5/2} \Big| \sum_{ab} (G_1B_1G_2B_2G_1)_{ab} M_{aa} M_{bb} \Big| \lesssim N^{-5/2} \left|(G_1B_1G_2B_2G_1)_{\boldsymbol m \boldsymbol m}\right|\prec \frac{1}{N^{3/2}\eta \gamma} \lesssim \mathcal{E}_1
	\end{equation*} 
	where we denoted $\boldsymbol m = (M_{aa})_{a\in [N]}$ and used that $\Vert \boldsymbol m \Vert \lesssim \sqrt{N}$. For the other three terms, we use \eqref{eq:GFT2avinput} and estimate the $a,b$ summations trivially such that we find them to be bounded by 
	$$(N \eta^{3/2} \gamma)^{-1} \lesssim \mathcal{E}_1/(\sqrt{N} \eta)\,.$$
	Thus, collecting all the terms and employing Young's inequality, we conclude \eqref{eq:condGron2av} for the case $\mathcal{E}_1 = 1/(\sqrt{N \eta} \gamma)$. 
	
	In the other case, when $\mathcal{E}_1 = 1/(N \eta_1 \eta_2)$, we only need to estimate the terms with $k=3$ slightly more carefully. In fact, for $k \ge 4$ the bound \eqref{eq:avkge4} is sufficient, since, by definition of $\gamma$ in Definition~\ref{def:gamma}, it holds that $\gamma \gtrsim \eta_1 \vee \eta_2$. Now, the main difference compared to the discussion above is that since $\mathcal{E}_1 = 1/(N \eta_1 \eta_2)$ has $N$ (instead of $\sqrt{N}$ as in the first case) in the denominator, the summations over $a$ and $b$ have to be carried out more effectively, i.e.~by exploiting as many off-diagonal terms as possible and by \emph{isotropic resummation}. In order to do this, we schematically decompose a diagonal resolvent chain as $G_{aa} = M_{aa} + \text{fluctuation}$, similarly to \eqref{eq:avoddd}. This is sufficient to treat all the terms arising for $k=3$ and $m=1$. 
	
	For $m=2,3$, however, there is an additional twist if the only off-diagonal terms are of the form $(G_1B_1G_2B_2G_1)_{ab} $, since we have no effective decomposition for longer isotropic chains. In this case, for $m=3$, we estimate 
	\begin{equation} \label{eq:2ndcasespecial}
		\begin{split}
&N^{-9/2} \Big| \sum_{a,b} \big((G_1B_1G_2B_2G_1)_{ab}\big)^3 \Big| \\
\prec \, &N^{-7/2} \frac{1}{\eta \gamma} \max_a \, (G_1B_1G_2B_2G_1G_1^*B_2^*G_2^*B_1^*G_1^*)_{aa} \\
\lesssim  \, &\frac{1}{N^{7/2}\eta_1^3 \eta_2^2 } \max_a \, (G_1B_1\Im G_2B_1^*G_1^*)_{aa} \lesssim \frac{1}{\sqrt{N} \eta} \, \frac{1}{(N \eta_1  \eta_2)^3} = \frac{\mathcal{E}_1^3}{\sqrt{N} \eta} \,. 
		\end{split}
	\end{equation}
	To go to the second line, we estimate one of the three factors $(G_1B_1G_2B_2G_1)_{ab}$ by \eqref{eq:GFT2avinput}. Next, we used the operator norm bound $\Vert B_2 G_1 G_1^* B_2^*\Vert \lesssim \eta_1^{-2}$ and a Ward identity. In the penultimate step, we used \eqref{eq:GFT2avinput} and the fact that $\gamma \gtrsim \eta_1 \vee \eta_2$. 
	Similar terms arising for $m=2$ are treated analogously to \eqref{eq:2ndcasespecial} and are hence left to the reader. 
	
	This concludes the proof of Lemma \ref{lem:condGron2av}. 
\end{proof}

\subsection{Unconditional Gronwall estimate: Proof of Lemma \ref{lem:uncondGron2iso}} \label{subec:uncondGron}

	The proof of Lemma \ref{lem:uncondGron2iso} is very similar to that of Lemma \ref{lem:condGron2iso} and we freely use the simplified notations introduced there. The only difference compared to Lemma \ref{lem:condGron2iso} is the following: In that proof we used the input estimate
	\begin{equation} \label{eq:splitting}
		\big(G_1 B_1 G_2\big)_{\boldsymbol u \boldsymbol v} = \big(M_{12}^{B_1}\big)_{\boldsymbol u \boldsymbol v} + \mathcal{O}_\prec (\mathcal{E}_0)
	\end{equation}
	from \eqref{eq:GFT1isoinput} and effectively summed up the $M$-term (see, e.g., \eqref{eq:example1}). In the current proof, we not use the splitting in \eqref{eq:splitting} but instead employ the trivial estimate $\big|(G_1 B_1 G_2)_{\boldsymbol u \boldsymbol v}\big| \prec \eta^{-1}$ (as follows by a Schwarz inequality together with a Ward identity and a single resolvent local law) or sum it up, e.g., as
	\begin{equation} \label{eq:longSchwarz}
		\sum_{a} |(G_1 B_1 G_2)_{\boldsymbol x a}| \le N^{1/2} \Big(\sum_a |(G_1 B_1 G_2)_{\boldsymbol x a}|^2 \Big)^{1/2} \prec N^{1/2}\eta^{-3/2}\,,
	\end{equation}
	where the final estimate follows from a Ward identity and \eqref{eq:3Gisotrivial}. 
	
	To illustrate the changes in a more concrete example, we consider \eqref{eq:GFT1isotype2.1}, and estimate it as
	\begin{equation*} 
		\begin{split}
			N^{-3/2} &\Big|\sum_{a,b} G_{\boldsymbol x a}  (G_1 B_1 G_2)_{b \boldsymbol y} M_{\boldsymbol x a} G_{bb} (G_1 B_1 G_2)_{a \boldsymbol y}\Big| \\
			&\prec N^{-3/2} \frac{1}{\eta}\sum_a |G_{\boldsymbol x a}|^2 \sum_b |(G_1 B_1 G_2)_{b \boldsymbol y}| \prec \frac{1}{N \eta^3 } = \mathcal{E}_0^2 
		\end{split}
	\end{equation*}
	by using a Schwarz inequality together with a Ward identity, the bound $|G_{\boldsymbol u\boldsymbol v}| \prec 1$, and \eqref{eq:longSchwarz}. 
	
	All the other terms can be treated with completely analogous simple modifications, hence we omit their detailed discussion. \qed 

\appendix

\section{Proofs of additional technical results}

\subsection{Proof of Proposition~\ref{prop:stab} and about its optimality}
\label{sec:addstabop}
In this section we first demonstrate the optimality of the lower  bound on $\beta_*$ from \eqref{eq:stab_bound} given in Proposition~\ref{prop:stab} and then present the proof of Proposition \ref{prop:stab} itself. Throughout this section, we will use the shorthand notation
$$
\Delta^2:= \langle (D_1-D_2)^2\rangle \,. 
$$

\begin{proposition}[Optimality of the stability bound in the bulk]\label{prop:stab_opt} Fix a (small) $\kappa>0$ and a (large) $L>0$. Let $D_1,D_2\in \C^{N\times N}$ be traceless Hermitian matrices with $\lVert D_l\rVert\le L$, $l=1,2$. Then uniformly in $E_1,E_2\in \R$ with $\max\lbrace\rho_1(E_1),\rho_2(E_2)\rbrace\ge \kappa$ it holds that
	\begin{equation}
		\beta_*(E_1+\ii 0,E_2+\ii 0)\sim \widehat{\gamma}(E_1+\ii 0,E_2+\ii 0).
		\label{eq:stab_opt}
	\end{equation}
	In \eqref{eq:stab_opt} implicit constants depend only on $\kappa$ and $L$.
\end{proposition}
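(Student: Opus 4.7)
The lower bound $\widehat{\gamma} \lesssim \beta_*$ is essentially contained in Proposition~\ref{prop:stab} (with a minor extension needed to accommodate the slightly weaker bulk assumption $\max\{\rho_1,\rho_2\}\ge\kappa$ instead of both $\rho_l \ge \kappa$), so the plan focuses on the matching upper bound $\beta_* \lesssim \widehat{\gamma}$. The starting point is the elementary inequality $\beta_* \le \beta(z_1,\bar z_2)$, with $z_l = E_l + \ii 0^+$ and hence $z_1 - \bar z_2 = E_1 - E_2$ real, so it suffices to control $\beta(z_1,\bar z_2) = |1 - \langle M_1 N_2\rangle|$, where $M_1 := M^{D_1}(E_1+\ii 0^+)$ and $N_2 := M^{D_2}(E_2 - \ii 0^+) = M_2^*$.

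The key ingredient is an \emph{exact algebraic identity} directly derived from the MDE \eqref{eq:MDE}. Subtracting the equations satisfied by $M_1$ and $N_2$, then multiplying by $M_1$ on the left and by $N_2$ on the right and taking the normalized trace yields
\begin{equation*}
(\langle N_2\rangle - \langle M_1\rangle)\big(1 - \langle M_1 N_2\rangle\big) = \langle M_1(D_1-D_2) N_2\rangle - (E_1 - E_2)\langle M_1 N_2\rangle.
\end{equation*}
The right-hand side equals $-\langle M_1 N_2\rangle \cdot \ell'$ where $\ell' := (E_1 - E_2) - \langle M_1(D_1-D_2)N_2\rangle/\langle M_1 N_2\rangle$ is precisely the (complex) uncapped version of the second case of $\LT$ in \eqref{eq:linterm}, since $\mathrm{sgn}(\Im z_1 \Im z_2) = +1$ here. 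Taking moduli,
\begin{equation*}
|\langle N_2\rangle - \langle M_1\rangle| \cdot \beta(z_1,\bar z_2) = |\langle M_1 N_2\rangle| \cdot \ell, \qquad \LT = \ell \wedge 1,
\end{equation*}
where $\ell := |\ell'|$.

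It remains to bound the two prefactors uniformly. The imaginary part of the MDE yields $\Im M_l = (\Im z_l + \Im \langle M_l\rangle) M_l M_l^*$, and taking the normalized trace shows $\langle M_l M_l^*\rangle = \Im\langle M_l\rangle/(\Im z_l + \Im\langle M_l\rangle) \le 1$ uniformly on the closed upper half plane. Hence, by Cauchy--Schwarz together with $\langle N_2^* N_2\rangle = \langle M_2 M_2^*\rangle \le 1$, one obtains $|\langle M_1 N_2\rangle| \le 1$. On the other hand, isolating the imaginary part of the denominator,
\begin{equation*}
|\langle N_2\rangle - \langle M_1\rangle| \ge \big|\Im(\overline{\langle M_2\rangle} - \langle M_1\rangle)\big| = \pi(\rho_1 + \rho_2) \ge \pi\kappa
\end{equation*}
by the hypothesis $\max\{\rho_1,\rho_2\}\ge\kappa$. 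Combining these estimates, $\beta(z_1,\bar z_2) \le \ell/(\pi\kappa)$. If $\ell\le 1$, then $\ell = \LT \le \widehat\gamma$ and we conclude $\beta_* \lesssim_\kappa \widehat\gamma$; if $\ell > 1$, then $\LT = 1$ so $\widehat\gamma \ge 1$ and the trivial bound $\beta_* \le 2$ closes the estimate.

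The main obstacle is purely bookkeeping: the case distinction in \eqref{eq:linterm} and the cap at $1$ must be treated carefully, but the identity above renders this transparent. A secondary concern is supplying the reverse inequality $\widehat\gamma \lesssim \beta_*$ under the weaker max-hypothesis; this can be achieved either by inspecting the proof of Proposition~\ref{prop:stab}, or, alternatively, by reading the identity above in reverse to obtain $\LT \lesssim \beta(z_1,\bar z_2)$ and then separately recovering the $\Delta^2 + |E_1-E_2|^2$ contributions from a second-order Taylor expansion of $\beta$ around $D_1 = D_2$, $E_1 = E_2$, exploiting that the imaginary part of $\langle M_1(D_1-D_2)N_2\rangle/\langle M_1 N_2\rangle$ is quadratic in $D_1-D_2$ with a strictly positive coefficient in the bulk (as can be verified explicitly from the MDE in the Wigner case $D_1 = 0$, $D_2 = \epsilon \tilde D$ traceless).
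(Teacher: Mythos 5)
Your upper bound $\beta_* \lesssim \widehat{\gamma}$ via the exact algebraic identity is correct and is a genuinely nicer argument than the paper's: the paper disposes of the regime $\Delta^2 + |E_1-E_2| \le c$ with a cryptic reference to "a straightforward perturbative calculation" for $\beta(E_1+\ii 0, E_2-\ii 0)$, whereas your identity
\begin{equation*}
(\langle N_2\rangle - \langle M_1\rangle)\bigl(1 - \langle M_1 N_2\rangle\bigr) = \langle M_1(D_1-D_2) N_2\rangle - (E_1 - E_2)\langle M_1 N_2\rangle
\end{equation*}
makes the calculation exact, global (no smallness assumption on $\Delta^2 + |E_1-E_2|$ is needed), and non-perturbative. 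In fact you prove the sharper statement $\beta_*\lesssim\LT$; in combination with the lower bound this reveals the structural fact that $\widehat{\gamma}\sim\LT$ at real spectral parameters in the bulk, i.e.~the $\Delta^2$ and $|E_1-E_2|^2$ summands in $\widehat{\gamma}$ are subsumed by the linear term there. The constants are also transparently controlled via $|\langle M_1 N_2\rangle|\le 1$ and $|\Im(\langle N_2\rangle - \langle M_1\rangle)|=\pi(\rho_1+\rho_2)\ge\pi\kappa$, the latter being precisely where the $\max$-hypothesis enters. This direction of your argument is complete.

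Two issues remain. First, you defer the lower bound $\widehat{\gamma}\lesssim\beta_*$ to Proposition~\ref{prop:stab}, which requires the a priori bound \eqref{eq:M_bound_I} on $\|M^{D_l}\|$; that bound is automatic only when $\rho_l(E_l)\ge\kappa$ for \emph{both} $l$, not merely for $\max$. You flag this, but do not resolve it, and your alternative sketch does not close the gap: reading the identity in reverse would give $\LT \lesssim \beta(z_1,\bar z_2)$ after a \emph{lower} bound on $|\langle M_1 N_2\rangle|$ (which is not immediate), yet this controls only $\beta(z_1,\bar z_2)$, not $\beta_*=\beta(z_1,z_2)\wedge\beta(z_1,\bar z_2)$, so you would still need to separately bound $\beta(z_1,z_2)$ from below — which is exactly the content of \eqref{eq:rho_stab}. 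Second, the proposed second-order Taylor expansion of $\beta$ around $D_1=D_2$, $E_1=E_2$ to recover the $\Delta^2+|E_1-E_2|^2$ contribution is only stated as a plan; since your own identity shows these contributions are already contained in $\LT$ (in the Wigner example $D_2=0$, $D_1=\epsilon\tilde D$ one indeed computes $\LT\sim\epsilon^2\sim\Delta^2$, so the linear-in-$\epsilon$ coefficient vanishes by tracelessness and the expansion is genuinely second order), this route is redundant and would need care. In short: the upper bound is done and improves on the paper's exposition, but for the lower bound you should simply invoke Proposition~\ref{prop:stab} together with \eqref{eq:rho_stab} and address explicitly how the $\max$-hypothesis interacts with the boundedness requirement \eqref{eq:M_bound_I}, rather than gesture at an independent Taylor argument.
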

\begin{proof}
	In the regime $\Delta^2+\vert E_1-E_2\vert\le c$, the estimate \eqref{eq:stab_opt} follows from a straightforward perturbative calculation for $\beta(E_1+\ii 0,E_2-\ii 0)$. Here, the implicit constant $c>0$ depends only on $\kappa$ and $L$. In the complementary regime, we have $\widehat{\gamma}\sim 1$ and also $\beta_*\sim 1$ by Proposition \ref{prop:stab}. Therefore, it holds that $\beta_*\sim\widehat{\gamma}$. The rest of the proof of Proposition \ref{prop:stab_opt} is elementary and thus omitted.
\end{proof}

\begin{proof}[Proof of Proposition~\ref{prop:stab}:] Assume for simplicity that $\mathbf{I}_1=\mathbf{I}_2=\R$. Since $\lVert D_j\rVert\le L$, we have that ${\mathrm supp}\rho_j\subset [-L-2,L+2]$ for $j=1,2$. 
	In the following, we will distinguish the two cases (i) $\max\lbrace \vert z_1\vert,\vert z_2\vert\rbrace \ge L+3$ and (ii) $\max\lbrace \vert z_1\vert,\vert z_2\vert\rbrace \le L+3$. 
	\\[2mm]
	\underline{Case (i):}	We will show that $\beta_*(z_1,z_2)\sim 1$ and $\widehat{\gamma}(z_1,z_2)\sim 1$, which imply, in particular \eqref{eq:stab_bound} and \eqref{eq:rho_stab}. Assume w.l.o.g.~that $\vert z_1\vert \ge L+3$. Denote 
	\begin{equation*}
		d_1:={\mathrm dist}(z_1,{\mathrm supp}\rho_1) = \min\lbrace \vert z_1-x\vert: x\in{\mathrm supp}\rho_1\rbrace.
	\end{equation*}
	Using the integral representation
	\begin{equation}
		\langle \Im M_1\rangle = \int_\R \frac{\eta_1}{\vert x-z_1\vert^2}\rho_1(x)\dif x,
		\label{eq:ImM_int_rep}
	\end{equation}
	we find that $\langle \Im M_1(z_1)\rangle\le \eta_1/d_1^2$. Therefore,
	\begin{equation*}
		\langle M_1M_1^*\rangle = \frac{\langle\Im M_1\rangle}{\eta_1+\langle \Im M_1\rangle}\le \frac{1}{1+d^2_1}.
	\end{equation*}    	
	This allows us to show that $\beta_*(z_1,z_2)\sim 1$, as follows from
	\begin{equation*}
		\begin{split}
			1&\gtrsim \beta_*(z_1,z_2)\ge 1-\max\lbrace \vert \langle M_1M_2\rangle\vert,\vert\langle M_1M_2^*\rangle\rbrace\\
			& \ge 1-\langle M_1M_1^*\rangle^{1/2}\langle M_2M_2^*\rangle^{1/2}\ge 1-\frac{1}{\sqrt{1+d^2_1}} \gtrsim 1.
		\end{split}
	\end{equation*}
	Here we used that $\langle M_2M_2^*\rangle\le 1$ and $d_1\ge 1$. Moreover, $\eta_1/\rho_1\gtrsim d^2_1$, which implies $\widehat{\gamma}(z_1,z_2)\sim 1$. Thus $\beta_*(z_1,z_2)\sim \widehat{\gamma}(z_1,z_2)$.
	\\[2mm]
	\underline{Case (ii):}
	For $\vert z_j\vert\le L+3$, $j=1,2$, we split the proof in two parts: the lower bound on $\beta_*$, and the upper bound on $\beta_*$. 
	\\[1mm]
	\underline{Lower bound on $\beta_*$.} Taking into account \cite[Proposition 4.2]{echo} it is sufficient to show that $\LT\lesssim \beta_*$. Subtracting \eqref{eq:MDE} for $M_1$ from \eqref{eq:MDE} for $M_2^*$ we get that
	\begin{equation*}
		z_1-\bar{z}_2-\frac{\langle M_1(D_1-D_2)M_2^*\rangle}{\langle M_1M_2^*\rangle}=\frac{(1-\langle M_1M_2^*\rangle)(\langle M_1\rangle -\langle M_2^*\rangle)}{\langle M_1M_2^*\rangle}. 
	\end{equation*}
	Therefore, we can rewrite $\LT$ as
	\begin{equation}
		\LT = \left\vert \frac{(1-\langle M_1M_2^*\rangle)(\langle M_1\rangle -\langle M_2^*\rangle)}{\langle M_1M_2^*\rangle}\right\vert \wedge 1.
		\label{eq:LT_2nd_form}
	\end{equation}
	If $\vert \langle M_1M_2^*\rangle\vert\ge 1/2$, \eqref{eq:LT_2nd_form} implies the bound $\LT\lesssim \vert 1-\langle M_1M_2^*\rangle\vert$, where we used that $\vert \langle M_1\rangle-\langle M_2^*\rangle\vert\lesssim 1$. In the complementary regime, i.e.~when $\vert \langle M_1M_2^*\rangle\vert <1/2$ we have  $\beta(z_1,\bar{z}_2)>1/2\gtrsim \LT$. 
	
	Now we prove that $\LT\lesssim \beta(z_1,z_2)$. First, consider the case $\vert\langle M_1M_2^*\rangle\vert\ge 1/2$. Again it is convenient to work with $\LT$ represented in the form  \eqref{eq:LT_2nd_form}. For the first factor in the numerator of \eqref{eq:LT_2nd_form} it holds that
	\begin{equation}
		\vert 1-\langle M_1M_2^*\rangle\vert \le \vert 1-\langle M_1M_2\rangle\vert +2\lVert M_1\rVert\cdot\vert \langle \Im M_2\rangle\vert \lesssim \vert 1-\langle M_1M_2\rangle\vert^{1/2}. 
		\label{eq:LT_stab_aux1}
	\end{equation}
	In the last step we used \eqref{eq:rho_stab} which is proven in \cite[Proposition 4.2]{echo}. For the second factor we use the bound
	\begin{equation}
		\begin{split}
			\vert \langle M_1\rangle -\langle M_2^*\rangle\vert^2 &\le \langle (M_1-M_2^*)(M_1^*-M_2)\rangle \\
			&=\langle M_1M_1^*\rangle +\langle M_2M_2^*\rangle -2\Re \langle M_1M_2\rangle \lesssim \vert 1-\langle M_1M_2\rangle\vert.
		\end{split}
		\label{eq:LT_stab_aux2}
	\end{equation}
	Therefore, \eqref{eq:LT_2nd_form} along with \eqref{eq:LT_stab_aux1} and \eqref{eq:LT_stab_aux2} implies $\LT\lesssim \beta(z_1,z_2)$. 
	
	Second, we consider the case $\vert \langle M_1M_2^*\rangle\vert < 1/2$. Then
	\begin{equation}
		\vert 1-\langle M_1M_2\rangle\vert \ge \vert 1-\langle M_1M_2^*\rangle\vert - 2\vert \langle M_1\Im M_2\rangle\vert\ge 1/2-2C_0\vert\langle\Im M_1\rangle\vert
		\label{eq:same_h_aux1}
	\end{equation}
	for some constant $C_0$. 
	In case that $\vert\langle \Im M_1\rangle\vert<1/(8C_0)$,  \eqref{eq:same_h_aux1} shows that $\beta(z_1,z_2)\ge 1/4\gtrsim \LT$. If $\vert\langle \Im M_1\rangle\vert\ge 1/(8C_0)$, we use \eqref{eq:rho_stab} to get $\beta(z_1,z_2) \ge \vert \langle\Im M_1\rangle\vert^2\gtrsim 1\gtrsim \LT$.
	\\[1mm]
	\underline{Upper bound on $\beta_*$.} Firstly we have
	\begin{equation}
		\beta_*\le \vert 1-\langle M_1M_2^*\rangle\vert\le \vert 1-\langle M_1M_1^*\rangle\vert + \vert \langle M_1^*(M_1-M_2)\rangle\vert.
		\label{eq:beta*upper}
	\end{equation}
	The first term on the rhs.~of \eqref{eq:beta*upper} has an upper bound of order $\widehat{\gamma}$, as follows from
	\begin{equation}
		\vert 1-\langle M_1M_1^*\rangle\vert = \frac{\eta_1}{\eta_1+\langle\Im M_1\rangle}\lesssim \frac{\eta_1}{\rho_1}\wedge 1\le \widehat{\gamma}.
		\label{eq:beta*upper1}
	\end{equation}
	The second term on the rhs.~of \eqref{eq:beta*upper} can be rewritten as
	\begin{equation}
		\left\vert \frac{(z_1-z_2-\langle M_1(D_1-D_2)M_2\rangle)\langle M_1^*M_1M_2\rangle}{1-\langle M_1M_2\rangle}\right\vert\lesssim \frac{\vert E_1-E_2\vert +\eta_1+\eta_2+\Delta}{\beta_*}\lesssim \frac{\widehat{\gamma}^{1/2}}{\beta_*}.
		\label{eq:beta*upper2}
	\end{equation}
	Now, combining \eqref{eq:beta*upper} with \eqref{eq:beta*upper1} and \eqref{eq:beta*upper2} we get that $\beta_*\lesssim \widehat{\gamma}^{1/4}$.
	
	This concludes the proof of Proposition \ref{prop:stab}. 
\end{proof}

\subsection{Proof of Proposition \ref{prop:gamma}}\label{sec:cone}

Before we turn to the proof of Proposition \ref{prop:gamma}, we explain some sufficient condition for $M$ being bounded on the whole complex plane. 

\begin{remark}[Sufficient condition for \eqref{eq:M_bound_I} with $\mathbf{I}=\R$]\label{rmk:Mbdd}
	As pointed out below Proposition \ref{prop:stab}, the bound \eqref{eq:M_bound_I} holds trivially in the bulk of the spectrum. We now give some sufficient conditions to ensure that \eqref{eq:M_bound_I} holds uniformly in the spectrum. Denote the eigenvalues of any self-adjoint deformation $D$ by $\lbrace d_j\rbrace_{j=1}^N$ labeled in increasing order, $d_j\le d_k$ for $j<k$. Fix a large positive constant $L>0$. The set $\mathcal{M}_L$ of admissible self-adjoint deformations $D$ is defined as follows: we say that $D\in \mathcal{M}_L$ if $\lVert D\rVert\le L$ and there exists an $N$-independent partition $\lbrace I_s\rbrace_{s=1}^m$ of $[0,1]$ in at most $L$ segments such that for any $s\in[1,m]$ and any $j,k\in[1,N]$ with $j/N,k/N\in I_s$ we have $\vert d_j-d_k\vert \le L\vert j/N- k/N\vert^{1/2}$. Since the operator $\mathcal{S}=\langle\cdot\rangle$ is flat, condition $D\in\mathcal{M}_L$ implies that $D$ satisfies \eqref{eq:M_bound_I} for $\mathbf{I}=\R$ with some $C_0<\infty$ by means of \cite[Lemma~9.3]{shape}.
\end{remark}

\begin{proof}[Proof of Proposition \ref{prop:gamma}]
	In order to prove Proposition \ref{prop:gamma}, we need to verify the properties of an \emph{admissible control parameter} from Definition \ref{def:gamma}. Note that in 
	Proposition \ref{prop:stab} we have already shown that $\widehat{\gamma}$ satisfies \eqref{eq:g_def}, i.e.~$\widehat{\gamma}$ is a lower bound on the stability operator. It thus remains to check items (2) and (3) of Definition \ref{def:gamma}, i.e.~monotonicity in time and vague monotonicity in imaginary part. In the rest of the proof, let $z_1,z_2\in\HH$ and $w_2:=z_2+\ii x$ with $x\ge 0$.\\[2 mm]
	\underline{Monotonicity in time:} In order to prove monotonicity in time, we claim that
	\begin{subequations}
		\begin{equation}
			\langle (D_{1,s}-D_{2,s})^2\rangle\sim \langle (D_{1,t}-D_{2,t})^2\rangle,
			\label{eq:d_in_time}
		\end{equation}
		\begin{equation}
			\LT_s\lesssim \LT_t + t-s,\quad \LT_t\lesssim \LT_s+t-s,
			\label{eq:LT_in_time}
		\end{equation}
		\begin{equation}
			\vert E_{1,s}-E_{2,s}\vert^2 \lesssim \vert E_{1,t}-E_{2,t}\vert^2 + (t-s)^2,\quad \vert E_{1,t}-E_{2,t}\vert^2 \lesssim \vert E_{1,s}-E_{2,s}\vert^2 + (t-s)^2,
			\label{eq:E_in_time}
		\end{equation}
		\begin{equation}
			\frac{\eta_{j,s}}{\rho_{j,s}}\wedge 1\sim \frac{\eta_{j,t}}{\rho_{j,t}}\wedge 1+t-s, \quad j\in [2],
			\label{eq:eta/rho_in_time}
		\end{equation}
	\end{subequations}
	uniformly in $s,t\in [0,T]$, $s\le t$. 
	
	The first assertion \eqref{eq:d_in_time} is a direct consequence of \eqref{eq:def_flow}, \eqref{eq:E_in_time} follows from \eqref{eq:z_t} and \eqref{eq:eta/rho_in_time} follows from \eqref{eq:eta/rho}.  To verify \eqref{eq:LT_in_time}, we again use \eqref{eq:z_t} for $z_{1,s},z_{2,s}\in\HH$ to get
	\begin{equation*}
		\begin{split}
			z_{1,t}-\bar{z}_{2,t}-\frac{\langle M_{1,t}(D_{1,t}-D_{2,t})M_{2,t}^*\rangle}{\langle M_{1,t}M_{2,t}^*\rangle} &= \ee^{-\frac{t-s}{2}}\left(z_{1,s}-\bar{z}_{2,s}-\frac{\langle M_{1,s}(D_{1,s}-D_{2,s})M_{2,s}^*\rangle}{\langle M_{1,s}M_{2,s}^*\rangle}\right)\\
			&\quad -2\left(\langle M_{1,s}\rangle - \langle M_{2,s}^*\rangle\right)\sinh\frac{t-s}{2}.
		\end{split}
	\end{equation*}
	Armed with \eqref{eq:d_in_time}-\eqref{eq:eta/rho_in_time} we obtain $\widehat{\gamma}_s+t-s\sim \widehat{\gamma}_t+t-s$. Moreover, by \eqref{eq:eta/rho_in_time} it holds that $\widehat{\gamma}_s\gtrsim t-s$, and thus $\widehat{\gamma}_s\sim \widehat{\gamma}_t+t-s$.
	\\[2mm]
	\underline{Vague monotonicity in space:} Note that $\widehat{\gamma}$ has the symmetry 
	\begin{equation*}
		\widehat{\gamma}(z_1,z_2,D_1,D_2)=\widehat{\gamma}(z_2,z_1,D_2,D_1).
	\end{equation*}
	Thus it is sufficient to prove the first part of \eqref{eq:g_monot}. In the following, we will distinguish between the two cases (i) $\vert \langle M_1(z_1)M_2^*(z_2)\rangle\vert \ge 1/2$ and (ii) $\vert \langle M_1(z_1)M_2^*(z_2)\rangle\vert < 1/2$. The exact choice of the threshold separating this two cases is not important, $1/2$ may be replaced by any  $c \in (0,1)$. The proof in case (ii) is much simpler, since it corresponds to the situation when $\beta_*(z_1,z_2)\gtrsim 1$ and one only needs to show that $\beta_*(z_1,w_2)\gtrsim 1$. The proof in case (i), however, is much more involved.\\[2 mm]
	\noindent\textit{Case (i):}  For $\vert \langle M_1(z_1)M_2^*(z_2)\rangle\vert \ge 1/2$, we first note that the integral representation \eqref{eq:ImM_int_rep} implies $\Im z_2/\rho_2(z_2)\le \Im w_2/\rho_2(w_2)$, i.e.~we have monotonicity of this summand in the definition of \eqref{eq:def_gamma_0}.

	It is thus left to show that
	\begin{equation}
		\LT(z_1,z_2)\lesssim \widehat{\gamma}(z_1,w_2).
		\label{eq:LT_ineq}
	\end{equation}
	
	First, suppose that $\vert \langle M_1(z_1)M_2^*(w_2)\rangle\vert\ge 1/2$.  If $\LT (z_1,z_2)\le \Delta^2$, then \eqref{eq:LT_ineq} obviously holds. Thus we may assume that $\LT(z_1,z_2)>\Delta^2$. Using the shorthand notations $M_j:=M_j(z_j)$, $j\in[2]$, $\widetilde{M}_2:=M_2(w_2)$ and $\Sigma:=D_1-D_2$, it is easy to see that
	\begin{equation}
		\begin{split}
			&\vert \LT(z_1,z_2)-\LT(z_1,w_2)\vert \\
			&\qquad\le \vert z_2-w_2\vert + \left\vert \frac{\langle M_1\Sigma(M_2^*-\widetilde{M}_2^*)\rangle}{\langle M_1M_2^*\rangle}\right\vert+ \left\vert \frac{\langle M_1\Sigma M_2^*\rangle\langle M_1(M_2^*-\widetilde{M}_2^*)\rangle}{\langle M_1M_2^*\rangle \langle M_1\widetilde{M}_2^*\rangle}\right\vert\\
			&\qquad = \vert z_2-w_2\vert + \left\vert \frac{\langle M_1\Sigma M_2^*\widetilde{M}_2^*\rangle(z_2-w_2)}{\langle M_1M_2^*\rangle(1-\langle M_2^*\widetilde{M}_2^*\rangle)}\right\vert + \left\vert \frac{\langle M_1\Sigma M_2^*\rangle\langle M_1M_2^*\widetilde{M}_2^*\rangle(z_2-w_2)}{\langle M_1M_2^*\rangle \langle M_1\widetilde{M}_2^*\rangle(1-\langle M_2^*\widetilde{M}_2^*\rangle)}\right\vert\\
			&\qquad \le \vert z_2-w_2\vert + (2L^3 +4L^5)\Delta \left\vert \frac{z_2-w_2}{1-\langle M_2\widetilde{M}_2\rangle}\right\vert.
		\end{split}
		\label{eq:LT_regularity_aux}
	\end{equation} 
	If $\vert\langle M_2\widetilde{M}_2\rangle\vert>1/2$, then $\left\vert (z_2-w_2)(1-\langle M_2\widetilde{M}_2\rangle)^{-1}\right\vert\sim \vert z_2-w_2\vert$. In the complementary case, $\vert\langle M_2\widetilde{M}_2\rangle\vert\le 1/2$, note that
	\begin{equation*}
		\left\vert \frac{z_2-w_2}{1-\langle M_2\widetilde{M}_2\rangle}\right\vert = \left\vert\frac{\langle M_2\rangle -\langle\widetilde{M}_2\rangle}{\langle M_2\widetilde{M}_2\rangle}\right\vert.
	\end{equation*}
	Since $D_2$ satisfies \eqref{eq:M_bound_I} with $\mathbf{I}=\R$, there exists $C_0'>0$ which depends only on $L$ such that 
	\begin{equation}
		\vert \langle M_2(\xi)\rangle-\langle M_2(\zeta)\rangle\vert\le C_0'\vert \xi-\zeta\vert^{1/3}
		\label{eq:M_holder}
	\end{equation}
	for any $\xi,\zeta\in\HH$ with $\vert \xi\vert,\vert\zeta\vert<L$. Therefore, in both cases, $\vert\langle M_2\widetilde{M}_2\rangle\vert > 1/2$ and $\vert\langle M_2\widetilde{M}_2\rangle\vert\le 1/2$, we have
	\begin{equation}
		\vert \LT(z_1,z_2)-\LT(z_1,w_2)\vert \le \vert z_2-w_2\vert + C_1\Delta\vert z_2-w_2\vert^{1/3}
		\label{eq:LT_regularity}
	\end{equation}
	for some constant $C_1$ which only depends on $L$. Next we distinguish between several regimes based on the relation of $\vert z_2-w_2\vert$, $\Delta$ and $\rho_2(w_2)$.
	
	\textbf{(1)} First, assume that $\vert z_2-w_2\vert \ge \Delta^{3/2}$. Then, as a consequence of \eqref{eq:LT_regularity}, we have
	\begin{equation*}
		\vert \LT(z_1,z_2)-\LT(z_1,w_2)\vert \le (C_1+1)\vert z_2-w_2\vert.
	\end{equation*}
	This immediately implies \eqref{eq:LT_ineq} in the case $\vert z_2-w_2\vert <\LT(z_1,z_2)/(2(C_1+1))$. In the complementary regime we have
	\begin{equation*}
		\Im w_2/\rho_2(w_2)\ge\Im w_2\ge \vert w_2-z_2\vert \ge (2(C_1+1))^{-1}\LT(z_1,z_2),
	\end{equation*}
	which allows to conclude \eqref{eq:LT_ineq} as well. 
	
	\textbf{(2)} Next, assume that $\vert z_2-w_2\vert < \Delta^{3/2}$ and $\rho_2(w_2)< C_2(\Im w)^{1/3}$, where $C_2>2C_0$ is a large positive constant depending only on $L$. From \eqref{eq:LT_regularity} we have
	\begin{equation*}
		\vert \LT(z_1,z_2)-\LT(z_1,w_2)\vert \le (C_1+1)\Delta\vert z_2-w_2\vert^{1/3}
	\end{equation*}
	which gives \eqref{eq:LT_ineq} for $\vert z_2-w_2\vert\le (\LT(z_1,z_2)/(2(C_1+1)\Delta))^3$. If $w_2$ does not satisfy this inequality, then it holds that
	\begin{equation}
		\LT(z_1,z_2)/2<(C_1+1)\Delta\vert z_2-w_2\vert^{1/3}\le (C_1+1)\LT^{1/2}(z_1,z_2)\vert z_2-w_2\vert^{1/3}.
		\label{eq:g_delta_bound}
	\end{equation}
	Therefore,
	\begin{equation*}
		(\Im w_2)^{2/3} \ge \vert w_2-z_2\vert^{2/3}\ge (2(C_1+1))^{-2}\LT(z_1,z_2).
	\end{equation*}
	In combination with the bound $\rho_2(w_2)<C_2(\Im w_2)^{1/3}$ this implies \eqref{eq:LT_ineq}.
	
	\textbf{(3)} Finally, assume that $\vert z_2-w_2\vert < \Delta^{3/2}$ and $\rho_2(w_2)\ge C_2(\Im w)^{1/3}$. It follows from \eqref{eq:M_holder} that for any $\zeta$ from the segment $\mathcal{I}$ connecting $z_2$ and $w_2$ we have $\rho_2(\zeta)\ge \rho_2(w_2)/2$. Hence
	\begin{equation*}
		\left\vert\langle M_2(z_2)\rangle -\langle M_2(w_2)\rangle\right\vert = \left\vert \int_{\mathcal{I}} \frac{\langle M^2_2(\zeta)\rangle}{1-\langle M^2_2(\zeta)\rangle}\dif\zeta\right\vert\le\frac{\vert z_2-w_2\vert}{\min_{\zeta\in\mathcal{I}}\vert 1-\langle M^2_2(\zeta)\rangle\vert}\le \frac{C_3\vert z_2-w_2\vert}{\rho_2(w)^2},
	\end{equation*}
	where $C_3$ depends only on $L$. Combine this bound with \eqref{eq:LT_regularity_aux}. 
	The case when the lhs.~of \eqref{eq:LT_regularity_aux} has an upper bound of order $\vert z_2-w_2\vert$ was already considered above. Thus we may assume that
	\begin{equation}
		\vert \LT(z_1,z_2)-\LT(z_1,w_2)\vert \le C_4 \Delta\frac{\vert z_2-w_2\vert}{\rho_2(w)^2},
		\label{eq:large_rho_regularity}
	\end{equation}
	where $C_4>0$ depends only on $L$. If the rhs.~of \eqref{eq:large_rho_regularity} is bounded from above by $\LT(z_1,z_2)/2$, we conclude the desired \eqref{eq:LT_ineq}. Otherwise, similarly to \eqref{eq:g_delta_bound} we get
	\begin{equation*}
		\LT(z_1,z_2)<\left(2C_4 \frac{\vert z_2-w_2\vert}{\rho^2_2(w_2)}\right)^2\le \left(2C_4\right)^2 \frac{\Im w_2}{\rho_2^2(w_2)}\cdot\frac{\Im w_2}{\rho(w_2)}\lesssim \frac{\Im w_2}{\rho(w_2)}
	\end{equation*}
	since $\rho_2(w_2)\ge C_2(\Im w)^{1/3}$.

	After having treated the case $\vert \langle M_1(z_1)M_2^*(w_2)\rangle\vert \ge  1/2$, we may assume that
	\begin{equation*}
		\vert \langle M_1(z_1)M_2^*(w_2)\rangle\vert< 1/2.
	\end{equation*}
	In this case, we have $\beta(z_1,\bar{w}_2)\ge 1/2$. Notice that
	\begin{equation}
		\vert\beta(z_1,\bar{w}_2)-\beta(z_1,w_2)\vert \le 2L\rho_2(w_2).
		\label{eq:beta_reg}
	\end{equation}
	If $\rho_2(w_2)<1/(8L)$, then \eqref{eq:beta_reg} gives $\beta(z_1,w_2)\ge 1/4$. Otherwise by \cite[Proposition 4.2]{echo} it holds that $\beta(z_1,w_2)\gtrsim \rho_2(w_2)^2 \gtrsim 1$. This means that $\beta_*(z_1,w_2)\sim 1$. Therefore, by Proposition \ref{prop:stab} $\gamma_0(z_1,w_2)\sim 1$, which immediately implies \eqref{eq:LT_ineq}.\\[2 mm]
	\noindent\textit{Case (ii):} In order to verify \eqref{eq:g_monot} in the case  $\vert\langle M_1(z_1)M_2^*(z_2)\rangle\vert< 1/2$, it is sufficient to show that $\beta_*(z_1,w_2)\sim 1$. Indeed, once we have this, the bound $\beta_*(z_1,w_2)\lesssim \widehat{\gamma}^{1/4}(z_1,w_2)$ from Proposition \ref{prop:stab} gives that $\widehat{\gamma}(z_1,w_2)\sim 1$, i.e. \eqref{eq:g_monot} holds. Using the H{\"o}lder 1/3-regularity \eqref{eq:M_holder} of $\langle M_2\rangle$ in a similar way as in the argument above \eqref{eq:M_holder} we get that $\beta(z_1,\bar{w}_2)\sim 1$ for $\vert z_2-w_2\vert\le c$ for some small positive constant $c\sim 1$ which depends only on $L$. For  $\vert z_2-w_2\vert> c$ by \eqref{eq:stab_bound} we have
	\begin{equation*}
		\beta(z_1,\bar{w}_2)\gtrsim \Im w_2/\rho_2(w_2)\ge \Im w_2\gtrsim 1.
	\end{equation*}
	Thus we have shown the existence of a (small) constant $c_0>0$ which depends only on $L$ such that $\beta(z_1,\bar{w}_2)\ge c_0$. Similarly to the proof around \eqref{eq:beta_reg} we argue that $\beta(z_1,w_2)\sim 1$. 
	
	This concludes the proof of Proposition \ref{prop:gamma}. 
\end{proof}

\subsection{Proof of Proposition \ref{prop:norm_M_bounds}:}\label{sec:norm_M_bounds} The proof is split in two parts. \\[2mm]
\textbf{Part 1:} The bound \eqref{eq:M2_gen} is the direct consequence of \eqref{eq:defM12}. In order to verify \eqref{eq:M3_gen} note that
\begin{equation*}
	\lVert M_{\nu_1,\nu_2,\nu_1}^{B_1,B_2}\rVert \lesssim \frac{\lVert B_1\rVert\cdot\lVert B_2\rVert}{\vert 1-\langle M_1M_2\rangle\vert^2\vert 1-\langle M_1^2\rangle\vert}.
\end{equation*}
Then use the lower bounds $\vert 1-\langle M_1M_2\rangle\vert\gtrsim \eta_1/\rho_1$ from Proposition \ref{prop:stab} and $\vert 1-\langle M_1^2\rangle\vert\gtrsim \rho_1^2$ from \eqref{eq:rho_stab} to get the desired result. For the upper bound on $\lVert M_{\nu_1,\nu_2,\bar{\nu}_1}^{B_1,B_2}\rVert$ the argument is similar, but one needs to use instead $\vert 1-\langle M_1M_2\rangle\vert\vee \vert 1-\langle M_1M_2^*\rangle\vert\gtrsim \rho_1^2$ and $\vert 1-\langle M_1M_1^*\rangle\vert\gtrsim \eta_1/\rho_1$ from Proposition \ref{prop:stab}.\\[2 mm]
\noindent\textbf{Part 2:} At first we prove \eqref{eq:M2_reg}. Inverting $\mathcal{B}_{12}$ defined in \eqref{eq:stabop} and using \eqref{eq:defM12} we get that
\begin{equation*}
	M_{\nu_1,\nu_2}^{A_1}=M_1A_1M_2+\frac{\langle M_1A_1M_2\rangle}{1-\langle M_1M_2\rangle}M_1M_2.
\end{equation*}
If $\Im z_1\Im z_2>0$, then by \eqref{eq:rho_stab} $\beta(z_1,z_2)\gtrsim \kappa^2$, so \eqref{eq:M2_reg} holds. Assume further that $\Im z_1\Im z_2<0$. Since $A_1$ is $(\nu_1,\nu_2)$-regular, either $\phi (\nu_1,\nu_2)$ defined in \eqref{eq:def_phi} vanishes or $\langle M_1A_1M_2\rangle=0$. In the first case $\widehat{\gamma}\sim 1$, so by Proposition \ref{prop:stab} $\beta(z_1,z_2)\sim 1$. In the second case $M_{\nu_1,\nu_2}^{A_1}=M_1A_1M_2$. In both cases $\lVert M_{\nu_1,\nu_2}^{A_1}\rVert\lesssim\lVert A_1\rVert$, i.e. \eqref{eq:M2_reg} holds.

The proofs of \eqref{eq:M3_reg_reg} and of the part of \eqref{eq:M3_reg_gen} which addresses $\lVert M_{\nu_1,\nu_2,\nu_1}^{A_1,B_2}\rVert$ go along the same lines. The only non-trivial bound is an upper bound \eqref{eq:M3_reg_gen} on $\lVert M_{\nu_1,\nu_2,\bar{\nu}_1}^{A_1,B_2}\rVert$ in the case when $\Im z_1\Im z_2>0$ and $\langle M_1A_1M_2^*\rangle=0$. Using explicit formulas for $\mathcal{B}_{13}^{-1}$ and for two-resolvent deterministic approximations we see that it is sufficient to verify the following cancellation between two terms:
\begin{equation}
	\left\vert\frac{\langle M_1M_1^*A_1M_2\rangle}{1-\langle M_1^*M_2\rangle}+\frac{\langle M_1A_1M_2\rangle\langle M_1M_1^*M_2\rangle}{(1-\langle M_1^*M_2\rangle)(1-\langle M_1M_2\rangle)}\right\vert\lesssim \frac{1}{\sqrt{\vert 1-\langle M_1^*M_2\rangle\vert}}.
	\label{eq:3G_cancel}
\end{equation}
By \eqref{eq:rho_stab} $\vert 1-\langle M_1M_2\rangle\vert \sim 1$. We further rewrite \eqref{eq:3G_cancel} as
\begin{equation*}
	\vert \langle M_1A_1M_2\rangle (1-\langle M_1^*M_2\rangle) - \langle M_1^*A_1M_2\rangle (1-\langle M_1M_2\rangle)\vert \lesssim \sqrt{\vert 1-\langle M_1^*M_2\rangle\vert},
\end{equation*}
which immediately follows from Lemma \ref{lem:reg_reg} applied to $y_1=y_2=0$. This finishes the verification of \eqref{eq:M3_reg_gen}. \qed

\subsection{Proof of Lemma \ref{lem:reg_reg}:}\label{sec:reg_reg} Let $w_1,w_2\in\C\setminus\R$ be any spectral parameters and denote $\nu_j^{\#}:=(w_j,D_j)$, $j=1,2$, and $\mathcal{A}:=\mathring{A}^{\nu_1,\nu_2}$. We have
\begin{equation*}
	\mathring{\mathcal{A}}^{\nu_1^{\#},\nu_2^{\#}} = \mathring{A}^{\nu_1^{\#},\nu_2^{\#}}+ (\mathcal{A}-A)(1-\phi(\nu_1^{\#},\nu_2^{\#})).
\end{equation*}
Using the fact that $\widehat{\gamma}(w_1,w_2)\sim 1$ when $\phi(\nu_1^{\#},\nu_2^{\#})\neq 1$ we get
\begin{equation*}
	\left\lVert \mathring{\mathcal{A}}^{\nu_1^{\#},\nu_2^{\#}} - \mathring{A}^{\nu_1^{\#},\nu_2^{\#}}\right\rVert \lesssim \lVert A\rVert \widehat{\gamma}(w_1,w_2).
\end{equation*}
Thus we may assume that $A=\mathcal{A}$, i.e. that $A$ is $(\nu_1,\nu_2)$-regular.

As usual we will denote $M_l(z):=M^{D_l}(z)$ for $l=1,2$. Since $A$ is $(\nu_1,\nu_2)$-regular, either (i) $\phi(\nu_1,\nu_2)=0$ or (ii) $\langle M_1(z_1)AM_2^*(z_2)\rangle=0$. In case (i), it is a direct consequence of the definition \eqref{eq:def_phi} of $\phi$ that $\phi(\nu_1',\nu_2')=0$. Therefore, since the lhs.~of \eqref{eq:reg_reg} vanishes, \eqref{eq:reg_reg} trivially holds. Thus, we will henceforth assume that $\langle M_1(z_1)AM_2^*(z_2)\rangle=0$. 

In the following, we will focus on showing that
\begin{equation}
	\lVert \mathring{A}^{\nu'_2,\nu'_1}-A\rVert\lesssim\lVert A\rVert\sqrt{\widehat{\gamma}(z_1',z_2')}
	\label{eq:reg_reg_spec}
\end{equation}
since the argument for the other bounds claimed in Lemma \ref{lem:reg_reg} are similar and thus are omitted. Firstly note that \eqref{eq:reg_reg_spec} is trivial in the case $\phi(\nu_1',\nu_2')=0$. In the complementary regime, where $\phi(\nu_1', \nu_2') \neq 0$, we have $\vert \langle M_2(z_2')M_1^*(z_1')\rangle\vert\sim 1$ and it is sufficient to prove that
\begin{equation}
	\vert \langle M_2(z_2')AM_1^*(z_1')\rangle\vert\lesssim \lVert A\rVert\sqrt{\widehat{\gamma}(z_1',z_2')}.
	\label{eq:reg_reg_aux1}
\end{equation}
Using the $(\nu_1,\nu_2)$-regularity of $A$ we rewrite the lhs.~of \eqref{eq:reg_reg_aux1} as
\begin{equation}
	\langle M_2(z_2')AM_1^*(z_1')\rangle =\langle M_2(z_2')A(M_1(z_1')-M_2(z_2))^*\rangle - \langle (M_1(z_1)-M_2(z_2'))AM_2^*(z_2)\rangle.
	\label{eq:reg_reg_aux2}
\end{equation}
Subtracting \eqref{eq:MDE} for $M_2(z_2)$ from \eqref{eq:MDE} for $M_1(z_1')$ we get
\begin{equation}
	\begin{split}
		M_1(z_1')-M_2(z_2)&= \frac{(z_1'-z_2)-\langle M_1(z_1')(D_1-D_2)M_2(z_2)\rangle}{1-\langle M_1(z_1')M_2(z_2)\rangle}M_1(z_1')M_2(z_2)\\
		& - M_1(z_1')(D_1-D_2)M_2(z_2).
		\label{eq:MDE_dif}
	\end{split}
\end{equation}
Since $\rho_2(z_2)\ge\kappa$, the denominator in \eqref{eq:MDE_dif} has a lower bound of order one by the means of \eqref{eq:rho_stab}. Plugging \eqref{eq:MDE_dif} into the first term on the rhs.~of \eqref{eq:reg_reg_aux2} we arrive at
\begin{equation*}
	\begin{split}
		\vert \langle M_2(z_2')A(M_1(z_1')-M_2(z_2))^*\rangle\vert &\lesssim \lVert A\rVert\left(\vert z_1'-z_2\vert +\langle (D_1-D_2)^2\rangle^{1/2}\right)\\
		&\lesssim \lVert A\rVert\widehat{\gamma}(z_1',z_2)\lesssim \lVert A\rVert\widehat{\gamma}(z_1',z_2').
	\end{split}
\end{equation*}
In the last step we used that $\widehat{\gamma}$ is an admissible control parameter (cf.~Proposition \ref{prop:gamma}) and hence satisfies the monotonicity property \eqref{eq:g_monot}. By a similar argument for the second term on the rhs.~of \eqref{eq:reg_reg_aux2} we conclude \eqref{eq:reg_reg_aux1} and thus the proof of Lemma \ref{lem:reg_reg}. \qed

\subsection{Proofs of technical results in the proof of Proposition~\ref{prop:Zig}}
\label{sec:techzig}

In this section we present the proofs of Lemma~\ref{lem:prop} and Lemma \ref{lem:ray}. 

\begin{proof}[Proof of Lemma~\ref{lem:prop}] We will verify each item in Lemma~\ref{lem:prop} separately. 
	\\[1mm]
	\underline{Item (1):} In order to prove \eqref{eq:prop_bound_basic} it is sufficient to show that
	\begin{equation}
		\frac{\langle M_1M_1^*\rangle^{1/2}\langle M_2M_2^*\rangle^{1/2}}{1-\langle M_1M_1^*\rangle^{1/2}\langle M_2M_2^*\rangle^{1/2}}\le \frac{\pi}{2}\left( \frac{\rho_1}{\eta_1} + \frac{\rho_2}{\eta_2} \right)
		\label{eq: f1}
	\end{equation}
	since $\vert\langle M_1M_2\rangle\vert\le \langle M_1M_1^*\rangle^{1/2}\langle M_2M_2^*\rangle^{1/2}$. Using the shorthand notations $x:=\pi\rho_1/\eta_1 > 0$ and $y:=\pi\rho_2/\eta_2 >0 $, we have
	\begin{equation*}
		\langle M_1M_1^*\rangle\langle M_2M_2^*\rangle = xy(x+1)^{-1}(y+1)^{-1}.
	\end{equation*}
	Then \eqref{eq: f1} is equivalent to
	\begin{equation*}
		\left(\left(1+1/x\right)^{1/2}\left(1+1/y\right)^{1/2}-1\right)^{-1}\le (x+y)/2\,, 
	\end{equation*}
	which can be rewritten as
	\begin{equation*}
		(x+y)^2 +(x+y)^2\left(1/x+1/y\right) + (x+y)^2/(xy)\ge (x+y)^2 + 4(x+y) + 4.
	\end{equation*}
	This inequality holds true since $(x+y)(1/x+1/y)\ge 4$ and $(x+y)^2\ge 4xy$. Thus,  \eqref{eq:prop_bound_basic} holds.
	\\[1mm]
	\underline{Item (2):} Under the characteristic flow, $M_{j,t}$ evolves as $M_{j,t}=\ee^{t/2}M_{j,0}$, cf.~Lemma \ref{lem:flow_properties}~(i). Thus
	\begin{equation*}
		\Re \langle M_{12,r}^I\rangle = \ee^r \frac{\Re\left[ \langle M_{1,0}M_{2,0}\rangle\right] - \ee^r \vert \langle M_{1,0}M_{2,0}\rangle\vert^2}{\vert 1-\langle M_{1,r}M_{2,r}\rangle\vert^2}.
	\end{equation*}
	Since $\Re\left[ \langle M_{1,0}M_{2,0}\rangle\right] - \ee^r \vert \langle M_{1,0}M_{2,0}\rangle\vert^2$ is monotonically decreasing in $r$ and the denominator is positive, the second statement of Lemma \ref{lem:prop} holds.
	\\[1mm]
	\underline{Item (3):}  In order to conclude \eqref{eq:prop_int_bound1}, we integrate \eqref{eq:prop_bound_basic} to get 
	\begin{equation*}
		\int_s^t f_r\dif r\le 2\int_s^t\vert \langle M_{12,r}^I\rangle\vert \dif r\le \int_s^t \left(\frac{\pi\rho_{1,r}}{\eta_{1,r}}+\frac{\pi\rho_{2,r}}{\eta_{2,r}}\right)\dif r\le \log \frac{\eta_{1,s}\eta_{2,s}}{\eta_{1,t}\eta_{2,t}}.
	\end{equation*}
	Here we used that $\pi\rho_{j,r}\le -\partial_r\eta_{j,r}$, $j=1,2$. To derive \eqref{eq:prop_int_bound2}, assume for notational simplicity that $s_0\ge t$.  Then
	\begin{equation*}
		\frac{1}{2}\int_s^t f_r\dif r=\Re\int_s^t \frac{\ee^r\langle M_{1,0}M_{2,0}\rangle}{1-\ee^r\langle M_{1,0}M_{2,0}\rangle}\dif r = \Re \log \frac{1-\langle M_{1,s}M_{2,s}\rangle}{1-\langle M_{1,t}M_{2,t}\rangle}=\log \frac{\beta_s}{\beta_t}.
	\end{equation*}
	\\[1mm]
	\underline{Item (4):}  For any $0\le s\le t\le T$ it holds that
	\begin{equation*}
		\begin{split}
			\beta_s &= \vert 1 -\langle M_{1,s}M_{2,s}\rangle\vert = \vert 1 - \ee^{s-t}\langle M_{1,t}M_{2,t}\rangle\vert\\
			& = \vert \ee^{s-t}(1-\langle M_{1,t}M_{2,t}\rangle) + (1-\ee^{s-t})\vert\sim \beta_t + t-s, 
		\end{split}	
	\end{equation*}
	where in the last implication we used that $1-\ee^{s-t}\ge 0$ and $\Re (1-\langle M_{1,t}M_{2,t}\rangle)\ge 0$.
\end{proof}

\begin{proof}[Proof of Lemma \ref{lem:ray}] We prove the two parts of Lemma \ref{lem:ray} separately. 
	\\[1mm]
	\underline{Part (i):} 
	At first we show that the constraint $\vert \Re z\vert\le N^{200}$ may be removed from the definition \eqref{eq:specdom} of $\Omega_T$. More precisely, we prove that if 
	\begin{equation}
		\vert\Im z\vert\rho_T(z)\ge N^{-1+\epsilon}\quad\text{and}\quad\vert\Im z\vert\le N^{100},
		\label{eq:specdom_cond}
	\end{equation}
	then $\vert\Re z\vert\le N^{200}$. Assume the opposite, i.e. that there exists $z=E+\ii \eta\in\C\setminus\R$ as in \eqref{eq:specdom_cond} such that $\vert\Re z\vert>N^{200}$. We have
	\begin{equation*}
		N^{-1+\epsilon}\lesssim\vert\Im z\vert\rho_T(z)=\frac{1}{\pi}\int_{\R}\frac{\eta^2}{(x-E)^2+\eta^2}\rho(x)\dif x\sim \frac{\eta^2}{\eta^2+E^2}.
	\end{equation*} 
	In the last step we used that $(x-E)^2+\eta^2\sim E^2+\eta^2$ for any $x\in{\mathrm supp}\,\rho_T$ once the distance from $E$ to the support of $\rho$ has a lower bound of order 1. Therefore it holds that
	\begin{equation*}
		\vert E\vert\lesssim\vert\eta\vert N^{(1-\epsilon)/2}\le N^{200},
	\end{equation*}
	which contradicts to the assumption $\vert E\vert >N^{200}$.
	
	Now we are ready to prove the first part of Lemma \ref{lem:ray}.  The ray property of $\Omega_T$ follows from the monotonicity of the function
	\begin{equation*}
		[0, \infty) \ni \eta \mapsto 	\eta\rho_T(E+\ii \eta)=\frac{1}{\pi}\int_\R\frac{\eta^2}{(x-E)^2+\eta^2}\rho_T(x)\dif x
	\end{equation*}
	for any fixed $E$. Moreover, since this function increases from $0$ at $\eta=0$ to $1$ at $\eta\to+\infty$, for any $E\in\R$ there exists a unique $\eta=\eta(E)>0$ such that 
	\begin{equation}
		\eta(E)\rho_T(E+\ii\eta(E))=N^{-1+\epsilon}.
		\label{eq:eta(E)}
	\end{equation}
	In particular, the part of the boundary of $\Omega_T\cap\HH$ which is not introduced by the constraint $\vert\Im z\vert\le N^{100}$ is a graph of a function $E \mapsto \eta(E)$. Differentiating the defining equation \eqref{eq:eta(E)} for $\eta(E)$ in $E$, we get that
	\begin{equation}
		\eta'(E) = \int_\R \frac{\eta (E-x)}{((x-E)^2+\eta^2)^2}\rho_T(x)\dif x \left(\int_\R \frac{(x-E)^2}{((x-E)^2+\eta^2)^2}\rho_T(x)\dif x   \right)^{-1}.
		\label{eq:eta'}
	\end{equation}
	
	Armed with these preliminaries, we will obtain Lemma \ref{lem:ray}~(i) by contradiction, so assume that for some $t\in [0,T)$ the ray property is violated. Then there exist two points $z_{1,t}, z_{2,t}$ with $\Im z_{j,t}< N^{100}$, $j=1,2$, on the boundary of $\Omega_t\cap\HH$ such that the vertical ray which enters $\Omega_t$ through one of this points leaves it through the other one. Denote $z_{j,T}:=\mathfrak{F}_{T,t}z_{j,t}$ and $E_j:=\Re z_{j,T}$, $j=1,2$. Without loss of generality assume that $E_1<E_2$. Then we have
	\begin{equation}
		\Re[\mathfrak{F}_{t,T}z_{1,T}]=\Re[\mathfrak{F}_{t,T}z_{2,T}].
		\label{eq:Re=Re}
	\end{equation}
	Since $z_{j,t}\in\partial\Omega_t$, $\Im z_{j,t}<N^{100}$ and $\Im z_{j,T}<\Im z_{j,t}$ by Lemma \ref{lem:flow_properties}, it holds that $\Im z_{j,T}=\eta(E_j)$, where $\eta(E)$ is defined in \eqref{eq:eta(E)}. Combining \eqref{eq:Re=Re} with \eqref{eq:z_t} we see that
	\begin{equation}
		\ee^{(T-t)/2}E_1+2\Re \langle M_T(z_{1,T})\rangle\sinh \frac{T-t}{2}=\ee^{(T-t)/2}E_2+2\Re \langle M_T(z_{2,T})\rangle\sinh \frac{T-t}{2}.
		\label{eq:deriv_aux}
	\end{equation}
	This is equivalent to
	\begin{equation}
		\frac{\Re \langle M_T(z_{1,T})\rangle - \Re \langle M_T(z_{2,T})\rangle}{E_1-E_2} = -\frac{1}{(1-\ee^{-(T-t)})}.
		\label{eq:deriv}
	\end{equation}
	While the rhs.~of \eqref{eq:deriv} is strictly smaller than $-1$, the lhs.~equals 
	\begin{equation}
		(E_2-E_1)^{-1}\int_{E_1}^{E_2}\partial_E \Re \langle M(E+\ii\eta(E))\rangle\dif E.
	\end{equation}
	Further, denoting $z:=E+\ii\eta(E)$ we have
	\begin{equation*}
		\begin{split}
			&\partial_E \Re \langle M(E+\ii\eta(E))\rangle = \partial_E \Re \langle M(z)\rangle + \partial_\eta \Re \langle M(z)\rangle \eta'(E) \\
			&\quad= \partial_E \Re \langle M(z)\rangle + 2\int_\mathbb{R} \frac{(E-x)\eta}{((x-E)^2+\eta^2)^2}\rho_T(x)\dif x\eta'(E)\ge \partial_E \Re \langle M(z)\rangle.
		\end{split}
	\end{equation*}
	In the last inequality we used \eqref{eq:eta'} to show that the second term is positive. Since
	\begin{equation*}
		\partial_E \Re\langle M(z)\rangle = \Re \frac{\langle M^2\rangle}{1-\langle M^2\rangle} = -1 +  \frac{1-\Re\langle M^2\rangle}{\vert 1-\langle M^2\rangle\vert^2}\ge -1,
	\end{equation*}
	the lhs.~of \eqref{eq:deriv} is lower bounded by  $-1$. Hence, we arrive at a contraction and hence Lemma \ref{lem:ray}~(i) holds. \\[2mm]
	\underline{Part (ii):} Now we prove the second part of Lemma \ref{lem:ray} concerning the bulk-restricted domains $\Omega_{\kappa,t}$. We aim to prove that there exists $t_*\in [0,T)$ with $T-t_*\sim 1$ such that $\Omega_{\kappa,t}$ has the ray property for all $t\in [t_*,T]$. By construction \eqref{eq:specdom_bulk} $\Omega_{\kappa,T}$ satisfies the ray property. As in the argument above assume that $\Omega_{\kappa,t}$ does not satisfy the ray property for some $t\in [0,T)$. However, unlike in the previous part of the proof we do not argue by contradiction, but rather prove that $T-t\gtrsim 1$.

	Similarly to \eqref{eq:deriv_aux}, we find $z_{1,T}$, $z_{2,T}$ on the boundary of $\Omega_{\kappa,T}\cap\HH$ such that \eqref{eq:deriv} holds, where we denoted $E_j:=\Re z_{j,T}$, $j\in[2]$. Moreover, by choosing the time $t$, for which the ray property of $\Omega_{\kappa,t}$ is violated, sufficiently close to $T$,
	one can find such $z_{1,T}$, $z_{2,T}$ 
	meeting the following additional condition: Either $E_1, E_2\in [b_r, (b_r+a_{r+1})/2]$ or $E_1, E_2\in [(b_r+a_{r+1})/2, a_{r+1}]$ for some $r\in [m-1]$, where we freely used the notations $a_r, b_r$ from Definition \ref{def:specdom}. Without loss of generality we may assume that the first of these two options holds and that $E_1<E_2$. Then \eqref{eq:deriv} reads 
	\begin{equation*}
		\frac{1}{E_2-E_1}\int_0^{E_2-E_1}\partial_x \Re\langle M_T(z_{1,T}+x+\ii x)\rangle\dif x = - \frac{1}{1-\ee^{T-t}}. 
	\end{equation*}
	Therefore, we have
	\begin{equation}
		\sup_{x\in [E_1,E_2]}\left\vert \frac{\langle M_T^2\rangle}{1-\langle M_T^2\rangle}\right\vert \gtrsim (T-t)^{-1},
		\label{eq:T-t_bound}
	\end{equation}
	where $M_T$ is evaluated at $z_{1,T}+x+\ii x$. We view \eqref{eq:T-t_bound} as a lower bound on $T-t$ and are hence left to show that the lhs. of \eqref{eq:T-t_bound} has an upper bound of order one. 
	
	For the numerator it holds that $\vert\langle M_T^2\rangle\vert\le 1$, while Proposition \ref{prop:stab} applied to the denominator gives that
	\begin{equation}
		\vert 1-\langle M_T^2(z)\rangle\vert\gtrsim \Im z + \rho_T^2(z), \quad z=z_{1,T}+x+\ii x,\, x\in [0,E_2-E_1].
		\label{eq:denom_bound}
	\end{equation}
	Recall that $b_r\in\mathbf{B}_\kappa$, i.e. $\rho_T(b_r)\ge \kappa$. Then there exists $c_0>0$ which depends only on $\kappa$ and $L$ such that for any $y\in [0,c_0]$ we have $\rho_T(b_r+y+\ii y)\ge\kappa/2$. This is a simple consequence of the differential inequality
	\begin{equation*}
		\partial_y\rho_T(b_r+y+\ii y)\lesssim \frac{1}{\vert 1-\langle M^2_T(b_r+y+\ii y)\rangle\vert}\lesssim \frac{1}{\rho_T^2(b_r+y+\ii y)},
	\end{equation*}
	where in the last step we again used Proposition \ref{prop:stab}. Take $x\in[0,E_2-E_1]$ and choose $y:=E_1-b_r+x$, which guarantees that $z:=b_r+y+\ii y=z_{1,T}+x+\ii x$. If $y\in [0,c_0]$, then $\rho_T(z)\ge\kappa/2$ and \eqref{eq:denom_bound} shows that $\vert 1-\langle M_T^2(z)\rangle\vert\gtrsim 1$. In the case $y>c_0$ we have $\Im z\gtrsim 1$ and derive the same conclusion $\vert 1-\langle M_T^2(z)\rangle\vert\gtrsim 1$ from \eqref{eq:denom_bound}. 
	
	This finishes the proof of Lemma \ref{lem:ray}.
\end{proof}

\end{document}